\numberwithin{equation}{section}
\newtheorem{prop}{Proposition}
\newtheorem{lemma}[prop]{Lemma}
\newtheorem{thm}[prop]{Theorem}
\newtheorem{cor}[prop]{Corollary}
\numberwithin{prop}{section}
\theoremstyle{definition}
\newtheorem{defn}[prop]{Definition}
\newtheorem{rmk}[prop]{Remark}
\newcommand{\del}{\partial}
\newcommand{\dt}{\frac{\partial}{\partial t}}
\newcommand{\brs}[1]{\left| #1 \right|}
\newcommand{\gG}{\Gamma}
\newcommand{\gD}{\Delta}
\newcommand{\gd}{\delta}
\newcommand{\gs}{\sigma}
\newcommand{\gl}{\lambda}
\newcommand{\gL}{\Lambda}
\newcommand{\ga}{\alpha}
\newcommand{\gb}{\beta}
\renewcommand{\ge}{\epsilon}
\newcommand{\N}{\nabla}
\newcommand{\FF}{\mathcal F}
\newcommand{\OO}{\mathcal O}
\newcommand{\LL}{\mathcal L}
\newcommand{\til}[1]{\widetilde{#1}}
\renewcommand{\bar}[1]{\overline{#1}}
\newcommand{\IP}[1]{\left<#1\right>}
\DeclareMathOperator{\Sym}{Sym}
\DeclareMathOperator{\Ric}{Ric}
\DeclareMathOperator{\Rm}{Rm}
\DeclareMathOperator{\tr}{tr}
\newcommand{\gU}{\Upsilon}
\newcommand{\dtu}{\frac{\del u}{\del t}}
\newcommand{\C}{\mathcal{C}^{+}}
\begin{document}

\title[Uniqueness for the $\sigma_2$-Yamabe problem]{A formal Riemannian structure on conformal classes and uniqueness for the
\texorpdfstring{$\gs_{2}$}{sigmak}-Yamabe problem}

\author{Matthew Gursky}
\address{Department of Mathematics
         University of Notre Dame\\
         Notre Dame, IN 46556}
\email{\href{mailto:mgursky@nd.edu}{mgursky@nd.edu}}

\author{Jeffrey Streets}
\address{Department of Mathematics\\
         University of California\\
         Irvine, CA  92617}
\email{\href{mailto:jstreets@uci.edu}{jstreets@uci.edu}}

\date{\today}

\begin{abstract} We define a new formal Riemannian metric on a
conformal classes of four-manifolds in the context of the $\gs_2$-Yamabe problem.  Exploiting this new variational structure we
show that solutions are unique unless the manifold is conformally equivalent to the round sphere.
\end{abstract}

\maketitle

\section{Introduction}
\subsection{Background}   In \cite{GS_Surfaces}, we defined a formal Riemannian metric on the space of conformal metrics on surfaces of positive (or negative) Gauss curvature.   Our goal in this paper is to show that one can extend this definition to conformal classes of metrics on four-manifolds, and to explore the geometric properties of this metric and their applications. The definition we give can be extended to higher (even) dimensions, but this will be pursued in a subsequent article since there are technical issues that do not arise in two or four dimensions \cite{GS_RVC}.

In addition to verifying the formal properties of this metric we prove a remarkable geometric consequence: namely, solutions of the $\sigma_2$-Yamabe problem -- whose existence follows from our positivity assumption and \cite{CGY1} -- are \emph{unique}, unless the manifold is conformally equivalent to the sphere.  This is a surprising departure from the classical (or $\sigma_1$-)Yamabe problem, where explicit examples of non-uniqueness are known (see Remarks \ref{nonuniqueremark} and \ref{nonUexamples} below).  Thus, positive conformal classes on four-manifolds have a unique conformal representative whose $\sigma_2$-curvature is constant; moreover the value of this constant (after normalizing the volume) can be expressed in terms of the Euler characteristic and the $L^2$-norm of the Weyl tensor (see the introduction of \cite{CGYAnnals}).  We also remark that this representative has positive Ricci curvature.

To give a more detailed description it will be helpful to return to the setting of surfaces.
Let $(M, g_0)$ be a compact Riemannian
surface with positive Gauss curvature $K_0 > 0$, and let $[g_0]$ denote the conformal class of $g_0$.  Define
\begin{align} \label{conedefintro}
\mathcal{C}^{+} = \{ g_u = e^{2u}g_0 \in [g_0] \ :\ K_u = K_{g_{u}} > 0 \}.
\end{align}
Formally, the tangent space to $[g_0]$ at any metric $g_u \in [g_0]$ is given by $C^{\infty}(M)$.
For $\phi,\psi \in C^{\infty}(M) \cong T_u([g_0])$ we define
\begin{align} \label{gsmetric}
 \llangle \phi, \psi \rrangle_u = \int_M \phi \psi K_u dA_u,
\end{align}
where $K_u$ is the Gauss curvature and $dA_u$ is the area form of $g_u$.

The definition in (\ref{gsmetric}) is inspired by the
Mabuchi-Semmes-Donaldson \cite{MabuchiSymp, Semmes, Donaldson} metric of
K\"ahler geometry, wherein a formal
Riemann metric is put on a K\"ahler class by imposing on the tangent space to a
given K\"ahler potential the $L^2$ metric with respect to the associated
K\"ahler metric.  As observed in \cite{MabuchiSymp}, this metric enjoys many
nice formal
properties, for instance nonpositive sectional curvature.  Moreover, it has a
profound relationship to natural functionals in K\"ahler geometry such as the
Mabuchi $K$-energy and the Calabi energy, as well as their gradient flow, the
Calabi flow.

In \cite{GS_Surfaces} we established a number of analogous properties for the metric
defined by (\ref{gsmetric}).  For example, $\mathcal{C}^{+}$ endowed with the metric in (\ref{gskmetric}) has
non-positive curvature in the sense of Alexandrov.  We also showed that the normalized Liouville energy $F : W^{1,2} \rightarrow \mathbb{R}$, defined by
\begin{align} \label{LEdef}
F[u] = \int_M |\nabla_0 u|^2 dA_0 + 2 \int_M K_0 u dA_0 - \big( \int_M K_0 dA_0 \big) \log \Big( \fint_M e^{2u} dA_0  \Big),
\end{align}
is {\em geodesically convex}.  Recall that critical points of $F$, which are precisely the conformal metrics of constant Gauss curvature, are minimizers and unique up to M\"oebius transformation.  Many of these global geometric properties are based on existence and partial
regularity results for geodesics in $\mathcal{C}^{+}$ (see Section 4 of \cite{GS_Surfaces} for precise statements).

In this paper we study a natural generalization of the inner product (\ref{gskmetric}).  For an $n$-dimensional Riemannian manifold ($n \geq 3$), we denote the Schouten tensor by
\begin{align*}
A = \frac{1}{(n-2)} \big( Ric - \frac{1}{2(n-1)} R g \big),
\end{align*}
where $Ric$ is the Ricci tensor and $R$ is the scalar curvature.  Let $\sigma_k(g^{-1} A)$ denote the $k^{th}$-symmetric function of the eigenvalues of the $(1,1)$ tensor obtained by raising an index of $A$; i.e.,
\begin{align*}
A_i^j = g^{jk} A_{ik}.
\end{align*}
The quantity $\sigma_k(g^{-1}A)$ is called the {\em $\sigma_k$-curvature} or the {\em $k$-scalar curvature}.  For example,
\begin{align} \label{s1A}
\sigma_1(g^{-1} A) = \dfrac{R}{2(n-1)}.
\end{align}
For $1 \leq k \leq n$, we write $A = A_g \in \Gamma_k^{+}$ if $\sigma_j(g^{-1} A) > 0$ on $M^n$ for all $1 \leq j \leq k$.  By (\ref{s1A}), we have $A_g \in \Gamma_1^{+}$ if $g$ has positive scalar curvature, while $A_g \in \Gamma_n^{+}$ if the Schouten tensor of $g$ is positive definite.

We will be interested in the case where $n = 4$ and $k =2$.  To this end, let $(M^4, g_0)$ be a compact Riemannian four-manifold such that $A_{g_0} \in \gG_{2}^+$.  Given $u \in C^{\infty}(M)$, let $A_u$ denote
the Schouten tensor of the conformal metric $g_u = e^{-2 u} g_0$.  We will say that $u$ is {\em admissible} if $A_u \in \Gamma_2^{+}$.    Let
\begin{align*}
\mathcal{C}^{+} = \mathcal{C}^{+}([g_0]) =  \big\{ g_u \in [g_0] \ \vert \ A_u \in \Gamma_{2}^{+} \big\}.
\end{align*}
By a result of Guan-Viaclovsky, \cite{GuanVia1}, if $g_u \in \mathcal{C}^{+}$ then $g_u$ has positive Ricci curvature.  As noted above, the tangent space to $\mathcal{C}^{+}$ at any point is given by $C^{\infty}(M)$.
 Thus, in analogy with (\ref{gskmetric}) we define for $\phi,\psi \in C^{\infty}(M)$
\begin{align} \label{gskmetric}
 \IP{ \phi, \psi}_u =&\ \int_M \phi \psi \gs_{2}(g^{-1}_u A_u) dV_u.
\end{align}

\begin{rmk} To simplify the notation we will write $\sigma_2(A)$ instead of $\sigma_2(g^{-1}A)$.  Since we will be working with conformal metrics, we will also need to distinguish between $g^{-1}A_u$ and $g_u^{-1}A_u$; i.e., whether we are using $g$ or $g_u$ to raise an index.  Therefore, we will adopt the usual convention that $\sigma_2(A_u) = \sigma_2( g^{-1} A_u)$, but write $\sigma_2(g_u^{-1}A_u)$ when we are using $g_u$ to raise an index.  Note that
\begin{align} \label{note}
\sigma_2(g_u^{-1} A_u) = e^{4u} \sigma_2(A_u).
\end{align}
In particular,
\begin{align*}
\sigma_2(g_u^{-1} A_u) dV_u = \sigma_2(A_u) dV.
\end{align*}
\end{rmk}

\begin{rmk} There is a sharp characterization of
conformal classes for which $\mathcal{C}^{+}$ is non-empty.  In view of the conformal invariance of the integral
\begin{align*}
\sigma := \int \sigma_{2}(g^{-1} A_g) dV_g,
\end{align*}
a necessary condition for $[g]$ to admit a metric $g_u \in [g]$ with $A_u \in \Gamma_2^{+}$ is the positivity of the Yamabe invariant and the positivity of $\sigma$.  In \cite{CGYAnnals} these conditions were shown to be sufficient.  Thus we have an exact parallel with the case
of two dimensions, since a conformal class of metrics on a surface admits a metric of positive Gauss curvature if and only if the total Gauss curvature is positive.
\end{rmk}

%
%
%
\subsection{Formal metric properties}

We begin by establishing in \S \ref{formalgeometry} some
fundamental formal properties of the metric defined in (\ref{gskmetric}).  We first introduce
a formal path derivative which can be regarded as the Levi-Civita connection
associated to the metric.  Using this we compute the curvature tensor, and
furthermore show that the curvature is nonpositive: \\

\begin{thm} \label{npcthm} Given $(M^4, g)$ a compact Riemannian manifold, with
$A_g \in \gG_{2}^+$.  Then (\ref{gskmetric}) defines a metric with
nonpositive sectional curvature on $\mathcal{C}^{+}$.
\end{thm}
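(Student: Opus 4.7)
The plan is to mirror the construction of Mabuchi in the K\"ahler setting, as well as our own previous work in dimension two \cite{GS_Surfaces}, now adapted to the variational structure of $\sigma_2$ in dimension four. The proof proceeds in three stages: derive the formal Levi--Civita connection on $\mathcal{C}^{+}$, compute the associated Riemann curvature tensor, and then show that the sectional curvature is nonpositive by exploiting the positivity of the Newton tensor $T_1(A_u) := \sigma_1(A_u) g_u - A_u$, which holds on $\Gamma_2^+$.

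First, along a smooth path $u(t) \in \mathcal{C}^{+}$ with tangent $\phi$, I would compute the variation of the weight $\sigma_2(A_u) dV$ (equivalently $\sigma_2(g_u^{-1}A_u) dV_u$). Using the conformal change formula
\begin{align*}
A_u = A_0 + \nabla^2 u + du \otimes du - \tfrac{1}{2}|\nabla u|^2 g_0,
\end{align*}
the standard linearization of $\sigma_2$ yields a second-order differential operator in $\phi$ whose principal symbol involves $T_1(A_u)$. Metric compatibility of $\IP{\cdot,\cdot}_u$ together with a torsion-free requirement then uniquely determines a Christoffel operator $\Gamma_u(\phi, \psi)$, as a bilinear differential expression in $\phi, \psi$ with coefficients depending on $A_u$, $T_1(A_u)$, and $\sigma_2(A_u)$. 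The divergence-freeness of $T_1(A_u)$ (from the twice-contracted second Bianchi identity) and the variational characterization of $\sigma_2$ in dimension four should make this well-defined and self-adjoint in a natural sense.

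Second, using the connection $D$, I would compute the Riemann curvature operator
\begin{align*}
R_u(\phi, \psi) \chi = D_\phi D_\psi \chi - D_\psi D_\phi \chi
\end{align*}
for formally constant vector fields by direct differentiation of $\Gamma_u$. To establish the sectional curvature bound, I would then contract with $\phi$ and $\psi$ and attempt to express the result as
\begin{align*}
\IP{R_u(\phi, \psi)\psi, \phi}_u = -\int_M T_1(A_u)(\Xi, \Xi) \, dV_u,
\end{align*}
where $\Xi = \Xi(\phi, \psi)$ is a natural $1$-form built from $\phi, \psi$ and their first derivatives---an analogue of the Poisson bracket that appears in the Mabuchi setting. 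Since $A_u \in \Gamma_2^+$ implies that $T_1(A_u)$ is positive definite, the integral is manifestly nonpositive and the theorem follows.

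The principal obstacle is this final step: organizing the many terms produced by the curvature computation into the clean negative-semidefinite quadratic form above. This requires repeated integration by parts using the divergence-free property of $T_1(A_u)$, together with careful tracking of commutators of covariant derivatives---these generate curvature contributions that must cancel against other terms. Unlike the Mabuchi case, where the Jacobi identity for the Poisson bracket provides a conceptual reason for the cancellations, here the cancellations should emerge from the symmetric function identities specific to $\sigma_2$ and the dimension-four relation $n = 2k$, which endows the problem with the extra conformal symmetry responsible for the variational nature of $\int \sigma_2(A_g) dV_g$.
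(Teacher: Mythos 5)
Your overall plan---define a connection via the metric-compatible, torsion-free prescription, compute the curvature commutator, and then use positivity of $T_1(A_u)$ on $\Gamma_2^+$---matches the paper's strategy, and Lemmas \ref{metcomp} and \ref{n4curvature} of the paper execute exactly the first two stages you describe. However, the final structure you anticipate does not materialize, and this is a genuine gap rather than just unresolved algebra.

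Specifically, you propose to cast the sectional curvature as
$\langle R_u(\phi,\psi)\psi,\phi\rangle_u = -\int_M T_1(A_u)(\Xi,\Xi)\,dV_u$
for some $1$-form $\Xi(\phi,\psi)$ playing the role of the Poisson bracket, so that nonpositivity is ``manifest'' from $T_1 \geq 0$. The paper explicitly remarks (Remark \ref{n4curvature} ff.) that there is \emph{no} such Poisson-bracket / symmetric-space interpretation in this setting. What the computation actually produces, after integrating by parts against the divergence-free vector field $X_i = \langle\nabla\phi,\nabla\psi\rangle\nabla_i\psi - |\nabla\psi|^2\nabla_i\phi$ (Lemma \ref{curvlemma1}), is the pointwise integrand
\begin{align*}
-T_1(\nabla\phi,\nabla\phi)\,T_1(\nabla\psi,\nabla\psi) + T_1(\nabla\phi,\nabla\psi)^2 + \sigma_2(A_u)\left[|\nabla\phi|^2|\nabla\psi|^2 - \langle\nabla\phi,\nabla\psi\rangle^2\right],
\end{align*}
divided by $\sigma_2(A_u)$. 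The first two terms are indeed $\leq 0$ by Cauchy--Schwarz in the $T_1$-inner product, but the third term is $\geq 0$; nonpositivity is not automatic and requires the separate algebraic inequality of Lemma \ref{sgLemma}: diagonalize $T_1$ with eigenvalues $\lambda_i \geq 0$, expand both quantities in terms of the components $z_{ij}$ of $\nabla\phi \wedge \nabla\psi$, substitute the identity $\sigma_2(A) = \tfrac{1}{3}\big(-\sum_i \lambda_i^2 + \sum_{i<j}\lambda_i\lambda_j\big)$, and check that each resulting coefficient of $z_{ij}^2$ is nonpositive. This is a nontrivial balance between $T_1$ and $\sigma_2$, not a consequence of positive-definiteness of $T_1$ alone, and it is precisely the ingredient your proposal is missing. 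Without anticipating the specific vector field identity and the pointwise inequality, you would be stuck at your self-identified ``principal obstacle,'' since the cancellation you hope for---into a single negative-definite quadratic form---does not occur.
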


Next, we derive the geodesic equation.  Formal calculations derived using either
the path derivative or variations of the length functional yield that a
one-parameter family of conformal factors is a geodesic if and only if
\begin{align} \label{geodesics}
u_{tt} - \frac{1}{\gs_2(A_u)} \IP{T_{1} (A_u), \N u_t \otimes \N u_t} = 0,
\end{align}
where $T_{1}$ is the Newton transform and $\langle \cdot, \cdot \rangle$ denotes the inner product on
tensor bundles induced by $g$ (the background metric).
This is a degenerate fully nonlinear equation, which is related to a
$\sigma_{2}$-type problem for the spacetime Hessian of $u$, in direct
analogy to the $(n+1)$-dimensional degenerate Monge-Ampere interpretation of the
Mabuchi geodesic equation in K\"ahler geometry.
We also show that one parameter families of conformal
transformations are automatically geodesics (Proposition
\ref{conformalpathsgeodesics}).  This is again in analogy with the fact that
one-parameter families of biholomorphisms generate families of K\"ahler
potentials which are Mabuchi geodesics.

In the K\"ahler setting, the Mabuchi metric and its geodesics are intimately
related to Mabuchi's $K$-energy functional.  This is a ``relative functional''
defined
via path integration of a closed $1$-form on a K\"ahler class.  It was shown in
\cite{Mabuchi,MabuchiSymp} that this functional is geodesically convex, leading
to the conjecture
that extremal K\"ahler metrics are unique up to biholomorphism in a fixed
K\"ahler class.  Confirming this conjecture requires extensive existence and
regularity results for the geodesic equation.  An initial theory of $C^{1,1}$
was developed in \cite{ChenSOKM,CalabiChen,Blocki}, and eventually a more
refined regularity theory was developed and the conjecture finally confirmed in
\cite{ChenTian}.

In our setting there is a natural analogue of Mabuchi's functional.  For surfaces it is given
by the Liouville energy, or regularized determinant (\ref{LEdef}).    In four dimensions this functional was written down by Chang-Yang in \cite{ChangYangMoserVol} (although it appears implicitly in \cite{CGYAnnals}):
\begin{align} \label{CYP} \begin{split}
F[u] &= \int \Big\{ 2 \Delta u |\nabla u|^2 - |\nabla u|^4 - 2 Ric(\nabla u,\nabla u) + R|\nabla u|^2 - 8 u \sigma_2(A_g) \Big\}dV \\
& \ \ \ \ - 2 \big( \int \sigma_2(A_g) dV \big) \log \Big( \fint e^{-4u} dV \Big).
\end{split}
\end{align}
After this, Brendle-Viaclovsky \cite{BV} give a path-integration derivation of this functional which makes clearer the analogy between it and the Mabuchi functional in K\"ahler geometry.  We will not need the precise formula, only the fact that it provides a conformal primitive for $\sigma_2(A)$; i.e., if $u_s$ is a path with $\frac{d}{ds}u_s\vert_{s=0} = u'$, then
\begin{align} \label{CYvar}
\frac{d}{ds}F[u_s]\big\vert_{s=0} = \int u' \big[ - \sigma_2(g_u^{-1} A_u) + \bar{\sigma} \big] dV_u.
\end{align}
Consequently, $u$ is a critical point of $F$ if and only if $g_u = e^{-2u}g$ is a solution of the $\sigma_2$-Yamabe problem:
\begin{align}  \label{skYamabe}
\sigma_2(g_u^{-1} A_u) \equiv const.
\end{align}

In four dimensions the existence of solutions to (\ref{skYamabe}) in conformal classes with $\mathcal{C}^{+} \neq \emptyset$ was first proved by Chang-Gursky-Yang \cite{CGY1} (for surveys on solving the $\sigma_k$-Yamabe problem for general $2 \leq k \leq n$ see \cite{JeffVSurvey} and \cite{STWSurvey}).  In particular, if $\mathcal{C}^{+}([g])$ is non-empty, then $[g]$ always admits a critical point of $F$.  Our next result gives us deeper insight into the variational structure of $F$:

\begin{thm}  \label{FconThm}  The functional $F$ in (\ref{CYP}) is geodesically convex. \end{thm}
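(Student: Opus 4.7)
Let $u_t$ be a smooth geodesic in $\mathcal{C}^{+}$, so that $u_t$ satisfies (\ref{geodesics}). The plan is to compute $\frac{d^{2}}{dt^{2}}F[u_t]$ explicitly, substitute the geodesic equation to eliminate the $u_{tt}$ term, and recognize the remaining expression as a nonnegative quadratic form in $\nabla u_t$ using the positivity of the Newton tensor $T_{1}(A_u)$ on $\Gamma_{2}^{+}$. The starting point is the first variation formula (\ref{CYvar}):
\[
\frac{d}{dt}F[u_t] \;=\; \int_M u_t\bigl[-\sigma_{2}(g_u^{-1}A_u) + \bar{\sigma}\bigr]\,dV_u \;=\; \int_M u_t\bigl[-\sigma_{2}(A_u) + \bar{\sigma}\,e^{-4u}\bigr]\,dV,
\]
where the second equality uses $\sigma_{2}(g_u^{-1}A_u)\,dV_u = \sigma_{2}(A_u)\,dV$.

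\textbf{Main computation.} Differentiating once more in $t$ produces three types of terms: (i) a $u_{tt}$-contribution $\int u_{tt}[\bar{\sigma} - \sigma_{2}(g_u^{-1}A_u)]\,dV_u$; (ii) a term coming from $\partial_t \sigma_{2}(A_u)$; and (iii) normalization terms from $\partial_t \bar{\sigma}$ and $\partial_t dV_u$. For (ii) I would use the Newton identity $\partial_t \sigma_{2}(A_u) = T_{1}(A_u)^{ij}\partial_t(A_u)_{ij}$ together with the conformal variation formula $\partial_t A_u = \nabla^{2} u_t + du\otimes du_t + du_t\otimes du - \langle\nabla u, \nabla u_t\rangle g$, where all covariant derivatives are in the background metric. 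The key structural input is that $T_{1}(A_u)$ is divergence-free: from $\nabla^{i}(A_u)_{ij} = \nabla_j \sigma_1(A_u)$ (the Bianchi identity for the Schouten tensor) one checks directly $\nabla^{i}T_{1}(A_u)_{ij} = \nabla^{i}[\sigma_1(A_u)g_{ij} - (A_u)_{ij}] = 0$. Two integrations by parts then convert $\int u_t\,T_{1}(A_u)^{ij}\nabla_i\nabla_j u_t\,dV$ into $-\int T_{1}(A_u)(\nabla u_t,\nabla u_t)\,dV$, while the cross and trace terms in $\partial_t A_u$ can be reorganized (again using $\nabla^{i}T_{1}(A_u)_{ij}=0$) into contributions quadratic in $\nabla u_t$.

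\textbf{Geodesic substitution and positivity.} Along a geodesic, (\ref{geodesics}) gives $u_{tt}\,\sigma_{2}(A_u) = \langle T_{1}(A_u), \nabla u_t \otimes \nabla u_t\rangle$, so the $u_{tt}\,\sigma_2(g_u^{-1}A_u)$ piece of (i) exactly equals $\int T_{1}(A_u)(\nabla u_t,\nabla u_t)\,dV$ up to sign. Combining this with the integrated-by-parts version of (ii), one expects the terms to regroup into a multiple of
\[
\int_M T_{1}(A_u)(\nabla u_t, \nabla u_t)\,dV,
\]
with the remaining $u_{tt}\bar{\sigma}$, $\partial_t\bar{\sigma}$, and $\partial_t dV_u$ pieces of (i) and (iii) cancelling each other (this cancellation is morally forced by the invariance $F[u + c] = F[u]$ for constants $c$, which, combined with the conformal invariance of $\sigma = \int \sigma_{2}(A_u)\,dV$ via $\bar\sigma\,\mathrm{Vol}(g_u)=\sigma$, keeps the $\bar\sigma$-dependence tightly controlled). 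Because $A_u \in \Gamma_{2}^{+}$ forces $T_{1}(A_u)$ to be positive definite (a consequence of G\r{a}rding's inequality for $\Gamma_{k}^{+}$-cones), the resulting integrand is pointwise nonnegative, proving $\frac{d^{2}}{dt^{2}}F[u_t]\geq 0$.

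\textbf{Main obstacle.} The technical heart is the bookkeeping in the second variation: the variation of $\sigma_{2}(A_u)$ produces cross terms in $\nabla u$ and $\nabla u_t$, the trace term contributes $\operatorname{tr}(T_{1}(A_u))\langle\nabla u,\nabla u_t\rangle$, and the log/volume term in (\ref{CYP}) contributes $u_t$-dependent normalization factors. Showing that, after all integrations by parts and after applying the geodesic equation, everything collapses to a manifestly nonnegative multiple of $\int T_{1}(A_u)(\nabla u_t,\nabla u_t)\,dV$ is the computation that must be carried out carefully; the divergence-free property of $T_{1}(A_u)$ and the conformal invariance of $\sigma$ are the two identities that make this cancellation work.
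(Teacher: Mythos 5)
There is a genuine gap in your argument. You claim that after applying the geodesic equation and integrating by parts, the second variation of $F$ ``collapses to a manifestly nonnegative multiple of $\int_M T_1(A_u)(\nabla u_t, \nabla u_t)\,dV$,'' with the $\bar{\sigma}$/normalization contributions cancelling among themselves. This is not what happens. The piece of $\tfrac{d}{dt}F$ coming from $-\sigma_2(g_u^{-1}A_u)$ is actually \emph{constant} along a geodesic: $\tfrac{d}{dt}\int u_t\,\sigma_2(g_u^{-1}A_u)\,dV_u = 0$ (Lemma~\ref{metricsplit}), precisely because the contribution of $u_{tt}\,\sigma_2$ cancels against the integrated-by-parts contribution of $\partial_t[\sigma_2\,dV_u]$. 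So the entire second variation comes from the $\bar{\sigma}$ normalization piece, and a direct computation gives
\begin{align*}
\frac{d^2}{dt^2}F[u] \;=\; \sigma V_u^{-1}\left[ \int_M \frac{1}{\sigma_2(g_u^{-1}A_u)}\langle T_1(g_u^{-1}A_u),\nabla u_t\otimes\nabla u_t\rangle_u\,dV_u \;-\; 4\left(\int_M u_t^2\,dV_u - V_u^{-1}\Bigl(\int_M u_t\,dV_u\Bigr)^2\right)\right].
\end{align*}
The bracket contains a \emph{competing negative term}: a nonnegative ``variance'' of $u_t$ is being subtracted. Positivity of $T_1(A_u)$ alone is not enough; the sign of the second variation is genuinely borderline. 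The missing ingredient, and in fact the crux of the theorem, is the sharp curvature-weighted Poincar\'e inequality due to Andrews (Proposition~\ref{andrewsineq}, applied via Corollary~\ref{n4andrews}), which gives exactly
\begin{align*}
\int_M \frac{1}{\sigma_2(A)}\,T_1(A)^{ij}\nabla_i\phi\,\nabla_j\phi\,dV \;\geq\; 4\int_M\phi^2\,dV - \frac{4}{V}\Bigl(\int_M\phi\,dV\Bigr)^2
\end{align*}
with equality only on the round sphere. Without this inequality the argument does not close, and the ``morally forced'' cancellation you invoke via translation invariance of $F$ does not occur. You should also note that the paper does not carry out the brute-force bookkeeping you describe; instead it first applies Lemma~\ref{metricsplit} to eliminate the $\sigma_2$-terms entirely, which substantially shortens the computation.
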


The proof of this theorem requires the use of a sharp curvature-weighted Poincar\'e inequality due to Andrews \cite{Andrews}.
In fact, it follows from Andrews' inequality that $F$ is strictly convex, up to to one-parameter families of conformal automorphisms on the round sphere.
This sharp characterization naturally leads one to conjecture that critical points of $F$ are unique, except in the case of the sphere.
 We are able to confirm this surprising fact:

%
%
%
%

\begin{thm} \label{unique} Let $(M^4, g)$ be a compact Riemannian manifold such
that $\mathcal{C}^{+}([g]) \neq \emptyset$.
\begin{enumerate}
\item If $(M^4, g)$ is not conformal to $(S^4, g_{S^4})$, then there exists a
unique solution to the $\sigma_2$-Yamabe problem in $[g]$.
\item In $[g_{S^4}]$, all solutions to the $\sigma_2$-Yamabe problem are round
metrics.
\end{enumerate}
\end{thm}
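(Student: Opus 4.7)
The plan is to follow the Mabuchi--Donaldson blueprint from K\"ahler geometry: given two $\sigma_2$-Yamabe solutions $u_0, u_1 \in \mathcal{C}^{+}$, I would connect them by a geodesic in $\mathcal{C}^{+}$ and combine the geodesic convexity of $F$ from Theorem \ref{FconThm} with the strict convexity coming from Andrews' inequality to force rigidity. By (\ref{CYvar}), both endpoints are critical points of $F$, so if one can produce a geodesic $u_t$ joining them, then $t \mapsto F[u_t]$ is convex on $[0,1]$ with vanishing derivatives at both endpoints, and must therefore be constant. Consequently $F''[u_t] \equiv 0$ along the path, and the tangent $\dot u_t$ lies in the kernel of the second variation of $F$ at each time.

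The first step is constructing the geodesic. Equation (\ref{geodesics}) is degenerate fully nonlinear, of $\sigma_2$-type in the spacetime Hessian of $u$ on $M \times [0,1]$. Following the strategy initiated by Chen in the K\"ahler case, I would introduce an $\varepsilon$-regularized Dirichlet problem on $M \times [0,1]$ (replacing the right-hand side of (\ref{geodesics}) with an $O(\varepsilon)$ non-degenerate term), solve it with boundary data $u_0, u_1$, establish uniform $C^{1,1}$-type a priori estimates while preserving the admissibility $A_{u_t^{\varepsilon}} \in \Gamma_2^{+}$ along the regularized path, and pass to a weak limit as $\varepsilon \to 0$. This analytic construction is the principal obstacle: in K\"ahler geometry the analogous estimates required the full $C^{1,1}$ machinery of Chen and its refinements, and here one must develop the corresponding theory for a genuinely different fully nonlinear operator while tracking admissibility.

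With such a weak geodesic in hand, one revisits the proof of Theorem \ref{FconThm}: the second variation $F''[u_t]$ is bounded below by a curvature-weighted Dirichlet-type quadratic form to which Andrews' sharp Poincar\'e inequality applies. The rigidity clause in that inequality identifies the functions saturating equality as first spherical harmonics on the round $(S^4, g_{S^4})$, and shows that the inequality is strict otherwise. Thus $F''[u_t] \equiv 0$ forces $\dot u_t \equiv 0$ unless $(M^4, g_{u_t})$ is conformal to $(S^4, g_{S^4})$, in which case $\dot u_t$ lies in the finite-dimensional subspace generated by the conformal Killing fields of $S^4$.

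In case (1), triviality of this kernel yields $\dot u_t \equiv 0$, so $u_0 = u_1$. In case (2), $\dot u_t$ corresponds at each time to a conformal vector field on $S^4$, so $u_t$ is a one-parameter family of conformal pullbacks of $g_{u_0}$; this is consistent with Proposition \ref{conformalpathsgeodesics}, which guarantees such paths are already geodesics. Hence $u_1$ is a conformal pullback of $u_0$, and since the round metric is a $\sigma_2$-Yamabe solution and the conformal group of $S^4$ maps round metrics to round metrics, every solution in $[g_{S^4}]$ must be round.
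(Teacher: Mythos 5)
Your overall strategy captures the right philosophy, but there is a concrete gap at the central analytic step. You describe constructing a $C^{1,1}$ weak geodesic joining $u_0$ to $u_1$, following the K\"ahler blueprint, and then applying the second-variation rigidity of Theorem \ref{FconThm} directly along that limiting path. In the present setting this cannot be carried out as stated: the paper obtains only uniform $C^1$ estimates for the regularized geodesic equation, with the second-order quantities degenerating like $\varepsilon^{-1}$ (Propositions \ref{uttest} and \ref{lapuest}). The authors explicitly remark that the first-order terms in the conformally transformed Schouten tensor obstruct the $C^{1,1}$ estimate, so there is no weak geodesic in the class one would need to legitimize the computation $F''[u_t] \equiv 0$. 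Moreover, the second variation of $F$ involves $\sigma_2(A_{u_t})$ and $T_1(A_{u_t})$, which are not defined pointwise for a merely $C^1$ (or even weak $C^{1,1}$) limit, so the convexity argument cannot be run directly on the limiting path.

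The paper circumvents this in a way your sketch does not anticipate. Lemma \ref{uniquelemma1} works on the smooth $\varepsilon$-regularized, $s$-perturbed paths themselves and shows, by integrating the second-variation inequality with an $O(s+\varepsilon)$ error, that $F[u_0]=F[u_1]$ is the global minimum and $F[u(\cdot,t,s,\varepsilon)] \to F[u_0]$. Each time-slice of the approximate geodesic is then smoothed for a short fixed time by the Guan--Wang flow (Theorem \ref{gwsmoothing}), giving a limiting path $v(\cdot,t)$ with uniform $C^{k,\alpha}$ bounds; since the flow decreases $F$, the path $v(\cdot,t)$ consists entirely of $F$-critical points, though it is \emph{not} claimed to be a geodesic. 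Finally, Lemma \ref{Fisolated} shows that a critical point is isolated unless $g_u$ is round, and its proof uses straight-line variations $(1-t)u + t u_i$ together with Andrews' rigidity, deliberately sidestepping any appeal to regularity of geodesics. Your handling of case (2) is also somewhat circular (you need $g_{u_0}$ itself to be round before invoking conformal pullbacks), whereas the paper's Lemma \ref{Fisolated} directly identifies each non-isolated critical metric as isometric to the round sphere. To salvage your proposal you would either need to prove the $C^{1,1}$ estimate the authors could not, or adopt the smoothing-plus-isolated-critical-point route.
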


\begin{rmk} \label{nonuniqueremark} This uniqueness property is in stark contrast to the
Yamabe problem, in which generic conformal classes admit arbitrarily many distinct solutions (see \cite{Pollack}).
In dimensions $n \geq 25$ the solution space may even be non-compact \cite{Brendle},\cite{BM}.   \end{rmk}

\begin{rmk}  \label{nonUexamples} Explicit examples of non-uniqueness for the Yamabe problem were constructed by Schoen in \cite{SchoenNU}, in which
he constructed Delaunay-type solutions on $S^{n-1} \times S^1$.  By lifting to the universal cover $S^{n-1} \times \mathbb{R}$
and imposing symmetry, he reduced the Yamabe equation to an ODE and studied the phase portrait.  Interestingly, Viaclovsky \cite{JeffAMS} carried out a similar construction for
solutions of the $\sigma_k$-Yamabe problem when $k < n/2$.  However, once $k \geq n/2$ the construction fails, since the admissibility
condition implies the Ricci curvature of any solution would have to be positive, and $S^{n-1} \times S^1$ does not admit a metric
with positive Ricci curvature.
\end{rmk}


The proof of Theorem \ref{unique} consists of two main phases.  First we develop
a weak existence/regularity theory for the geodesic equation (\ref{geodesics}).
In general for degenerate Monge-Ampere equations one typically expects at best
$C^{1,1}$ control, and indeed this is verified in the K\"ahler setting by Chen
(with complements due to Blocki)
\cite{ChenSOKM, Blocki}.  Where Mabuchi
geodesics can be interpreted as solutions of a degenerate complex Monge-Ampere
equation, our geodesics are solutions to a degenerate $\gs_2$-equation
(Proposition \ref{geodMA}), and so one at best again expects $C^{1,1}$
regularity.
However, due to some technical issues arising from the presence of first order
terms in the Schouten tensor, we are not able to establish such estimates.
Rather we are forced to regularize the equation by rendering the right hand side
positive (which is a standard trick), but also perturbing the coefficients on
the time direction term, to further break the nondegeneracy.  This leads to full
$C^{\infty}$ regularity, but only the $C^1$-estimates persist as the
regularization parameters go to zero.

Given this, one cannot directly rigorously establish properties of $F$ related
to the geodesic convexity.  Nonetheless we are able to improve the regularity of
an approximate geodesic connecting any two solutions to the $\sigma_2$-problem
by smoothing via the parabolic flow introduced by Guan-Wang \cite{GW}.  In
particular we are able to take a sequence of approximate geodesics connecting
two critical points for $F$, smooth them for a short time with this flow, and
then show that this process yields a path of critical points for $F$, although
not necessarily a geodesic.  Combining this with arguments using the geodesic
convexity shows that the existence of this path implies that the critical points
are all round metrics on $S^4$, finishing the proof.

\subsection{Outline}
In \S \ref{bckgrnd} we establish notation and record some basic properties of
the Schouten tensor and of elementary symmetric polynomials.  Next in \S
\ref{formalgeometry} we establish the basic properties of the $\sigma_2$-metric defined in (\ref{gskmetric}).
 In particular we prove Theorem \ref{npcthm} and establish the geodesic
convexity of the $F$ functional.  Then in \S \ref{geeodestimates} we develop
estimates for approximate solutions to the geodesic equation, leading to a weak
existence theory.  In \S \ref{smoothingsec} we show a short-time smoothing
result which we will use to improve the regularity of approximate geodesics
connecting any two critical points of the $F$-functional.  We combine these two
main technical tools in \S \ref{uniquesec} to establish Theorem \ref{unique}.

\section{Background} \label{bckgrnd}

In this section we establish our notation and some basic formulas.  Although we are primarily interested in four dimensions, we
will state most of the standard results for symmetric functions we will need for general $n$ and $k$.


%
%
\subsection{The Schouten tensor}

Given a Riemannian manifold $(M^n,g)$ let $A$ denote the Schouten tensor of $g$.
 Given a conformal metric $g_u = e^{-2u}g$, the tensor $A$ transforms according
to
\begin{align} \label{Au}
A_u = A + \N^2 u + \N u \otimes \N u - \frac{1}{2} \brs{\N u}^2 g.
\end{align}

Let $g_u = e^{-2u(t)}g$ be a $1$-parameter family of conformal metrics.  Then
using formula (\ref{Au}) it follows that
\begin{align} \label{Ap}
\frac{\partial}{\partial t} (g_u^{-1} A_u)_i^j = 2 ( \frac{\partial u}{\partial t})
(g_u^{-1} A_u)_i^j + (\nabla_u^2 \frac{\partial u}{\partial t})_i^j,
\end{align}
where the Hessian is with respect to $g_u$. A direct calculation (\cite{Reilly})
yields
\begin{align} \label{dsig} \begin{split}
\frac{\partial}{\partial t} \sigma_k(g_u^{-1} A_u) &= \langle T_{k-1}(g_u^{-1} A_u), \nabla^2_u
\frac{\partial u}{\partial t} \rangle_{g_u} + 2k
\frac{\partial u}{\partial t} \sigma_k(g_u^{-1}A_u),
\end{split}
\end{align}
where $T_{k-1}$ is the Newton transform. Since the Newton transform is a $(1,1)$-tensor, for the pairing in (\ref{dsig}) we {\em lower} an index of $T_{k-1}(g_u^{-1}A_u)$ and view it as a $(0,2)$-tensor, and use the inner product induced by $g_u$.  For example, if $n=4$ and $k=2$,
\begin{align} \label{Tk}
T_1(g_u A_u) = - A_u + \sigma_1(g_u^{-1} A_u) g_u.
\end{align}

Combining (\ref{dsig}) with the variation of the volume form yields
\begin{align}  \label{densep}
\frac{\partial}{\partial t} \big[ \sigma_k(g_u^{-1} A_u) dV_u \big] = \langle
T_{k-1}(g_u^{-1} A_u), \nabla_u^2 \frac{\partial u}{\partial t} \rangle_{g_u} dV_u + (n - 2k)
\frac{\partial u}{\partial t} \sigma_k(g_{u}^{-1} A_u) dV_u.
\end{align}

A key property we will use throughout is the following:

\begin{lemma} \label{divT} If $k = 2$ or if the manifold is locally conformally flat, then $T_{k-1}(g^{-1}A)$ is divergence-free.
\end{lemma}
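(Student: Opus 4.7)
The plan is to treat the two cases separately, since the $k = 2$ case is a direct consequence of the second Bianchi identity, while the locally conformally flat case requires handling Riemann-curvature commutator terms.

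First, for $k = 2$ I would start from the explicit formula (\ref{Tk}), $T_1(g^{-1}A) = -A + \sigma_1(g^{-1}A) g$, viewed as a $(0,2)$-tensor after lowering, and compute the divergence termwise:
\[
\nabla^i T_1(A)_{ij} = -\nabla^i A_{ij} + \nabla_j \sigma_1(A).
\]
Combining the contracted second Bianchi identity $\nabla^i R_{ij} = \tfrac{1}{2} \nabla_j R$ with the definition of the Schouten tensor gives
\[
\nabla^i A_{ij} = \tfrac{1}{2(n-1)} \nabla_j R = \nabla_j \sigma_1(A),
\]
so the two terms cancel. This argument is independent of dimension and of conformal flatness.

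Second, for the LCF case at general $k$ I would induct on $k$ using the recursion $T_k = \sigma_k I - A \cdot T_{k-1}$ for the Newton transforms (as $(1,1)$-tensors). Taking the divergence of this relation produces a clean $\nabla^j \sigma_{k-1}$ term plus a divergence of $A \cdot T_{k-2}$. The latter splits into a piece handled by the Bianchi identity for $A$ (exactly as in the $k=2$ step) and a piece of the form $A_l^m \nabla_i (T_{k-2})_m^{\; j}$ that, upon commuting two covariant derivatives, introduces the Riemann tensor. In the LCF setting one has $\mathrm{Rm} = A \KN g$, so these Ricci-identity terms simplify to polynomial expressions in the eigenvalues of $A$. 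A direct algebraic check in an orthonormal frame diagonalizing $A$, using the standard Newton identity $\sum_i \lambda_i \sigma_{k-1,\hat\imath}(\lambda) = k\, \sigma_k(\lambda)$, then shows that these commutator terms cancel exactly against $\nabla^j \sigma_{k-1}$.

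The main obstacle is the LCF step, where the commutator bookkeeping and the subsequent algebraic cancellation against $\nabla \sigma_{k-1}$ require care. This identity is classical, going back to Reilly in the Euclidean setting and extended to the conformal setting by Viaclovsky, so in practice one can either cite the proof or verify it at a point by choosing a local frame that diagonalizes $A$.
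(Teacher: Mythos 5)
Your $k=2$ argument is correct and is the right one for the purposes of this paper: the contracted second Bianchi identity gives $\nabla^i A_{ij} = \nabla_j \sigma_1(A)$ in any dimension, and this cancels the $\nabla_j \sigma_1$ term coming from $T_1 = \sigma_1 g - A$. This needs no conformal flatness and is all that is used in the four-dimensional arguments of the paper. The paper itself simply cites \cite{JeffThesis} and \cite{Reilly} rather than giving a proof.

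The locally conformally flat step, however, contains a genuine confusion. You invoke a commutation of two covariant derivatives and the curvature decomposition $\Rm = A \KN g$, but the divergence of a Newton transform built from $A$ alone never produces a second covariant derivative to commute. Writing $(T_{k-1})_j^i = \frac{1}{(k-1)!}\delta^{i\,i_1\cdots i_{k-1}}_{j\,j_1\cdots j_{k-1}} A_{i_1}^{j_1}\cdots A_{i_{k-1}}^{j_{k-1}}$ and applying $\nabla_i$, one gets a \emph{first}-order expression, $(k-1)$ copies of $\delta^{i\,i_1\cdots}_{j\,j_1\cdots}(\nabla_i A_{i_1}^{j_1})A_{i_2}^{j_2}\cdots$, and the antisymmetry of the generalized Kronecker delta in the pair $(i,i_1)$ kills this expression precisely when $\nabla_i A_{i_1 l}$ is symmetric in $i, i_1$, i.e.\ when $A$ is a Codazzi tensor. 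That is the whole content of the LCF hypothesis: vanishing Weyl tensor forces the Cotton tensor $C_{lij} = \nabla_i A_{jl} - \nabla_j A_{il}$ to vanish, hence $A$ is Codazzi, hence the divergence vanishes. This is exactly the point emphasized in the paper's remark following the lemma. Your recursion $T_k = \sigma_k I - A T_{k-1}$ also does not close cleanly as stated, since the resulting term $A^{jl}\nabla_j(T_{k-1})_{li}$ is not the divergence $\nabla^l(T_{k-1})_{li}$, and what bridges the gap is again the Codazzi symmetry, not a Ricci-identity curvature term. Replacing the induction-plus-commutator scheme with the direct generalized-Kronecker-delta computation and the observation that LCF $\Rightarrow$ Cotton $= 0$ $\Rightarrow$ $A$ Codazzi would give a correct and shorter proof.
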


\begin{rmk}  This was proved in \cite{JeffThesis}.  The essential idea also appears in \cite{Reilly}, where the Schouten tensor is replaced
with the second fundamental form of a hypersurface of a space of constant curvature.  In both cases one needs that the tensor is
Codazzi; i.e.,
\begin{align*}
\N_k A_{ij} = \N_j A_{ik}.
\end{align*}
\end{rmk}

Note that the conformal invariance of the integral
\begin{align*}
\gs = \int_M \gs_2(g_u^{-1} A_u) dV_u
\end{align*}
follows from the variational formula (\ref{densep}) and Lemma \ref{divT}.  We denote the average value
\begin{align}
 \bar{\gs} = \gs V_u^{-1}.
\end{align}

\subsection{Properties of elementary symmetric polynomials}

We record some lemmas concerning elementary symmetric polynomials and
Newton transforms.  To begin we record basic facts which are well-known from
Garding's theory of hyperbolic polynomials \cite{Garding}.  We use these to
derive some further properties of generalized Newton transforms required for our
estimates of the geodesic equation.  First, given $A \in \gG_k^+$ we let
$\gs_k(A)$ denote the $k$-th elementary polynomial in the eigenvalues of $A$.
Moreover, given $A_1,\dots,A_k$ we define the generalized Newton transformation
by
\begin{align*}
\left[ T_k \right]_{ij}(A_1,\dots,A_k) := \frac{1}{k!}
\gd^{i,i_1,\dots,i_k}_{j,j_1,\dots,j_k} (A_1)_{i_1j_1} \dots (A_k)_{i_k j_k},
\end{align*}
where here $\gd$ denotes the generalized Kronecker delta function.  Moreover we
set
\begin{align*}
\Sigma_k(A_1,\dots,A_k) = \frac{1}{(k-1)!} \gd^{i_1,\dots,i_k}_{j_1,\dots,j_k}
(A_1)_{i_1j_1} \dots (A_k)_{i_k j_k}.
\end{align*}

\begin{lemma} \label{newtonprops} One has
\begin{enumerate}
\item Given $A_1,\dots,A_k \in \gG_k^+$, then $[T_k]_{ij} (A_1,\dots,A_k) > 0$.
\item Given $A_1,\dots,A_k \in \gG_k^+$, then $\Sigma(A_1,\dots,A_k) > 0$.
\item If $A - B \in \gG_k^+$ and $A_2,\dots,A_k \in \gG_k^+$ then
$\Sigma(B,A_2,\dots,A_k) < \Sigma(A,A_2,\dots,A_k)$.
\end{enumerate}
\end{lemma}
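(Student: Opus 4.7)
The plan is to deduce all three statements from G{\aa}rding's theory of hyperbolic polynomials, using that $\sigma_k$ is G{\aa}rding-hyperbolic on $\mathrm{Sym}(n)$ with respect to the identity, with associated G{\aa}rding cone $\Gamma_k^+$, and that $\Sigma_k$ is, up to normalization, the complete polarization of $\sigma_k$ (indeed $\Sigma_k(A,\dots,A) = k\,\sigma_k(A)$ from the definition).

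Part (2) will follow immediately from the standard G{\aa}rding inequality: for $A_1, \dots, A_k \in \Gamma_k^+$,
$$\Sigma_k(A_1, \dots, A_k) \;\geq\; k\,\sigma_k(A_1)^{1/k} \cdots \sigma_k(A_k)^{1/k} \;>\; 0.$$
Part (3) will then follow from the multilinearity of $\Sigma_k$ in each slot: since
$$\Sigma_k(A, A_2, \dots, A_k) - \Sigma_k(B, A_2, \dots, A_k) \;=\; \Sigma_k(A - B, A_2, \dots, A_k),$$
and $A - B, A_2, \dots, A_k \in \Gamma_k^+$ by hypothesis, part (2) upgrades this to the strict inequality claimed.

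For part (1), the plan is to test $[T_k](A_1, \dots, A_k)$ against an arbitrary unit vector $v$ and reduce to a lower-dimensional version of (2). Choosing an orthonormal frame in which $v$ is the final basis vector, the antisymmetry of the generalized Kronecker $\delta$ forces the remaining dummy indices to avoid the distinguished direction, so that
$$v^i v^j \, [T_k]_{ij}(A_1, \dots, A_k) \;=\; \tfrac{1}{k}\, \Sigma_k^{(n-1)}(\tilde A_1, \dots, \tilde A_k),$$
where $\Sigma_k^{(n-1)}$ is the analogous polarization in one lower dimension and $\tilde A_\ell$ denotes the principal restriction of $A_\ell$ to $v^\perp$. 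Part (2), applied on $v^\perp$, then delivers strict positivity of the contraction, and hence of $[T_k]$ itself, for any $v \neq 0$.

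The hard part will be confirming that each $\tilde A_\ell$ lies in the appropriate G{\aa}rding cone on $v^\perp$, since restriction from $\Gamma_k^+$ on $\mathbb{R}^n$ does not automatically land in $\Gamma_k^+$ on $\mathbb{R}^{n-1}$. I would address this either via a Cauchy-type eigenvalue interlacing bound relating $\sigma_j(\tilde A_\ell)$ to $\sigma_j(A_\ell)$ and $\sigma_{j+1}(A_\ell)$, or through a perturbation trick: first verify the inequality for $A_\ell + sI$ with $s$ large enough that each perturbed matrix enters the full positive cone (where restriction manifestly preserves the cone membership, and G{\aa}rding's inequality applies directly in dimension $n-1$), and then pass to $s \to 0$ via multilinearity, isolating the leading strictly positive contribution to rule out degeneration on the boundary.
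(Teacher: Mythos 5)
The paper does not prove this lemma --- it is recorded as well-known from Garding's theory of hyperbolic polynomials --- so there is no argument in the paper to compare against; the only question is whether your proposal is correct. Parts (2) and (3) are fine: (2) is precisely Garding's inequality applied to the polarization $\tfrac{1}{k}\Sigma_k$ of the hyperbolic degree-$k$ polynomial $\sigma_k$ (noting $\Sigma_k(A,\dots,A)=k\,\sigma_k(A)$), and (3) follows from (2) by multilinearity of $\Sigma_k$ in each slot.

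Part (1), however, has a genuine gap at exactly the point you flag, and neither of your proposed fixes can close it, because the intermediate claim is false. Your reduction $v^i v^j\,[T_k]_{ij}(A_1,\dots,A_k)=\tfrac{1}{k}\,\Sigma_k^{(n-1)}(\tilde A_1,\dots,\tilde A_k)$ is correct, but $A\in\Gamma_k^+(\mathbb{R}^n)$ does \emph{not} force the hyperplane restriction $\tilde A$ into $\Gamma_k^+(\mathbb{R}^{n-1})$. Take $n=3$, $k=2$, $A=\mathrm{diag}(3,3,-1)$: then $\sigma_1(A)=5>0$ and $\sigma_2(A)=3>0$, so $A\in\Gamma_2^+$, yet the restriction to $e_1^\perp$ is $\tilde A=\mathrm{diag}(3,-1)$ with $\sigma_2(\tilde A)=-3<0$. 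Correspondingly $[T_2]_{11}(A,A)=T_2(A)_{11}=\sigma_2(A)-\sigma_1(A)\cdot 3+9=3-15+9=-3<0$. In particular the perturbation route fails in the limit for a real reason, not merely the usual degeneration to a non-strict inequality: the strict inequality you want is simply false with these indices. What your computation has detected, without quite naming it, is that part (1) as printed is an off-by-one misstatement. The correct (and standard) fact --- and what the paper actually invokes, e.g.\ in the proof of Lemma~\ref{newtonmonotone}, where $[T_{k-1}]$ with $k-1$ slots each in $\Gamma_k^+$ is asserted to be positive --- is that $A_1,\dots,A_{k-1}\in\Gamma_k^+$ implies $[T_{k-1}]_{ij}(A_1,\dots,A_{k-1})>0$, equivalently $A_1,\dots,A_k\in\Gamma_{k+1}^+$ implies $[T_k]_{ij}(A_1,\dots,A_k)>0$. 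For that corrected statement your reduction strategy is sound, and the restriction lemma you then need is $A\in\Gamma_k^+(\mathbb{R}^n)\Rightarrow\tilde A\in\Gamma_{k-1}^+(\mathbb{R}^{n-1})$: the cone index must drop by one under restriction to a hyperplane. That fact is true and standard, but it deserves an actual proof (e.g.\ diagonalize and show $\sigma_{k-1}$ of any $n-1$ of the eigenvalues is positive), not a gesture toward interlacing. As written, your part (1) is aimed at a false target and does not close the gap it correctly identifies.
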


\begin{lemma} \label{newtonmonotone} Given $A,B \in \gG_k^+$ $A < B$, one has
$T_{k-1}(A) < T_{k-1}(B)$.
 \begin{proof} From Lemma \ref{newtonprops}, for $A_i \in \gG_k$ one has
$T_k(A_1,\dots,A_k) > 0$.  Now consider $M_t = A +
t(B - A)$.  Since $B - A$ is positive definite certainly it lies in $\gG_k^+$.
It follows that
\begin{align*}
 \frac{d}{dt} T_k(M_t) =&\ \frac{d}{dt} [T_k](M_t,\dots,M_t)\\
 =&\ \sum_{j=1}^k [T_k] (M_t,\dots,B - A,\dots M_t)\\
 \geq&\ 0.
\end{align*}
The result follows.
 \end{proof}
\end{lemma}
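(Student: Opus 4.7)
The plan is to exploit the symmetric multilinear structure of the Newton transform by interpolating linearly between $A$ and $B$. Concretely, set $M_t = A + t(B-A)$ for $t \in [0,1]$ and write
\begin{align*}
T_{k-1}(B) - T_{k-1}(A) = \int_0^1 \frac{d}{dt} T_{k-1}(M_t)\, dt.
\end{align*}
Since $T_{k-1}(M) = [T_{k-1}](M,\dots,M)$ is the symmetric $(k-1)$-multilinear polarization evaluated on the diagonal, symmetry gives
\begin{align*}
\frac{d}{dt} T_{k-1}(M_t) = (k-1)\, [T_{k-1}](B - A,\, M_t,\dots, M_t).
\end{align*}

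The next step is to verify the positivity of this derivative as a $(0,2)$-tensor, using Lemma \ref{newtonprops}(1). For this I need each of the $k-1$ arguments to belong to $\Gamma_{k-1}^+$. The hypothesis $A < B$ means $B - A$ is positive definite, hence $B - A \in \Gamma_n^+ \subset \Gamma_{k-1}^+$. For the remaining arguments, I would invoke Garding's theorem, which says $\Gamma_k^+$ is a convex cone; since $A, B \in \Gamma_k^+$ by assumption, $M_t = (1-t)A + tB \in \Gamma_k^+ \subset \Gamma_{k-1}^+$ for every $t \in [0,1]$. Applying Lemma \ref{newtonprops}(1) with $k$ replaced by $k-1$ then yields
\begin{align*}
[T_{k-1}](B - A,\, M_t,\dots, M_t) > 0
\end{align*}
as a $(0,2)$-tensor, for each $t \in [0,1]$.

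Integrating over $t \in [0,1]$ produces $T_{k-1}(B) - T_{k-1}(A) > 0$, which is the desired conclusion. There is really no substantial obstacle here once one commits to the polarization viewpoint; the only points requiring care are the bookkeeping on the Garding cone containments $\Gamma_n^+ \subset \Gamma_k^+ \subset \Gamma_{k-1}^+$ (so that both $B-A$ and $M_t$ qualify as admissible arguments of the $(k-1)$-fold polarized Newton transform), and the fact that $\Gamma_k^+$ is convex so that the segment $M_t$ stays inside it.
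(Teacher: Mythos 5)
Your argument is correct and follows exactly the paper's approach: interpolate linearly via $M_t = A + t(B-A)$, differentiate the generalized Newton transform using its multilinear structure, and invoke Lemma \ref{newtonprops}(1) for positivity of the derivative. You are a bit more careful than the paper about the cone containments $\Gamma_n^+ \subset \Gamma_k^+ \subset \Gamma_{k-1}^+$ (which the paper leaves implicit) and about the index on $T$, which the paper writes as $T_k$ but should be $T_{k-1}$ to match the statement of the lemma.
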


\begin{lemma} \label{LAlemma} Given $A$ a symmetric matrix and $X$ a vector, one
has for $k \geq
1$,
\begin{align*}
 \IP{T_k(A - X \otimes X), X \otimes X} =&\ \IP{T_k(A), X \otimes X},\\
\gs_k(A - X \otimes X) =&\ \gs_k(A) - \IP{T_{k-1}(A),X \otimes X}.
\end{align*}
\begin{proof}
If we express the matrix $B_t = A - t X \otimes X$ in a basis where $X$ is the
first basis vector, it is clear that the function
\begin{align*}
f(t) = \gs_k(B_t)
\end{align*}
is a linear function of $t$.  It follows that its time derivative is constant,
hence
\begin{align*}
C = f'(t) = - \IP{T_{k-1}(A - t X \otimes X), X \otimes X}.
\end{align*}
Hence
\begin{align*}
\IP{T_{k-1}(A),X \otimes X} = -f'(0) = -f'(1) = \IP{T_{k-1}(A - X \otimes X), X
\otimes X}.
\end{align*}
Moreover, this shows that
\begin{align*}
\gs_k(A - X \otimes X) = f(1) = f(0) + \int_0^1 f'(s) ds = \gs_k(A) -
\IP{T_{k-1}(A),X \otimes X}.
\end{align*}
\end{proof}
\end{lemma}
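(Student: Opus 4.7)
The plan is to reduce everything to a single linear-in-$t$ observation along the one-parameter family $B_t = A - tX\otimes X$ for $t \in [0,1]$, using the general formula
\[
\frac{d}{dt}\sigma_k(B_t) = \IP{T_{k-1}(B_t), \dot{B}_t}
\]
(which is just the standard variation of $\sigma_k$ of a symmetric matrix; note this is the formal analogue of \eqref{dsig} without the volume term). First I would set $f(t) := \sigma_k(B_t)$ and observe that $f$ is in fact a polynomial of degree at most $1$ in $t$. The cleanest way to see this is to work in an orthonormal basis whose first vector is $X/|X|$: then $X\otimes X$ has only its $(1,1)$-entry nonzero, so the only entry of $B_t$ depending on $t$ is the $(1,1)$-entry, and it does so linearly. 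Since $\sigma_k$ is multilinear in the rows of its matrix argument (equivalently, its partial derivative with respect to any single entry is independent of that entry), $f(t)$ is affine in $t$.

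Given linearity, $f'(t)$ is constant on $[0,1]$. Computing $f'$ from the variational formula above with $\dot B_t = -X\otimes X$ gives
\[
f'(t) = -\IP{T_{k-1}(B_t), X\otimes X},
\]
and equating $f'(0) = f'(1)$ delivers the first identity (with $T_{k-1}$ rather than $T_k$; I read the $T_k$ in the statement as a typographical slip, since the proof and the pattern in \eqref{dsig} both force $T_{k-1}$). The second identity is then immediate from $f(1) = f(0) + \int_0^1 f'(s)\,ds$: since $f'$ is the constant $-\IP{T_{k-1}(A), X\otimes X}$, we get
\[
\sigma_k(A - X\otimes X) = \sigma_k(A) - \IP{T_{k-1}(A), X\otimes X},
\]
which is exactly the second claim.

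There is essentially no obstacle here; the only substantive point is justifying the linearity of $f(t)$, and this is pure linear algebra once one diagonalizes in a frame adapted to $X$. An alternative, coordinate-free argument uses the polarization $[T_k](A_1,\dots,A_k)$ introduced just above the statement: since $\sigma_k(B_t)$ is the total polarization evaluated at $k$ copies of $B_t = A - tX\otimes X$, expanding multilinearly produces terms involving $X\otimes X$ in at most one slot (any term with $X\otimes X$ in two or more slots vanishes because the generalized Kronecker delta is totally antisymmetric and $X\otimes X$ is symmetric of rank one in the same index pair). This again forces $f$ to be affine in $t$ and yields both identities.
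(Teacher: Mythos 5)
Your proposal is correct and follows essentially the same route as the paper: set $f(t) = \sigma_k(A - tX\otimes X)$, note that $f$ is affine in $t$ by passing to a basis adapted to $X$, differentiate using the standard variational formula, and equate $f'(0)=f'(1)$ and integrate. You also correctly flag the $T_k$ versus $T_{k-1}$ mismatch in the first displayed identity, which the paper's own proof confirms is a typographical slip.
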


\begin{lemma} \label{convexityinequality} Given $A,B \in \Sym^2(\mathbb R^4)$,
$A,B \in \gG_2^+$ one has
\begin{align*}
\IP{T_1(B),A}^2 \geq 4 \gs_2(A) \gs_2(B).
\end{align*}
\begin{proof} We compute that
\begin{align*}
 \frac{\gs_1(A)}{\gs_1(B)} \IP{T_1(B),A} =&\ - \frac{\gs_1(A)}{\gs_1(B)}
\IP{B,A} + \gs_1(A)^2\\
 \geq&\ - \frac{1}{2} \left[ \frac{\gs_1(A)}{\gs_1(B)} \right]^2 \brs{B}^2 -
\frac{1}{2} \brs{A}^2 + \left[ \gs_1(A) \right]^2\\
 =&\ - \frac{1}{2} \gs_1(A)^2 \left[ \frac{\brs{B}^2 - \gs_1(B)^2 +
\gs_1(B)^2}{\gs_1(B)^2} \right] + \gs_2(A) + \frac{1}{2} \gs_1(A)^2\\
=&\ \frac{\gs_1(A)^2}{\gs_1(B)^2} \gs_2(B) + \gs_2(A).
\end{align*}
Rearranging this and applying Cauchy-Schwarz yields
\begin{align*}
 \gs_2(A) \leq&\ \frac{\gs_1(A)}{\gs_1(B)} \IP{T_1(B),A} -
\frac{\gs_1(A)^2}{\gs_1(B)^2} \gs_2(B)\\
 \leq&\ \frac{1}{4 \gs_2(B)} \IP{T_1(B),A}^2,
\end{align*}
as required.
\end{proof}
\end{lemma}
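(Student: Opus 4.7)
The inequality reads like a \emph{reverse} Cauchy--Schwarz: the quadratic form $\gs_2$ on $\Sym^2(\mathbb R^4)$ has signature $(1,9)$ (one positive direction coming from the trace and nine negative ones coming from the trace-free part), and the cone $\gG_2^+$ plays the role of a ``forward light cone'' on which a Minkowski-type inequality $\IP{T_1(B),A}^2 \geq 4\gs_2(A)\gs_2(B)$ should hold. My plan is to prove this directly by studying the quadratic polynomial $g(t) := \gs_2(A+tB)$ in the real variable $t$ and showing that its discriminant is nonnegative.

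First, expanding using $\gs_2 = \tfrac{1}{2}\bigl(\gs_1^2 - \brs{\cdot}^2\bigr)$ yields the identity
\begin{align*}
g(t) \;=\; \gs_2(B)\,t^2 \;+\; \IP{T_1(B),A}\,t \;+\; \gs_2(A),
\end{align*}
since the cross term works out to $\gs_1(A)\gs_1(B) - \IP{A,B} = \IP{T_1(B),A}$. The claim of the lemma is precisely the statement that the discriminant of $g$ is nonnegative. Because $B\in\gG_2^+$ forces $\gs_2(B)>0$, the parabola $g$ opens upward, so to force nonnegative discriminant it suffices to exhibit a single real value $t_0$ with $g(t_0)\leq 0$.

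Second, I would choose $t_0 = -\gs_1(A)/\gs_1(B)$, which is well defined since $A,B\in\gG_2^+\subset\gG_1^+$ gives $\gs_1(A),\gs_1(B) > 0$. At this value the matrix $C := A + t_0 B$ is trace-free, so
\begin{align*}
g(t_0) \;=\; \gs_2(C) \;=\; \tfrac{1}{2}\bigl(\gs_1(C)^2 - \brs{C}^2\bigr) \;=\; -\tfrac{1}{2}\brs{C}^2 \;\leq\; 0,
\end{align*}
as needed.

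The only conceptual obstacle is recognizing the right $t_0$. A generic bound of the form $\brs{A-tB}^2\geq 0$ points in the wrong direction (it produces an ordinary, not a reverse, Cauchy--Schwarz inequality); but at the unique $t_0$ for which $A+t_0 B$ is trace-free the inequality becomes useful, because trace-free symmetric matrices automatically satisfy $\gs_2\leq 0$. Once this is spotted the argument collapses into the two lines above, and one sees the same proof goes through verbatim to give the Garding-type inequality for $\gs_2$ in any dimension.
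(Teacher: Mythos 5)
Your proof is correct, and it reorganizes the argument in a genuinely cleaner way than the paper's. The paper's proof works directly with the scalar identity $\gs_2 = \tfrac{1}{2}(\gs_1^2 - \brs{\cdot}^2)$: it expands $\frac{\gs_1(A)}{\gs_1(B)}\IP{T_1(B),A}$, applies AM--GM once to the inner product $\IP{B,A}$, rearranges, and then applies AM--GM a second time to complete the square in $\gs_1(A)/\gs_1(B)$. Your version packages the same two steps as a single discriminant argument: you observe $\gs_2(A+tB) = \gs_2(B)t^2 + \IP{T_1(B),A}t + \gs_2(A)$, note that $\gs_2(B)>0$ forces the parabola to open upward, and exhibit $t_0=-\gs_1(A)/\gs_1(B)$ (the trace-free root) with $g(t_0)\leq 0$. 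Unwinding, your bound $g(t_0)\leq 0$ is exactly the paper's intermediate inequality $\gs_2(A)\leq\frac{\gs_1(A)}{\gs_1(B)}\IP{T_1(B),A}-\frac{\gs_1(A)^2}{\gs_1(B)^2}\gs_2(B)$, and your ``nonnegative discriminant'' conclusion is precisely the paper's second AM--GM. So the algebra is the same, but your framing makes the reverse Cauchy--Schwarz structure, the role of the Lorentzian cone $\gG_2^+$, and the dimension-independence of the inequality immediately visible, whereas the paper's computation leaves these implicit. This is a worthwhile improvement in exposition, and your closing remark that the argument gives the G\aa rding/Minkowski-type inequality for $\gs_2$ in any dimension is correct and worth noting explicitly (the paper's proof also never uses $n=4$, but does not say so).
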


\section{The \texorpdfstring{$\sigma_2$}{sigmak}-metric} \label{formalgeometry}

In this section we define the $\sigma_2$-metric and establish fundamental
properties of this metric concerning connections, torsion, curvature and
distance.  We end by showing the crucial
geodesic convexity property of the functional $F$ of Chang-Yang.

\subsection{Metric, connection, and curvature}

As in the Introduction, let
\begin{align*}
\mathcal{C}^{+} = \mathcal{C}^{+}([g]) = \big\{ g_u = e^{-2u}g\ :\ A_u \in \Gamma_{2}^{+} \big\}.
\end{align*}

\begin{defn} \label{sigmakmetric}  Let $(M^4, g)$ be
a compact Riemannian four-manifold.  The \emph{$\gs_k$-metric} is the formal
Riemannian metric defined for $g_u \in \mathcal{C}^{+}([g]) = \mathcal{C}^{+}$, $\ga,\gb \in T_u \mathcal{C}^{+} \cong
C^{\infty}(M)$ via
\begin{align*}
\IP{\ga,\gb}_{u} = \frac{1}{\gs} \int_M \ga \gb \gs_2(g_u^{-1} A_u) dV_u.
\end{align*}
Moreover, given $u_t$ a path in $\mathcal{C}^{+}$ and $\ga_t$ a one-parameter family of
tangent vectors with $\ga_t \in T_{u_t} \C$, we define the directional derivative along the path $u_t$ by
\begin{align} \label{connectiondef} \begin{split}
\frac{D}{\del t} \ga &:= \ga_t - \gs_2(g_u^{-1} A_u)^{-1} \IP{T_{1}(g_u^{-1} A_u), \N \ga
\otimes \N u_t}_{g_u} \\
&= \ga_t - \gs_2(A_u)^{-1} \IP{T_{1}(A_u), \N \ga
\otimes \N u_t},
\end{split}
\end{align}
where we have used (\ref{note}), and the convention that $T_{1}(g^{-1}A_u) = T_{1}(A_u)$.
\end{defn}

\begin{lemma}  \label{metcomp} The connection
defined by (\ref{connectiondef}) is metric compatible and torsion free.
\begin{proof} First we check metric compatibility.  We compute, using (\ref{densep}) and Lemma \ref{divT},
\begin{align*}
\frac{d}{dt} \IP{\ga_t, \gb_t}_{u_t} =&\ \frac{d}{dt} \int_M \ga \gb \gs_2(g_u^{-1} A_u)
dV_u\\
=&\ \IP{\dot{\ga}, \gb} + \IP{\ga,\dot{\gb}} + \int_M \ga \gb
\IP{T_{1}(g_u^{-1} A_u),\N_u^2 \dtu} dV_u\\
=&\ \IP{\dot{\ga}, \gb} + \IP{\ga,\dot{\gb}} - \int_M \IP{T_{1}(g_u^{-1} A_u), \left(
\ga \N \gb + \gb \N \ga \right) \otimes \N_u \dtu} dV_u\\
=&\ \IP{ \frac{D}{\del t} \ga,\gb} + \IP{\ga,\frac{D}{\del t} \gb}.
\end{align*}
Next, to compute the torsion, let $u_{s,t}$ be a two parameter family of
conformal factors.
Then
\begin{align*}
\frac{D}{\del s} \frac{\del u}{\del t} - \frac{D}{\del t} \frac{\del u}{\del
s} =&\ \frac{\del^2 u}{\del s \del t} - \gs_2(g_u^{-1} A_u)^{-1} \IP{T_{1}(g_u^{-1} A_u), \N
\frac{\del u}{\del s} \otimes \N \frac{\del u}{\del t}}_u \\
&\ - \frac{\del^2 u}{\del s
\del t} + \gs_2(g_u^{-1} A_u)^{-1} \IP{ T_{1}(g_u^{-1} A_u), \N \frac{\del u}{\del t} \otimes \N
\frac{\del u}{\del s} }_u \\
=&\ 0.
\end{align*}
The lemma follows.
\end{proof}
\end{lemma}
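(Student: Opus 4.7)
The plan is to verify the two identities directly from Definition (\ref{connectiondef}), treating metric compatibility and torsion-freeness separately. The only substantive tools needed are the variation formula (\ref{densep}) for the $\sigma_2$-curvature density and the divergence-free property of $T_1$ (Lemma \ref{divT}); essentially the right-hand correction in (\ref{connectiondef}) is engineered precisely so that these two facts combine to produce metric compatibility.

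For metric compatibility I would differentiate $\langle \alpha_t, \beta_t\rangle_{u_t}$ in $t$ under the integral sign. The time derivative of $\alpha\beta$ yields the $\dot{\alpha}\beta + \alpha\dot{\beta}$ contributions expected on the right-hand side. For the derivative of the density $\sigma_2(g_u^{-1}A_u)\,dV_u$ I would apply (\ref{densep}) with $n=4,k=2$; the happy coincidence $n-2k=0$ kills the conformal weight term entirely and leaves only the Hessian pairing $\langle T_1(g_u^{-1}A_u), \nabla_u^2 \dot{u}\rangle_{g_u}\,dV_u$. Two integrations by parts, justified because Lemma \ref{divT} gives divergence-freeness of $T_1(g_u^{-1}A_u)$ with respect to $g_u$, transfer both derivatives onto $\alpha\beta$; the Leibniz rule $\nabla(\alpha\beta)=\alpha\nabla\beta+\beta\nabla\alpha$ then splits this cleanly into two symmetric pieces, which after the $\sigma_2^{-1}$ renormalization of (\ref{connectiondef}) are exactly $\langle \frac{D}{\partial t}\alpha,\beta\rangle + \langle \alpha,\frac{D}{\partial t}\beta\rangle$.

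For the torsion-free property I would let $u_{s,t}$ be a two-parameter family and form $\frac{D}{\partial s}(\partial u/\partial t) - \frac{D}{\partial t}(\partial u/\partial s)$ using (\ref{connectiondef}). The second partials $\partial^2 u/\partial s\partial t$ cancel by equality of mixed partials. The remaining terms are $\sigma_2(A_u)^{-1}\langle T_1(A_u), \nabla(\partial_s u)\otimes \nabla(\partial_t u)\rangle$ versus the same with $s,t$ swapped, which coincide because $T_1(A_u)$ is symmetric as a $(0,2)$-tensor and the wedge of gradients is symmetric in $s,t$.

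The main obstacle, if any, is algebraic bookkeeping rather than a genuine difficulty: one must keep careful track of whether indices are raised by $g$ or $g_u$, and use (\ref{note}) to toggle between $\sigma_2(A_u)$ and $\sigma_2(g_u^{-1}A_u)$ consistently. More conceptually, the proof pinpoints exactly why this definition succeeds in dimension four with $k=2$: both the vanishing of the dimensional factor $n-2k$ and the Codazzi-type divergence-free identity for $T_{k-1}$ are needed, and the latter is exactly the content of Lemma \ref{divT}; this matches the remark in the introduction that extending the construction to higher even dimensions will require additional care.
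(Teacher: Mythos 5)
Your proposal matches the paper's argument: you differentiate the density via (\ref{densep}), correctly observe that $n-2k=0$ when $n=4,\ k=2$ kills the conformal weight term, integrate by parts using the divergence-freeness of $T_1$ from Lemma \ref{divT}, split $\nabla(\alpha\beta)$ by the Leibniz rule to recover the two connection correction terms, and obtain torsion-freeness from equality of mixed partials plus the symmetry of $T_1$ as a $(0,2)$-tensor. One small slip in your wording: only a \emph{single} integration by parts is used, not two — one moves the divergence off $\langle T_1(g_u^{-1}A_u), \nabla^2_u\dot{u}\rangle_{g_u}\,\alpha\beta$ to get $-\langle T_1(g_u^{-1}A_u), \nabla(\alpha\beta)\otimes\nabla_u\dot{u}\rangle_{g_u}$, leaving one derivative on $\alpha\beta$ and one on $\dot{u}$, which is exactly the form matching the correction term in (\ref{connectiondef}); transferring both derivatives onto $\alpha\beta$ (as your phrasing suggests) would instead give $\langle T_1,\nabla^2(\alpha\beta)\rangle\,\dot{u}$ and would not decompose into the required $\langle \tfrac{D}{\partial t}\alpha,\beta\rangle + \langle\alpha,\tfrac{D}{\partial t}\beta\rangle$.
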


Next we compute the sectional curvature, and conclude that it is non-positive. We first record an
integral identity in Lemma \ref{curvlemma1} and a certain general quadratic
inequality in Lemma \ref{sgLemma}. We then obtain the curvature inequality by
exploiting these identities.

\begin{lemma} \label{curvlemma1} If $\phi, \psi \in C^{\infty}(M)$, then
\begin{align*}  \begin{split}
& \int \Big\{ \nabla^2 \phi (\nabla \psi, \nabla \psi)  - \Delta \phi \vert
\nabla \psi \vert^2  - \nabla^2 \psi (\nabla \psi, \nabla \phi ) + \Delta \psi
\langle \nabla \psi, \nabla \phi \rangle \Big\} \phi dV \\
&\quad \quad = \int \Big\{ - \vert \langle \nabla \phi, \nabla \psi \rangle
\vert^2 + \vert \nabla \phi \vert^2 \vert \nabla \psi \vert^2 \Big\} dV.
\end{split}
\end{align*}
\end{lemma}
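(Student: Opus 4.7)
The identity is a pure integration-by-parts computation; no curvature commutators enter because every second derivative appearing is the Hessian of a scalar, which is automatically symmetric. My plan is to integrate by parts only the two terms in the integrand that carry second derivatives of $\phi$ (the terms multiplied by $\phi$ in front), transferring those derivatives onto the $\psi$ factors; everything else is bookkeeping.

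Concretely, split the left-hand side as $\int \phi \, I \, dV + \int \phi \, II \, dV$, where
\begin{align*}
I &= \nabla^2\phi(\nabla\psi,\nabla\psi) - \Delta\phi \, |\nabla\psi|^2, \\
II &= -\nabla^2\psi(\nabla\psi,\nabla\phi) + \Delta\psi \, \langle\nabla\psi,\nabla\phi\rangle.
\end{align*}
For the first piece of $\int \phi\, I$, namely $\int \phi\,\phi_{ij}\psi^i\psi^j\,dV$, transfer $\nabla^j$ off $\phi_{ij}$ using only $\phi_{ij}=\phi_{ji}$ (no commutation of covariant derivatives). Expanding $\nabla^j(\phi\psi_i\psi_j) = \phi^{;j}\psi_i\psi_j + \phi\,\psi_i{}^{;j}\psi_j + \phi\,\psi_i\Delta\psi$ yields
\[
\int\phi\,\nabla^2\phi(\nabla\psi,\nabla\psi)\,dV = -\int\langle\nabla\phi,\nabla\psi\rangle^2\,dV - \int\phi\,\nabla^2\psi(\nabla\phi,\nabla\psi)\,dV - \int\phi\,\Delta\psi\,\langle\nabla\phi,\nabla\psi\rangle\,dV.
\]
For the second piece, integrate $-\int\phi\,\Delta\phi\,|\nabla\psi|^2\,dV$ by parts by moving one $\nabla_i$ off $\Delta\phi = \phi^{;i}{}_{;i}$; using $\nabla_i|\nabla\psi|^2 = 2\psi_{;ij}\psi^{;j}$ gives
\[
-\int\phi\,\Delta\phi\,|\nabla\psi|^2\,dV = \int|\nabla\phi|^2|\nabla\psi|^2\,dV + 2\int\phi\,\nabla^2\psi(\nabla\phi,\nabla\psi)\,dV.
\]

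Adding these two displays, one obtains
\[
\int\phi\, I\,dV = \int\bigl\{|\nabla\phi|^2|\nabla\psi|^2 - \langle\nabla\phi,\nabla\psi\rangle^2\bigr\}\,dV + \int\phi\bigl\{\nabla^2\psi(\nabla\phi,\nabla\psi) - \Delta\psi\,\langle\nabla\phi,\nabla\psi\rangle\bigr\}\,dV.
\]
The correction in the second integral is precisely $-II$ (using symmetry of $\nabla^2\psi$ and of the inner product), so adding $\int\phi\,II\,dV$ cancels it exactly and leaves the right-hand side of the lemma.

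There is no real obstacle here: the computation is mechanical, and the only thing worth flagging is that the argument deliberately avoids commuting $\nabla^j$ with $\nabla_i$, which is what allows the Ricci terms (that would otherwise appear via $\nabla^j\phi_{ij} = \nabla_i\Delta\phi + R_{ij}\phi^{;j}$) to stay absent. In particular the identity is a genuine pointwise-like cancellation statement and not one requiring the geometry of $(M,g)$.
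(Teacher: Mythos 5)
Your proof is correct: the integrations by parts are carried out accurately, and the cancellation with the $II$-terms is exactly as you describe, so the identity follows. The paper's proof takes a slicker route: it observes that the entire bracket in the left-hand integrand is pointwise the divergence of the vector field
\begin{align*}
X_i = \langle \nabla\phi, \nabla\psi\rangle\,\nabla_i\psi - |\nabla\psi|^2\,\nabla_i\phi,
\end{align*}
so a single integration by parts gives $\int(\delta X)\,\phi\,dV = -\int\langle X,\nabla\phi\rangle\,dV$, which is precisely the right-hand side. Your version distributes the same integration by parts across the two $\phi$-second-derivative terms instead of recognizing the divergence structure all at once; it is mechanically longer but, as you correctly emphasize, it makes transparent why no Ricci terms appear (you only ever use symmetry of the Hessian, never commute $\nabla$ past $\nabla$). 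The paper's choice of $X$ makes the cancellation look inevitable in one line; your version earns the same conclusion by tracking the intermediate $\nabla^2\psi(\nabla\phi,\nabla\psi)$ and $\Delta\psi\,\langle\nabla\phi,\nabla\psi\rangle$ terms explicitly. Both are fine; the paper's is the better presentation.
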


\begin{proof}  Consider the vector field
\begin{align*}
X_i = \langle \nabla \phi, \nabla \psi \rangle \nabla_i \psi - |\nabla \psi|^2
\nabla_i \phi.
\end{align*}
Taking the divergence gives
\begin{align*} \begin{split}
\delta X &= \nabla_i X_i \\
&= \nabla^2 \phi(\nabla \psi, \nabla \psi) + \nabla^2 \psi(\nabla \phi, \nabla
\psi) + \Delta \psi \langle \nabla \phi, \nabla \psi \rangle  \\
& \quad -2 \nabla^2 \psi(\nabla \psi, \nabla \phi) - \Delta \phi |\nabla \psi|^2
\\
&= \nabla^2 \phi (\nabla \psi, \nabla \psi)  - \Delta \phi \vert \nabla \psi
\vert^2  - \nabla^2 \psi (\nabla \psi, \nabla \phi ) + \Delta \psi \langle
\nabla \psi, \nabla \phi \rangle.
\end{split}
\end{align*}
Therefore,
\begin{align*} \begin{split}
I &\equiv \int \Big\{ \nabla^2 \phi (\nabla \psi, \nabla \psi)  - \Delta \phi
\vert \nabla \psi \vert^2  - \nabla^2 \psi (\nabla \psi, \nabla \phi ) + \Delta
\psi \langle \nabla \psi, \nabla \phi \rangle \Big\} \phi dV \\
&= \int \big( \delta X\big) \phi dV.
\end{split}
\end{align*}
On the other hand, integrating by parts gives
\begin{align*} \begin{split}
I &= \int \big( \delta X\big) \phi dV \\
&= - \int \langle X, \nabla \phi \rangle dV \\
&= \int \Big\{ - \vert \langle \nabla \phi, \nabla \psi \rangle \vert^2 + \vert
\nabla \phi \vert^2 \vert \nabla \psi \vert^2 \Big\} dV,
\end{split}
\end{align*}
as claimed.
\end{proof}

\begin{lemma} \label{sgLemma}  Let $T_1 = T_1(A)$ denote the first Newton
transformation of the symmetric linear map $A : V \rightarrow V$, where $V$ is a
real inner product
space of dimension four. Assume $A \in \Gamma_2^{+}$.  Then for all $X,Y \in V$,
\begin{align*}
-T_1(X,X) T_1(Y,Y) + T_1(X,Y)^2 + \sigma_2(A) \big[ |X|^2 |Y|^2 - \langle X,Y
\rangle^2 \big]  \leq 0.
\end{align*}
\end{lemma}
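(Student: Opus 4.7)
The basic strategy is to reduce the inequality to an algebraic statement about the $2$-plane spanned by $X$ and $Y$, and then verify it by direct expansion in an adapted orthonormal basis. The key conceptual observation is that $T_1(X,X)T_1(Y,Y) - T_1(X,Y)^2$ is essentially the Gram determinant of $T_1$ on $\mathrm{span}(X,Y)$, with the weight $|X|^2|Y|^2 - \langle X,Y\rangle^2$ playing the role of the squared area, so the whole inequality is really a statement about $2$-planes.

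First I would reduce to the orthonormal case. A direct bilinear computation shows that replacing $Y$ by $Y' := Y - \lambda X$ with $\lambda = \langle X,Y\rangle/|X|^2$ leaves both $T_1(X,X)T_1(Y,Y) - T_1(X,Y)^2$ and $|X|^2|Y|^2 - \langle X,Y\rangle^2$ unchanged, because the cross terms in $\lambda$ cancel in the $T_1$-combination and the area factor is manifestly shear-invariant. Rescaling then lets me assume $|X| = |Y| = 1$ and $\langle X,Y\rangle = 0$, and the claim becomes
\[
T_1(X,X)T_1(Y,Y) - T_1(X,Y)^2 \,\geq\, \gs_2(A).
\]

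Next I would complete $\{X,Y\}$ to an orthonormal basis $\{e_1,e_2,e_3,e_4\}$ of $V$ with $e_1=X,\, e_2=Y$, set $A_{ij} := \IP{A e_i, e_j}$, and use the formula $T_1(A) = \gs_1(A)\,\Id - A$ from \eqref{Tk} together with the identity $2\gs_2(A) = \gs_1(A)^2 - |A|^2$. Substituting, splitting the trace as $\gs_1(A) = (A_{11}+A_{22}) + (A_{33}+A_{44})$, and collecting terms, I expect the combination in question to simplify to the explicit sum of squares
\[
T_1(X,X)T_1(Y,Y) - T_1(X,Y)^2 - \gs_2(A) \,=\, \tfrac{1}{2}(A_{33}+A_{44})^2 + \tfrac{1}{2}(A_{33}^2 + A_{44}^2) + \sum_{\substack{1 \leq i < j \leq 4 \\ \{i,j\}\neq\{1,2\}}} A_{ij}^2,
\]
which is manifestly nonnegative, proving the lemma.

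The main (and essentially only) obstacle is recognizing this sum-of-squares structure, which is not obvious from the initial expression but falls out once one substitutes and splits $\gs_1$ along the decomposition $V = \mathrm{span}(X,Y) \oplus \mathrm{span}(X,Y)^\perp$. I note in passing that the argument never uses $A \in \gG_2^+$, so the inequality in fact holds for every symmetric $A \colon V \to V$.
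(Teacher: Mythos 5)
Your proof is correct, and the sum-of-squares identity you state does check out: in an orthonormal basis $\{e_1,\dots,e_4\}$ with $e_1 = X$, $e_2 = Y$, writing $a = A_{11}+A_{22}$ and $b = A_{33}+A_{44}$ one finds
\begin{align*}
T_1(X,X)T_1(Y,Y) - T_1(X,Y)^2 - \sigma_2(A)
&= (a+b)b + A_{11}A_{22} - A_{12}^2 - \tfrac{1}{2}(a+b)^2 + \tfrac{1}{2}\sum_i A_{ii}^2 + \sum_{i<j}A_{ij}^2 \\
&= \tfrac{1}{2}(A_{33}+A_{44})^2 + \tfrac{1}{2}(A_{33}^2+A_{44}^2) + \sum_{\substack{i<j \\ \{i,j\}\neq\{1,2\}}} A_{ij}^2 \;\geq\; 0,
\end{align*}
after cancelling the $A_{12}^2$ term and using $-\tfrac12 a^2 + A_{11}A_{22} + \tfrac12(A_{11}^2+A_{22}^2) = 0$. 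Your reduction to orthonormal $X,Y$ via the shear $Y \mapsto Y - \lambda X$ and rescaling is also fine: both $T_1(X,X)T_1(Y,Y)-T_1(X,Y)^2$ and $|X|^2|Y|^2 - \IP{X,Y}^2$ are invariant under the shear and bi-quadratic under scaling. Your route is genuinely different from the paper's: the paper instead chooses an orthonormal basis that diagonalizes $T_1$ (equivalently $A$), keeps $X,Y$ general, and reduces the inequality to showing that a certain quadratic form in the wedge components $z_{ij} = x_i y_j - x_j y_i$ has all nonpositive diagonal coefficients, which it verifies by $pq \leq \tfrac12 p^2 + \tfrac12 q^2$. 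So the paper adapts the basis to $A$ and keeps the pair $(X,Y)$ free, while you adapt the basis to the pair $(X,Y)$ and keep $A$ free; the paper's version packages the inequality as nonnegativity of a form on $\Lambda^2 V$, while yours exhibits an explicit sum of squares. Both are elementary and roughly equal in length.

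Your closing remark that $A \in \Gamma_2^+$ is never used is correct, and it is worth noting that the same is quietly true of the paper's argument: although the paper remarks that the eigenvalues $\lambda_i$ of $T_1$ are nonnegative, the final estimate in (\ref{coe}) is just the AM--GM bound $pq \leq \tfrac12(p^2+q^2)$ applied three times, valid for all real $\lambda_i$. So the lemma holds for arbitrary symmetric $A$ in dimension four, and the $\Gamma_2^+$ hypothesis is only needed elsewhere in the argument (where $\sigma_2$ appears in denominators).
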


\begin{proof}  Choose an orthonormal basis for $V$ which diagonalizes $T_1$, and
let $\{ \lambda_1, \dots, \lambda_4\}$ denote the eigenvalues of $T_1$.  Note by
our assumption on $A$
we know that $\lambda_i \geq 0$ for each $i$.  With respect to this orthornormal
basis write $X = (x_1,\dots,x_4)$ and $Y =
(y_1,\dots,y_4)$. Then expanding and collecting terms we get
\begin{align*} \begin{split}
 & -T_1(X,X) T_1(Y,Y)  + T_1(X,Y)^2  \\
& \quad = - \big\{ \lambda_1 x_1^2 + \cdots + \lambda_4 x_4^2 \big\}\big\{
\lambda_1 y_1^2 + \cdots + \lambda_4 y_4^2 \big\} + \big\{ \lambda_1 x_1 y_1 +
\cdots \lambda_4 x_4 y_4 \big\}^2 \\
& \quad = -\lambda_1 \lambda_2 \big( x_1^2 y_2^2 + x_2^2 y_1^2 - 2 x_1 x_2 y_1
y_2 \big) - \lambda_1 \lambda_3 \big( x_1^2 y_3^2 + x_3^2 y_1^2 - 2 x_1 x_3 y_1
y_3 \big) \\
& \quad \quad \quad - \cdots - \lambda_3 \lambda_4 \big( x_3^2 y_4^2 + x_4^2
y_3^2 - 2 x_3 x_4 y_3 y_4 \big).
\end{split}
\end{align*}
Next, let
\begin{align*}
Z = X \wedge Y,
\end{align*}
whose components are
\begin{align*}
z_{ij} = x_i y_j - x_j y_i.
\end{align*}
In terms of $Z$, we can rewrite the above as
\begin{align*}
 & -T_1(X,X) T_1(Y,Y)  + T_1(X,Y)^2 = -\lambda_1 \lambda_2 z_{12}^2 - \lambda_1
\lambda_3 z_{13}^2 - \cdots - \lambda_3 \lambda_4 z_{34}^2.
\end{align*}
At the same time,
\begin{align*} \begin{split}
|X|^2 |Y|^2 - \langle X,Y \rangle^2  &= \frac{1}{2}|Z|^2 \\
&= z_{12}^2 + z_{13}^2 + \cdots + z_{34}^2.
\end{split}
\end{align*}
Therefore,
\begin{align} \label{sPdef} \begin{split}
& -T_1(X,X) T_1(Y,Y) + T_1(X,Y)^2 + \sigma_2(A) \big[ |X|^2 |Y|^2 - \langle X,Y
\rangle^2 \big] \\
&=  -\lambda_1 \lambda_2 z_{12}^2 - \lambda_1 \lambda_3 z_{13}^2 - \cdots -
\lambda_3 \lambda_4 z_{34}^2 + \sigma_2(A) \big[ z_{12}^2 + z_{13}^2 + \cdots +
z_{34}^2 \big].
\end{split}
\end{align}
We need to express $\sigma_2(A)$ in terms of the eigenvalues of $T_1$.   Since
\begin{align} \label{TS}
T_1 = - A + \sigma_1(A) \cdot I,
\end{align}
taking the trace it follows that
\begin{align*}
\lambda_1 + \cdots + \lambda_4 = 3 \sigma_1(A).
\end{align*}
Also, taking the norm-squared in (\ref{TS}),
\begin{align*}
|T_1|^2 = |A|^2 + 2 \sigma_1(A)^2.
\end{align*}
Therefore,
\begin{align*}
\sigma_2(A) = \frac{1}{3} \big( -\lambda_1^2  - \cdots - \lambda_4^2  +
\lambda_1 \lambda_2 + \lambda_1 \lambda_3 + \cdots \lambda_3 \lambda_4 \big).
\end{align*}
Substituting this into (\ref{sPdef}),
\begin{align}  \label{P4} \begin{split}
& -T_1(X,X) T_1(Y,Y) + T_1(X,Y)^2 + \sigma_2(A) \big[ |X|^2 |Y|^2 - \langle X,Y
\rangle^2 \big] \\
&=  -\lambda_1 \lambda_2 z_{12}^2 - \lambda_1 \lambda_3 z_{13}^2 - \cdots -
\lambda_3 \lambda_4 z_{34}^2  \\
& \quad +\frac{1}{3} \big( -\lambda_1^2  - \cdots - \lambda_4^2  + \lambda_1
\lambda_2 + \lambda_1 \lambda_3 + \cdots \lambda_3 \lambda_4 \big) \big[
z_{12}^2 + z_{13}^2 + \cdots + z_{34}^2 \big] \\
&= \frac{1}{3} \big( -\lambda_1^2  - \cdots - \lambda_4^2  - 2 \lambda_1
\lambda_2 + \lambda_1 \lambda_3 + \cdots \lambda_3 \lambda_4 \big) z_{12}^2 \\
& \quad + \frac{1}{3} \big( -\lambda_1^2  - \cdots - \lambda_4^2  + \lambda_1
\lambda_2 - 2 \lambda_1 \lambda_3 + \lambda_1 \lambda_4 + \cdots \lambda_3
\lambda_4 \big) z_{13}^2  \\
& \quad + \cdots + \frac{1}{3} \big( -\lambda_1^2  - \cdots - \lambda_4^2  +
\lambda_1 \lambda_2 + \cdots + \lambda_2 \lambda_4 - 2 \lambda_3 \lambda_4 \big)
z_{34}^2.
\end{split}
\end{align}
We claim that the coefficients of the $z_{ij}^2$-terms are all non-positive.  To
see this, consider the first one:
\begin{align} \label{coe} \begin{split}
& -\lambda_1^2  - \cdots - \lambda_4^2  - 2 \lambda_1 \lambda_2 + \lambda_1
\lambda_3 + \lambda_1 \lambda_4 + \lambda_2 \lambda_3 + \lambda_2 \lambda_4 +
\lambda_3 \lambda_4 \\
&= -(\lambda_1 + \lambda_2)^2 - \lambda_3^2 - \lambda_4^2 + (\lambda_1 +
\lambda_2)\lambda_3 + (\lambda_1 + \lambda_2)\lambda_4 + \lambda_3 \lambda_4 \\
&\leq -(\lambda_1 + \lambda_2)^2 - \lambda_3^2 - \lambda_4^2 +
\frac{1}{2}(\lambda_1 + \lambda_2)^2 + \frac{1}{2}\lambda_3^2  +
\frac{1}{2}(\lambda_1 + \lambda_2)^2 + \frac{1}{2}\lambda_4^2 \\
& \quad  + \frac{1}{2}\lambda_3^2 +  \frac{1}{2}\lambda_4^2 \\
&= 0.
\end{split}
\end{align}
\end{proof}

Finally we prove the required curvature inequality, which is a more precise
statement of Theorem \ref{npcthm}.

\begin{thm} \label{n4curvature} Let $(M^4, g)$ be a compact Riemannian manifold
such that $A_g \in
\gG_2^+$.  Given $u \in \gG_2^+$ and $\phi,\psi \in T_u \gG_2^+$ we have
\begin{align*}
 K(\phi,\psi) =&\ \int \frac{1}{\sigma_2(g_u^{-1} A_u)}\Big\{ - \big\langle T_1(g_u^{-1} A_u),
\nabla
 \phi \otimes \nabla \phi
\big\rangle \big\langle T_1(g_u^{-1} A_u), \nabla  \psi
\otimes \nabla \psi \big\rangle \\
 & \quad + \big\langle T_1(g_u^{-1} A_u), \nabla  \phi \otimes
\nabla \psi \big\rangle^2 + \sigma_2(g_u^{-1} A_u) \big\vert \nabla \phi \vert^2 \vert
\nabla \psi \vert^2 - \sigma_2(g_u^{-1} A_u) \big\vert \langle
\nabla  \phi , \nabla \psi
\rangle \vert^2  \Big\} dV_u\\
\leq&\ 0,
\end{align*}
where the inner products are with respect to $g_u$
\end{thm}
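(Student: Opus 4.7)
The plan is to compute the Riemann curvature tensor of the formal connection (\ref{connectiondef}) directly from its definition, extract the sectional curvature pairing, and verify that the resulting integrand is non-positive pointwise via Lemma \ref{sgLemma}.

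I would first extend $\phi, \psi$ to a two-parameter family of admissible conformal factors $u_{s,t}$ based at $u$ with $\partial_s u\vert_{(0,0)} = \phi$, $\partial_t u\vert_{(0,0)} = \psi$, and $[\partial_s, \partial_t] = 0$ --- for instance the affine extension $u_{s,t} = u + s\phi + t\psi$, which lies in $\C$ for small $s,t$ by openness of $\Gamma_2^+$. Torsion-freeness (Lemma \ref{metcomp}) together with $[\partial_s, \partial_t] = 0$ then give
\begin{align*}
K(\phi, \psi) = \IP{\tfrac{D}{\del s}\tfrac{D}{\del t}\partial_t u - \tfrac{D}{\del t}\tfrac{D}{\del s}\partial_t u, \partial_s u}_u.
\end{align*}
Iterating the connection formula (\ref{connectiondef}), the inner derivative $\tfrac{D}{\del t}\partial_t u = u_{tt} - \sigma_2(A_u)^{-1}\IP{T_1(A_u), \N u_t \otimes \N u_t}$ is the left-hand side of the geodesic equation (\ref{geodesics}). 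Applying $\tfrac{D}{\del s}$ brings in variations of $\sigma_2^{-1}$, $T_1(A_u)$, $\N u_t$, and the pairing metric $g_u$ in the $s$-direction; subtracting the analogous expression with $s$ and $t$ interchanged, the second-order temporal terms $u_{stt}$ and $u_{tts}$ cancel, leaving a combination of Hessians and gradients of $\phi$ and $\psi$.

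Next I would pair the resulting tangent vector with $\partial_s u = \phi$ against the weighted measure $\sigma_2 dV_u$ and integrate by parts. Lemma \ref{divT} (divergence-freeness of $T_1$) permits derivatives to be moved off the Newton transform without boundary contributions, and the pivotal identity of Lemma \ref{curvlemma1} converts the Hessian-Laplacian combination that emerges into the antisymmetric form $\vert\N\phi\vert^2\vert\N\psi\vert^2 - \langle\N\phi,\N\psi\rangle^2$. Collecting the remaining contributions then produces exactly the integrand displayed in the statement.

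Finally, for the sign, I apply Lemma \ref{sgLemma} pointwise at each $p \in M$ with $A = g_u^{-1}A_u(p) \in \Gamma_2^+$, $X = \N\phi(p)$, and $Y = \N\psi(p)$. The lemma says precisely that the bracketed integrand is non-positive at every point; since $\sigma_2(g_u^{-1}A_u) > 0$ on $\C$, dividing and integrating preserves the sign and yields $K(\phi, \psi) \leq 0$. The main obstacle is the combinatorial bookkeeping in deriving the formula: each of $\sigma_2^{-1}$, $T_1(A_u)$, and the pairing metric $g_u$ depends nontrivially on the conformal factor, so the second commutator produces many terms. The real content of the proof is verifying that, after invoking $[\partial_s, \partial_t] = 0$, torsion-freeness, Lemma \ref{divT}, and Lemma \ref{curvlemma1}, all these contributions collapse into exactly the symmetric quadratic form to which Lemma \ref{sgLemma} applies --- the sign itself then being a purely algebraic consequence.
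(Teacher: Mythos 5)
Your proposal is correct and follows essentially the same route as the paper: compute the curvature commutator acting on $\partial_t u$, pair with $\partial_s u$ against the $\sigma_2$-weighted measure, integrate by parts via the divergence-freeness of $T_1$ (Lemma \ref{divT}), convert the surviving Hessian combination with Lemma \ref{curvlemma1}, and close the sign argument pointwise with Lemma \ref{sgLemma}. You defer the intermediate algebra to ``combinatorial bookkeeping,'' which is where most of the paper's effort actually goes, but the strategy, the cancellations you anticipate, and all the key lemmas are exactly those the paper uses.
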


\begin{proof} Let $u(s,t)$ be a 2-parameter family of conformal factors, and
$\alpha = \alpha(s,t) \in T_{u(s,t)}\mathcal{C}^{+}$.
Using the formula for the directional derivative in (\ref{connectiondef}), we have
\begin{align} \begin{split} \label{secD1}
\frac{D}{\partial s} \frac{D}{\partial t} \alpha &= \frac{\partial}{\partial s}
\big( \frac{D}{\partial t} \alpha \big) - \frac{1}{\sigma_2(g_u^{-1} A_u)} \big\langle
T_1(g_u^{-1} A_u), \nabla ( \frac{D}{\partial t}\alpha ) \otimes \nabla (\frac{\partial
u}{\partial s})  \big\rangle_u  \\
&= \frac{\partial}{\partial s} \Big\{ \frac{\partial \alpha}{\partial t} -
\frac{1}{\sigma_2(g_u^{-1} A_u)} \big\langle T_1(g_u^{-1} A_u), \nabla \alpha  \otimes \nabla(
\frac{\partial u}{\partial t}) \big\rangle_u \Big\}  \\
& \quad \quad - \frac{1}{\sigma_2(g_u^{-1} A_u)}\big\langle T_1(g_u^{-1} A_u), \nabla(
\frac{D}{\partial t}\alpha ) \otimes \nabla ( \frac{\partial u}{\partial s})
\big\rangle_u \\
&= \frac{\partial^2 \alpha}{\partial s \partial t} + \frac{1}{\sigma_2(g_u^{-1} A_u)^2}
\big\langle T_1(g_u^{-1} A_u), \nabla^2 (\frac{\partial u}{\partial s}) \big\rangle_u
  \big\langle T_1(g_u^{-1} A_u) ,
\nabla \alpha \otimes \nabla (\frac{\partial u}{\partial t}) \big\rangle_u \\
& \quad \quad - \frac{1}{\sigma_2(g_u^{-1} A_u)} \big\langle \frac{\partial}{\partial s}
T_1(g_u^{-1} A_u), \nabla \alpha \otimes \nabla (\frac{\partial u}{\partial t})
\big\rangle_u \\
&\quad \quad - \frac{1}{\sigma_2(g_u^{-1} A_u)} \big\langle  T_1(g_u^{-1} A_u), \nabla(
\frac{\partial \alpha}{\partial s}) \otimes \nabla(\frac{\partial u}{\partial
t}) + \nabla \alpha \otimes \nabla( \frac{\partial^2 u}{\partial s \partial t})
\big\rangle_u \\
&\quad \quad - \frac{1}{\sigma_2(g_u^{-1} A_u)} \big\langle  T_1(g_u^{-1} A_u), \nabla(
\frac{D}{\partial t}\alpha) \otimes \nabla( \frac{\partial u}{\partial s})
\big\rangle_u.
\end{split}
\end{align}
In the above, we have used the fact that the inner product on symmetric $2$-tensors satisfies
\begin{align*}
\frac{\partial}{\partial s}\langle \ \cdot\ , \  \cdot \ \rangle_u = 4 \frac{\partial u}{\partial s} \langle \ \cdot\ , \ \cdot \ \rangle_u.
\end{align*}
For the last term in (\ref{secD1}),
\begin{align*} \begin{split}
&- \frac{1}{\sigma_2(g_u^{-1} A_u)} \big\langle   T_1(g_u^{-1} A_u), \nabla( \frac{D}{\partial
t}\alpha) \otimes \nabla( \frac{\partial u}{\partial s}) \big\rangle_u = \\
&= - \frac{1}{\sigma_2(g_u^{-1} A_u)} \Big\langle  T_1(g_u^{-1} A_u), \nabla \Big\{
\frac{\partial \alpha}{\partial t} - \frac{1}{\sigma_2(g_u^{-1} A_u)} \big\langle
T_1(g_u^{-1} A_u), \nabla  \alpha \otimes \nabla( \frac{\partial u}{\partial t})
\big\rangle_u \Big\} \otimes \nabla( \frac{\partial u}{\partial s}) \Big\rangle_u \\
&= - \frac{1}{\sigma_2(g_u^{-1} A_u)} \Big\langle  T_1(g_u^{-1} A_u), \nabla ( \frac{\partial
\alpha}{\partial t} ) \otimes \nabla( \frac{\partial u}{\partial s}) \Big\rangle_u
\\
& \quad \quad + \frac{1}{\sigma_2(g_u^{-1} A_u)} \Big\langle  T_1(g_u^{-1} A_u), \nabla \Big\{
\frac{1}{\sigma_2(g_u^{-1} A_u)} \big\langle T_1(g_u^{-1} A_u), \nabla  \alpha \otimes \nabla(
\frac{\partial u}{\partial t}) \big\rangle_u \Big\} \otimes \nabla( \frac{\partial
u}{\partial s}) \Big\rangle_u.
\end{split}
\end{align*}
By (\ref{Tk}) and (\ref{Ap}),
\begin{align*}  \begin{split}
\frac{\partial}{\partial s}T_1(g_u^{-1} A_u) &= \frac{\partial}{\partial s} \big\{ -A_u + \sigma_1(g_u^{-1} A_u) g_u  \big\} \\
&= - \nabla_u^2( \frac{\partial u}{\partial s})
+ \Delta_u ( \frac{\partial u}{\partial s}) g_u.
\end{split}
\end{align*}
Substituting this into (\ref{secD1}), we get
\begin{align*} \begin{split}
\frac{D}{\partial s} \frac{D}{\partial t} \alpha  &= \frac{\partial^2
\alpha}{\partial s \partial t} + \frac{1}{\sigma_2(g_u^{-1} A_u)} \Bigg\{
\frac{1}{\sigma_2(g_u^{-1} A_u)}  \big\langle T_1(g_u^{-1} A_u), \nabla^2 (\frac{\partial
u}{\partial s}) \big\rangle_u \big\langle T_1(g_u^{-1} A_u) , \nabla \alpha \otimes \nabla
(\frac{\partial u}{\partial t}) \big\rangle_u \\
& \quad \quad +  \Big\langle
\nabla_u^2( \frac{\partial u}{\partial s}) - \Delta_u ( \frac{\partial u}{\partial
s}) g_u, \nabla \alpha \otimes \nabla
(\frac{\partial u}{\partial t}) \Big\rangle_u \\
&\quad \quad -  \big\langle  T_1(g_u^{-1} A_u), \nabla( \frac{\partial \alpha}{\partial
s}) \otimes \nabla(\frac{\partial u}{\partial t}) \big\rangle_u  + \big\langle
T_1(g_u^{-1} A_u),\nabla \alpha \otimes \nabla( \frac{\partial^2 u}{\partial s \partial
t}) \big\rangle_u \\
&\quad \quad -   \Big\langle  T_1(g_u^{-1} A_u), \nabla ( \frac{\partial \alpha}{\partial
t} ) \otimes \nabla( \frac{\partial u}{\partial s}) \Big\rangle_u \\
& \quad \quad +   \Big\langle  T_1(g_u^{-1} A_u), \nabla \Big\{  \frac{1}{\sigma_2(g_u^{-1} A_u)}
\big\langle T_1(g_u^{-1} A_u), \nabla  \alpha \otimes \nabla( \frac{\partial u}{\partial
t}) \big\rangle_u \Big\} \otimes \nabla( \frac{\partial u}{\partial s})
\Big\rangle_u \Bigg\}.
\end{split}
\end{align*}
Next, we rearrange the terms into two groups: those symmetric in $s,t$, and those that
are not:
\begin{align*} \begin{split}
\frac{D}{\partial s} \frac{D}{\partial t} \alpha  &= \frac{\partial^2
\alpha}{\partial s \partial t} + \frac{1}{\sigma_2(g_u^{-1} A_u)} \Bigg\{ -  \big\langle
T_1(g_u^{-1} A_u), \nabla( \frac{\partial \alpha}{\partial s}) \otimes
\nabla(\frac{\partial u}{\partial t}) \big\rangle_u -   \Big\langle  T_1(g_u^{-1} A_u),
\nabla ( \frac{\partial \alpha}{\partial t} ) \otimes \nabla( \frac{\partial
u}{\partial s}) \Big\rangle_u \\
&\quad \quad   + \big\langle  T_1(g_u^{-1} A_u),\nabla \alpha \otimes \nabla(
\frac{\partial^2 u}{\partial s \partial t}) \big\rangle_u \Bigg\} \\
& \quad \quad  + \frac{1}{\sigma_2(g_u^{-1} A_u)} \Bigg\{ \frac{1}{\sigma_2(g_u^{-1} A_u)}
\big\langle T_1(g_u^{-1} A_u), \nabla^2 (\frac{\partial u}{\partial s}) \big\rangle_u
\big\langle T_1(g_u^{-1} A_u) , \nabla \alpha \otimes \nabla (\frac{\partial u}{\partial
t}) \big\rangle_u \\
& \quad \quad  +  \Big\langle \nabla_u^2( \frac{\partial u}{\partial s}) - \Delta_u
( \frac{\partial u}{\partial s}) g_u , \nabla \alpha \otimes \nabla
(\frac{\partial u}{\partial t}) \Big\rangle_u \\
& \quad \quad +   \Big\langle  T_1(g_u^{-1} A_u), \nabla \Big\{  \frac{1}{\sigma_2(g_u^{-1} A_u)}
\big\langle T_1(g_u^{-1} A_u), \nabla  \alpha \otimes \nabla( \frac{\partial u}{\partial
t}) \big\rangle_u \Big\} \otimes \nabla( \frac{\partial u}{\partial s})
\Big\rangle_u \Bigg\}.
\end{split}
\end{align*}
Therefore,
\begin{align} \begin{split} \label{secD5}
\Big( \frac{D}{\partial s} \frac{D}{\partial t}  - \frac{D}{\partial t}
\frac{D}{\partial s} \Big) \alpha  &= \frac{1}{\sigma_2( g_u^{-1} A_u)} \Bigg\{
\frac{1}{\sigma_2( g_u^{-1} A_u)}  \big\langle T_1( g_u^{-1} A_u), \nabla^2 (\frac{\partial
u}{\partial s}) \big\rangle_u \big\langle T_1(g_u^{-1} A_u) , \nabla \alpha \otimes \nabla
(\frac{\partial u}{\partial t}) \big\rangle_u \\
& \hskip-.5in - \frac{1}{\sigma_2(g_u^{-1} A_u)}  \big\langle T_1(g_u^{-1} A_u), \nabla^2
(\frac{\partial u}{\partial t}) \big\rangle_u \big\langle T_1(g_u^{-1} A_u) , \nabla \alpha
\otimes \nabla (\frac{\partial u}{\partial s}) \big\rangle_u \\
& \hskip-1in +  \Big\langle \nabla_u^2( \frac{\partial u}{\partial s}) - \Delta_u (
\frac{\partial u}{\partial s}) g_u , \nabla \alpha \otimes \nabla (\frac{\partial
u}{\partial t}) \Big\rangle_u - \Big\langle \nabla_u^2( \frac{\partial u}{\partial
t}) - \Delta_u ( \frac{\partial u}{\partial t}) g_u , \nabla \alpha \otimes \nabla
(\frac{\partial u}{\partial s}) \Big\rangle_u \\
& \hskip-.5in +   \Big\langle  T_1(g_u^{-1} A_u), \nabla \Big\{  \frac{1}{\sigma_2(g_u^{-1} A_u)}
\big\langle T_1(g_u^{-1} A_u), \nabla  \alpha \otimes \nabla( \frac{\partial u}{\partial
t}) \big\rangle_u \Big\} \otimes \nabla( \frac{\partial u}{\partial s})
\Big\rangle_u \\
& \hskip-.5in - \Big\langle  T_1(g_u^{-1} A_u), \nabla \Big\{  \frac{1}{\sigma_2(g_u^{-1} A_u)}
\big\langle T_1(g_u^{-1} A_u), \nabla  \alpha \otimes \nabla( \frac{\partial u}{\partial
s}) \big\rangle_u \Big\} \otimes \nabla( \frac{\partial u}{\partial t})
\Big\rangle_u \Bigg\}.
\end{split}
\end{align}

To compute the sectional curvature of the plane spanned by $\{ \frac{\partial
u}{\partial s}, \frac{\partial u}{\partial t} \}$, we take $\alpha =
\frac{\partial u}{\partial t}$ in the formula above, then take the inner product
with $\frac{\partial u}{\partial s}$:
\begin{align*} \begin{split}
 \Big\langle \Big( \frac{D}{\partial s} \frac{D}{\partial t} -
&\frac{D}{\partial t} \frac{D}{\partial s} \Big) \frac{\partial u}{\partial t},
\frac{\partial u}{\partial s} \Big\rangle_u  = \\ &
\hskip-.6in \int \Big\{ \frac{1}{\sigma_2(g_u^{-1} A_u)}  \big\langle T_1(g_u^{-1} A_u), \nabla_u^2
(\frac{\partial u}{\partial s}) \big\rangle_u \big\langle T_1(g_u^{-1} A_u) , \nabla
(\frac{\partial u}{\partial t}) \otimes \nabla (\frac{\partial u}{\partial t})
\big\rangle_u \frac{\partial u}{\partial s}\\
& \hskip-.5in - \frac{1}{\sigma_2(g_u^{-1} A_u)}  \big\langle T_1(g_u^{-1} A_u), \nabla_u^2
(\frac{\partial u}{\partial t}) \big\rangle_u \big\langle T_1(g_u^{-1} A_u) , \nabla
(\frac{\partial u}{\partial t}) \otimes \nabla (\frac{\partial u}{\partial s})
\big\rangle_u \frac{\partial u}{\partial s}\\
& \hskip-.5in +  \Big\langle \nabla_u^2 (\frac{\partial u}{\partial s})  - \Delta_u
(\frac{\partial u}{\partial s}) g_u , \nabla (\frac{\partial u}{\partial t})
\otimes \nabla (\frac{\partial u}{\partial t}) \Big\rangle_u \frac{\partial
u}{\partial s} - \Big\langle \nabla_u^2( \frac{\partial u}{\partial t}) - \Delta_u (
\frac{\partial u}{\partial t}) g_u , \nabla (\frac{\partial u}{\partial t})
\otimes \nabla (\frac{\partial u}{\partial s}) \Big\rangle_u \frac{\partial
u}{\partial s} \\
& \hskip-.5in +   \Big\langle  T_1(g_u^{-1} A_u), \nabla \Big\{  \frac{1}{\sigma_2(g_u^{-1} A_u)}
\big\langle T_1(g_u^{-1} A_u), \nabla  (\frac{\partial u}{\partial t}) \otimes \nabla(
\frac{\partial u}{\partial t}) \big\rangle_u \Big\} \otimes \nabla (\frac{\partial
u}{\partial s}) \Big\rangle_u \frac{\partial u}{\partial s} \\
& \hskip-.5in - \Big\langle  T_1(g_u^{-1} A_u), \nabla \Big\{  \frac{1}{\sigma_2(g_u^{-1} A_u)}
\big\langle T_1(g_u^{-1} A_u), \nabla  (\frac{\partial u}{\partial t}) \otimes \nabla
(\frac{\partial u}{\partial s}) \big\rangle_u \Big\} \otimes \nabla(
\frac{\partial u}{\partial t}) \Big\rangle_u \frac{\partial u}{\partial s} \Big\}
dV_u.
\end{split}
\end{align*}
Consider the last two lines above.  Integrating by parts and using the fact that
$T_1(g_u^{-1}A_u)$ is divergence-free, we get
\begin{align*} \begin{split}
& \int \Big\{ \Big\langle  T_1(g_u^{-1}A_u), \nabla \Big\{  \frac{1}{\sigma_2(g_u^{-1}A_u)}
\big\langle T_1(g_u^{-1}A_u), \nabla  (\frac{\partial u}{\partial t}) \otimes \nabla(
\frac{\partial u}{\partial t}) \big\rangle_u \Big\} \otimes \nabla (\frac{\partial
u}{\partial s}) \Big\rangle_u \frac{\partial u}{\partial s} \\
& \hskip.25in - \Big\langle  T_1(g_u^{-1}A_u), \nabla \Big\{  \frac{1}{\sigma_2(g_u^{-1}A_u)}
\big\langle T_1(g_u^{-1}A_u), \nabla  (\frac{\partial u}{\partial t}) \otimes \nabla
(\frac{\partial u}{\partial s}) \big\rangle_u \Big\} \otimes \nabla(
\frac{\partial u}{\partial t}) \Big\rangle_u \frac{\partial u}{\partial s} \Big\}
dV_u \\
&= \int \Big\{ - \frac{1}{\sigma_2(g_u^{-1}A_u)}  \big\langle T_1(g_u^{-1}A_u), \nabla_u^2
(\frac{\partial u}{\partial s}) \big\rangle_u \big\langle T_1(g_u^{-1}A_u) , \nabla
(\frac{\partial u}{\partial t}) \otimes \nabla (\frac{\partial u}{\partial t})
\big\rangle_u \frac{\partial u}{\partial s}\\
& \hskip.5in - \frac{1}{\sigma_2(g_u^{-1}A_u)} \big\langle T_1(g_u^{-1}A_u), \nabla
(\frac{\partial u}{\partial s}) \otimes \nabla( \frac{\partial u}{\partial s})
\big\rangle_u \big\langle T_1(g_u^{-1}A_u), \nabla  (\frac{\partial u}{\partial t})
\otimes \nabla( \frac{\partial u}{\partial t}) \big\rangle_u   \\
& \hskip.25in + \frac{1}{\sigma_2(g_u^{-1}A_u)}  \big\langle T_1(g_u^{-1}A_u), \nabla_u^2
(\frac{\partial u}{\partial t}) \big\rangle_u \big\langle T_1(g_u^{-1}A_u) , \nabla
(\frac{\partial u}{\partial t}) \otimes \nabla (\frac{\partial u}{\partial s})
\big\rangle_u \frac{\partial u}{\partial s} \\
& \hskip.5in + \frac{1}{\sigma_2(g_u^{-1}A_u)} \big\langle T_1(g_u^{-1}A_u), \nabla
(\frac{\partial u}{\partial s}) \otimes \nabla( \frac{\partial u}{\partial t})
\big\rangle_u^2 \Big\} dV_u.
\end{split}
\end{align*}
Substituting this into (\ref{secD5}) we find that the the first two lines there
cancel, and we arrive at
\begin{align*} \begin{split}
 \Big\langle \Big( \frac{D}{\partial s} \frac{D}{\partial t} -
&\frac{D}{\partial t} \frac{D}{\partial s} \Big) \frac{\partial u}{\partial s},
\frac{\partial u}{\partial t} \Big\rangle_u  = \\ &
\hskip-.6in \int \Big\{ \Big\langle \nabla_u^2 (\frac{\partial u}{\partial s})  -
\Delta_u (\frac{\partial u}{\partial s}) g_u , \nabla (\frac{\partial u}{\partial
t}) \otimes \nabla (\frac{\partial u}{\partial t}) \Big\rangle_u  \\
& \hskip-.3in - \Big\langle \nabla_u^2( \frac{\partial u}{\partial t}) - \Delta_u (
\frac{\partial u}{\partial t}) g_u , \nabla (\frac{\partial u}{\partial t})
\otimes \nabla (\frac{\partial u}{\partial s}) \Big\rangle_u  \Big\}
\frac{\partial u}{\partial s} dV_u \\
 & \hskip-.7in + \int \frac{1}{\sigma_2(g_u^{-1}A_u)} \Big\{ - \big\langle T_1(g_u^{-1}A_u),
\nabla  (\frac{\partial u}{\partial s}) \otimes \nabla( \frac{\partial
u}{\partial s}) \big\rangle_u \big\langle T_1(g_u^{-1}A_u), \nabla  (\frac{\partial
u}{\partial t}) \otimes \nabla( \frac{\partial u}{\partial t}) \big\rangle_u \\
 & \hskip-.3in + \big\langle T_1(g_u^{-1}A_u), \nabla  (\frac{\partial u}{\partial s})
\otimes \nabla( \frac{\partial u}{\partial t}) \big\rangle_u^2 \Big\} dV_u.
\end{split}
\end{align*}
From Lemmas \ref{curvlemma1} and \ref{sgLemma} we conclude
\begin{align*}
 \Big\langle \Big( \frac{D}{\partial s} \frac{D}{\partial t} & -
\frac{D}{\partial t} \frac{D}{\partial s} \Big) \frac{\partial u}{\partial s},
\frac{\partial u}{\partial t} \Big\rangle_u \\
 =& \int \frac{1}{\sigma_2(g_u^{-1}A_u)}\Big\{ - \big\langle T_1(g_u^{-1}A_u), \nabla
 (\frac{\partial u}{\partial s}) \otimes \nabla( \frac{\partial u}{\partial s})
\big\rangle_u \big\langle T_1(g_u^{-1}A_u), \nabla  (\frac{\partial u}{\partial t})
\otimes \nabla( \frac{\partial u}{\partial t}) \big\rangle_u \\
 & \quad + \big\langle T_1(g_u^{-1}A_u), \nabla  (\frac{\partial u}{\partial s}) \otimes
\nabla( \frac{\partial u}{\partial t}) \big\rangle_u^2\\
& \quad + \sigma_2(g_u^{-1}A_u) \big\vert \nabla \frac{\partial u}{\partial s} \big\vert_u^2
\big\vert \nabla \frac{\partial u}{\partial t} \big\vert_u^2 - \sigma_2(g_u^{-1}A_u) \big\vert
\langle \nabla  \frac{\partial u}{\partial s} , \nabla \frac{\partial
u}{\partial t} \rangle_u \big\vert_u \Big\} dV_u\\
\leq&\ 0,
\end{align*}
as required.
\end{proof}

\begin{rmk} The Mabuchi metric turns out to be formally an
infinite dimensional symmetric space, evidenced by the sectional curvatures
admitting an interpretation as the square norm of the Poisson bracket of the two
tangent vector functions.  There does not seem to be such an interpretation in
this setting.
\end{rmk}

\subsection{Formal metric space structure} \label{MSS}

In this subsection we observe some fundamental properties of lengths of curves
and distances in the $\gs_2$-metric.

\begin{defn}  Given a path $u : [a,b] \to \C$, the
\emph{length of $u$} is
\begin{align*}
\mathcal L(u) := \int_a^b \IP{\ga,\gb}^{\frac{1}{2}} dt = \int_a^b \left[ \int_M
\left(
\frac{\del u}{\del t} \right)^2 \gs_2 (g_u^{-1} A_u) dV_u \right]^{\frac{1}{2}}dt.
\end{align*}
A curve is a \emph{geodesic} if it is a critical point for $L$.
\end{defn}

\begin{lemma}  A curve $u_t \in \C$ is a geodesic if
and only if
\begin{align} \label{pathgeod}
u_{tt} - \frac{1}{\gs_2(A_u)} \IP{T_1(A_u), \N u_t \otimes \N u_t} = 0.
\end{align}
\begin{proof} Formally, by Lemma \ref{metcomp} the connection is indeed the
Riemannian connection and so a curve is a
geodesic if and only if
\begin{align*}
0 =&\ \frac{D}{\del t} \frac{\del u}{\del t} = u_{tt} - \frac{1}{\gs_2(A_u)}
\IP{T_1(A_u), \N u_t \otimes \N u_t}.
\end{align*}
This can also be derived by directly taking the first variation of the length
functional.
\end{proof}
\end{lemma}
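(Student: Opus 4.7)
The plan is to present two parallel arguments, either of which proves the lemma, mirroring the two routes mentioned in the paper.

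The cleaner approach is via the connection. By Lemma \ref{metcomp}, the operator $\frac{D}{\del t}$ defined in (\ref{connectiondef}) is metric-compatible and torsion-free, and hence formally serves as the Levi-Civita connection of the $\sigma_2$-metric. The standard formal variational argument (critical curves of the length functional must parallel-transport their velocity, after reparametrizing to constant speed) then characterizes geodesics as solutions of $\frac{D}{\del t}\frac{\del u}{\del t}=0$. Substituting $\alpha = u_t$ into (\ref{connectiondef}) along the path $u_t$ gives exactly (\ref{pathgeod}).

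As an independent check, I would derive (\ref{pathgeod}) directly from the first variation of $\mathcal L$. Letting $u_{s,t}$ be a smooth variation with $u_{s,a},u_{s,b}$ held fixed, I differentiate
\begin{align*}
\mathcal L(u_{s,\cdot}) = \int_a^b \left[\int_M u_t^2\,\sigma_2(g_u^{-1}A_u)\,dV_u\right]^{1/2} dt
\end{align*}
in $s$, and use (\ref{densep}) for the variation of $\sigma_2(g_u^{-1}A_u)\,dV_u$. Along a constant-speed parametrization this yields, up to a harmless positive factor,
\begin{align*}
\int_a^b\int_M \Big\{ u_t u_{st}\,\sigma_2(g_u^{-1}A_u) + \tfrac{1}{2} u_t^2 \langle T_1(g_u^{-1}A_u), \nabla_u^2 u_s\rangle_u\Big\} dV_u\,dt.
\end{align*}
Integrating by parts in $t$ on the first term and in space on the second, using Lemma \ref{divT} so that no derivatives fall on $T_1(g_u^{-1}A_u)$, produces an integrand of the form $u_s\cdot E$, where $E$ is proportional to the left-hand side of (\ref{pathgeod}). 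Arbitrariness of $u_s$ then gives (\ref{pathgeod}).

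The only subtle bookkeeping is the conformal scaling, i.e.\ passing between $\nabla^2$ and $\nabla_u^2$ and between $\sigma_2(A_u)$ and $\sigma_2(g_u^{-1}A_u)$ via (\ref{note}), and verifying that the divergence-free property in Lemma \ref{divT} exactly cancels the second-derivative term on the variation. There is no real obstacle beyond these routine manipulations, consistent with this being a formal variational identity on the infinite-dimensional Riemannian structure already set up in Definition \ref{sigmakmetric} and Lemma \ref{metcomp}.
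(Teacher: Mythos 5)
Your first argument is precisely the paper's proof: invoke Lemma \ref{metcomp} to identify $\frac{D}{\del t}$ as the Levi-Civita connection of the $\sigma_2$-metric, then set $\frac{D}{\del t}\frac{\del u}{\del t}=0$. Your second argument (direct first variation of $\mathcal L$, using (\ref{densep}) and the divergence-free property of $T_1$ from Lemma \ref{divT}) is exactly the alternative the paper mentions but does not carry out, and your sketch of it is sound.
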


\begin{rmk}  We observe a canonical isometric
splitting of $T_u\C$ with respect to
the $\sigma_k$ metric.  In particular, the real line $\mathbb R \subset T_u \C$
given by constant functions is orthogonal to
\begin{align*}
T^0_u \C := \left\{ \ga\ |\ \int_M \ga \gs_2(g_u^{-1} A_u) dV_u = 0 \right\}.
\end{align*}
\end{rmk}

\noindent In the next lemma we show two basic properties of geodesics, namely
that they preserve this isometric splitting, and are automatically parameterized
with constant speed.

\begin{lemma}  \label{metricsplit} Let $u_t$ be a
solution to (\ref{pathgeod}).  Then
\begin{align*}
\frac{d}{dt} \int_M u_t \gs_2(g_u^{-1}A_u) dV_u =&\ 0,\\
\frac{d}{dt} \int_M u_t^2 \gs_2(g_u^{-1}A_u) dV_u =&\ 0.
\end{align*}
\begin{proof} Differentiating and using (\ref{densep}),
\begin{align*}
\frac{d}{dt} \int_M u_t \gs_2(g_u^{-1}A_u) dV_u =&\ \int_M \left( u_{tt}
\gs_2(g_u^{-1}A_u) +
u_t \IP{T_1(g_u^{-1}A_u), \N^2 u_t}_u \right) dV_u\\
=&\ \int_M \left( u_{tt} - \gs_2(g_u^{-1} A_u)^{-1} \IP{T_1(g_u^{-1}A_u), \N u_t
\otimes \N
u_t}_u \right) \gs_2(g_u^{-1}A_u) dV_u\\
=&\ 0.
\end{align*}
Next
\begin{align*}
\frac{d}{dt} \int_M u_t^2 \gs_2(g_u^{-1}A_u) dV_u =&\ \int_M \left[ 2 \gs_2(g_u^{-1}A_u)
u_{tt} u_t + u_t^2 \IP{T_1(g_u^{-1}A_u), \N^2 u_t}_u \right] dV_u\\
=&\ 2 \int_M \gs_2(g_u^{-1}A_u) u_t \left[ u_{tt} -
\frac{1}{\gs_2(g_u^{-1}A_u)}\IP{T_1(g_u^{-1}A_u), \N
u_t \otimes \N u_t}_u \right] dV_u\\
=&\ 0.
\end{align*}
\end{proof}
\end{lemma}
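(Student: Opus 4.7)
The plan is to prove both identities by direct differentiation, exploiting three structural facts that together make the conclusion almost automatic: the variational formula \eqref{densep} for the density $\sigma_2(g_u^{-1}A_u)\,dV_u$, the divergence-free property of $T_1$ (Lemma \ref{divT}), and the geodesic equation \eqref{pathgeod}.

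For the first identity, I would differentiate under the integral and split the derivative into a term from $u_t$ and a term from the measure. The crucial observation is that in four dimensions with $k=2$ we have $n - 2k = 0$, so \eqref{densep} reduces to
\[
\frac{\partial}{\partial t}\bigl[\sigma_2(g_u^{-1}A_u)\,dV_u\bigr] = \bigl\langle T_1(g_u^{-1}A_u),\nabla_u^2 u_t\bigr\rangle_u dV_u,
\]
with no zeroth-order correction. This yields
\[
\frac{d}{dt}\int_M u_t\,\sigma_2(g_u^{-1}A_u)\,dV_u = \int_M \Bigl(u_{tt}\,\sigma_2(g_u^{-1}A_u) + u_t\bigl\langle T_1, \nabla_u^2 u_t\bigr\rangle_u\Bigr)dV_u.
\]
Integrating the second term by parts and using that $T_1$ is divergence-free (so only the derivative of $u_t$ remains) produces $-\langle T_1,\nabla u_t\otimes \nabla u_t\rangle_u$, at which point the geodesic equation \eqref{pathgeod} forces the integrand to vanish identically.

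For the second identity the argument is essentially the same, only the integration by parts picks up a factor of $2u_t$: differentiating gives $2 u_t u_{tt}\sigma_2\,dV_u + u_t^2\langle T_1,\nabla_u^2 u_t\rangle_u dV_u$, and integrating by parts using $\nabla(u_t^2) = 2 u_t \nabla u_t$ together with $\divg T_1 = 0$ produces $-2u_t\langle T_1, \nabla u_t\otimes \nabla u_t\rangle_u$. Factoring $2 u_t \sigma_2$ out of the integrand recovers exactly the geodesic equation multiplied by $2 u_t\sigma_2$, which vanishes.

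There is no real obstacle: once one notices the cancellation $n-2k=0$ peculiar to dimension four at $k=2$, both statements are formal consequences of the geodesic equation and the Codazzi/divergence-free structure encoded in Lemma \ref{divT}. These are precisely the two analogues in this setting of the standard Riemannian facts that geodesics preserve any parallel splitting of the tangent bundle and have constant speed; here they say that a geodesic preserves orthogonality to $T^0_u\mathcal{C}^{+}$ and is automatically of constant $\sigma_2$-speed.
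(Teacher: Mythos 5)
Your proposal is correct and takes essentially the same approach as the paper's proof: differentiate using the variational formula \eqref{densep}, integrate by parts against the divergence-free tensor $T_1$ (Lemma \ref{divT}), and invoke the geodesic equation \eqref{pathgeod} to make the integrand vanish. The only cosmetic difference is that you explicitly flag the $n-2k=0$ cancellation in \eqref{densep} for $n=4$, $k=2$, which the paper uses silently.
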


\begin{prop} \label{distancenondegeneracy} Given
$u_0, u_1 \in C^{\infty}(M)$ and $u_t : [0,1] \to \C$ a
geodesic, one has
\begin{align*}
\LL(u) \geq&\ \gs^{-\frac{1}{2}} \max \left\{ \int_{u_1  > u_0} (u_1 - u_0)
\gs_2(g_{u_1}^{-1} A_{u_1})
dV_{u_1}, \int_{u_0 > u_1} (u_0 - u_1) \gs_2(g_{u_0}^{-1} A_{u_0}) dV_{u_0} \right\}.
\end{align*}
\begin{proof} Observe that the geodesic equation implies $u_{tt}\geq 0$, and
so we obtain the pointwise inequality
\begin{align*}
u_t(0) \leq u_1 - u_0 \leq u_t(1).
\end{align*}
Thus using H\"older's inequality we have
\begin{align*}
E(1) =&\ \left( \int_M u_t^2 \gs_2(g_{u_1}^{-1} A_{u_1}) dV_{u_1} \right)^{\frac{1}{2}}\\
\geq&\ \gs^{-\frac{1}{2}} \int_M \brs{u_t} \gs_2(g_{u_1}^{-1} A_{u_1}) dV_{u_1}\\
\geq&\ \gs^{-\frac{1}{2}} \int_{u_1  > u_0} (u_1 - u_0) \gs_2(g_{u_1}^{-1} A_{u_1}) dV_{u_1}.
\end{align*}
A similar argument yields
\begin{align*}
E(0) \geq \gs^{-\frac{1}{2}} \int_{u_0 > u_1} (u_0 - u_1) \gs_2(g_{u_0}^{-1} A_{u_0})
dV_{u_0}.
\end{align*}
Since geodesics are automatically constant speed by Lemma \ref{metricsplit},
the result follows.
\end{proof}
\end{prop}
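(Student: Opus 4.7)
The plan is to combine the pointwise convexity of $u(x,\cdot)$ implied by the geodesic equation with the constant-speed property of geodesics established in Lemma \ref{metricsplit}.

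The first step is to note that since $A_u \in \Gamma_2^+$, part (1) of Lemma \ref{newtonprops} guarantees that $T_1(A_u)$ is positive definite. Hence the geodesic equation (\ref{pathgeod}) yields $u_{tt} \geq 0$ pointwise. Therefore $t \mapsto u(x,t)$ is convex on $[0,1]$ for every fixed $x \in M$, and the standard tangent-line inequalities for convex functions give the pointwise bounds
\begin{align*}
u_t(x,0) \leq u_1(x) - u_0(x) \leq u_t(x,1) \quad \text{for all } x \in M.
\end{align*}

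The second step is to extract length information from $E(t) = \left(\int_M u_t^2 \gs_2(g_u^{-1}A_u)\, dV_u\right)^{1/2}$. By Lemma \ref{metricsplit}, $E$ is constant in $t$, so $\LL(u) = E(0) = E(1)$. Applying H\"older's inequality with the positive measure $\gs_2(g_u^{-1}A_u) dV_u$, which has total mass $\gs$ by conformal invariance, gives
\begin{align*}
\int_M |u_t|\, \gs_2(g_u^{-1}A_u)\, dV_u \leq \gs^{1/2}\, E(t).
\end{align*}

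The third and final step is to evaluate at the endpoints and combine with the pointwise bounds. At $t=1$, restricting to the set $\{u_1 > u_0\}$ where $u_t(\cdot,1) \geq u_1 - u_0 > 0$, we conclude
\begin{align*}
\LL(u) = E(1) \geq \gs^{-1/2}\int_{u_1 > u_0}(u_1 - u_0)\gs_2(g_{u_1}^{-1}A_{u_1})\, dV_{u_1}.
\end{align*}
Symmetrically, at $t=0$, restricting to $\{u_0 > u_1\}$ where $-u_t(\cdot,0) \geq u_0 - u_1 > 0$ produces the second term in the maximum. The computation is essentially mechanical once convexity is in hand, so there is no serious obstacle; the only conceptual point is recognizing that admissibility forces $T_1(A_u) \geq 0$, which in turn drives the pointwise convexity of the geodesic in time.
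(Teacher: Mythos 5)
Your proof is correct and follows essentially the same route as the paper: the sign of the Newton transform gives $u_{tt} \geq 0$, pointwise convexity yields the tangent-line bounds at the endpoints, Hölder against the weight $\sigma_2(g_u^{-1}A_u)\,dV_u$ (total mass $\sigma$) gives the $\sigma^{-1/2}$ factor, and the constant-speed property from Lemma~\ref{metricsplit} ties $\LL(u)$ to $E(0)$ and $E(1)$. The only addition is making explicit the positivity of $T_1(A_u)$ (which the paper leaves implicit when it asserts $u_{tt} \geq 0$); that conclusion is correct for $A_u \in \Gamma_2^+$, though strictly speaking it follows from the standard Garding fact that $A \in \Gamma_k^+$ implies $T_{k-1}(A) > 0$ rather than from a literal reading of Lemma~\ref{newtonprops}(1).
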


\subsection{Geodesics and the conformal group of the sphere}  As in the two-dimensional case, we will show that the $1$-parameter family of transformations that generate the conformal
group of the sphere are geodesics.  In anticipation of our forthcoming article on the higher-dimensional case we will prove a more general result.

Let $(S^n,g_0)$ denote the round sphere.  Using stereographic projection $\sigma : S^n \setminus \{ N \} \rightarrow \mathbb{R}^n$,
where $N \in S^n$ denotes the north pole, one can define a one-parameter of conformal maps of $S^n$ by conjugating the dilation map $\delta_{\alpha} : x \mapsto \alpha^{-1} x$ on $\mathbb{R}^n$ with $\sigma$:
\begin{align*}
\varphi_{\alpha} = \sigma^{-1} \circ \delta_{\alpha} \circ \sigma : S^n \rightarrow S^n.
\end{align*}
Taking $\alpha(t) = e^{\lambda t}$, where $\lambda$ is a fixed real number, we can define the path of conformal metrics
\begin{align} \label{gtdef}
g(t) = e^{-2u} g_{0} = \phi_{\ga}^* g_0 = \left[ \frac{2 \ga(t) }{(1 + \xi) + \alpha(t)^2 (1 - \xi)} \right]^{2},
 \end{align}
where $\xi = x^{n+1}$ is the $(n+1)$-coordinate function; i.e., $N = (0,\dots,0,1)$  (see \cite{LeeParker}).

\begin{prop}   \label{conformalpathsgeodesics}  If $k = n/2$, the path $g(t) = e^{-2u(t)}g_0 :  (-\infty , + \infty) \rightarrow \mathcal{C}^{+}$ satisfies
\begin{align} \label{geok}
u_{tt} - \frac{1}{\gs_{k}(A_u)} \IP{T_{k-1}(A_u), \N u_t \otimes \N u_t} = 0.
\end{align}
In particular, when $n=4$ this path defines a geodesic.
\end{prop}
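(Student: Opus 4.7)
The plan is to verify (\ref{geok}) by direct computation, exploiting the fact that each $g(t) = \varphi_{\alpha(t)}^* g_{0}$ is pointwise isometric to the round sphere. Since $(S^n,g_{0})$ has constant sectional curvature one, $A_{g(t)} = \tfrac{1}{2} g(t)$ pointwise, hence $g(t)^{-1} A_{g(t)} = \tfrac{1}{2}\Id$ at every point and every $t$. The recurrence $T_{j}(M) = \sigma_{j}(M)\Id - M\, T_{j-1}(M)$ then yields $\sigma_{k}(g(t)^{-1} A_{g(t)}) = \binom{n}{k}\, 2^{-k}$ and $T_{k-1}(g(t)^{-1} A_{g(t)}) = \binom{n-1}{k-1}\, 2^{-(k-1)}\Id$ as $(1,1)$-tensors.

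Next I would translate these formulas back to the background $g_{0}$. Since $g_{u}^{-1} = e^{2u} g_{0}^{-1}$, the homogeneities $\sigma_k(\lambda M) = \lambda^{k}\sigma_{k}(M)$ and $T_{k-1}(\lambda M) = \lambda^{k-1}T_{k-1}(M)$ give
\begin{align*}
\sigma_{k}(A_{u}) = \binom{n}{k}\, 2^{-k}\, e^{-2ku}, \qquad T_{k-1}(A_{u}) = \binom{n-1}{k-1}\, 2^{-(k-1)}\, e^{-2(k-1)u}\, g_{0},
\end{align*}
where $T_{k-1}(A_{u})$ is viewed as a $(0,2)$-tensor via $g_{0}$. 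Using the identity $\binom{n-1}{k-1}/\binom{n}{k} = k/n$, the right-hand side of (\ref{geok}) collapses to $\tfrac{2k}{n}\, e^{2u}|\N u_{t}|_{g_{0}}^{2}$, so that the geodesic equation becomes
\begin{align*}
u_{tt} = \tfrac{2k}{n}\, e^{2u}\,|\N u_{t}|_{g_{0}}^{2}.
\end{align*}
When $k = n/2$, the coefficient is exactly $1$, and this is where the hypothesis enters.

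It remains to verify $u_{tt} = e^{2u}|\N u_{t}|_{g_{0}}^{2}$ for the explicit path. From (\ref{gtdef}) with $\alpha(t) = e^{\lambda t}$, routine algebra gives $e^{u(t,x)} = \cosh(\lambda t) - \xi \sinh(\lambda t)$, where $\xi = x^{n+1}$. Write $c = \cosh(\lambda t)$, $s = \sinh(\lambda t)$ and $D = c - s\xi$. Then $u_{t} = \lambda(s - c\xi)/D$; differentiating with the aid of $c^{2} - s^{2} = 1$ produces $u_{tt} = \lambda^{2}(1 - \xi^{2})/D^{2}$. A parallel computation of $\partial_{\xi} u_{t}$ gives $\N u_{t} = -\lambda D^{-2}\, \N \xi$, and since $|\N \xi|_{g_{0}}^{2} = 1 - \xi^{2}$ on the unit sphere, one finds $|\N u_{t}|_{g_{0}}^{2} = \lambda^{2}(1 - \xi^{2})/D^{4}$. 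Combined with $e^{2u} = D^{2}$ this gives $e^{2u}|\N u_{t}|_{g_{0}}^{2} = \lambda^{2}(1-\xi^{2})/D^{2} = u_{tt}$, as needed. The only real source of friction in the argument is bookkeeping the different scaling powers of $e^{u}$ when switching between $g_{u}$- and $g_{0}$-raised indices; once this is done cleanly the computation is elementary, and the key algebraic miracle is the identity $2\binom{n-1}{k-1}/\binom{n}{k} = 2k/n$, which equals $1$ precisely when $k = n/2$.
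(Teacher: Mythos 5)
Your proposal is correct and follows essentially the same approach as the paper: exploit $A_{g(t)} = \tfrac12 g(t)$ on the sphere to reduce the Newton-transform coefficient to $\tfrac{2k}{n}$, then verify the resulting scalar ODE/PDE by direct differentiation of the explicit conformal factor. The only differences are cosmetic: you reparameterize $e^{u} = \cosh(\lambda t) - \xi\sinh(\lambda t)$ (which the paper leaves in the $\alpha$-form, making the derivative computations a bit messier), and you make the binomial-coefficient bookkeeping $\binom{n-1}{k-1}/\binom{n}{k} = k/n$ explicit where the paper appeals to "standard identities."
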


 \begin{proof}
 By (\ref{gtdef}),
 \begin{align*}
  u = u(t) = - \log 2 \ga + \log \left[ (1 + \xi) + \ga^2 (1 - \xi) \right].
 \end{align*}
This yields
\begin{align*}
u_t = - \frac{\dot{\ga}}{\ga} +  \frac{2 \ga \dot{\ga}(1 - \xi)}{(1 + \xi) +
\ga^2(1 - \xi)}
\end{align*}
and hence
\begin{align*}
u_{tt} =&\ - \frac{\ga_{tt}}{\ga} + \left( \frac{\ga_t}{\ga} \right)^2 +
\frac{[(1 + \xi) + \ga^2(1 - \xi)] (2 \ga \ga_{tt} + 2 \ga_t^2)(1-\xi) - 4
\ga^2 \ga_t^2 (1-\xi)^2}{\left[ (1 + \xi) + \ga^2(1-\xi) \right]^2}.
\end{align*}
Since $\alpha(t) = e^{\lambda t}$, we have
\begin{align} \label{udd}
u_{tt} = 4 \gl^2 e^{2 \gl t} \frac{ 1-\xi^2}{\left[ (1 + \xi) +
\ga^2(1-\xi) \right]^2}.
\end{align}
Also,
\begin{align*}
 \N u_t =&\ - \frac{2 \ga \ga_t \N \xi}{(1 + \xi) + \ga^2(1-\xi)} -
\frac{2 \ga \ga_t(1 - \xi)}{ \left[ (1 + \xi) + \ga^2(1-\xi) \right]^2}
\left[ (1-\ga^2) \N \xi \right]\\
=&\ \frac{- 2 \ga \ga_t \N \xi}{\left[ (1 + \xi) + \ga^2(1-\xi) \right]^2}
\left[ (1+ \xi) + \ga^2(1-\xi) + (1-\xi)(1 - \ga^2) \right]\\
=&\ \frac{ - 4\ga \ga_t \N \xi}{\left[ (1 + \xi) + \ga^2(1-\xi) \right]^2} \\
=&\ \frac{ - 4\lambda e^{2\lambda t} \N \xi}{\left[ (1 + \xi) + \ga^2(1-\xi) \right]^2}.
\end{align*}

On $S^n$, the Schouten tensor is a multiple of the identity; in fact $A(g_0) = \frac{1}{2}g_0$. Therefore, using standard identities
for the symmetric functions,
\begin{align*}
\dfrac{1}{\sigma_k(g(t)^{-1} A_{g(t)})} T_1(g(t)^{-1} A_{g(t)}) = \dfrac{2k}{n} g(t) = g(t),
\end{align*}
since $k = n/2$.  Thus
\begin{align} \label{rhsg}
\dfrac{1}{\sigma_k(g(t)^{-1} A_{g(t)})} \langle T_{k-1}(g(t)^{-1} A_{g(t)}) , \N u_t \otimes \N u_t \rangle = 4 \gl^2 e^{2 \gl t} \frac{ |\N \xi|^2 }{\left[ (1 + \xi) +
\ga^2(1-\xi) \right]^2}.
\end{align}
Since $|\N \xi|^2 = 1 - \xi^2$, comparing (\ref{udd}) and (\ref{rhsg}) we see that $u$ satisfies (\ref{geok}).
\end{proof}

\begin{rmk} We do not expect conformal vector fields on general backgrounds to
generate nontrivial geodesics, and thus nonuniqueness of solutions.  It follows from a result of
Lelong-Ferrand/Obata \cite{LF, Obata} that if $(M^n, g)$ is not conformally
equivalent to the round sphere, then any conformal Killing field is a Killing
field for a conformally related metric.  Expressed with respect to this
background metric, pullback by a family of isometries will result in no change
on the level of conformal factors.
\end{rmk}

\subsection{\texorpdfstring{The $F$-functional and geodesic convexity}{The
F-functional and geodesic convexity}} \label{Fsubsec}

We now derive the geodesic convexity of the $F$-functional of Chang-Yang.
The crucial input is a sharp curvature-weighted Poincar\'e inequality due to
Andrews:

\begin{prop} \label{andrewsineq} (Andrews \cite{Andrews}, cf. \cite{CLN} pg.
517) Let $(M^n, g)$ be a closed
Riemannian manifold with
positive Ricci curvature.  Given $\phi \in C^{\infty}(M)$ such that $\int_M \phi
dV = 0$, then
\begin{align*}
\frac{n}{n-1} \int_M \phi^2 dV \leq&\ \int_M \left( \Ric^{-1} \right)^{ij} \N_i
\phi
\N_j \phi dV,
\end{align*}
with equality if and only if $\phi \equiv 0$ or $(M^n, g)$ is isometric to the
round sphere.
\end{prop}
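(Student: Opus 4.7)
The strategy is a Bochner--Lichnerowicz integration paired with a weighted Cauchy--Schwarz duality trick. Since $\int_M \phi\, dV = 0$, by the Fredholm alternative and elliptic regularity there exists a smooth solution $u \in C^{\infty}(M)$ of $\Delta u = \phi$, unique up to an additive constant. Integration by parts gives
\begin{equation*}
\int_M \phi^2\, dV = \int_M \phi\, \Delta u\, dV = -\int_M \langle \N \phi, \N u\rangle\, dV.
\end{equation*}
Because $\Ric$ is positive definite, the pointwise identity $\langle \N\phi, \N u\rangle = \langle \Ric^{-1/2}\N\phi, \Ric^{1/2}\N u\rangle$ is available, and Cauchy--Schwarz in $L^2(M)$ produces
\begin{equation*}
\int_M \phi^2\, dV \leq \left(\int_M (\Ric^{-1})^{ij}\N_i\phi\,\N_j\phi\, dV\right)^{1/2}\left(\int_M \Ric(\N u,\N u)\, dV\right)^{1/2}.
\end{equation*}

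Next I would bound the Ricci integral by a standard Bochner computation. Integrating $\tfrac{1}{2}\Delta|\N u|^2 = |\N^2 u|^2 + \langle \N u, \N \Delta u\rangle + \Ric(\N u,\N u)$ over $M$, the left side vanishes, and the middle term becomes $-\int_M (\Delta u)^2\, dV$ after one integration by parts, so
\begin{equation*}
\int_M (\Delta u)^2\, dV = \int_M |\N^2 u|^2\, dV + \int_M \Ric(\N u, \N u)\, dV.
\end{equation*}
The pointwise trace inequality $|\N^2 u|^2 \geq (\Delta u)^2/n$ then yields $\int_M \Ric(\N u, \N u)\, dV \leq \tfrac{n-1}{n}\int_M (\Delta u)^2\, dV = \tfrac{n-1}{n}\int_M \phi^2\, dV$. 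Substituting into the Cauchy--Schwarz bound and cancelling a factor of $(\int_M \phi^2\, dV)^{1/2}$ rearranges, after squaring, to the claimed sharp inequality.

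For the equality case, assume $\phi \not\equiv 0$ and that equality is achieved. Then both preceding estimates must be saturated simultaneously; in particular equality in $|\N^2 u|^2 \geq (\Delta u)^2/n$ forces $\N^2 u = \tfrac{\phi}{n} g$ with $u$ nonconstant, which is precisely Obata's equation, so by the classical Obata theorem $(M^n,g)$ is isometric to a round sphere. The conceptually nontrivial ingredient is the duality inside Cauchy--Schwarz: the classical Lichnerowicz argument would replace $\Ric$ by a uniform lower bound $\lambda_{\min} g$ and lose the sharp constant whenever $\Ric$ is not Einstein, whereas pairing $\Ric$ against $\N u$ and $\Ric^{-1}$ against $\N\phi$ on opposite sides of Cauchy--Schwarz keeps the \emph{full} Ricci tensor in play and preserves sharpness. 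The remaining ingredients --- existence and smoothness of $u$, the Bochner formula, and the trace Cauchy--Schwarz --- are entirely routine.
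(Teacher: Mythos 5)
Your proof of the inequality itself is correct, and since the paper quotes this proposition from \cite{Andrews} and \cite{CLN} rather than proving it, your argument fills in exactly the standard route: solve $\Delta u = \phi$, pair $\Ric^{-1/2}\N\phi$ against $\Ric^{1/2}\N u$, and close via Bochner and the trace inequality $|\N^2 u|^2 \geq (\Delta u)^2/n$.

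The equality case, however, has a genuine gap. Saturating the trace inequality gives $\N^2 u = \tfrac{\Delta u}{n}\,g$ with $u$ nonconstant, but this is \emph{not} ``precisely Obata's equation.'' Obata's theorem requires $\N^2 u + c\,u\,g = 0$ for a \emph{constant} $c>0$; i.e., the coefficient multiplying $g$ must be a linear function of $u$ itself. What your equality analysis produces is only that the traceless Hessian of $u$ vanishes, equivalently that $\N u$ is a nontrivial closed conformal Killing field. On a compact manifold this is strictly weaker: the Tashiro classification shows that a nonconstant $u$ with $\N^2 u = f\,g$ forces a warped-product structure $g = dt^2 + \rho(t)^2 g_{S^{n-1}}$ over an interval (i.e.\ a rotationally symmetric metric on $S^n$), which is \emph{conformal} to the round sphere but need not be isometric to it. Concretely, take any smooth profile $\rho$ on $[0,\pi]$ with $\rho(0)=\rho(\pi)=0$, $\rho'(0)=1=-\rho'(\pi)$, $\rho''<0$, not a multiple of $\sin t$, and set $u(t)=\int_0^t\rho$. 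Then $\N^2 u = \rho' g$, the function $\phi := \Delta u = n\rho'$ has mean zero, and a direct computation shows that every inequality in your chain (the two Cauchy--Schwarz steps and the trace step, using the Bochner identity $\N f = -\tfrac{1}{n-1}\Ric(\N u)$ which follows automatically from $\N^2 u = fg$) is saturated. So the equality conditions you have isolated do not distinguish the round sphere from a general rotationally symmetric metric of positive Ricci curvature, and the step ``by the classical Obata theorem'' does not follow. A correct treatment of the equality case requires an additional argument showing $\Delta u$ is proportional to $-u$, or else a sharper characterization of when every step is tight --- and as the example above indicates, the conclusion one should expect from this particular chain of estimates is conformal, not isometric, equivalence to the round sphere.
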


The convexity of $F$ will follow from a weaker form of this
inequality:

\begin{cor} \label{n4andrews} Let $(M^4, g)$ be a closed
Riemannian manifold such that $A_g \in \gG_2^+$.  Given $\phi \in C^{\infty}(M)$
such that $\int_M \phi
dV = 0$, then
\begin{align*}
 \int_M \frac{1}{\gs_2(A_g)} T_1(A_g)^{ij} \N_i \phi \N_j \phi dV_g \geq 4
\int_M \phi^2 dV_g - \left( \frac{4}{\int_M dV_g} \right) \left( \int_M \phi
dV_g \right)^2,
\end{align*}
with equality if and only if $\phi \equiv 0$ or $(M^n, g)$ is isometric to the
round sphere.
\begin{proof} We assume $\int_M \phi dV_g = 0$.  By Andrews' Poincar\'e
inequality we have
\begin{align*}
 \frac{4}{3} \int_M \phi^2 dV_g \leq \int_M \left( \Ric^{-1} \right)^{ij} \N_i
\phi \N_j \phi dV_g.
\end{align*}
To show the claim it suffices to show that
\begin{align*}
 3 \Ric^{-1} (X,X) \leq \frac{1}{\gs_2(A)} T_1(X,X).
\end{align*}
Since $\Ric$ and $T_1(A)$ commute, it suffices to show that $\Ric \circ T_1 \geq
3 \gs_2(A) g$.  Since $\Ric = 2 A + \gs_1(A) g$, this is equivalent to
\begin{align*}
 - 2 A \circ A + \gs_1(A) A + \gs_1(A)^2 g \geq 3 \gs_2(A) g.
\end{align*}
Now let $Z = A - \frac{1}{4} \gs_1(A) g$, then we can rewrite this as
\begin{align*}
 - 2 Z^2 + \frac{9}{8} \gs_1(A)^2 g \geq 3 \gs_2 g.
\end{align*}
Now, a Lagrange multipler argument shows that
\begin{align*}
 Z \circ Z \leq \frac{3}{4} \brs{Z}^2 g.
\end{align*}
Thus
\begin{align*}
 -2 Z^2 + \frac{9}{8} \gs_1(A)^2 g \geq - \frac{3}{2} \brs{Z}^2 g + \frac{9}{8} \gs_1(A)^2 g
= 3 \gs_2(A) g.
\end{align*}
\end{proof}
\end{cor}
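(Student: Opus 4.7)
The plan is to deduce this from Andrews' unweighted Poincar\'e inequality (Proposition \ref{andrewsineq}) by establishing a pointwise operator comparison between $\gs_2(A)^{-1} T_1(A)$ and $3\,\Ric^{-1}$. First I would reduce to the mean-zero case: the left side is manifestly invariant under $\phi \mapsto \phi + c$, and a direct expansion shows the RHS equals $4 \int (\phi - \bar\phi)^2 dV$ and so is invariant as well. Assuming $\int \phi\, dV = 0$, since $A_g \in \Gamma_2^+$ forces $\Ric > 0$ by Guan--Viaclovsky, Proposition \ref{andrewsineq} with $n=4$ gives $\tfrac{4}{3}\int \phi^2 dV \leq \int (\Ric^{-1})^{ij} \N_i \phi \N_j \phi\, dV$, so it suffices to prove the pointwise bound $3\,\Ric^{-1} \leq \gs_2(A)^{-1} T_1(A)$ as symmetric operators.

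Using $T_1(A) = -A + \gs_1(A) g$ and $\Ric = 2 A + \gs_1(A) g$, both are polynomials in $A$ and hence commute; the pointwise inequality is therefore equivalent to the operator bound
\begin{align*}
\Ric \circ T_1(A) \;=\; -2 A^2 + \gs_1(A)\, A + \gs_1(A)^2 g \;\geq\; 3 \gs_2(A)\, g.
\end{align*}
To exhibit this I would decompose $A = Z + \tfrac{1}{4}\gs_1(A) g$ with $\tr(Z) = 0$. After a short expansion the middle terms cancel and the claim collapses to $-2 Z^2 + \tfrac{9}{8}\gs_1(A)^2 g \geq 3 \gs_2(A) g$.

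The key step, and the only place dimension $n=4$ enters crucially, is the pointwise estimate $Z^2 \leq \tfrac{3}{4}|Z|^2 g$ valid for any trace-free symmetric $Z$ on a four-dimensional inner product space. This follows from a Lagrange multiplier argument: to bound the largest eigenvalue $\mu_1$ of $Z$ subject to $\mu_1 + \mu_2 + \mu_3 + \mu_4 = 0$ and $\sum \mu_i^2 = |Z|^2$, use $\mu_1 = -(\mu_2 + \mu_3 + \mu_4)$ with Cauchy--Schwarz to get $\mu_1^2 \leq 3(|Z|^2 - \mu_1^2)$, whence $\mu_1^2 \leq \tfrac{3}{4}|Z|^2$. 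Combining this with the standard identities $|Z|^2 = |A|^2 - \tfrac{1}{4}\gs_1(A)^2$ and $2\gs_2(A) = \gs_1(A)^2 - |A|^2$ yields exact equality of the two sides of the reduced claim, finishing the inequality.

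The equality statement is obtained by tracking the two inequalities in the chain: equality in Andrews forces $(M^4,g) \cong (S^4, g_{S^4})$ (or $\phi \equiv 0$), which is consistent with the rest of the pointwise computation since on the round $S^4$ we have $A = \tfrac{1}{2} g$ and $Z \equiv 0$. I do not anticipate a genuine obstacle; the whole argument is essentially an algebraic verification that the dimension-4 coefficients (the $\tfrac{n}{n-1} = \tfrac{4}{3}$ in Andrews and the $\tfrac{3}{4}$ in the eigenvalue bound) line up exactly, and the main care needed is bookkeeping between $\gs_1, \gs_2$ and $|A|^2$.
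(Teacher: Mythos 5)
Your proposal is correct and follows essentially the same route as the paper's proof: reduce via Andrews' inequality to the pointwise operator bound $3\,\Ric^{-1} \leq \gs_2(A)^{-1}T_1(A)$, rewrite it with $Z = A - \tfrac14\gs_1(A) g$ so that it collapses to $-2Z^2 + \tfrac98 \gs_1(A)^2 g \geq 3\gs_2(A) g$, and close using the trace-free eigenvalue bound $Z^2 \leq \tfrac34|Z|^2 g$ in dimension four together with $2\gs_2(A) = \gs_1(A)^2 - |A|^2$. You merely make explicit a few steps the paper leaves implicit (the mean-zero reduction, the Cauchy--Schwarz derivation of the eigenvalue bound, and the identity $|Z|^2 = |A|^2 - \tfrac14\gs_1(A)^2$).
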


\begin{prop} \label{CYgeodconv} The functional $F$ is geodesically convex.

\begin{proof} It follows from \cite{ChangYangMoserVol} that for a path of conformal metrics $u = u(t)$,
\begin{align} \label{Fdot}
\frac{d}{dt} F[u] =&\ \int_M u_t\left[ - \gs_2(g_u^{-1} A_u) + \bar{\gs}
\right] dV_u.
\end{align}
Assuming the path is a geodesic, then differentiating again and using Lemma \ref{metricsplit} we have
\begin{align*}
\frac{d^2}{dt^2} F[u] =&\ \frac{d}{dt} \int_M u_t \left[ - \gs_2(g_u^{-1} A_u) +
\bar{\gs} \right] dV_u\\
=&\  \gs \frac{d}{dt} \int_M  u_t V_u^{-1} dV_u\\
=&\ \gs \int_M \left[u_{tt} V_u^{-1}  + V_u^{-2} u_t \left(
\int_M 4 u_t dV_u \right) - 4 V_u^{-1} u_t^2 \right] dV_u\\
=&\ \gs V_u^{-1} \left[ \int_M \frac{1}{\gs_2(g_u^{-1} A_u)} \IP{T_1 (g_u^{-1} A_u),
\N u_t \otimes \N u_t}_u dV_u \right.\\
&\ \left. \qquad - 4 \left( \int_M u_t^2 dV_u - V_u^{-1} \left( \int_M
u_t dV_u \right)^2 \right) \right]\\
\geq&\ 0,
\end{align*}
where the last line follows from Corollary \ref{n4andrews}.
\end{proof}
\end{prop}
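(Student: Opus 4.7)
The plan is to show convexity by computing $\frac{d^2}{dt^2} F[u(t)]$ along a geodesic and reducing the resulting expression to the sharp weighted Poincaré inequality in Corollary \ref{n4andrews}. The key conceptual point is that the first variation formula (\ref{CYvar}) naturally splits into two pieces, one of which is a conserved quantity along the geodesic by Lemma \ref{metricsplit}, while the other contributes the ``Rayleigh quotient'' that matches exactly the form of Andrews' inequality.

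First I would record that along any path $u_t \in \C$, we have
\begin{align*}
\frac{d}{dt} F[u_t] = -\int_M u_t\, \sigma_2(g_u^{-1}A_u)\, dV_u + \sigma V_u^{-1} \int_M u_t\, dV_u,
\end{align*}
using $\bar\sigma = \sigma V_u^{-1}$ and the Brendle--Viaclovsky formula. Assume now $u_t$ is a geodesic. Lemma \ref{metricsplit} immediately gives that $\int u_t\,\sigma_2(g_u^{-1}A_u)\,dV_u$ is constant, so its derivative vanishes and I only need to differentiate the second term. Using the variation $\frac{d}{dt} dV_u = -4 u_t\, dV_u$ (in dimension four) and the product rule yields
\begin{align*}
\frac{d^2}{dt^2} F[u_t] = \sigma V_u^{-1} \left[ \int_M u_{tt}\, dV_u - 4 \int_M u_t^2\, dV_u + 4 V_u^{-1} \Big(\int_M u_t\, dV_u\Big)^2 \right].
\end{align*}
Now substitute the geodesic equation (\ref{geodesics}), rewritten via (\ref{note}) and the conformal scaling of the inner product as
\begin{align*}
u_{tt} = \frac{1}{\sigma_2(g_u^{-1}A_u)} \IP{T_1(g_u^{-1}A_u), \N u_t \otimes \N u_t}_u,
\end{align*}
to replace the $u_{tt}$ term. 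This yields the expression appearing in the statement of Proposition \ref{CYgeodconv}.

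Finally, apply Corollary \ref{n4andrews} to the mean-zero function $\phi = u_t - V_u^{-1}\int u_t\, dV_u$; since $\nabla \phi = \nabla u_t$ and $\int \phi^2 = \int u_t^2 - V_u^{-1}(\int u_t)^2$, the corollary gives exactly
\begin{align*}
\int_M \frac{1}{\sigma_2(g_u^{-1}A_u)} \IP{T_1(g_u^{-1}A_u), \N u_t \otimes \N u_t}_u dV_u \geq 4 \int_M u_t^2\, dV_u - 4 V_u^{-1}\Big(\int_M u_t\, dV_u\Big)^2,
\end{align*}
and hence $\frac{d^2}{dt^2} F[u_t] \geq 0$, since $\sigma > 0$ by the admissibility assumption.

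The only genuine obstacle is ensuring the sharp weighted inequality matches the geometric term produced by the geodesic equation; this is the content of Corollary \ref{n4andrews}, whose derivation from Andrews' Ricci-weighted Poincaré inequality requires the pointwise operator bound $3\operatorname{Ric}^{-1} \leq \sigma_2(A)^{-1} T_1(A)$, reducing by a Lagrange multiplier argument to a trace-free estimate $Z \circ Z \leq \frac{3}{4}|Z|^2 g$ valid for symmetric $4\times 4$ trace-free tensors. Once that algebraic inequality is in hand, the rest of the proof is the bookkeeping above.
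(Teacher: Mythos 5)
Your proof is correct and follows essentially the same route as the paper: differentiate the first variation formula, observe that Lemma \ref{metricsplit} kills the $\int u_t\,\sigma_2(g_u^{-1}A_u)\,dV_u$ contribution, substitute the geodesic equation for $u_{tt}$, and invoke Corollary \ref{n4andrews}. Your explicit reduction to a mean-zero $\phi = u_t - V_u^{-1}\int u_t\,dV_u$ is exactly what the paper's appeal to Corollary \ref{n4andrews} relies on, just spelled out.
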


\section{Estimates of the Geodesic Equation} \label{geeodestimates}

In this section we establish several fundamental properties of the geodesic
equation (\ref{pathgeod}).   Once again, for future reference we will consider a more general
equation which reduces to (\ref{pathgeod}) when $n = 4$ and $k=2$:
\begin{align*}
u_{tt} = \frac{1}{\gs_k(A_u)}\IP{T_{k-1}(A_u), \N u_t \otimes \N u_t}.
\end{align*}

To begin, we define a certain regularization of this equation.  In
particular let
\begin{align*}
\Phi (u) := u_{tt} \gs_k(A_u) - \IP{T_{k-1}(A_u), \N
u_t
\otimes \N u_t}.
\end{align*}
Furthermore, let
\begin{align*}
\Phi_{\ge}(u) = (1+\ge) u_{tt} \gs_k(A_u) - \IP{T_{k-1}(A_u), \N u_t \otimes \N
u_t}.
\end{align*}
We will fix two parameters $\ge,s$, and study a priori estimates for
\begin{align*}
 \Phi_{\ge}(u(\cdot,\cdot,s)) = s f.
\end{align*}

To obtain estimates though we will simply fix a function $f \in C^{\infty}(M
\times [0,1])$ and study the equation
\begin{align} \label{regularizedgeod}
\mathcal G_{f}^{\ge}(u) =  \Phi_{\ge}(u) - f = 0. \qquad (\star_{\ge,f}).
\end{align}
As remarked on above, in the setting of Mabuchi geodesics, as observed by Semmes
\cite{Semmes} if one complexifies the time direction the equation admits an
interpretation as a certain modification of the tensor $A$ will show up
naturally in the
linearized operator.  Let
\begin{align*}
E = E^{\ge}_u = (1+\ge) u_{tt} A_u - \N u_t \otimes \N u_t.
\end{align*}

\begin{prop} \label{geodMA} $u \in C^2$ satisfies
($\star_{\ge,f}$) if and only
if
\begin{align*}
\left[ (1+\ge) u_{tt} \right]^{1-k} \gs_k(E^{\ge}_u) = f.
\end{align*}
\begin{proof} Using Lemma \ref{LAlemma} and homogeneity properties of elementary
symmetric polynomials we compute
\begin{align*}
\gs_k(E^{\ge}_u) =&\ \gs_k ( (1+\ge) u_{tt} A_u - \N u_t \otimes \N u_t)\\
=&\ \gs_k( (1+\ge) u_{tt} A_u) - \IP{T_{k-1}( (1 +\ge) u_{tt}A_u), \N u_t
\otimes
\N u_t}\\
=&\ \left[ (1 + \ge) u_{tt} \right]^{k-1} \left[ (1 +\ge) u_{tt} \gs_k(A_u) -
\IP{T_{k-1}(A_u),\N u_t \otimes \N u_t} \right].
\end{align*}
The proposition follows.
\end{proof}
\end{prop}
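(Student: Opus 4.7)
The plan is to reduce the equivalence to a direct algebraic identity, using Lemma \ref{LAlemma} (which expresses $\sigma_k(A - X \otimes X)$ in terms of $\sigma_k(A)$ and $\langle T_{k-1}(A), X \otimes X\rangle$) together with the homogeneity properties of the elementary symmetric polynomials and Newton transforms.

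First I would set $B = (1+\varepsilon) u_{tt} A_u$ and $X = \nabla u_t$, so that $E^{\varepsilon}_u = B - X \otimes X$. Applying Lemma \ref{LAlemma} with this $B$ and $X$ gives
\begin{align*}
\sigma_k(E^{\varepsilon}_u) = \sigma_k(B) - \langle T_{k-1}(B), X \otimes X\rangle.
\end{align*}
Next I would invoke the standard homogeneity relations $\sigma_k(cA) = c^k \sigma_k(A)$ and $T_{k-1}(cA) = c^{k-1} T_{k-1}(A)$ for a scalar $c$ (here $c = (1+\varepsilon)u_{tt}$), to obtain
\begin{align*}
\sigma_k(E^{\varepsilon}_u) = \bigl[(1+\varepsilon)u_{tt}\bigr]^{k} \sigma_k(A_u) - \bigl[(1+\varepsilon)u_{tt}\bigr]^{k-1} \langle T_{k-1}(A_u), \nabla u_t \otimes \nabla u_t\rangle.
\end{align*}
Factoring $\bigl[(1+\varepsilon)u_{tt}\bigr]^{k-1}$ out of both terms yields $\sigma_k(E^{\varepsilon}_u) = \bigl[(1+\varepsilon)u_{tt}\bigr]^{k-1} \Phi_{\varepsilon}(u)$, which is exactly the claimed identity after multiplying by $\bigl[(1+\varepsilon)u_{tt}\bigr]^{1-k}$; equivalence with $(\star_{\varepsilon,f})$ is then immediate.

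There is no real obstacle here --- the statement is essentially a polynomial identity in the variables $u_{tt}$, $A_u$, and $\nabla u_t$, and Lemma \ref{LAlemma} was precisely designed to carry out this computation. The only minor point worth flagging is that, to divide through by $\bigl[(1+\varepsilon)u_{tt}\bigr]^{k-1}$ in the final step, one tacitly assumes $u_{tt} \neq 0$; however, the underlying identity holds at the level of the polynomial expressions before any division, so the equivalence $(\star_{\varepsilon,f}) \iff \bigl[(1+\varepsilon)u_{tt}\bigr]^{1-k}\sigma_k(E^{\varepsilon}_u) = f$ is valid wherever the right-hand side is defined, in particular for admissible $u$ with $A_u \in \Gamma_k^+$ and the regularization $\varepsilon > 0$ rendering the $u_{tt}$-coefficient strictly positive on the solutions of interest.
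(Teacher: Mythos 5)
Your proof is correct and follows essentially the same route as the paper's: apply Lemma \ref{LAlemma} with $B = (1+\varepsilon)u_{tt}A_u$ and $X = \nabla u_t$, then use the homogeneity of $\sigma_k$ and $T_{k-1}$ to pull out the factor $[(1+\varepsilon)u_{tt}]^{k-1}$. Your closing remark about the tacit division by $[(1+\varepsilon)u_{tt}]^{k-1}$ is a fair point, but as you note this is harmless on admissible solutions.
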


We will say that a solution $u$ of ($\star_{\ge,f}$) is {\em admissible} if $E^{\ge}_u \in \Gamma_k^+$.  As we will see below, ($\star_{\ge,f}$) is elliptic
for admissible solutions.

\begin{lemma} \label{linearizedgeod} Let $u = u(s,\cdot) \in C^{\infty}(M \times [0,1])$ be
a one-parameter family
of smooth functions such that $\left. \frac{d}{ds} u(s,\cdot) \right|_{s=0} = v$.  Then
\begin{align*}
\left. \frac{d}{ds} u_{tt}^{1-k} \gs_k(E_{u(s,\cdot)})\right|_{s=0} =&\ \LL(v),
\end{align*}
where
\begin{gather} \label{linop}
\begin{split}
\LL(v)  =&\ (1+\ge)^{k-1} u_{tt}^{-1} f v_{tt} \\
&\ + u_{tt}^{1-k} \left<T_{k-1}(E_u^{\ge}), (1 + \ge) u_{tt} \left( \N^2 v + \N
v \otimes \N u + \N u
\otimes \N v - \IP{\N v,\N u} g \right) \right.\\
&\ \qquad \qquad \qquad \qquad \left. - \N v_t \otimes \N u_t - \N u_t \otimes
\N v_t + u_{tt}^{-1} v_{tt} \N u_t \otimes \N u_t \right>.
\end{split}
\end{gather}

\begin{proof} We compute
\begin{gather} \label{lin10}
\begin{split}
\frac{d}{ds}& u_{tt}^{1-k} \gs_k(E^{\ge}_{u_s})\\
=&\ (1-k) u_{tt}^{-k} \gs_k(E^{\ge}_u) v_{tt}
+ u_{tt}^{1-k} \IP{T_{k-1}(E^{\ge}_u), \frac{d}{ds}
E^{\ge}_u}\\
=&\ (1-k) u_{tt}^{-k} \gs_k(E^{\ge}_u) v_{tt}\\
&\ + u_{tt}^{1-k} \IP{T_{k-1}(E^{\ge}_u), (1+\ge)v_{tt}
A_u + (1 +\ge) u_{tt} \frac{d}{ds} A_u
- \N v_t \otimes \N u_t - \N u_t \otimes \N v_t}.
\end{split}
\end{gather}
The second term can be simplified using Lemma \ref{LAlemma} to
\begin{gather} \label{lin20}
\begin{split}
(1+\ge) u_{tt}^{1-k} & \IP{T_{k-1}(E^{\ge}_u), v_{tt} A_u}\\
=&\ v_{tt} (1+\ge) u_{tt}^{1-k} \left[
u_{tt}^{-1} (1+\ge)^{-1}
\IP{T_{k-1}(E^{\ge}_u), E^{\ge}_u + \N u_t \otimes \N u_t} \right]\\
=&\ v_{tt} u_{tt}^{-k} \left[ k \gs_k(E_u^{\ge}) + \IP{T_{k-1}(E_u), \N u_t
\otimes \N
u_t}
\right]\\
=&\ k v_{tt} u_{tt}^{-k} \gs_k(E^{\ge}_u) + v_{tt} u_{tt}^{-1} (1+\ge)^{k-1}\IP{
T_{k-1}(A_u), \N
u_t \otimes \N
u_t}\\
=&\ k v_{tt} u_{tt}^{-k} \gs_k(E^{\ge}_u) + v_{tt} \left[ (1+\ge)^k\gs_k(A_u) -
f (1+\ge)^{k-1} u_{tt}^{-1} \right]\\
=&\ v_{tt} \left[ u_{tt}^{-k} (k - 1) \gs_k(E_u^{\ge}) + (1+\ge)^k \gs_k(A_u)
\right].
\end{split}
\end{gather}
Hence the overall term involving $v_{tt}$ in (\ref{lin10}) is $v_{tt}
(1+\ge)^k\gs_k(A_u)$.  However we can furthermore express, again using the
geodesic equation and Lemma \ref{LAlemma}, that
\begin{align*}
(1+\ge)^k \gs_k(A_u) =&\ (1+\ge)^{k-1} u_{tt}^{-1} f + (1+\ge)^{k-1} u_{tt}^{-1}
\IP{T_{k-1}(A_u), \N u_t \otimes \N u_t}\\
=&\ (1+\ge)^{k-1} u_{tt}^{-1} f + u_{tt}^{-k} \IP{T_{k-1}(E_u^{\ge}), \N u_t
\otimes \N u_t}.
\end{align*}
Likewise we simplify the third term of (\ref{lin10}) as
\begin{align*}
 (u_{tt}+\ge)^{1-k}  \IP{T_{k-1}(E_u), (1 + \ge) u_{tt} \left( \N^2 v + \N v
\otimes \N u + \N u
\otimes \N v - \IP{\N v,\N u} g \right) }.
\end{align*}
Collecting these calculations yields the result.
\end{proof}
\end{lemma}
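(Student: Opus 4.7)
The plan is to attack this by straightforward differentiation in $s$, then to repeatedly invoke Lemma~\ref{LAlemma} together with the equation $(\star_{\ge,f})$ itself to collapse the $v_{tt}$ contributions into the compact form appearing in $\LL(v)$. Writing $u_{tt}^{1-k} \gs_k(E_u^{\ge})$ as a product and applying the chain rule, I get two contributions at $s=0$:
\begin{align*}
\frac{d}{ds}u_{tt}^{1-k}\gs_k(E_{u_s}^{\ge})\Big|_{s=0}
= (1-k) u_{tt}^{-k} v_{tt}\, \gs_k(E_u^{\ge}) + u_{tt}^{1-k}\IP{T_{k-1}(E_u^{\ge}),\tfrac{d}{ds}E_{u_s}^{\ge}\big|_{s=0}}.
\end{align*}
For the second factor, since $E_u^{\ge}=(1+\ge)u_{tt}A_u-\N u_t\otimes\N u_t$, the product rule gives $\frac{d}{ds}E_{u_s}^{\ge}|_{s=0} = (1+\ge)v_{tt}A_u + (1+\ge)u_{tt}\frac{d}{ds}A_{u_s}|_{s=0} - \N v_t\otimes\N u_t - \N u_t\otimes\N v_t$. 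From the conformal transformation law~(\ref{Au}) we directly read off $\frac{d}{ds}A_{u_s}|_{s=0} = \N^2 v + \N v\otimes\N u + \N u\otimes\N v - \IP{\N v,\N u}g$, which is already the bracketed expression multiplying $(1+\ge)u_{tt}$ inside $\LL(v)$. So the second, third, and fourth entries of the big inner product in (\ref{linop}) simply account for $(1+\ge)u_{tt}\frac{d}{ds}A_{u_s}|_{s=0} - \N v_t\otimes\N u_t - \N u_t\otimes\N v_t$.

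The heart of the calculation is to show that the remaining pieces --- the term $(1-k)u_{tt}^{-k}v_{tt}\gs_k(E_u^{\ge})$ and $(1+\ge)u_{tt}^{1-k}v_{tt}\IP{T_{k-1}(E_u^{\ge}),A_u}$ --- combine to yield exactly the first term $(1+\ge)^{k-1}u_{tt}^{-1}f\,v_{tt}$ of $\LL(v)$ plus the last entry $u_{tt}^{-1}v_{tt}\N u_t\otimes\N u_t$ of the inner product. To handle the latter inner product, I would substitute $A_u=((1+\ge)u_{tt})^{-1}(E_u^{\ge}+\N u_t\otimes\N u_t)$ to obtain
\begin{align*}
(1+\ge)u_{tt}^{1-k}\IP{T_{k-1}(E_u^{\ge}),A_u}
= u_{tt}^{-k}\bigl[k\,\gs_k(E_u^{\ge}) + \IP{T_{k-1}(E_u^{\ge}),\N u_t\otimes\N u_t}\bigr],
\end{align*}
using the Euler identity $\IP{T_{k-1}(B),B}=k\gs_k(B)$. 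Multiplied by $v_{tt}$ and combined with $(1-k)u_{tt}^{-k}v_{tt}\gs_k(E_u^{\ge})$, the $\gs_k(E_u^{\ge})$ pieces collapse to a single $v_{tt}u_{tt}^{-k}\gs_k(E_u^{\ge})$, which via Proposition~\ref{geodMA} equals $(1+\ge)^{k-1}u_{tt}^{-1}f\,v_{tt}$ on a solution of $(\star_{\ge,f})$. The residual $v_{tt}u_{tt}^{-k}\IP{T_{k-1}(E_u^{\ge}),\N u_t\otimes\N u_t}$ is precisely $u_{tt}^{1-k}\IP{T_{k-1}(E_u^{\ge}),u_{tt}^{-1}v_{tt}\N u_t\otimes\N u_t}$, which is the last entry in the inner product of (\ref{linop}).

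The only step requiring care beyond bookkeeping is the translation from $E_u^{\ge}$ back to $A_u$ and $f$; Lemma~\ref{LAlemma} guarantees the compatibility (indeed its scaled form $\IP{T_{k-1}(E_u^{\ge}),\N u_t\otimes\N u_t}=((1+\ge)u_{tt})^{k-1}\IP{T_{k-1}(A_u),\N u_t\otimes\N u_t}$ gives the alternative bookkeeping if desired). I do not anticipate a serious obstacle here --- the key conceptual point was identified in the paragraph preceding the lemma, namely that complexifying the time direction makes $E_u^{\ge}$ the natural linearization variable --- but one must be careful with exponents of $u_{tt}$ and $(1+\ge)$, and must not confuse inner products with respect to $g$ versus $g_u$ (here everything is with respect to $g$, which is why $T_{k-1}(A_u)$ and $T_{k-1}(E_u^{\ge})$ are the objects appearing rather than their $g_u$-analogues).
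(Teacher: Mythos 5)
Your proof is correct and takes essentially the same approach as the paper's: differentiate in $s$, read off $\tfrac{d}{ds}A_{u_s}$ from the conformal transformation law~(\ref{Au}), and collapse the $v_{tt}$-contributions by substituting $A_u = ((1+\ge)u_{tt})^{-1}(E_u^{\ge} + \N u_t\otimes\N u_t)$, invoking the Euler identity $\IP{T_{k-1}(B),B}=k\gs_k(B)$, and applying $(\star_{\ge,f})$ via Proposition~\ref{geodMA}. Your bookkeeping is in fact slightly more direct than the paper's step~(\ref{lin20}), which passes through the intermediate expression $(1+\ge)^k\gs_k(A_u)$ and then reverses that substitution, whereas you keep the $\gs_k(E_u^{\ge})$ and $\IP{T_{k-1}(E_u^{\ge}),\N u_t\otimes\N u_t}$ pieces separate throughout and identify them with the corresponding entries of~(\ref{linop}) at the end.
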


\begin{lemma} \label{geodesicellipticity} Given $f \geq 0$, equation
$(\star_{\ge,f})$ for admissible $u$ is strictly
elliptic for $\ge > 0$, and
weakly elliptic for $\ge = 0$.
\begin{proof} We compute the principal symbol of $\LL$.  We will ignore
the first term of (\ref{linop}), which has weakly positive symbol.  Now fix a
vector $V = \left(\gl, X \right) \in T [0,1] \times TM$.  It follows from (\ref{linop}) that the principal symbol of $\LL$ acts via
\begin{align*}
L(V,V) =&\ u_{tt}^{1-k} \IP{T_{k-1}(E_u^{\ge}), (1+\ge) u_{tt} X \otimes X - \N u_t \otimes  (\gl X) - (\gl X) \otimes \N u_t + u_{tt}^{-1} \N u_t \otimes \N u_t (\gl^2)}
\end{align*}
It follows from the Cachy-Schwarz inequality that for any $\rho > 0$, as an inequality of matrices one has
\begin{align*}
- \gl X \otimes \N u_t - \gl \N u_t \otimes X \leq&\ \rho X \otimes X +
\rho^{-1} \gl^2 \N u_t \otimes \N u_t
\end{align*}
Applying this inequality with $\rho = (1 + \frac{\ge}{2}) u_{tt}$ yields
\begin{align*}
(1+\ge) u_{tt} X \otimes X - \N u_t \otimes  (\gl X) - (\gl X) \otimes \N u_t + u_{tt}^{-1} \N u_t \otimes \N u_t (\gl^2) \geq \frac{\ge}{2} u_{tt} X \otimes X + \frac{\ge}{2} u_{tt}^{-1} \gl^2.
\end{align*}
Since $u$ is admissible, we have $T_{k-1}(E^{\ge}_u) > 0$, and the result follows.
\end{proof}
\end{lemma}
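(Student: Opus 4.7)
The plan is to extract the principal symbol of the linearization $\LL$ from Lemma \ref{linearizedgeod} and verify its positivity via a matrix Cauchy--Schwarz inequality, leveraging admissibility to guarantee positive-definiteness of $T_{k-1}(E^{\ge}_u)$.

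First, I would isolate the principal part of the expression (\ref{linop}) by retaining only terms containing two derivatives of $v$: $v_{tt}$, $\N v_t$, and $\N^2 v$. With the symbol substitutions $v_{tt}\mapsto \lambda^2$, $\N v_t\mapsto \lambda X$, $\N^2 v\mapsto X\otimes X$ for the dual vector $V=(\lambda,X)$, the non-negative contribution $(1+\ge)^{k-1} u_{tt}^{-1} f\lambda^2$ from the first summand of $\LL$ can be set aside, and the remainder collapses to
\begin{align*}
L(V,V) = u_{tt}^{1-k}\IP{T_{k-1}(E^{\ge}_u),\, M(\lambda,X)},
\end{align*}
where
\begin{align*}
M(\lambda,X) = (1+\ge)u_{tt}\,X\otimes X - \lambda\bigl(X\otimes \N u_t + \N u_t\otimes X\bigr) + u_{tt}^{-1}\lambda^2\,\N u_t\otimes \N u_t.
\end{align*}
Admissibility, $E^{\ge}_u\in\gG_k^+$, together with Lemma \ref{newtonprops} gives $T_{k-1}(E^{\ge}_u)>0$ as a matrix, so positivity of $L$ reduces to bounding $M$ from below.

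Next, observe that when $\ge=0$ the matrix $M$ is a perfect square, $M = Y\otimes Y$ with $Y = u_{tt}^{1/2} X - u_{tt}^{-1/2}\lambda\,\N u_t$; this is manifestly non-negative but only rank-one, which is precisely the weak ellipticity claim. For $\ge>0$ the plan is to apply the Young inequality
\begin{align*}
-\lambda\bigl(X\otimes \N u_t + \N u_t\otimes X\bigr) \leq \rho\,X\otimes X + \rho^{-1}\lambda^2\,\N u_t\otimes \N u_t
\end{align*}
with $\rho=(1+\tfrac{\ge}{2})u_{tt}$, which absorbs the cross term and leaves a strictly positive residue $\tfrac{\ge}{2}u_{tt}\,X\otimes X + \tfrac{\ge/2}{1+\ge/2}u_{tt}^{-1}\lambda^2\,\N u_t\otimes \N u_t$ in $M$. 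Pairing with $T_{k-1}(E^{\ge}_u)>0$ then yields $L(V,V)>0$ for nonzero $V$.

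The only subtle point is the choice of $\rho$: it must satisfy $u_{tt}<\rho<(1+\ge)u_{tt}$ so that the residues in \emph{both} the $X\otimes X$ and $\N u_t\otimes \N u_t$ directions are strictly positive, and this is possible precisely when $\ge>0$. The symmetric split $\rho=(1+\ge/2)u_{tt}$ is the natural choice, and as $\ge\to 0$ the admissible window collapses to a single value reflecting the rank-one identity $M=Y\otimes Y$ above --- a structural explanation of why the unregularized equation is only weakly elliptic, and of why the $(1+\ge)u_{tt}$ perturbation of the $v_{tt}$-coefficient (rather than merely $f\to f+\ge$) is the right way to break the degeneracy.
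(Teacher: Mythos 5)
Your argument is essentially identical to the paper's: same extraction of the principal symbol from (\ref{linop}), same matrix Cauchy--Schwarz estimate with the choice $\rho=(1+\tfrac{\ge}{2})u_{tt}$, and same appeal to $T_{k-1}(E^{\ge}_u)>0$ via admissibility. Your added observation that at $\ge=0$ the symbol matrix factors as $M=Y\otimes Y$ with $Y=u_{tt}^{1/2}X-u_{tt}^{-1/2}\lambda\,\N u_t$ is a nice structural explanation of the degeneracy (and your computation of the residue $\tfrac{\ge/2}{1+\ge/2}u_{tt}^{-1}\lambda^2$ is in fact more precise than the paper's), but the route is the same.
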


\subsection{\texorpdfstring{$C^0$ estimate}{C0 estimate}}
To prove a $C^0$-estimate we begin with two technical lemmas:

\begin{lemma} \label{Ltt} Suppose $\phi = \phi(t)$.  Then
\begin{align*}
\LL \phi =&\ \phi_{tt} (1+\ge)^{k} \gs_k(A_u).
\end{align*}
\begin{proof} We directly compute using (\ref{linop}), Lemma \ref{LAlemma}, and
the geodesic equation that
\begin{align*}
\LL \phi =&\ \phi_{tt} \left\{ (1+\ge)^{k-1} u_{tt}^{-1} f + u_{tt}^{1-k}
\IP{T_{k-1}(E_u^{\ge}), u_{tt}^{-1} \N u_t \otimes \N u_t} \right\}\\
=&\ \phi_{tt} \left\{ (1+\ge)^{k-1} u_{tt}^{-1} f + u_{tt}^{1-k}
\IP{T_{k-1}((1+\ge) u_{tt} A_u), u_{tt}^{-1} \N u_t \otimes \N u_t} \right\}\\
=&\ \phi_{tt} (1+\ge)^{k-1} \left\{ u_{tt}^{-1} f + u_{tt}^{-1}
\IP{T_{k-1}(A_u), \N u_t \otimes \N u_t} \right\}\\
=&\ \phi_{tt} (1+\ge)^{k} \gs_k(A_u).
\end{align*}
\end{proof}
\end{lemma}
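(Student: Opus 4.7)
The plan is to substitute $\phi = \phi(t)$ directly into the expression (\ref{linop}) for $\LL$ and observe that every term containing a spatial gradient of $\phi$ (namely $\N \phi$, $\N^2 \phi$, and $\N \phi_t$) vanishes, since $\phi$ depends on $t$ only. This immediately reduces $\LL \phi$ to
\begin{align*}
\LL \phi = \phi_{tt}\left[ (1+\ge)^{k-1} u_{tt}^{-1} f + u_{tt}^{-k} \IP{T_{k-1}(E_u^{\ge}), \N u_t \otimes \N u_t} \right].
\end{align*}

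Next, I would simplify the inner product using two ingredients already in hand. First, by the definition $E_u^{\ge} = (1+\ge) u_{tt} A_u - \N u_t \otimes \N u_t$, Lemma \ref{LAlemma} gives
\begin{align*}
\IP{T_{k-1}(E_u^{\ge}), \N u_t \otimes \N u_t} = \IP{T_{k-1}((1+\ge)u_{tt} A_u), \N u_t \otimes \N u_t},
\end{align*}
and the homogeneity of $T_{k-1}$ (it is a degree $k-1$ polynomial in its argument) pulls the scalar factor out as $\left((1+\ge) u_{tt}\right)^{k-1}$. Thus the expression in brackets becomes
\begin{align*}
(1+\ge)^{k-1} u_{tt}^{-1}\Big[ f + \IP{T_{k-1}(A_u), \N u_t \otimes \N u_t} \Big].
\end{align*}

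Finally, I would invoke the defining relation $(\star_{\ge,f})$, which states precisely that $f + \IP{T_{k-1}(A_u), \N u_t \otimes \N u_t} = (1+\ge) u_{tt}\gs_k(A_u)$. Substituting this in collapses the bracket to $(1+\ge)^k \gs_k(A_u)$, yielding the claim. There is no real obstacle here: the lemma is a computational identity, and the only subtlety is recognizing that the combination $f + \IP{T_{k-1}(A_u), \N u_t \otimes \N u_t}$ appearing after simplification is exactly what the equation replaces by $(1+\ge) u_{tt} \gs_k(A_u)$. I would be careful to keep track of the powers of $(1+\ge)$ and $u_{tt}$ throughout, since they must combine correctly to give the clean right-hand side.
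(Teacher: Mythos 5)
Your proposal is correct and follows exactly the same route as the paper: substitute $\phi=\phi(t)$ into (\ref{linop}) so that all spatial-gradient terms drop out, apply Lemma \ref{LAlemma} to replace $T_{k-1}(E_u^{\ge})$ by $T_{k-1}((1+\ge)u_{tt}A_u)$ inside the pairing with $\N u_t\otimes\N u_t$, pull out the scalar by $(k-1)$-homogeneity of $T_{k-1}$, and finally invoke $(\star_{\ge,f})$ to collapse $f+\IP{T_{k-1}(A_u),\N u_t\otimes\N u_t}$ to $(1+\ge)u_{tt}\gs_k(A_u)$. The bookkeeping of the powers of $u_{tt}$ and $(1+\ge)$ in your intermediate lines matches the paper's.
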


\begin{lemma} \label{Lucalc} Let $u$ be an admissible solution to
($\star_{\ge,f}$).  Then
\begin{align*}
  \LL u =&\ (k+1) (1+\ge)^{k-1} f + (1+\ge) u^{2-k}_{tt} \IP{T_{k-1}(E_u), - A +
\N u \otimes \N u - \frac{1}{2} \brs{\N u}^2 g}.
\end{align*}
\begin{proof} To begin we directly compute using (\ref{linop}) that
\begin{align*}
 \LL u =&\ (1+\ge)^{k-1} f + u_{tt}^{1-k} \left<T_{k-1}(E_u^{\ge}), (1+\ge)
u_{tt} \left( \N^2 u + 2 \N u \otimes \N u - \brs{\N u}^2 g \right) - \N u_t
\otimes \N u_t \right>.
\end{align*}
For the second term we simplify
\begin{align*}
 (1+\ge) u^{2-k}_{tt}&  \IP{T_{k-1}(E^{\ge}_u),\N^2 u}\\
=&\ (1+\ge) u^{2-k}_{tt} \IP{T_{k-1}(E_u),
A_u - A - \N u \otimes \N u + \frac{1}{2} \brs{\N u}^2 g}\\
 =&\ u_{tt}^{2-k} \IP{T_{k-1}(E_u), u_{tt}^{-1} \left[ E_u + \N
u_t \otimes \N u_t \right] }\\
&\ + (1+\ge) u_{tt}^{2-k}\IP{T_{k-1}(E), - A - \N u \otimes \N u + \frac{1}{2}
\brs{\N u}^2
g}\\
=&\ k u_{tt}^{1-k} \gs_k(E) + u_{tt}^{1-k} \IP{T_{k-1}(E_u), \N u_t \otimes \N
u_t} \\
&\ + (1+\ge) u^{2-k}_{tt}
\IP{T_{k-1}(E_u),
-A - \N u \otimes \N u + \frac{1}{2} \brs{\N u}^2 g}\\
=&\ k (1+\ge)^{k-1} f + u_{tt}^{1-k} \IP{T_{k-1}(E_u), \N u_t \otimes \N u_t}\\
&\ + (1+\ge) u^{2-k}_{tt} \IP{T_{k-1}(E_u),
-A - \N u \otimes \N u + \frac{1}{2} \brs{\N u}^2 g}.
\end{align*}
Combining these calculations yields the result.
\end{proof}
\end{lemma}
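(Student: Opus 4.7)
The plan is to substitute $v = u$ directly into the expression for $\mathcal{L}(v)$ from Lemma \ref{linearizedgeod} and then simplify by invoking the conformal transformation formula (\ref{Au}) for the Schouten tensor. Setting $v = u$ in (\ref{linop}), the first term immediately yields $(1+\ge)^{k-1} u_{tt}^{-1} f \cdot u_{tt} = (1+\ge)^{k-1} f$. In the inner product the symmetric combinations simplify via $\N v \otimes \N u + \N u \otimes \N v \to 2 \N u \otimes \N u$ and $\IP{\N v,\N u} g \to \brs{\N u}^2 g$, while the time-derivative terms produce a net contribution $-\N u_t \otimes \N u_t$ from $-\N v_t \otimes \N u_t - \N u_t \otimes \N v_t + u_{tt}^{-1} v_{tt} \N u_t \otimes \N u_t$.

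Next, I would use $A_u = A + \N^2 u + \N u \otimes \N u - \frac{1}{2}\brs{\N u}^2 g$ to replace $\N^2 u$ by $A_u - A - \N u \otimes \N u + \frac{1}{2}\brs{\N u}^2 g$. The key algebraic observation is that the coefficient $(1+\ge) u_{tt} A_u$ that emerges is exactly $E_u + \N u_t \otimes \N u_t$ by the definition of $E_u = E_u^\ge$. Consequently the $\N u_t \otimes \N u_t$ piece left over from the first step is absorbed, so the full inner product reduces to
\[
\IP{T_{k-1}(E_u),\, E_u + (1+\ge) u_{tt} \bigl( - A + \N u \otimes \N u - \tfrac{1}{2} \brs{\N u}^2 g \bigr)}.
\]

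The final step invokes the Euler-type identity $\IP{T_{k-1}(E_u), E_u} = k \gs_k(E_u)$ combined with Proposition \ref{geodMA}, which states $u_{tt}^{1-k} \gs_k(E_u) = (1+\ge)^{k-1} f$. This contributes an additional $k (1+\ge)^{k-1} f$, which added to the initial $(1+\ge)^{k-1} f$ yields the stated coefficient $(k+1)(1+\ge)^{k-1} f$. The remaining piece $(1+\ge) u_{tt}^{2-k} \IP{T_{k-1}(E_u), -A + \N u \otimes \N u - \tfrac{1}{2} \brs{\N u}^2 g}$ is precisely the second term in the statement.

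There is no genuine obstacle here: the entire argument is a bookkeeping exercise, and the only subtlety is noticing that the residual $-\N u_t \otimes \N u_t$ term produced by symmetrizing the linearization at $v = u$ is exactly cancelled by the $\N u_t \otimes \N u_t$ absorbed into $E_u$. That cancellation is what allows the formula to collapse so cleanly to a single inner product involving $-A + \N u \otimes \N u - \tfrac{1}{2} \brs{\N u}^2 g$, which is the ``non-Hessian'' part of the conformal variation of $A$.
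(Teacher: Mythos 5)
Your proof is correct and follows essentially the same route as the paper: substitute $v=u$ into the linearization \eqref{linop}, rewrite $\N^2 u$ via the Schouten transformation \eqref{Au}, recognize $(1+\ge)u_{tt}A_u = E_u^{\ge} + \N u_t\otimes \N u_t$ so the residual $\N u_t\otimes\N u_t$ terms cancel, and finish with the Euler identity $\IP{T_{k-1}(E_u),E_u}=k\gs_k(E_u)$ together with Proposition~\ref{geodMA}. The only difference is cosmetic bookkeeping — you collapse to a single inner product and spot the $E_u$ block directly, whereas the paper isolates the $\N^2 u$ piece and recombines term by term — but the ingredients and the order of cancellation are the same.
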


\begin{prop} \label{C0est} Let $u$ be an admissible solution to
($\star_{\ge,f}$).  Then
\begin{align*}
\sup_{M \times [0,1]} \brs{u} \leq C(u_{|M \times \{0,1\}}, \max_M f).
\end{align*}
\begin{proof} We first observe that an admissible solution to
(\ref{regularizedgeod})
satisfies $u_{tt} \geq 0$, and hence by convexity one  has $\sup_{M \times
[0,1]} u \leq \sup_{M \times \{0,1\}} u$.  To obtain the lower bound, fix a
constant $\gL$ and let
\begin{align*}
 \Psi = u + \gL t(1-t).
\end{align*}
Observe that at an
interior spacetime minimum of $\Psi$ one has
\begin{align*}
 0 = \N u, \qquad \N^2 u > 0.
\end{align*}
Using this and Lemma \ref{Lucalc} yields, at such a spacetime minimum,
\begin{align*}
 \LL \Psi =&\ (k+1) (1+\ge)^{k-1} f - (1+\ge) u_{tt}^{2-k} \IP{T_{k-1}(E_u),
A}\\
 &\  - 2 \gL \left[ (1+\ge)^{k-1} u_{tt}^{-1} f + u_{tt}^{1-k}
\IP{T_{k-1}(E_u^{\ge}), u_{tt}^{-1} \N u_t \otimes \N u_t} \right].
\end{align*}
Next we claim
\begin{align*}
\Psi_{tt} \N^2 \Psi - \N \Psi_t \otimes \N \Psi_t \geq 0.
\end{align*}
Since we are at a minimum for $\Psi$, $\Psi_{tt} \N^2 \Psi$ is a positive
semidefinite matrix.  The expression above is thus the difference between a
positive semidefinite
matrix and a negative definite rank $1$ matrix.  The lemma follows if we
establish positivity in the nondegenerate direction of the rank $1$ matrix we
subtracted, i.e. $\N \Psi_t$.  In particular it then suffices to show
\begin{align*}
\Psi_{tt} \N^{k} \N^l \Psi \N_k \Psi_t \N_l \Psi_t - \brs{\N \Psi_t}^4 \geq 0.
\end{align*}
To establish this we use that $\Psi$ is actually a spacetime minimum.  This
implies
that the spacetime Hessian is positive semidefinite.  Testing this condition
against the vector $- \sqrt{\Psi_{tt}} \N \Psi_t \oplus \frac{\brs{\N
\Psi_t}^2}{\sqrt{\Psi_{tt}}} \frac{\del}{\del t}$ yields
\begin{align*}
0 \leq&\ \Psi_{tt} \N^k \N^l \Psi \N_k \Psi_t \N_l \Psi_t - 2 \brs{\N \Psi_t}^4
+ \brs{\N
\Psi_t}^4,
\end{align*}
as required.  However, using the explicit form of $\Psi$ we see that this
implies
\begin{align*}
 (u_{tt} - \gL) \N^2 u - \N u_t \otimes \N u_t \geq 0,
\end{align*}
which since $\N^2 u > 0$ implies
\begin{align*}
 u_{tt} \N^2 u - \N u_t \otimes \N u_t \geq 0.
\end{align*}
Hence $E_u \geq u_{tt} A$, and then we obtain using Lemma
\ref{newtonprops} that
\begin{align*}
u_{tt}^{2-k} \IP{T_{k-1}(E_u),A} =&\ u_{tt}^{2-k} \Sigma(E_u,\dots,E_u,A)\\
\geq&\ u_{tt}^{2-k} \Sigma(u_{tt} A,\dots,u_{tt} A, A)\\
=&\ u_{tt} \gs_k(A)\\\
\geq&\ 0.
\end{align*}
We can also simplify
\begin{align*}
u_{tt}^{1-k} \IP{T_{k-1}(E_u^{\ge}), u_{tt}^{-1} \N u_t \otimes \N u_t} =&\
(1+\ge)^{k-1} u_{tt}^{-1} \IP{T_{k-1}(A_u), \N u_t \otimes \N u_t}\\
=&\ - (1+\ge)^{k-1} u_{tt}^{-1} f + (1+\ge)^{k-1}\gs_k(A_u)
\end{align*}
Combining these observations yields, at the interior minimum,
\begin{align*}
\LL \Psi \leq&\ (k+1)(1+\ge)^{k-1} f - 2 \gL (1+\ge)^{k-1} \gs_k(A_u)\\
\leq&\ (k+1)(1+\ge)^{k-1} f - 2 \gL (1+\ge)^{k-1} \gs_k(A)\\
\leq&\ C f - 2 \gd \gL,
\end{align*}
for some constants $C$ and $\gd$ depending only on the background data and maximum of $f$.
Choosing $\gL$ sufficiently large with respect to these constants yields $\LL
\Psi < 0$.  Hence $\Psi$ cannot have an interior minimum, and the result
follows.
\end{proof}
\end{prop}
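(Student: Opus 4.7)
The plan is to prove the upper and lower bounds separately, with the upper bound essentially immediate from admissibility and the lower bound requiring a barrier argument.

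For the upper bound, I would first observe that any admissible solution satisfies $u_{tt} \geq 0$. This follows from the fact that $E_u^{\ge} = (1+\ge) u_{tt} A_u - \N u_t \otimes \N u_t \in \Gamma_k^+$ together with the positivity of $A_u$: if $u_{tt}$ were nonpositive at some point, then $(1+\ge) u_{tt} A_u$ would fail to dominate the rank-one tensor $\N u_t \otimes \N u_t$ in the Garding cone sense. Once $u_{tt} \geq 0$, convexity in $t$ for each fixed $x$ gives $\sup_{M \times [0,1]} u \leq \sup_{M \times \{0,1\}} u$.

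For the lower bound, I would use a standard barrier-plus-maximum-principle argument. Introduce $\Psi = u + \gL t(1-t)$ for a constant $\gL$ to be chosen, and note $\Psi = u$ on $M \times \{0,1\}$. If the infimum of $\Psi$ is attained at an interior point $(x_0, t_0)$, then $\N \Psi(x_0,t_0) = \N u(x_0,t_0) = 0$, and the spacetime Hessian of $\Psi$ is positive semidefinite. The idea is to compute $\LL \Psi$ at this point and show it is strictly negative for $\gL$ large, contradicting the fact that $\LL$ has nonnegative principal symbol (Lemma \ref{geodesicellipticity}) and that at a minimum the subprincipal contributions vanish because $\N \Psi = 0$. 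Using Lemmas \ref{Ltt} and \ref{Lucalc}, the barrier contributes $-2\gL (1+\ge)^{k} \gs_k(A_u)$ from the time-Hessian term, while $\LL u$ at the minimum reduces (since $\N u = 0$) to a bounded term $(k+1)(1+\ge)^{k-1} f$ plus the geometrically delicate piece $-(1+\ge) u_{tt}^{2-k}\IP{T_{k-1}(E_u), A}$.

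The main obstacle will be controlling the sign of this last term. The key technical step is to extract from the positivity of the spacetime Hessian of $\Psi$ the pointwise matrix inequality
\begin{align*}
u_{tt} \N^2 u - \N u_t \otimes \N u_t \geq 0.
\end{align*}
To see this, I would test the full spacetime Hessian against a carefully chosen vector of the form $\bigl(-\sqrt{u_{tt}}\, \N u_t\bigr) \oplus \bigl(|\N u_t|^2/\sqrt{u_{tt}} \bigr)\pp{}{t}$, which after expansion gives exactly the desired inequality. Combining this with the admissibility identity $E_u = (1+\ge) u_{tt}(A + \N^2 u) - \N u_t \otimes \N u_t$ (using $\N u = 0$ at the min) shows $E_u \geq u_{tt} A$ as symmetric matrices. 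Then the monotonicity of the Newton transform (Lemma \ref{newtonmonotone}) together with Lemma \ref{newtonprops}(3) yields
\begin{align*}
u_{tt}^{2-k}\IP{T_{k-1}(E_u), A} \geq u_{tt}\, \gs_k(A) \geq \gd > 0,
\end{align*}
where $\gd$ depends only on the background data. Collecting everything gives $\LL \Psi \leq C \max f - 2\gd \gL$ at the interior minimum, so choosing $\gL$ large forces a contradiction. Thus $\Psi$ attains its minimum on the parabolic boundary, providing the required lower bound on $u$ in terms of the boundary data and $\max_M f$.
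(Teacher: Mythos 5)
Your proposal follows the same strategy as the paper's proof and is essentially correct: $u_{tt} \geq 0$ together with convexity gives the upper bound; for the lower bound you use the barrier $\Psi = u + \Lambda t(1-t)$, extract $u_{tt}\N^2 u - \N u_t \otimes \N u_t \geq 0$ from the positivity of the spacetime Hessian at an interior minimum (which gives $E_u \geq u_{tt}A$), apply Newton-transform monotonicity, and choose $\Lambda$ large using the positive lower bound on $\sigma_k$. One small bookkeeping correction: the claim that $u_{tt}^{2-k}\IP{T_{k-1}(E_u),A} \geq u_{tt}\sigma_k(A) \geq \delta > 0$ with $\delta$ depending only on background data is stronger than available (it would require a $\Lambda$-independent lower bound on $u_{tt}$); the paper uses only $\geq 0$ here, which is correct and suffices, and the genuine $-2\delta\Lambda$ in the final estimate must come from $\sigma_k(A_u) \geq \sigma_k(A) \geq \delta_0$ in the barrier term $-2\Lambda(1+\ge)^k\sigma_k(A_u)$, a fact worth stating explicitly since it uses $A_u = A + \N^2 u \geq A$ at the minimum.
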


\begin{rmk}  In the following estimates, all bounds on solutions be understood to depend on
\begin{align*}
\max_M \big\{ f + \frac{|f_t|}{f} + \frac{|\N f|}{f} + \frac{|f_{tt}|}{f} + \frac{|\N^2 f|}{f} \big\},
\end{align*}
but this dependence will be suppressed to simplify the exposition.  \end{rmk}

\subsection{\texorpdfstring{$C^1$ estimates}{C1 estimates}}

\begin{prop} \label{utestimate} Given $u$ an admissible
solution to $(\star_{\ge,f})$, one has
 \begin{align*}
 \sup_{M \times [0,1]} \brs{u_t} \leq C.
 \end{align*}
\begin{proof} First we observe that, since $u_{tt} \geq 0$, it follows that
there is a constant such that
$u_t(0) \leq C$ by direct integration.  Now fix constants $\gL_1, \gL_2$ and
consider
\begin{align*}
\Phi(x,t) = u(x,t) - u(x,0) - \gL_1 t^2 +  \gL_2 t,
\end{align*}
where $\gL_1$ is chosen large below, and $\gL_2$ is chosen still larger so that
$\Phi(x,1) \geq 0$.  First note using (\ref{linop}) that
\begin{align*}
\LL u_0 =&\ u_{tt}^{1-k} \left<T_{k-1}(E_u^{\ge}), (1 + \ge) u_{tt} \left( \N^2
u_0 + \N u_0 \otimes \N u + \N u
\otimes \N u_0 - \IP{\N u_0,\N u} g \right) \right>.
\end{align*}
Combining this with Lemmas \ref{Ltt} and \ref{Lucalc} we obtain
\begin{align*}
\LL \Phi =&\ \LL u - \LL u_0 - \gL_1 \LL t^2\\
=&\ (1+\ge) u^{2-k}_{tt} \IP{T_{k-1}(E_u), - A - \N^2 u_0 + \N u \otimes \N u -
2 \N u \otimes \N u_0 - \frac{1}{2} \brs{\N u}^2 g + \IP{\N u_0, \N u} g}\\
&\ (k+1) (1+\ge)^{k-1} f - 2 \gL_1 (1+\ge)^{k-1} u_{tt}^{-1} f - 2 \gL_1
u_{tt}^{-k} \IP{T_{k-1}(E_u^{\ge}), \N u_t \otimes \N u_t}.
\end{align*}
Also we have $\N u = \N u_0$ at the minimum, so we can simplify to
\begin{align*}
\LL \Phi =&\ - u_{tt}^{2-k}
\IP{T_{k-1}(E),  A +
\N^2 u_0 + \N u_0 \otimes \N u_0 - \frac{1}{2}
\brs{\N u_0}^2 g}\\
&\ + (k+1) (1+\ge)^{k-1} f - 2 \gL_1 (1+\ge)^{k} \gs_k(A_u)\\
=&\ - u_{tt}^{2-k}
\IP{T_{k-1}(E),  A_{u_0}} + (k+1) (1+\ge)^{k-1} f - 2 \gL_1 (1+\ge)^{k}
\gs_k(A_u).
\end{align*}
At a spacetime minimum for $\Phi$ we have $\N^2(u - u_0) \geq 0$, and hence
\begin{align*}
0 \leq&\ \Phi_{tt} \N^2 \Phi - \N \Phi_t \otimes \N \Phi_t\\
=&\ (u_{tt} - 2 \gL_1) \N^2 (u - u_0) - \N u_t \otimes \N u_t\\
\leq&\ u_{tt} \N^2 (u - u_0) - \N u_t \otimes \N u_t.
\end{align*}
Using this yields
\begin{align*}
E_u =&\  \left[ (1+\ge) u_{tt} A_u - \N u_t \otimes \N u_t \right]\\
=&\ \left[ (1+\ge) u_{tt} \left( A + \N^2 u + \N u \otimes \N u -
\frac{1}{2} \brs{\N u}^2 g \right) - \N u_t \otimes \N u_t\right]\\
\geq&\ \left[ (1+\ge) u_{tt} \left( A + \N^2 u_0 + \N u \otimes \N u -
\frac{1}{2} \brs{\N u}^2 g \right) \right]\\
=&\ \left[ (1+\ge) u_{tt} \left( A + \N^2 u_0 + \N u_0 \otimes \N u_0 -
\frac{1}{2} \brs{\N u_0}^2 g \right) \right].
\end{align*}
It follows from Lemma \ref{newtonmonotone} that
\begin{align*}
\IP{T_{k-1}(E), A} \geq 0.
\end{align*}
A similar calculation shows that at the minimum point under consideration we
have
\begin{align*}
\gs_k(A_u) \geq \gs_k(A_{u_0}).
\end{align*}
Putting these estimates together yields
\begin{align*}
\LL \Phi \leq&\ (k+1) (1+\ge)^{k-1} f - 2 \gL_1 (1+\ge)^k \gs_k(A_{u_0}).
\end{align*}
If we choose $\gL_1$ sufficiently large with respect to the positive lower bound
for $\gs_k(A_{u_0})$ and the maximum of $f$ we obtain
$L \Phi < 0$, and hence $\Phi$ cannot have an interior
minimum.  Thus it follows that
$\Phi_t(x,0) \geq 0$ for all $x$, and thus the lower bound for $u_t(0)$ follows.
 A very similar estimate yields a two sided bound for $u_t(1)$.  Since
$u_{tt} \geq 0$ everywhere we have a two sided bound for $u_t$ everywhere.
\end{proof}
\end{prop}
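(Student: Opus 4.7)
The plan is to combine the convexity $u_{tt} \geq 0$, which gives half of the desired bounds automatically via the $C^0$ estimate, with a barrier/maximum principle argument modelled on the proof of Proposition \ref{C0est} for the remaining direction. First, observe that by Proposition \ref{geodMA} the admissibility of $u$ together with $f \geq 0$ forces $u_{tt} \geq 0$ on $M \times [0,1]$, so $u_t(x,\cdot)$ is non-decreasing in $t$. Combining this monotonicity with Proposition \ref{C0est} and the identity $u(x,1) - u(x,0) = \int_0^1 u_t(x,t)\,dt$ yields
\begin{align*}
u_t(x,0) \leq u(x,1) - u(x,0) \leq C, \qquad u_t(x,1) \geq u(x,1) - u(x,0) \geq -C.
\end{align*}
It therefore suffices to produce a lower bound for $u_t$ at $t = 0$ and an upper bound at $t = 1$; the monotonicity in $t$ will then propagate these bounds to all of $M \times [0,1]$.

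To obtain the lower bound at $t = 0$, I would introduce the barrier $\Phi(x,t) = u(x,t) - u(x,0) - \gL_1 t^2 + \gL_2 t$, with $\gL_1$ chosen large and $\gL_2$ then chosen even larger (depending on $\gL_1$ and the $C^0$-bound) so that $\Phi \geq 0$ on $\{t=1\}$, while $\Phi \equiv 0$ on $\{t=0\}$. Using (\ref{linop}) to compute $\LL u_0$ directly (noting $u_0$ is time-independent), Lemma \ref{Ltt} to obtain $\LL(\gL_1 t^2) = 2\gL_1(1+\ge)^k \gs_k(A_u)$, and Lemma \ref{Lucalc} to compute $\LL u$, one assembles $\LL \Phi$ as a combination of terms involving $\IP{T_{k-1}(E_u^{\ge}), A_{u_0}}$, $\gs_k(A_u)$, and $f$.

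The main step is then to show $\LL \Phi < 0$ at any interior spacetime minimum of $\Phi$, which by the maximum principle (valid since $\LL$ has no zero-th order term and is elliptic on admissible solutions by Lemma \ref{geodesicellipticity}) forces the minimum onto the parabolic boundary. At such a minimum one has $\N \Phi = 0$, and positive semidefiniteness of the full spacetime Hessian yields, via a Schur-complement manipulation as in the proof of Proposition \ref{C0est}, $\Phi_{tt} \N^2 \Phi \geq \N \Phi_t \otimes \N \Phi_t$. Since $\N u = \N u_0$ at the minimum, this translates into $u_{tt} \N^2 u - \N u_t \otimes \N u_t \geq u_{tt} \N^2 u_0$, hence
\begin{align*}
E_u^{\ge} = (1+\ge) u_{tt} A_u - \N u_t \otimes \N u_t \geq (1+\ge) u_{tt} A_{u_0}.
\end{align*}
Newton monotonicity (Lemma \ref{newtonmonotone}) then gives lower bounds for $\IP{T_{k-1}(E_u^{\ge}), A_{u_0}}$ and $\gs_k(A_u)$ in terms of $\gs_k(A_{u_0})$, which is strictly positive on $M$ by admissibility of $u_0$. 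Collecting these ingredients produces an estimate of the form $\LL \Phi \leq C_1 \max_M f - \gL_1 C_2$ at the minimum with $C_2 > 0$; choosing $\gL_1$ sufficiently large makes $\LL \Phi < 0$ there, a contradiction. Therefore the minimum lies on $\{t=0\}$, giving $\Phi_t(\cdot,0) \geq 0$ and $u_t(\cdot, 0) \geq -\gL_2$. A symmetric barrier centered at $t=1$ (e.g. $\Psi(x,t) = u(x,1) - u(x,t) - \gL_1 (1-t)^2 + \gL_2 (1-t)$) yields the desired upper bound for $u_t(\cdot, 1)$.

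The main obstacle will be the translation of the spacetime Hessian positivity into the pointwise comparison $E_u^{\ge} \geq (1+\ge) u_{tt} A_{u_0}$: since $\LL$ intertwines $\N^2 v$ with $\N v_t \otimes \N u_t$ and $v_{tt} \N u_t \otimes \N u_t$, the separate inequalities $\N^2 \Phi \geq 0$ and $\Phi_{tt} \geq 0$ are not enough, and one must exploit the full semidefiniteness of the $(n+1)\times(n+1)$ spacetime Hessian through the Schur trick. Once this step is in place, the rest is algebraic bookkeeping using Lemma \ref{LAlemma} and the formulas for $\LL$ on $t$-only functions and on $u$ established in Lemmas \ref{Ltt} and \ref{Lucalc}.
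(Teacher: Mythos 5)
Your proposal is correct and follows essentially the same route as the paper: the same barrier $\Phi(x,t) = u(x,t) - u(x,0) - \gL_1 t^2 + \gL_2 t$, the same computation of $\LL\Phi$ via Lemmas \ref{Ltt} and \ref{Lucalc}, the same Schur-complement use of the full spacetime Hessian positivity to get $E_u^{\ge} \geq (1+\ge) u_{tt} A_{u_0}$, and the same appeal to Newton monotonicity. The only (harmless) differences are that you spell out more explicitly why $u_{tt} \geq 0$ combined with the $C^0$ estimate gives the ``easy'' one-sided bounds, and you write down the symmetric barrier at $t=1$ that the paper leaves implicit.
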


We next proceed to obtain the interior spatial gradient estimate.  To do this we need two preliminary calculations.

\begin{lemma} \label{Leucalc} Let $u$ be an admissible solution to
($\star_{\ge,f}$).  Then
\begin{align*}
  \LL e^{-\gl u} \geq&\ - \gl e^{- \gl u} \LL u + \frac{1}{2} \gl^2 e^{-\gl u} u_{tt}^{2-k} \IP{T_{k-1}(E_u^{\ge}) , \N u \otimes \N u} - C \gl^2 e^{-\gl u} \gs_k(A_u) u_t^2.
\end{align*}
\begin{proof} To begin we directly compute using (\ref{linop}) that
\begin{align*}
 \LL e^{-\gl u} =&\ (1+\ge)^{k-1} u_{tt}^{-1} f (e^{-\gl u})_{tt}\\
 &\ + u_{tt}^{1-k} \left<T_{k-1}(E_u^{\ge}), (1+\ge)
u_{tt} \left( \N^2 e^{-\gl u} + \N e^{-\gl u} \otimes \N u + \N u \otimes \N e^{-\gl u} - \IP{\N e^{-\gl u},\N u} \right) \right.\\
&\ \qquad \left. - \N (e^{-\gl u})_t \otimes \N u_t - \N u_t \otimes \N (e^{-\gl u})_t + u_{tt}^{-1} \N u_t \otimes \N u_t (e^{-\gl u})_{tt} \right>\\
=&\ - \gl e^{- \gl u} \LL u + (1+\ge)^{k-1} u_{tt}^{-1} f \gl^2 e^{- \gl u} u_t^2\\
&\ + \gl^2 e^{-\gl u} u_{tt}^{1-k} \IP{T_{k-1}(E_u^{\ge}), (1+\ge) u_{tt} \N u \otimes \N u - u_t \N u \otimes \N u_t - u_t \N u_t \otimes \N u + u_{tt}^{-1} u_t^2 \N u_t \otimes \N u_t}.
\end{align*}
Next we observe using the Cauchy-Schwarz inequality and equation ($\star_{\ge,f}$) that
\begin{align*}
u_{tt}^{1-k} & \IP{T_{k-1}(E_u^{\ge}), (1+\ge) u_{tt} \N u \otimes \N u - u_t \N u \otimes \N u_t - u_t \N u_t \otimes \N u + u_{tt}^{-1} u_t^2 \N u_t \otimes \N u_t}\\
 =&\ \gs_k(A_u) u_t^2 - 2 u_t
u_{tt}^{1-k} \IP{T_{k-1}(E_u), \N
u_t \otimes \N u} + u_{tt}^{2-k} \IP{T_{k-1}(E_u),\N u \otimes \N u}\\
\geq&\ - C \gs_k(A_u) u_t^2 + \frac{1}{2} u_{tt}^{2-k} \IP{T_{k-1}(E_u),\N u
\otimes \N u}.
\end{align*}
Combining these calculations yields the result.
\end{proof}
\end{lemma}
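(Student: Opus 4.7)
The plan is a direct computation: substitute $v = e^{-\gl u}$ into the formula (\ref{linop}) for $\LL(v)$, expand, and organize by powers of $\gl$. The derivatives are
\begin{align*}
v_t &= - \gl e^{-\gl u} u_t, \qquad v_{tt} = -\gl e^{-\gl u} u_{tt} + \gl^2 e^{-\gl u} u_t^2, \\
\N v &= -\gl e^{-\gl u} \N u, \qquad \N^2 v = -\gl e^{-\gl u}\N^2 u + \gl^2 e^{-\gl u}\N u \otimes \N u, \\
\N v_t &= -\gl e^{-\gl u}\N u_t + \gl^2 e^{-\gl u} u_t \N u.
\end{align*}
Plugging these into (\ref{linop}) splits $\LL e^{-\gl u}$ cleanly into its $\gl$-linear and $\gl^2$ parts. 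The $\gl$-linear piece is, by inspection of (\ref{linop}) with $v = u$, exactly $-\gl e^{-\gl u}\LL u$; this identification is the first step and accounts for the Hessian, mixed spatial--time, and time--time contributions simultaneously.

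The remaining $\gl^2 e^{-\gl u}$ contribution is (omitting the common scalar):
\[
(1+\ge)^{k-1} u_{tt}^{-1} f u_t^2 + (1+\ge) u_{tt}^{2-k}\IP{T_{k-1}(E_u^{\ge}),\N u\otimes\N u} - 2 u_t u_{tt}^{1-k} T_{k-1}(E_u^{\ge})^{ij}\N_i u \N_j u_t + u_t^2 u_{tt}^{-k}\IP{T_{k-1}(E_u^{\ge}),\N u_t\otimes\N u_t}.
\]
The next step is to apply Cauchy--Schwarz to the cross term: since $T_{k-1}(E_u^{\ge}) > 0$ by Lemma \ref{newtonprops} and admissibility, for any $\rho > 0$
\[
2|u_t| u_{tt}^{1-k}\bigl|T_{k-1}(E_u^{\ge})^{ij}\N_i u \N_j u_t\bigr| \leq \rho u_{tt}^{2-k}\IP{T_{k-1}(E_u^{\ge}),\N u\otimes\N u} + \rho^{-1} u_t^2 u_{tt}^{-k}\IP{T_{k-1}(E_u^{\ge}),\N u_t\otimes\N u_t}.
\]
Choosing $\rho = (1+\ge)/2$ absorbs exactly half of the good term $\IP{T_{k-1}(E_u^{\ge}),\N u\otimes\N u}$, leaving the required factor of $\tfrac12$.

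The final step is to handle the leftover terms proportional to $u_t^2$. Using Lemma \ref{LAlemma} one has
\[
u_{tt}^{-k}\IP{T_{k-1}(E_u^{\ge}),\N u_t\otimes\N u_t} = (1+\ge)^{k-1} u_{tt}^{-1}\IP{T_{k-1}(A_u),\N u_t\otimes\N u_t},
\]
and the geodesic equation $(\star_{\ge,f})$ rewrites the right-hand side as $(1+\ge)^k \gs_k(A_u) - (1+\ge)^{k-1}u_{tt}^{-1}f$. Combining this with the $(1+\ge)^{k-1}u_{tt}^{-1}f u_t^2$ term (which appears with a favorable sign) and dropping the remaining nonnegative $f/u_{tt}$ contribution leaves a term bounded below by $-C\gs_k(A_u) u_t^2$ for a constant $C = C(\ge,k)$. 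Putting these three steps together yields the claimed inequality.

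The main obstacle is the careful algebraic bookkeeping in steps two and three, in particular ensuring that the constant in the Young's inequality is tuned so as to preserve the coefficient $\tfrac12$ in front of the essential positive term. Once Cauchy--Schwarz is invoked with the correct $\rho$, the use of Lemma \ref{LAlemma} combined with $(\star_{\ge,f})$ to collapse the residual $u_t^2$ terms into a clean $\gs_k(A_u) u_t^2$ expression is straightforward but must be performed without losing the sign structure.
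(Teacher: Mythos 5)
Your proposal is correct and follows essentially the same route as the paper: you split off the $\gl$-linear part as $-\gl e^{-\gl u}\LL u$, apply Cauchy--Schwarz to the cross term to keep a $\tfrac12$ fraction of $\IP{T_{k-1}(E_u^{\ge}),\N u\otimes\N u}$, and use Lemma \ref{LAlemma} together with $(\star_{\ge,f})$ to reduce the residual $u_t^2$ contributions to $-C\gs_k(A_u)u_t^2$. The only cosmetic difference is the order of the Cauchy--Schwarz and the simplification via Lemma \ref{LAlemma}, which does not affect the outcome.
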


\begin{lemma} \label{Lut} Given $u$ an admissible
solution to $(\star_{\ge,f})$, one has
 \begin{align*}
  \LL u_t^2 =&\ 2 u_t f_t + 2 (1+\ge)^{k-1} f u_{tt} + 2 \ge u_{tt}^{2-k}
T_{k-1}(E)^{jk} \N_j u_t \N_k u_t.
 \end{align*}
\begin{proof} It follows directly from the definition of $\LL$ that $\LL u_t =
f_t$.  It follows that
\begin{align*}
\LL u_t^2 =&\ 2 u_t \LL u_t + 2 (1+\ge)^{k-1} f u_{tt}\\
&\ + 2 u_{tt}^{1-k} T_{k-1}(E)^{jk} \left\{ (1+\ge) u_{tt} \N_j u_t \N_k u_t - 2
\N_j u_t \N_k u_t u_{tt} + \N_j u_t \N_k u_t u_{tt} \right\}\\
=&\ 2 u_t f_t + 2 (1+\ge)^{k-1} f u_{tt} + 2 \ge u_{tt}^{2-k} T_{k-1}(E)^{jk}
\N_j u_t \N_k u_t,
\end{align*}
as required.
\end{proof}
\end{lemma}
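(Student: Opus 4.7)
The plan is to apply the explicit formula (\ref{linop}) for $\LL$ to $v = u_t^2$ and then reorganize the resulting expansion by collecting Leibniz-rule residues. Two observations drive the computation. First, since $\LL$ is by construction the linearization of $\Phi_{\ge}$ at $u$ (Lemma \ref{linearizedgeod}) and since the equation $\Phi_{\ge}(u) = f$ depends on $t$ only through $u$, differentiating it in $t$ immediately yields the identity $\LL u_t = f_t$. Second, $\LL$ is a linear second-order operator, so acting on a pure product $\phi \cdot \phi$ produces $2\phi\LL\phi$ plus a remainder quadratic in the first derivatives of $\phi$.

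Concretely, I would substitute $v = u_t^2$ into (\ref{linop}) and Leibniz-expand each second-order ingredient: $(u_t^2)_{tt} = 2 u_t u_{ttt} + 2 u_{tt}^2$, $\N^2(u_t^2) = 2 u_t \N^2 u_t + 2 \N u_t \otimes \N u_t$, $\N(u_t^2)_t = 2 u_{tt} \N u_t + 2 u_t \N u_{tt}$, and $\N(u_t^2) = 2 u_t \N u_t$. I would then partition the expansion into the block of terms carrying an overall factor $2 u_t$ — which reassemble exactly as $2 u_t \LL u_t = 2 u_t f_t$ by the first observation above — and a residue built from $u_{tt}^2$ and $u_{tt} \N u_t \otimes \N u_t$, together with their images under $T_{k-1}(E_u^{\ge})$.

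The only real task is tracking the coefficients in the residue. The $2 u_{tt}^2$ piece of $(u_t^2)_{tt}$, paired with the scalar coefficient $(1+\ge)^{k-1} u_{tt}^{-1} f$, contributes $2(1+\ge)^{k-1} f u_{tt}$, which produces the second term of the claimed identity. For the contraction of $\N u_t \otimes \N u_t$ with $T_{k-1}(E_u^{\ge})$, there are three sources: the Hessian block $(1+\ge) u_{tt} \N^2 v$ gives $+2(1+\ge)$; the mixed cross-terms $-\N v_t \otimes \N u_t - \N u_t \otimes \N v_t$ contribute $-4$ from their two $2 u_{tt} \N u_t$ factors; and the rank-one piece $u_{tt}^{-1} v_{tt}\N u_t \otimes \N u_t$ contributes $+2$ from its $2 u_{tt}^2$ factor. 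These sum to $2\ge$, giving the final term $2\ge\, u_{tt}^{2-k}\, T_{k-1}(E)^{jk}\N_j u_t \N_k u_t$.

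The calculation is essentially linear bookkeeping, with no genuine obstacle; what the reader should notice is that the $(1+\ge)$ regularization sits only in the Hessian block, so the net residue coefficient is precisely $\ge$ and it vanishes in the formal geodesic limit $\ge = 0$. This cancellation is exactly the reflection of the degeneracy of the unperturbed geodesic operator along the time direction, and it will be the source of both the strength (for $\ge > 0$, the $T_{k-1}(E)^{jk}\N_j u_t \N_k u_t$ term provides a useful nonnegative quantity when combined with Lemma \ref{Leucalc}) and the failure (at $\ge = 0$) in the subsequent $C^1$ estimates.
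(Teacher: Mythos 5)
Your proposal is correct and follows essentially the same route as the paper: observe $\LL u_t = f_t$ by differentiating the equation in $t$, Leibniz-expand all the second-order ingredients of $\LL(u_t^2)$, group the terms carrying an overall $2u_t$ into $2u_t\LL u_t$, and track the residue coefficients $2(1+\ge) - 4 + 2 = 2\ge$ multiplying $u_{tt}\,T_{k-1}(E)^{jk}\N_j u_t\N_k u_t$. The bookkeeping and the observation that the residue vanishes at $\ge = 0$ (reflecting the degeneracy of the unregularized geodesic operator in the time direction) are both accurate and match the paper's proof.
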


\begin{lemma} \label{Lgradu} Given $u$ an admissible
solution to $(\star_{\ge,f})$, one has
 \begin{align*}
\LL \brs{\N u}^2 =&\ 2 u_{tt}^{1-k} T_{k-1}(E)^{jk} \left\{ (1+\ge) u_{tt} \N_i
\N_j u \N_i \N_k u - 2 \N_i \N_j u \N_k u_t \N_i u_t + u_{tt}^{-1} \N_j u_t \N_k
u_t \brs{\N u_t}^2 \right\}\\
&\ + 2(1+\ge)^{k-1} u_{tt}^{-1} f \brs{\N u_t}^2 + 2 \IP{\N f, \N u} - 2 (1+\ge)
u_{tt}^{2-k} \IP{T_{k-1}(E), \N^i u \N_i A + R_{ijk}^l \N^i u \N_l u}.
 \end{align*}
 \begin{proof} To begin we take the gradient of the geodesic equation to yield
 \begin{align*}
\N_i f =&\ \N_i \left[ u_{tt}^{1-k} \gs_k(E^{\ge}_u) \right]\\
=&\ (1-k) u_{tt}^{-k} \N_i u_{tt} \gs_k(E^{\ge}_u) + u_{tt}^{1-k}
\IP{T_{k-1}(E^{\ge}_u),
\N_i E_u^{\ge}}\\
=&\ (1-k) u_{tt}^{-k} \N_i u_{tt} \gs_k(E_u)\\
&\ + u_{tt}^{1-k} \IP{T_{k-1}(E^{\ge}_u),
(1+\ge) \N_i u_{tt} A_u + (1+\ge) u_{tt} \N_i A_u - \N_i \N u_t \otimes \N u_t -
\N u_t \otimes
\N_i \N u_t}.
 \end{align*}
A calculation similar to (\ref{lin20}) shows that
 \begin{align*}
 (1-k) u_{tt}^{-k}&  \N_i u_{tt} \gs_k(E_u) + u_{tt}^{1-k} \IP{T_{k-1}(E_u),
(1+\ge) \N_i
u_{tt} A_u}\\
 =&\ (1+\ge)^{k-1} u_{tt}^{-1} f \N_i u_{tt} + u_{tt}^{-k} \IP{T_{k-1}(E), \N
u_t \otimes \N u_t} \N_i u_{tt}
 \end{align*}
Next we simplify
\begin{align*}
 \N_i (A_u)_{jk} =&\ \N_i \left[ A_{jk} + \N_j \N_k u + \N_j u \N_k u -
\frac{1}{2} \brs{\N u}^2 g_{jk} \right]\\
=&\ \N_i A_{jk} + \N_i \N_j \N_k u + \N_i \N_j u \N_k u + \N_j u \N_i \N_k u -
\frac{1}{2} \N_i
\brs{\N u}^2 g_{jk}\\
=&\ \N_i A_{jk} + \N_j \N_k \N_i u + R_{ij k}^l \N_l u + \N_i \N_j u \N_k u +
\N_j u \N_i \N_k u - \frac{1}{2} \N_i \brs{\N u}^2 g_{jk}.
\end{align*}
Hence we obtain the identity
\begin{align} \label{DPhi}
 \LL \N_i u =  \N_i f -  (1+\ge) u_{tt}^{2-k} T_{k-1}(E)^{jk} \big\{ \N_i A_{jk} + R_{ijk}^l
\N_l u \big\}.
\end{align}
On the other hand using (\ref{linop}) we have
\begin{align*}
\LL \brs{\N u}^2 =&\ 2 \IP{\LL \N u, \N u} + 2 (1+\ge)^{k-1} u_{tt}^{-1} f
\brs{\N u_t}^2\\
&\ + 2 u_{tt}^{1-k} T_{k-1}(E)^{jk} \left\{ (1+\ge) u_{tt} \N_i \N_j u \N_i \N_k
u - 2 \N_i \N_j u \N_k u_t \N_i u_t + u_{tt}^{-1} \N_j u_t \N_k u_t \brs{\N
u_t}^2 \right\}\\
=&\ 2 u_{tt}^{1-k} T_{k-1}(E)^{jk} \left\{ (1+\ge) u_{tt} \N_i \N_j u \N_i \N_k
u - 2 \N_i \N_j u \N_k u_t \N_i u_t + u_{tt}^{-1} \N_j u_t \N_k u_t \brs{\N
u_t}^2 \right\}\\
&\ + 2(1+\ge)^{k-1} u_{tt}^{-1} f \brs{\N u_t}^2 + 2 \IP{\N f, \N u} - 2 (1+\ge)
u_{tt}^{2-k} \IP{T_{k-1}(E), \N^i u \N_i A + R_{ijk}^l \N^i u \N_l u},
\end{align*}
as required.
\end{proof}
\end{lemma}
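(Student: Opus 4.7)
The plan is to compute $\LL |\N u|^2$ via the product rule, where the key input is an identity for $\LL(\N_i u)$ obtained by spatially differentiating the equation $(\star_{\ge,f})$. Since $|\N u|^2 = g^{ab}\N_a u \N_b u$ is a sum of products of first derivatives of $u$, and $\LL$ is a second-order operator whose principal part is encoded in (\ref{linop}), applying $\LL$ will split naturally into a ``linear'' piece $2\N^i u \, \LL \N_i u$ plus a ``quadratic'' piece arising from the bilinear form associated to the principal symbol.

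First, I would differentiate the equation in the form $u_{tt}^{1-k}\gs_k(E_u^{\ge}) = f$ (Proposition \ref{geodMA}) in the direction $\N_i$. Expanding $\N_i E_u^{\ge} = (1+\ge)\N_i u_{tt} \, A_u + (1+\ge) u_{tt}\N_i A_u - \N_i \N u_t \otimes \N u_t - \N u_t \otimes \N_i\N u_t$ and then invoking the analogue of (\ref{lin20}) to trade $(1-k)u_{tt}^{-k}\N_i u_{tt}\, \gs_k(E_u)$ plus $u_{tt}^{1-k}\langle T_{k-1}(E_u^{\ge}),(1+\ge)\N_i u_{tt} A_u\rangle$ for $(1+\ge)^{k-1}u_{tt}^{-1}f\,\N_i u_{tt}$ plus $u_{tt}^{-k}\langle T_{k-1}(E_u),\N u_t\otimes \N u_t\rangle\N_i u_{tt}$, one is left with the contribution from $(1+\ge)u_{tt}\N_i A_u$. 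Using the expansion of $\N_i(A_u)_{jk}$ displayed in the lemma (which introduces the background term $\N_i A_{jk}$ and the curvature term $R_{ijk}^l\N_l u$ via the Ricci identity $\N_i\N_j\N_k u = \N_j\N_k\N_i u + R_{ijk}^l\N_l u$), the terms $\N^2\N_i u$, $\N\N_i u \otimes \N u$, $\N_i\N u_t \otimes \N u_t$ and $(\N^i u)_{tt}$ assemble (after dividing appropriately) exactly into $\LL(\N_i u)$ as dictated by (\ref{linop}). This yields the identity (\ref{DPhi}):
\begin{align*}
\LL \N_i u = \N_i f - (1+\ge) u_{tt}^{2-k} T_{k-1}(E)^{jk}\bigl\{\N_i A_{jk} + R_{ijk}^l \N_l u\bigr\}.
\end{align*}

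Next, I would apply $\LL$ to $|\N u|^2$ directly, using the explicit form (\ref{linop}). Since $\LL$ is linear of second order, writing $(|\N u|^2)_{tt} = 2\N^a u\, \N_a u_{tt} + 2|\N u_t|^2$, $\N^2 |\N u|^2 = 2 \N^a u\,\N^2 \N_a u + 2\N\N_a u \otimes \N\N^a u$, and similarly splitting the mixed $\N v_t\otimes \N u_t$ and $\N v\otimes \N u$ pieces of (\ref{linop}) by the product rule, one finds
\begin{align*}
\LL |\N u|^2 = 2\N^a u \,\LL \N_a u + Q,
\end{align*}
where $Q$ collects the genuinely quadratic remainders. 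A direct inspection of the principal parts shows
\begin{align*}
Q = 2 u_{tt}^{1-k} T_{k-1}(E)^{jk}\bigl\{(1+\ge) u_{tt}\,\N_i\N_j u\,\N^i\N_k u - 2\N_i\N_j u\,\N_k u_t\,\N^i u_t + u_{tt}^{-1} \N_j u_t\N_k u_t\,|\N u_t|^2\bigr\} + 2(1+\ge)^{k-1} u_{tt}^{-1} f|\N u_t|^2,
\end{align*}
and combining with $2\N^a u \,\LL \N_a u$ (substituting Step 1) produces the remaining pieces $2\langle \N f,\N u\rangle$ and $-2(1+\ge)u_{tt}^{2-k}\langle T_{k-1}(E),\N^i u\,\N_i A + R_{ijk}^l\N^i u\,\N_l u\rangle$ in the statement.

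The main obstacle will be bookkeeping: one must track five separate product-rule splittings (from the $v_{tt}$, $\N^2 v$, $\N v\otimes \N u + \N u\otimes\N v - \langle \N v,\N u\rangle g$, $\N v_t\otimes \N u_t$, and $v_{tt}\N u_t\otimes \N u_t$ pieces of (\ref{linop})) and verify that the contributions from the $\N u\otimes \N v + \N v\otimes \N u - \langle \N v,\N u\rangle g$ piece with $v = \N^a u$ (which generate terms like $\N_j u\N_k u$, $|\N u|^2 g_{jk}$ paired against $T_{k-1}(E)$) cancel against the corresponding pieces produced when $\N_i A_u$ from Step 1 is expanded. Once these cancellations are identified via Lemma \ref{LAlemma} and the fact that $E_u = (1+\ge)u_{tt} A_u - \N u_t\otimes \N u_t$, the stated identity follows.
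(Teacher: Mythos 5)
Your proposal follows the paper's proof essentially step for step: you differentiate the equation in $\nabla_i$, invoke the analogue of (\ref{lin20}) to absorb the $\nabla_i u_{tt}$ terms, expand $\nabla_i A_u$ and use the Ricci identity to produce (\ref{DPhi}), and then apply the product rule to split $\mathcal{L}|\nabla u|^2$ into $2\langle \mathcal{L}\nabla u, \nabla u\rangle$ plus the quadratic remainder $Q$, with $Q$ exactly as in the paper. The ``cancellations'' you flag between the $\nabla v\otimes\nabla u$ piece of the linearization and the expansion of $\nabla_i A_u$ are precisely the matching that establishes (\ref{DPhi}); your reading of that bookkeeping is correct, and the argument is the same as the one given in the paper.
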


\begin{prop} \label{4dC1estimate} Given $u$ an admissible
solution to $(\star_{\ge,f})$, one has
 \begin{align*}
 \sup_{M \times [0,1]} \brs{\N u}^2 \leq C.
 \end{align*}
 \begin{proof}  Without loss of generality we can assume $u < 0$.  Choose
$\lambda_1, \lambda_2, \lambda_3 \in \mathbb R$ and let
 \begin{align*}
 \Phi = \brs{\N u}^2 + \gl_1 u_t^2 + e^{- \gl_2 u} + \gl_3 t(t-1).
 \end{align*}
  Lemmas \ref{Ltt}, \ref{Leucalc}, \ref{Lut}, \ref{Lgradu} show
that
 \begin{align*}
 \LL \Phi \geq&\ \LL \brs{\N u}^2 + 2 \gl_1 \left[ f_t u_t + (1+\ge)^{k-1} f u_{tt} + \ge u_{tt}^{2-k}
T_{k-1}(E)^{jk} \N_j u_t \N_k u_t \right]\\
&\ - \gl_2 \LL u e^{-\gl_2 u} + \frac{\gl_2^2}{2} e^{-\gl_2 u} u_{tt}^{2-k} \IP{T_{k-1}(E_u^{\ge}) , \N u \otimes \N u} - C \gl_2^2 e^{-\gl_2 u} \gs_k(A_u) u_t^2 + \gl_3 \gs_k(A_u)\\
 \geq&\ 2 \IP{\N f, \N u} + 2 \gs_k(A_u) \brs{\N u_t}^2 + 2 u_{tt}^{2-k}
\IP{T_{k-1}(E_u),
\N_j \N_i u \N_k \N_i u}\\
&\ - 4 u_{tt}^{1-k}
\IP{T_{k-1}(E_u)_{ij},\N_i u_t \N_k u_t \N_j \N_k u} - 2 u_{tt}^{2-k} \IP{T_{k-1}(E_u)_{jk} \N_i u, \N_i A_{jk} + R_{ij k}^l \N_l
u}\\
&\ - C \gl_1 f_t + \gl_1 f u_{tt}\\
&\ - \gl_2 e^{-\gl_2 u} \left[ f - u^{2-k}_{tt} \IP{T_{k-1}(E_u),A} +
u^{2-k}_{tt} \left[
\IP{T_{k-1}(E_u), \N u \otimes \N u} - \frac{1}{2} \tr T_{k-1}(E_u) \brs{\N u}^2
\right] \right]\\
&\ + \frac{\gl_2^2}{2} e^{-\gl_2 u} u_{tt}^{2-k} \IP{T_{k-1}(E_u^{\ge}) , \N u \otimes \N u} - C \gl_2^2 e^{-\gl_2 u} \gs_k(A_u) u_t^2 + \gl_3 \gs_k(A_u).
 \end{align*}
 First we observe that, using the Cauchy-Schwarz inequality and Lemma
\ref{LAlemma}
 \begin{align*}
4 u_{tt}^{1-k}& \IP{T_{k-1}(E_u)_{ij}, \N_i u_t \N_k u_t \N_j \N_k u}\\
=&\ 4 u_{tt}^{1-k} \left[ \IP{ T_{k-1}(E_u)^{\frac{1}{2}} \cdot \N^2 u,
T_{k-1}(E_u)^{\frac{1}{2}} \cdot \N u_t \otimes \N u_t} \right]\\
\leq&\ 2 u_{tt}^{2-k} \IP{T_{k-1}(E_u), \N^2 u \cdot \N^2 u} + 2 u_{tt}^{-k}
\IP{T_{k-1}(E_u), \N u_t \otimes \N u_t} \brs{\N u_t}^2\\
=&\ 2 u_{tt}^{2-k} \IP{T_{k-1}(E_u), \N^2 u \cdot \N^2 u} + 2 u_{tt}^{-1}
\IP{T_{k-1}(A_u), \N u_t \otimes \N u_t} \brs{\N u_t}^2\\
=&\ 2 u_{tt}^{2-k} \IP{T_{k-1}(E_u), \N^2 u \cdot \N^2 u} + 2 \left[ \gs_k(A_u)
-f u_{tt}^{-1} \right]
\brs{\N u_t}^2.
 \end{align*}
Observe the preliminary inequality
 \begin{align*}
 u_{tt}^{2-k} \tr T_{k-1}(E_u) =&\ u_{tt}^{2-k} \gs_{k-1}(E_u)\\
 \geq&\ u_{tt}^{2-k} \left[ \gs_k(E_u)^{\frac{k-1}{k}} \right]\\
 =&\ u_{tt}^{2-k} \left[ f u_{tt}^{k-1} \right]^{\frac{k-1}{k}}\\
 =&\ f^{\frac{k-1}{k}} u_{tt}^{2-k + \frac{k^2 - 2 k + 1}{k}}\\
 =&\ f^{\frac{k-1}{k}} u_{tt}^{\frac{1}{k}}.
 \end{align*}
Next observe the estimate
 \begin{align*}
 \IP{\N f, \N u} \leq&\ C f u_{tt}^{-\frac{1}{k}} + C f u_{tt}^{\frac{1}{k}}
\brs{\N u}^2\\
 \leq&\ C f u_{tt}^{-1} + C f u_{tt} + C f u_{tt}^{\frac{1}{k}} \brs{\N u}^2\\
 \leq&\ C f u_{tt}^{-1} + C f u_{tt} + C f^{\frac{k-1}{k}} u_{tt}^{\frac{1}{k}}
\brs{\N u}^2.
 \end{align*}
Next observe that
\begin{align*}
-2 u_{tt}^{2-k} \IP{T_{k-1}(E_u)_{jk} \N_i u, \N_i A_{jk} + R_{ijk}^l \N_l u}
\geq&\ - C u_{tt}^{2-k} \tr T_{k-1} (E_u)
\left[1 + \brs{\N u}^2 \right].
\end{align*}
Combining these preliminary observations and using Proposition \ref{utestimate} yields
\begin{align*}
L \Phi \geq&\ \left(\gl_3 - C \right) f u_{tt}^{-1} + \left( \gl_1 - C \right) f
u_{tt} + \left( \frac{\gl_2}{4} e^{-\gl_2 u} - C \right) f^{\frac{k-1}{k}}
u_{tt}^{\frac{1}{k}} \brs{\N u}^2 - C \gl_1 f\\
&\ + e^{-\gl_2 u} u_{tt}^{2-k} \left[ \frac{1}{2} \gl_2^2 - \gl_2 \right]
\IP{T_{k-1}(E_u), \N u \otimes \N u}\\
&\ + u_{tt}^{2-k} \tr T_{k-1}(E_u) \left[ - C - C \brs{\N u}^2 + \frac{\gl_2}{4}
e^{-\gl_2 u} \brs{\N u}^2 \right]\\
&\ + \gs_k(A_u) \left[ \gl_3 - C \gl^2_2 \right]\\
\geq&\ \frac{\gl_3}{2} f u_{tt}^{-1} + \frac{\gl_1}{2} f u_{tt} - C \gl_1 f +
u_{tt}^{2-k} \tr T_{k-1}(E_u) \left[ - C + \brs{\N u}^2 \right]\\
\geq&\ u_{tt}^{2-k} \tr T_{k-1}(E_u) \left[ - C + \brs{\N u}^2 \right],
\end{align*}
where the second inequality follows by choosing $\gl_1,\gl_2$ large with
respect to universal constants and noting that $e^{-\gl_2 u} > 1$ for every choice of $\gl_2$, and then choosing $\gl_3$ large with respect to these choices.  The third inequality follows by
choosing $\gl_3$ large with respect to $\gl_1$.  Using the previously
establishing a priori estimates for $u$ and $u_t$, at a sufficiently large
maximum of $\Phi$ we will have $\brs{\N u}^2 \geq C$, and hence we see that $L
\Phi > 0$ at a sufficiently large maximum, a contradiction.  The a priori
estimate for $\brs{\N u}^2$ follows.
 \end{proof}
\end{prop}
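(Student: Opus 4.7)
The strategy is a maximum principle argument on an auxiliary test function that combines the quantity we want to control, $\brs{\N u}^2$, with several barrier terms whose role is to absorb the structurally bad terms that arise in $\LL \brs{\N u}^2$.  Since the linearization $\LL$ of $(\star_{\ge,f})$ is (at least weakly) elliptic by Lemma \ref{geodesicellipticity} and admissible solutions satisfy $u_{tt}\geq 0$ and $T_{k-1}(E_u^\ge) > 0$, at an interior spacetime maximum of any test function $\Phi$ we must have $\LL \Phi \leq 0$, and we aim to derive a contradiction once $\brs{\N u}^2$ is sufficiently large.

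With this in mind I would test the function
\[
\Phi \;=\; \brs{\N u}^2 \,+\, \gl_1 u_t^2 \,+\, e^{-\gl_2 u} \,+\, \gl_3\, t(t-1),
\]
after normalizing so that $u<0$ (permitted since we already control $\brs{u}$ by Proposition \ref{C0est}).  The role of each term: $\gl_3 t(t-1)$ forces the maximum to occur at an interior time; the $u_t^2$-piece is inserted so that, via Lemma \ref{Lut}, it generates a factor proportional to $\gs_k(A_u) u_{tt}$ plus a positive $T_{k-1}(E)$-quadratic in $\N u_t$, both of which can swallow the cross terms appearing in $\LL \brs{\N u}^2$; the exponential $e^{-\gl_2 u}$ piece, via Lemma \ref{Leucalc}, produces a positive multiple of $u_{tt}^{2-k}\IP{T_{k-1}(E_u^\ge),\N u\otimes \N u}$ which is the crucial coercive term in the direction of $\N u$.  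Together with Lemma \ref{Ltt} and the formula for $\LL\brs{\N u}^2$ in Lemma \ref{Lgradu}, one obtains a pointwise inequality for $\LL\Phi$ that I then want to show is strictly positive at a large maximum.

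The main technical step is to handle the mixed term
\[
4 u_{tt}^{1-k}\, T_{k-1}(E_u)^{ij}\, \N_i u_t\, \N_k u_t\, \N_j\N_k u
\]
appearing in $\LL\brs{\N u}^2$: I would Cauchy--Schwarz this against the positive tensor $T_{k-1}(E_u)$ to split it into a pure $\IP{T_{k-1}(E_u),\N^2 u\cdot \N^2 u}$ piece (absorbed by the $\N^2u$-squared term already present with a favorable sign in $\LL\brs{\N u}^2$) and a $\IP{T_{k-1}(A_u),\N u_t\otimes \N u_t}$ piece, which by $(\star_{\ge,f})$ equals $\gs_k(A_u)-f u_{tt}^{-1}$ up to constants, and which the $\gl_1 u_t^2$-term was introduced to control.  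For the remaining lower-order terms involving $\N f$, $\N A$ and $R_{ijk}^l \N_l u$, a key ingredient is the Newton--MacLaurin-type inequality $\gs_{k-1}(E_u)\geq \gs_k(E_u)^{(k-1)/k} = (f u_{tt}^{k-1})^{(k-1)/k}$, which yields $u_{tt}^{2-k}\,\tr T_{k-1}(E_u)\geq f^{(k-1)/k} u_{tt}^{1/k}$.  Young's inequality then bounds $\IP{\N f,\N u}$ by a small multiple of $f u_{tt}$, a small multiple of $f u_{tt}^{-1}$, and $C f^{(k-1)/k} u_{tt}^{1/k}\brs{\N u}^2$, with the last absorbed by $\tfrac{1}{4}\gl_2 e^{-\gl_2 u}\cdot f^{(k-1)/k} u_{tt}^{1/k}\brs{\N u}^2$ from the $e^{-\gl_2 u}$-piece.

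Finally the parameters are chosen in a definite order:  first $\gl_1$ large so that the $\gl_1 f u_{tt}$ term dominates the $\brs{\N u_t}^2$ contributions; then $\gl_2$ large so that $\tfrac12 \gl_2^2 - \gl_2 >0$ and so that $\tfrac14 \gl_2 e^{-\gl_2 u}$ dominates both the Riemann/Schouten error $C(1+\brs{\N u}^2)$ multiplying $u_{tt}^{2-k}\tr T_{k-1}(E_u)$ and the $C f^{(k-1)/k}u_{tt}^{1/k}\brs{\N u}^2$ term above; finally $\gl_3$ large to absorb residual $f u_{tt}^{-1}$ and $\gs_k(A_u)$-error terms coming from $\LL(\gl_1 u_t^2)$ and $\LL(e^{-\gl_2 u})$.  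With all choices fixed, at a spacetime maximum where $\brs{\N u}^2$ is large enough the estimate reduces to $\LL \Phi \gtrsim u_{tt}^{2-k}\tr T_{k-1}(E_u)\bigl(\brs{\N u}^2 - C\bigr) > 0$, contradicting the maximum principle.  The main obstacle I expect is the careful bookkeeping needed to split the mixed $\N^2 u \cdot \N u_t$ cross term and to verify that the chain of parameter choices $\gl_1\ll \gl_2\ll \gl_3$ can be made consistently once the Newton--MacLaurin bound on $\tr T_{k-1}(E_u)$ is invoked.
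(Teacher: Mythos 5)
Your proposal is correct and follows essentially the same route as the paper: same test function $\Phi = \brs{\N u}^2 + \gl_1 u_t^2 + e^{-\gl_2 u} + \gl_3 t(t-1)$ after normalizing $u<0$, same appeal to Lemmas \ref{Ltt}, \ref{Leucalc}, \ref{Lut}, \ref{Lgradu}, the same Cauchy--Schwarz split of the mixed $\N^2 u \cdot \N u_t$ cross-term against $T_{k-1}(E_u)$, the same Newton--MacLaurin lower bound $u_{tt}^{2-k}\tr T_{k-1}(E_u) \geq f^{(k-1)/k}u_{tt}^{1/k}$, the same Young's inequality bookkeeping for $\IP{\N f, \N u}$, and the same staged choice of $\gl_1,\gl_2,\gl_3$. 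The only small imprecision is your claim that the $\gl_1 u_t^2$ term absorbs the $\gs_k(A_u) - f u_{tt}^{-1}$ piece produced by the cross-term: in fact that piece cancels against the $2\gs_k(A_u)\brs{\N u_t}^2$ already present in $\LL\brs{\N u}^2$, while the real job of $\gl_1$ is to dominate the $C f u_{tt}$ arising from the Young's-inequality bound on $\IP{\N f, \N u}$ — but this does not change the structure or correctness of the argument.
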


\subsection{\texorpdfstring{$C^2$ estimates}{C2 estimates}}

\begin{lemma} \label{Lutt} Given $u$ an admissible solution of $(\star_{\ge,f})$
we
have
\begin{align*}
\LL u_{tt} =&\ - k f^{\frac{k}{k-1}} u_{tt} (1+\ge)^k \FF^{ij,kl} \left[ (E_u)_t
\right]_{ij} \left[ (E_u)_t \right]_{kl}\\
&\ + u_{tt}^{1-k} \left< T_{k-1}(E), 2 (1+\ge) u_{tt}^{-2} u_{ttt}^2 \N u_t
\otimes \N u_t - 4 u_{tt}^{-1} u_{ttt} \N u_{tt} \otimes \N u_t + 2 \N u_{tt}
\otimes \N u_{tt}\right.\\
&\ \qquad \qquad \left. - 2 u_{tt} \N u_t \otimes \N u_t + (1+\ge) u_{tt}
\brs{\N u_t}^2 g  \right>\\
&\ + (1+\ge)^{k-1} k f^{\frac{k-1}{k}} (f^{\frac{1}{k}})_{tt} + 2 (k-1)
(1+\ge)^{k-1} u_{tt}^{-1} f^{\frac{k-1}{k}} (f^{\frac{1}{k}})_t u_{ttt} -
2(1+\ge)^{k-1} u_{tt}^{-1} u_{ttt} f_t\\
&\ + (1+\ge)^{k-1} \frac{k+1}{k} f u_{tt}^{-2} u_{ttt}^2.
\end{align*}

\begin{proof}
First we compute using (\ref{linop}) that
\begin{gather} \label{lutt5}
\begin{split}
\LL u_{tt} =&\ (1+\ge)^{k-1} u_{tt}^{-1} f u_{tttt}\\
&\ + u_{tt}^{1-k} \left<T_{k-1}(E), (1+\ge) u_{tt} \left( \N^2 u_{tt} + \N
u_{tt} \otimes \N u + \N u \otimes \N u_{tt} - \IP{\N u_{tt} \N u} g \right)
\right.\\
&\ \qquad \qquad \qquad \left. - \N u_{ttt} \otimes \N u_t - \N u_t \otimes \N
u_{ttt} + u_{tt}^{-1} \N u_t \otimes \N u_t u_{tttt} \right>.
\end{split}
\end{gather}

To simplify notation we adopt the following (standard) conventions: for an $n \times n$ symmetric matrix $r = r_{ij}$ we denote
\begin{align*}
\FF(r) = \gs_k(r)^{1/k},
\end{align*}
and derivatives of $\FF$ with respect to the entries of $r$ by
\begin{align*}
\dfrac{\partial}{\partial r_{pq}}\FF(r) &= \FF(r)^{pq}, \\
 \dfrac{\partial^2}{\partial r_{pq} \partial r_{rs}}\FF(r) &= \FF(r)^{pq, rs}.
\end{align*}
We next need to differentiate the equation, which we can rewrite as
\begin{align*}
c_{\ge} f^{\frac{1}{k}} u_{tt}^{\frac{k-1}{k}} = \gs_k(E_u)^{\frac{1}{k}} =
\FF(E_u),
\end{align*}
where $c_{\ge} = (1+\ge)^{\frac{k-1}{k}}$.
Differentiating this yields
\begin{align*}
c_{\ge} \left( f^{\frac{1}{k}} \right)_t u_{tt}^{\frac{k-1}{k}} + c_{\ge}
\frac{k-1}{k}
f^{\frac{1}{k}} u_{tt}^{-\frac{1}{k}} u_{ttt} = \FF^{ij} \left[ \dt E_u
\right]_{ij} = \frac{1}{k} \gs_k(E_u)^{\frac{1-k}{k}} \IP{T_{k-1}(E_u),
(E_u)_t}.
\end{align*}
Differentiating again yields
\begin{gather} \label{lutt20}
\begin{split}
\FF^{ij} & \left[ (E_u)_{tt}
\right]_{ij} + \FF^{ij,kl} \left[ (E_u)_t \right]_{ij} \left[ (E_u)_t
\right]_{kl}\\
=&\ c_{\ge} \left[  \left( f^{\frac{1}{k}} \right)_{tt} u_{tt}^{\frac{k-1}{k}} +
2 \frac{k-1}{k}
\left( f^{\frac{1}{k}} \right)_t u_{tt}^{-\frac{1}{k}} u_{ttt}  - \frac{1}{k}
\left( \frac{k-1}{k} \right) f^{\frac{1}{k}} u_{tt}^{-\frac{1+k}{k}} u_{ttt}^2
\right.\\
&\  \left. + \frac{k-1}{k} f^{\frac{1}{k}} u_{tt}^{-\frac{1}{k}} u_{tttt}
\right].
\end{split}
\end{gather}
Next we want to get an explicit formula for $(E_u)_{tt}$, which we build up to
in stages.  We first observe the preliminary  computation
\begin{gather} \label{lutt10}
\begin{split}
(1+\ge) (A_u)_t =&\ \left[ u_{tt}^{-1} E_u + u_{tt}^{-1} \N u_t \otimes \N u_t
\right]_t\\
=&\ - u_{tt}^{-2} u_{ttt} E_u + u_{tt}^{-1} (E_u)_t - u_{tt}^{-2} u_{ttt} \N u_t
\otimes \N u_t + u_{tt}^{-1} \N u_{tt} \otimes \N u_t + u_{tt}^{-1} \N u_t
\otimes \N u_{tt}
\end{split}
\end{gather}
Next we compute that
\begin{align*}
\left[ (E_u)_t \right] =&\ (1+\ge) u_{ttt} A_u + (1+\ge) u_{tt} (A_u)_t - \N
u_{tt} \otimes \N
u_t - \N u_t \otimes \N u_{tt}\\
=&\ (1+\ge) u_{ttt} A_u + (1+\ge) u_{tt} \left[ \N^2 u_t + \N u_t \otimes \N u +
\N u \otimes \N
u_t - \IP{\N u_t, \N u} g \right]\\
&\ - \N u_{tt} \otimes \N u_t - \N u_t \otimes \N u_{tt}.
\end{align*}
Next we have, using (\ref{lutt10}),
\begin{align*}
\left[ (E_u)_{tt} \right] =&\ (1+\ge) u_{tttt} A_u + 2 (1+\ge) u_{ttt} (A_u)_t +
(1+\ge) u_{tt}
(A_u)_{tt}\\
&\ - \N u_{ttt} \otimes \N u_t - 2 \N u_{tt} \otimes \N u_{tt} - \N u_t
\otimes \N u_{ttt}\\
=&\ (1+\ge) u_{tttt} A_u + 2 (1+\ge) u_{ttt} (A_u)_t\\
&\  + (1+\ge) u_{tt} \left[ \N^2 u_{tt} + \N u_{tt} \otimes \N u + 2 \N u_t
\otimes \N
u_t + \N u \otimes \N u_{tt} - \brs{\N u_t}^2 g - \IP{\N u, \N u_{tt}} g
\right]\\
&\ - \N u_{ttt} \otimes \N u_t - 2 \N u_{tt} \otimes \N u_{tt} - \N u_t \otimes
\N u_{ttt}\\
=&\ (1+\ge) u_{tttt} A_u\\
&\ + 2 u_{ttt} \left[ - u_{tt}^{-2} u_{ttt} E_u + u_{tt}^{-1}
(E_u)_t - u_{tt}^{-2} u_{ttt} \N u_t \otimes \N u_t + u_{tt}^{-1} \N u_{tt}
\otimes \N u_t + u_{tt}^{-1} \N u_t \otimes \N u_{tt} \right]\\
&\  + (1+\ge) u_{tt} \left[ \N^2 u_{tt} + \N u_{tt} \otimes \N u + 2 \N u_t
\otimes \N
u_t + \N u \otimes \N u_{tt} - \brs{\N u_t}^2 g - \IP{\N u, \N u_{tt}} g
\right]\\
&\ - \N u_{ttt} \otimes \N u_t - 2 \N u_{tt} \otimes \N u_{tt} - \N u_t \otimes
\N u_{ttt}.
\end{align*}
Hence
\begin{align*}
k & \gs_k(E_u)^{\frac{k-1}{k}} u_{tt}^{1-k} \FF^{ij} \left[ (E_u)_{tt}
\right]_{ij}\\
=&\ u_{tt}^{1-k} \left< T_{k-1}(E_u), (1+\ge) u_{tttt} A_u - 2 u_{tt}^{-2}
u_{ttt}^2 E_u + 2 u_{tt}^{-1} u_{ttt} (E_u)_t - 2 u_{tt}^{-2} u_{ttt}^2 \N u_t
\otimes \N u_t \right.\\
&\ \qquad \qquad + 4 u_{tt}^{-1} u_{ttt} \N u_{tt} \otimes \N u_t + (1+\ge)
\left\{ u_{tt} \N^2 u_{tt} + 2
u_{tt} \N u_{tt} \otimes \N u + 2 u_{tt} \N u_t \otimes \N u_t \right. \\
&\ \qquad \qquad \left. \left. - u_{tt} \brs{\N u_t}^2 g - u_{tt} \IP{\N u, \N
u_{tt}} g \right\} - 2 \N
u_{ttt} \otimes \N u_t - 2  \N u_{tt} \otimes \N u_{tt} \right>\\
=&\ \sum_{i=1}^{12} A_i.
\end{align*}
Comparing against (\ref{lutt5}) yields
\begin{align*}
\LL u_{tt} =&\ A_1 + A_6 + A_7 + A_{10} + A_{11}\\
&\ + u_{tttt} u_{tt}^{1-k} \IP{T_{k-1}(E), - (1+\ge) A_u + u_{tt}^{-1} \N u_t
\otimes \N u_t}  + (1+\ge)^{k-1} u_{tt}^{-1} f u_{tttt}\\
=&\ A_1 + A_6 + A_7 + A_{10} + A_{11}\\
&\ + u_{tttt} \left[ u_{tt}^{-k} \IP{T_{k-1}(E), - E} + (1+\ge)^{k-1}
u_{tt}^{-1} f \right]\\
=&\ A_1 + A_6 + A_7 + A_{10} + A_{11} + u_{tttt} \left[ k u_{tt}^{-k} \gs_k(E) +
(1+\ge)^{k-1} u_{tt}^{-1} f \right]\\
=&\ A_1 + A_6 + A_7 + A_{10} + A_{11} + f (1+\ge)^{k-1} (1-k) u_{tt}^{-1}
u_{tttt}.
\end{align*}
Hence we obtain
\begin{gather} \label{lutt15}
\begin{split}
\LL u_{tt}
=&\ k \gs_k(E_u)^{\frac{k-1}{k}} u_{tt}^{1-k} \FF^{ij} \left[ (E_u)_{tt}
\right]_{ij} - u_{tt}^{1-k} \left< T_{k-1}(E_u), - 2 u_{tt}^{-2}
u_{ttt}^2 E_u + 2 u_{tt}^{-1} u_{ttt} (E_u)_t \right.\\
&\ \left. - 2 (1+\ge) u_{tt}^{-2} u_{ttt}^2 \N u_t
\otimes \N u_t + 4 u_{tt}^{-1} u_{ttt} \N u_{tt} \otimes \N u_t + 2 u_{tt} \N
u_t \otimes \N u_t \right.\\
&\ \left. - (1+\ge) u_{tt} \brs{\N u_t}^2 g - 2 \N u_{tt} \otimes \N u_{tt}
\right> + f (1+\ge)^{k-1} (1-k) u_{tt}^{-1} u_{tttt}\\
=&\ k \gs_k(E_u)^{\frac{k-1}{k}} u_{tt}^{1-k} \left[ - \FF^{ij,kl} \left[
(E_u)_t \right]_{ij} \left[ (E_u)_t \right]_{kl} \right.\\
&\ \left. + c_{\ge} \left[  \left( f^{\frac{1}{k}} \right)_{tt}
u_{tt}^{\frac{k-1}{k}} + 2 \frac{k-1}{k}
\left( f^{\frac{1}{k}} \right)_t u_{tt}^{-\frac{1}{k}} u_{ttt}  - \frac{1}{k}
\left( \frac{k-1}{k} \right) f^{\frac{1}{k}} u_{tt}^{-\frac{1+k}{k}} u_{ttt}^2 +
\frac{k-1}{k} f^{\frac{1}{k}} u_{tt}^{-\frac{1}{k}} u_{tttt} \right] \right]\\
&\ + u_{tt}^{1-k} \left< T_{k-1}(E), 2 u_{tt}^{-2} u_{ttt}^2 E_u - 2 u_{tt}^{-1}
u_{ttt}(E_u)_t + 2 (1+\ge) u_{tt}^{-2} u_{ttt}^2 \N u_t \otimes \N u_t \right.\\
&\ \left. - 4 u_{tt}^{-1} u_{ttt} \N u_{tt} \otimes \N u_t - 2 u_{tt} \N
u_t \otimes \N u_t + (1+\ge) u_{tt} \brs{\N u_t}^2 g + 2 \N u_{tt} \otimes \N
u_{tt} \right>\\
&\ + f (1+\ge)^{k-1} (1-k) u_{tt}^{-1} u_{tttt}\\
=&\ \sum_{i=1}^{13} A_i.
\end{split}
\end{gather}
We now clean up some of the lower order terms.  In particular we express
\begin{align*}
k \gs_k(E)^{\frac{k-1}{k}} u_{tt}^{1-k} =&\ k \left[ f u_{tt}^{k-1}
(1+\ge)^{k-1} \right]^{\frac{k-1}{k}} u_{tt}^{1-k} = k f^{\frac{k-1}{k}}
u_{tt}^{\frac{1-k}{k}} (1+\ge)^{\frac{(k-1)^2}{k}}.
\end{align*}
Then observe
\begin{align*}
A_2 =&\ \left(k \gs_k(E)^{\frac{k-1}{k}} u_{tt}^{1-k} \right) \left(
(1+\ge)^{\frac{k-1}{k}} (f^{\frac{1}{k}})_{tt} u_{tt}^{\frac{k-1}{k}} \right)\\
=&\ \left( k f^{\frac{k-1}{k}} u_{tt}^{\frac{1-k}{k}}
(1+\ge)^{\frac{(k-1)^2}{k}} \right) \left( (1+\ge)^{\frac{k-1}{k}}
(f^{\frac{1}{k}})_{tt} u_{tt}^{\frac{k-1}{k}} \right)\\
=&\ (1+\ge)^{k-1} k f^{\frac{k-1}{k}} (f^{\frac{1}{k}})_{tt}
\end{align*}
Next
\begin{align*}
A_3 =&\ \left( k \gs_k(E)^{\frac{k-1}{k}} u_{tt}^{1-k} \right) \left(
(1+\ge)^{\frac{k-1}{k}} 2 \frac{k-1}{k} (f^{\frac{1}{k}})_t
u_{tt}^{-\frac{1}{k}} u_{ttt} \right)\\
=&\ \left( k f^{\frac{k-1}{k}} u_{tt}^{\frac{1-k}{k}}
(1+\ge)^{\frac{(k-1)^2}{k}} \right) \left( (1+\ge)^{\frac{k-1}{k}} 2
\frac{k-1}{k} (f^{\frac{1}{k}})_t u_{tt}^{-\frac{1}{k}} u_{ttt} \right)\\
=&\ 2 (k-1) (1+\ge)^{k-1} u_{tt}^{-1} f^{\frac{k-1}{k}} (f^{\frac{1}{k}})_t
u_{ttt}
\end{align*}
Next
\begin{align*}
A_4 =&\ \left( k f^{\frac{k-1}{k}} u_{tt}^{\frac{1-k}{k}}
(1+\ge)^{\frac{(k-1)^2}{k}} \right) \left( - (1+\ge)^{\frac{k-1}{k}} \frac{1}{k}
\left( \frac{k-1}{k} \right) f^{\frac{1}{k}} u_{tt}^{-\frac{1+k}{k}} u_{ttt}^2
\right)\\
=&\ - (1+\ge)^{k-1} \left( \frac{k-1}{k} \right) f u_{tt}^{-2} u_{ttt}^2.
\end{align*}
Next note that
\begin{align*}
A_{5} = k \gs_k(E_u)^{\frac{k-1}{k}} u_{tt}^{1-k} c_{\ge} \frac{k-1}{k}
f^{\frac{1}{k}} u_{tt}^{-\frac{1}{k}} u_{tttt} =&\ (k-1) (1+\ge)^{k-1} f
u_{tt}^{-1}  u_{tttt} = - A_{13}.
\end{align*}
Also observe
\begin{align*}
A_{6} =&\ u_{tt}^{1-k} \IP{T_{k-1}(E), 2 u^{-2}_{tt} u_{ttt}^2 E_u}\\
=&\ 2 k u^{-1-k}_{tt} u_{ttt}^2 \gs_k(E)\\
=&\ 2 k u_{tt}^{-1-k} u_{ttt}^2 \left[ f u_{tt}^{k-1} (1+\ge)^{k-1} \right]\\
=&\ 2 k (1+\ge)^{k-1} u_{tt}^{-2} u_{ttt}^2 f.
\end{align*}
Lastly
\begin{align*}
A_7 =&\ -2 u_{tt}^{1-k} \IP{T_{k-1}(E), u_{tt}^{-1} u_{ttt} (E_u)_t}\\
=&\ -2 u_{tt}^{-k} u_{ttt} \left[ \gs_k(E) \right]_t\\
=&\ - 2(1+\ge)^{k-1} u_{tt}^{-k} u_{ttt} \left[ f u_{tt}^{k-1}\right]_t\\
=&\ - 2(1+\ge)^{k-1} u_{tt}^{-k} u_{ttt} \left[ f_t u_{tt}^{k-1} + (k-1) f
u_{tt}^{k-2} u_{ttt} \right]\\
=&\ - 2(1+\ge)^{k-1} u_{tt}^{-1} u_{ttt} \left[ f_t + (k-1) f u_{tt}^{-1}
u_{ttt} \right].
\end{align*}
Inserting these simplifications into (\ref{lutt10}) yields the result.
\end{proof}
\end{lemma}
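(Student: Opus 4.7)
The plan is to differentiate the equation $(\star_{\ge,f})$ twice with respect to $t$ and then reconcile the resulting identity with the explicit expression for $\LL u_{tt}$ obtained by applying Lemma \ref{linearizedgeod} with $v = u_{tt}$.

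First I would rewrite $(\star_{\ge,f})$ in its concave form
\begin{align*}
c_{\ge}\, f^{1/k}\, u_{tt}^{(k-1)/k} \;=\; \FF(E_u) \;:=\; \gs_k(E_u)^{1/k},
\qquad c_{\ge} = (1+\ge)^{(k-1)/k}.
\end{align*}
This is the natural object to differentiate because the second derivative of $\FF$ produces the negative semidefinite quadratic form $\FF^{ij,kl}$ that appears as the main term in the claim. Differentiating once in $t$ yields a scalar identity equating the $t$-derivative of the left side to $\FF^{ij}[(E_u)_t]_{ij}$, and differentiating again gives
\begin{align*}
\FF^{ij}[(E_u)_{tt}]_{ij} + \FF^{ij,kl}[(E_u)_t]_{ij}[(E_u)_t]_{kl}
\;=\; c_{\ge}\,\partial_t^2 \big(f^{1/k} u_{tt}^{(k-1)/k}\big),
\end{align*}
which, once expanded, accounts for all the $f$-derivative terms and the $u_{tttt}$ term in the claimed formula.

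Second, I would derive the explicit formulas for $(E_u)_t$ and $(E_u)_{tt}$ directly from $E_u = (1+\ge) u_{tt} A_u - \N u_t \otimes \N u_t$ together with the variational formula for the Schouten tensor $(A_u)_t = \N_u^2 u_t + \N u_t\otimes \N u + \N u\otimes \N u_t - \IP{\N u_t,\N u} g$. A convenient trick, used to eliminate second-order spatial derivatives of $u_t$ in favor of $E_u$ and $A_u$, is to substitute $(A_u)_t$ using the algebraic identity
\begin{align*}
(1+\ge)(A_u)_t
= -u_{tt}^{-2} u_{ttt}\, E_u + u_{tt}^{-1} (E_u)_t - u_{tt}^{-2} u_{ttt}\, \N u_t \otimes \N u_t
+ u_{tt}^{-1}\big(\N u_{tt}\otimes \N u_t + \N u_t \otimes \N u_{tt}\big),
\end{align*}
which follows by differentiating $E_u = (1+\ge) u_{tt} A_u - \N u_t \otimes \N u_t$ in $t$ and solving.

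Third, I would compare the expression $k\,\gs_k(E_u)^{(k-1)/k} u_{tt}^{1-k}\,\FF^{ij}[(E_u)_{tt}]_{ij}$, which equals
$u_{tt}^{1-k}\IP{T_{k-1}(E_u), (E_u)_{tt}}$, with the formula for $\LL u_{tt}$ obtained by plugging $v = u_{tt}$ into (\ref{linop}). Many of the resulting terms pair up directly (in particular those containing $\N^2 u_{tt}$, mixed $\N u_{tt}\otimes \N u$ terms, and the $f u_{tttt}/u_{tt}$ prefactor), while the $u_{tttt} A_u$ contribution combines with the $u_{tt}^{-1}\N u_t\otimes \N u_t$ term from $\LL u_{tt}$ to leave precisely a multiple of $\gs_k(E_u)$ via Lemma \ref{LAlemma}; this cancellation ultimately eliminates $u_{tttt}$ altogether from the final formula, a crucial simplification. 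The remaining lower-order $u_{ttt}$ terms are then collected using Lemma \ref{LAlemma} to convert between $T_{k-1}(E_u)$ and $T_{k-1}(A_u)$ contractions, while rescaling $\gs_k(E_u) = f u_{tt}^{k-1}(1+\ge)^{k-1}$ to extract the explicit $f$-prefactors in the claim.

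The main obstacle will be purely computational bookkeeping: the calculation of $(E_u)_{tt}$ produces on the order of a dozen terms, and many must be regrouped using the $E_u \leftrightarrow A_u$ dictionary and Lemma \ref{LAlemma} to arrive at the stated form. The substantive (as opposed to mechanical) content is the identification of the $u_{tttt}$ cancellation and the isolation of the concavity quadratic form $\FF^{ij,kl}$ as the only ``second-order in velocity'' obstruction — everything else is linear or lower-order in the third time derivative and can be absorbed into the asserted explicit remainder.
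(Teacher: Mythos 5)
Your proposal follows essentially the same route as the paper's proof: rewrite $(\star_{\ge,f})$ in the concave form $c_{\ge} f^{1/k} u_{tt}^{(k-1)/k} = \FF(E_u)$ and differentiate twice in $t$ to isolate the $\FF^{ij,kl}$ quadratic form, compute $(E_u)_{tt}$ via the identity for $(1+\ge)(A_u)_t$, compare $u_{tt}^{1-k}\IP{T_{k-1}(E_u),(E_u)_{tt}}$ against $\LL u_{tt}$ from (\ref{linop}), and clean up using Lemma \ref{LAlemma} and $\gs_k(E_u) = f u_{tt}^{k-1}(1+\ge)^{k-1}$. You correctly identify both the structural role of the concavity term and the $u_{tttt}$ cancellation (the combination $-(1+\ge)A_u + u_{tt}^{-1}\N u_t\otimes\N u_t = -u_{tt}^{-1}E_u$ reduces to $-k u_{tt}^{-k}\gs_k(E_u)$, which then cancels against the $u_{tttt}$-contribution from the twice-differentiated equation), so this is a sound outline of exactly the paper's argument.
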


\begin{prop} \label{uttest} Given $u$ an admissible
solution to $(\star_{\ge,f})$, one has
 \begin{align*}
 \sup_{M \times [0,1]} u_{tt} \leq C \ge^{-1}.
 \end{align*}
 \begin{proof} Let's begin with a preliminary estimate for $\LL u_{tt}$.
Returning to Lemma \ref{Lutt} and considering the terms in order, one first
observes by convexity of $\FF$ that
 \begin{align*}
 - k f^{\frac{k}{k-1}} u_{tt} (1+\ge)^k \FF^{ij,kl} \left[ (E_u)_t \right]_{ij}
\left[ (E_u)_t \right]_{kl} \geq 0.
 \end{align*}
 Also, by an application of the Cauchy Schwarz inequality one has the matrix
inequality
 \begin{align*}
 2  u_{tt}^{-2} u_{ttt}^2 \N u_t \otimes \N u_t - 4 u_{tt}^{-1} u_{ttt} \N
u_{tt} \otimes \N u_t + 2 \N u_{tt} \otimes \N u_{tt} \geq 0.
 \end{align*}
 Also, since $u$ is an admissible solution we have
 \begin{align*}
 u_{tt}^{1-k} \IP{T_{k-1}(E), u_{tt} \brs{\N u_t}^2 g} =&\ u_{tt}^{2-k} \brs{\N
u_t}^2 \tr T_{k-1}(E) \geq 0.
\end{align*}
Also we observe
\begin{align*}
(1+\ge)^{k-1} k f^{\frac{k-1}{k}} (f^{\frac{1}{k}})_{tt} \leq&\ C
f^{\frac{k-1}{k}} \left[ f^{\frac{1}{k} - 1} f_{tt} + f^{\frac{1}{k} - 2} f_t^2
\right] \leq C f.
\end{align*}
Next
\begin{align*}
2 (k-1)(1+\ge)^{k-1} u_{tt}^{-1} f^{\frac{k-1}{k}} (f^{\frac{1}{k}})_t u_{ttt}
\leq&\ C f^{\frac{k-1}{k}}(f^{\frac{1}{k} - 1} f_t) u_{tt}^{-1} u_{ttt}\\
\leq&\ C f u_{tt}^{-1} u_{ttt}\\
\leq&\ C \gd^{-1} f + C \gd f u_{tt}^{-2} u_{ttt}^2.
\end{align*}
Also
\begin{align*}
- 2 (1+\ge)^{k-1} u_{tt}^{-1} u_{ttt} f_t \leq&\ C f u_{tt}^{-1} u_{ttt}\\
\leq&\ C \gd^{-1} f + C \gd f u_{tt}^{-2} u_{ttt}^2.
\end{align*}
Combining these estimates and choosing $\gd$ sufficiently small leads to the
preliminary estimate
\begin{gather} \label{uttest10}
\begin{split}
\LL u_{tt} \geq&\ - 2 u_{tt}^{2-k} \IP{T_{k-1}(E), \N u_t \otimes \N u_t} - C f.
\end{split}
\end{gather}
Similar considerations with the result of Lemma \ref{Lut} lead to the
preliminary estimate
\begin{gather} \label{uttest20}
\LL u_t^2 \geq - C f + 2 f u_{tt} + 2 \ge u_{tt}^{2-k} \IP{T_{k-1}(E), \N u_t
\otimes \N u_t}.
\end{gather}
 Now fix constants $\gl_i$ and let
 \begin{align*}
 \Phi = u_{tt} + \gl_1 \ge^{-1} u_t^2 + \gl_2 t(t-1).
 \end{align*}
Choosing $\gl_1 \geq 1$, combining Lemma \ref{Ltt} with (\ref{uttest10}) and
(\ref{uttest20}) yields
 \begin{align*}
 \LL \Phi \geq&\ 2 u_{tt}^{2-k} \IP{ T_{k-1}(E), (\gl_1 - 1) \N u_t \otimes \N
u_t} - f (C + C \gl_1 \ge^{-1}) + 2 \gl_1 \ge^{-1} f u_{tt} + \gl_2 f
u_{tt}^{-1}\\
 \geq&\ f \left[ \left( 2 \gl_1 \ge^{-1} - \gd \left( C + C \gl_1 \ge^{-1}
\right) \right) u_{tt} + \left( \gl_2 - \gd^{-1} \left(C + C \gl_1 \ge^{-1}
\right) \right) \right].
 \end{align*}
If we now choose $\gd$ small above with respect to universal constants and then
choose $\gl_2$ large with respect to $\gd$ we conclude
 \begin{align*}
 \LL \Phi > 0,
 \end{align*}
 and hence $\Phi$ cannot have an interior maximum.  The proposition follows.
 \end{proof}
\end{prop}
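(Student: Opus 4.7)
The plan is to construct an auxiliary function $\Phi = u_{tt} + \lambda_1 \epsilon^{-1} u_t^2 + \lambda_2 t(t-1)$ and run a maximum principle argument with respect to the linearized operator $\LL$. The motivation is that $\LL u_{tt}$ will contain a term like $-2u_{tt}^{2-k}\IP{T_{k-1}(E),\N u_t\otimes \N u_t}$ with the wrong sign, which cannot be directly absorbed; the key observation (visible in the last term of Lemma~\ref{Lut}) is that $\LL u_t^2$ contains a \emph{favorable} term of exactly this form with coefficient $2\epsilon u_{tt}^{2-k}\IP{T_{k-1}(E),\N u_t\otimes\N u_t}$, so multiplying $u_t^2$ by $\lambda_1 \epsilon^{-1}$ produces a cancellation. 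The $\lambda_2 t(t-1)$ term is a standard device generating a positive contribution proportional to $\sigma_k(A_u)$ (via Lemma~\ref{Ltt}), which can be traded for $f u_{tt}^{-1}$ at the cost of $\lambda_2 = O(\epsilon^{-1})$.

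First I would derive a lower bound on $\LL u_{tt}$ from Lemma~\ref{Lutt}. The dominant negative contribution is $-2u_{tt}^{2-k}\IP{T_{k-1}(E),\N u_t\otimes\N u_t}$; all other terms are either manifestly non-negative (the $\FF^{ij,kl}$ concavity term, the Cauchy--Schwarz quadratic in $\N u_{tt}$ and $\N u_t$, and the $\brs{\N u_t}^2\tr T_{k-1}(E)$ piece) or are lower-order expressions in $f$ and its first two derivatives, which by the standing hypothesis are absorbed into terms of the form $Cf + \delta f u_{tt}^{-2}u_{ttt}^2$ after Young's inequality. Dropping the genuinely positive contributions produces the preliminary inequality $\LL u_{tt} \geq -2u_{tt}^{2-k}\IP{T_{k-1}(E),\N u_t\otimes\N u_t} - Cf$. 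The analogous simplification of Lemma~\ref{Lut}, keeping only the $\epsilon$ term and handling $u_t f_t$ with the $u_t$ bound from Proposition~\ref{utestimate}, gives $\LL u_t^2 \geq -Cf + 2fu_{tt} + 2\epsilon u_{tt}^{2-k}\IP{T_{k-1}(E),\N u_t\otimes\N u_t}$.

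Combining these with Lemma~\ref{Ltt} applied to $t(t-1)$ (whose output bounds $\sigma_k(A_u)$ below, and hence $fu_{tt}^{-1}$ below via the equation), and taking $\lambda_1 \geq 1$, I obtain a bound of the form
\begin{align*}
\LL \Phi \geq f\bigl[(2\lambda_1\epsilon^{-1} - \delta\, C(1+\lambda_1\epsilon^{-1}))u_{tt} + (\lambda_2 - \delta^{-1} C(1+\lambda_1\epsilon^{-1}))\bigr],
\end{align*}
after splitting the $-Cf(1+\lambda_1\epsilon^{-1})$ error term by Young's inequality $Cf \leq \tfrac{1}{2}\delta f u_{tt} + \tfrac{1}{2}\delta^{-1}C^2 f u_{tt}^{-1}$. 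Choosing $\delta$ small universal, then $\lambda_2 = O(\epsilon^{-1})$ large, forces $\LL\Phi > 0$, so $\Phi$ has no interior maximum.

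The conclusion then follows by estimating $\Phi$ on the parabolic boundary $t\in\{0,1\}$, where the $\lambda_2 t(t-1)$ summand vanishes. The boundary value of $u_{tt}$ is controlled independently of $\epsilon$ by solving the equation algebraically for $u_{tt}$ in terms of boundary data (using admissibility to bound $\sigma_k(A_{u_0}), \sigma_k(A_{u_1})$ away from zero and the $C^1$ bounds from Propositions~\ref{utestimate}, \ref{4dC1estimate}); combined with $\lambda_1\epsilon^{-1} u_t^2 \leq C\epsilon^{-1}$ and $\lambda_2 = O(\epsilon^{-1})$, this yields $u_{tt}\leq C\epsilon^{-1}$ throughout $M\times[0,1]$. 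I expect the main bookkeeping obstacle to be verifying that all spurious terms in $\LL u_{tt}$ from Lemma~\ref{Lutt} really can be absorbed into $Cf$ or $\delta f u_{tt} + \delta^{-1}f u_{tt}^{-1}$; in particular the handling of the $u_{ttt}$-terms $u_{tt}^{-1}u_{ttt} f_t$ and $u_{tt}^{-1} u_{ttt} f^{(k-1)/k}(f^{1/k})_t$ requires a Cauchy--Schwarz pairing against $fu_{tt}^{-2}u_{ttt}^2$, using that this latter quantity appears with a positive coefficient in Lemma~\ref{Lutt}.
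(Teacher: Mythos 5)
Your interior argument coincides with the paper's: same auxiliary function $\Phi = u_{tt} + \gl_1\ge^{-1}u_t^2 + \gl_2 t(t-1)$, same preliminary estimates (\ref{uttest10}) and (\ref{uttest20}) extracted from Lemmas~\ref{Lutt}, \ref{Lut}, \ref{Ltt}, and the same cancellation of $-2u_{tt}^{2-k}\IP{T_{k-1}(E),\N u_t\otimes\N u_t}$ against the $\ge$-weighted term from $\LL u_t^2$. So the core of the proposal is correct and is the paper's proof.

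The one place you are too quick is the boundary step. You claim $u_{tt}$ on $M\times\{0,1\}$ is controlled by ``solving the equation algebraically'' using admissibility and the $C^1$ bounds from Propositions~\ref{utestimate} and \ref{4dC1estimate}. Restricting the equation to $t=0$ gives
\[
(1+\ge)\,u_{tt}(\cdot,0)\,\gs_k(A_{u_0}) = \IP{T_{k-1}(A_{u_0}),\,\N u_t(\cdot,0)\otimes \N u_t(\cdot,0)} + f,
\]
so the algebraic bound on $u_{tt}(\cdot,0)$ requires a bound on the mixed derivative $\N u_t$ on the boundary slice, and this is \emph{not} a consequence of the a priori bounds on $|\N u|$ and $|u_t|$ over the spacetime domain: those control each first derivative separately, not the second-order mixed term. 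In the paper this boundary control is established separately (Proposition~\ref{C2bdyProp}) via a barrier argument producing $\sup_{M\times\{0,1\}}[|u_{tt}|+|\N u_t|+|\N^2 u|]\le C$, and the interior proposition you are proving implicitly quotes it. Your proposal should either invoke that boundary estimate or supply the barrier construction for $\N u_t$ on $M\times\{0,1\}$; as written, the claim that the $C^1$ bounds suffice is a genuine (if localized) gap.
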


\begin{lemma} \label{Llapu}
Given $u$ an admissible solution of $(\star_{\ge,f})$ we
have
\begin{align*}
\LL(\gD u) =&\ - k \gs_k(E)^{\frac{k-1}{k}} u_{tt}^{1-k} \FF^{(ij),(kl)} \N_p
(E_u)_{ij} \N_p (E_u)_{kl}\\
&\ + u_{tt}^{1-k} T_{k-1}(E)^{ij} \Big\{  2 u_{tt}^{-2} \brs{\N
u_{tt}}^2 \N_i u_t \otimes \N_j u_t  - 4 u_{tt}^{-1} \N_p u_{tt} \N_i \N_p u_t
\N_j u_t  + 2 \N_i \N_p u_t \N_j
\N_p u_t \\
&\  - 2 (1+\ge) u_{tt} \N_i \N_p u \N_j \N_p u + (1+\ge) u_{tt} \brs{\N^2
u}^2 g_{ij} + u_{tt}
\OO(\brs{\N^2 u} + \brs{\N u}^2 + 1) \Big\} \\
&\ + k (1+\ge)^{k-1} f^{\frac{k-1}{k}} \gD (f^{\frac{1}{k}}) - (1+\ge)^{k-1}
\frac{2}{k} u_{tt}^{-1} \IP{\N f, \N u_{tt}}\\
&\ + (1+\ge)^{k-1} \left( \frac{k+1}{k} \right) f u_{tt}^{-2} \brs{\N u_{tt}}^2.
\end{align*}

\begin{proof}
To begin we compute using (\ref{linop})
\begin{gather} \label{Llap10}
\begin{split}
\LL(\gD u) =&\ (1 + \ge)^{k-1} u_{tt}^{-1} f \gD u_{tt}\\
&\ + u_{tt}^{1-k} \left< T_{k-1}(E), (1+\ge) u_{tt} \left( \N^2 \gD u + \N \gD u
\otimes \N u + \N u \otimes \N \gD u - \IP{\N \gD u, \N u} g \right) \right.\\
&\ \left. \qquad \qquad - \N \gD u_t \otimes \N u_t - \N u_t \otimes \N \gD u_t
+ u_{tt}^{-1} \N u_t \otimes \N u_t \gD u_{tt} \right>.
\end{split}
\end{gather}
Next we differentiate the equation, which we rewrite as
\begin{align*}
c_{\ge} f^{\frac{1}{k}} u_{tt}^{\frac{k-1}{k}} = \gs_k(E_u)^{\frac{1}{k}} =:
\FF(E_u),
\end{align*}
Differentiating yields
\begin{align*}
c_{\ge} \N_p (f^{\frac{1}{k}}) u_{tt}^{\frac{k-1}{k}} + c_{\ge} \left(
\frac{k-1}{k} \right)
f^{\frac{1}{k}} u_{tt}^{-\frac{1}{k}} \N_p u_{tt} = \FF^{ij} \N_p (E_u)_{ij}.
\end{align*}
Differentiating again yields
\begin{align*}
\FF^{ij} &(\gD E_u)_{ij} + \FF^{(ij),(kl)} \N_p (E_u)_{ij} \N_p (E_u)_{kl}\\
=& c_{\ge} \left[ \gD (f^{\frac{1}{k}})
u_{tt}^{\frac{k-1}{k}} + 2 \left( \frac{k-1}{k} \right) \IP{ \N
(f^{\frac{1}{k}}), \N u_{tt}} u_{tt}^{-\frac{1}{k}} \right.\\
&\ \left. - \frac{1}{k} \left( \frac{k-1}{k} \right) f^{\frac{1}{k}} u_{tt}^{-
\frac{1+k}{k}} \brs{\N u_{tt}}^2 + \left( \frac{k-1}{k} \right) f^{\frac{1}{k}}
u_{tt}^{-\frac{1}{k}} \gD u_{tt} \right].
\end{align*}
Next we have
\begin{align*}
\N_p (E_u)_{ij} =&\ \N_p \left[ (1+\ge) u_{tt} (A_u)_{ij} - \N_i u_t \N_j u_t
\right]\\
=&\ (1+\ge) \N_p u_{tt} (A_u)_{ij} + (1+\ge) u_{tt} \N_p (A_u)_{ij} - \N_p \N_i
u_t \N_j u_t -
\N_i u_t \N_p \N_j u_t.
\end{align*}
Differentiating again and commuting derivatives yields
\begin{align*}
(\gD E_u)_{ij} =&\ (1+\ge) \gD u_{tt} (A_u)_{ij} + 2 (1+\ge) \N_p u_{tt} \N_p
(A_u)_{ij} +
(1+\ge) u_{tt} \gD (A_u)_{ij}\\
&\ - \N_i \gD u_t \N_j u_t - \N_i u_t \N_j \gD u_t - 2 \N_i \N_p u_t \N_j \N_p
u_t\\
&\ - R_{ip} \N_p u_t \N_j u_t - R_{jp} \N_p u_t \N_i u_t.
\end{align*}
Differentiating the equation for the Schouten tensor yields
\begin{align*}
\N_p (A_u)_{ij} =&\ \N_p A_{ij} + \N_p \N_i \N_j u + \N_i \N_p u \N_j u + \N_i u
\N_j \N_p u - \frac{1}{2} \N_p \brs{\N u}^2 g.
\end{align*}
This implies
\begin{gather} \label{lapA}
 \begin{split}
\gD (A_u)_{ij} =&\ \gD A_{ij} + \N_i \N_j \gD u + \N_i \gD u \N_j u + \N_i u
\N_j \gD u\\
&\ + 2 \N_i \N_p u \N_j \N_p u - \brs{\N^2 u}^2 g_{ij} - \IP{\N u, \N \gD u}
g_{ij} + \OO(\brs{\N^2 u} + \brs{\N u}^2 + 1).
 \end{split}
\end{gather}
On the other hand it is also useful to express
\begin{align*}
(1+\ge) \N_p (A_u)_{ij} =&\ \N_p \left[ u_{tt}^{-1} (E_u)_{ij} + u_{tt}^{-1}
\N_i u_t
\N_j u_t \right]\\
=&\ u_{tt}^{-1} \N_p (E_u)_{ij} - u_{tt}^{-2} (E_u)_{ij} \N_p u_{tt} -
u_{tt}^{-2} \N_p u_{tt} \N_i u_t \N_j u_t\\
&\ + u_{tt}^{-1} \N_i \N_p u_t \N_j u_t + u_{tt}^{-1} \N_i u_t \N_j \N_p u_t.
\end{align*}
Combining the above calculations yields
\begin{align*}
\gD(E_u)_{ij} =&\ (1+\ge) \gD u_{tt} (A_u)_{ij} + 2 \N_p u_{tt} \left[
u_{tt}^{-1} \N_p
(E_u)_{ij} - u_{tt}^{-2} (E_u)_{ij} \N_p u_{tt} - u_{tt}^{-2} \N_p u_{tt} \N_i
u_t \N_j u_t \right.\\
&\ \left. + u_{tt}^{-1} \N_i \N_p u_t \N_j u_t + u_{tt}^{-1} \N_i u_t \N_j \N_p
u_t \right]\\
&\ + (1+\ge) u_{tt} \left[ \N_i \N_j \gD u + \N_i \gD u \N_j u + \N_i u \N_j \gD
u
\right.\\
&\ \left. + 2 \N_i \N_p u \N_j \N_p u - \brs{\N^2 u}^2 g_{ij} - \IP{\N u, \N \gD
u} g_{ij} + \OO(\brs{\N^2 u} + \brs{\N u}^2 + 1) \right]\\
&\ - \N_i \gD u_t \N_j u_t - \N_i u_t \N_j \gD u_t - 2 \N_i \N_p u_t \N_j \N_p
u_t\\
=&\ (1+\ge) \gD u_{tt} (A_u)_{ij} + 2 u_{tt}^{-1} \N_p u_{tt} \N_p (E_u)_{ij} -
2
u_{tt}^{-2}\brs{\N u_{tt}}^2 (E_u)_{ij} - 2 u_{tt}^{-2} \brs{\N u_{tt}}^2 \N_i
u_t \otimes \N_j u_t\\
&\ + 2 u_{tt}^{-1} \N_p u_{tt} \N_i \N_p u_t \N_j u_t + 2 u_{tt}^{-1} \N_p
u_{tt} \N_j \N_p u_t \N_i u_t\\
&\ + (1+\ge) u_{tt} \N_i \N_j \gD u + (1+\ge) u_{tt} \N_i \gD u \N_j u + (1+\ge)
u_{tt} \N_i u \N_j \gD
u + 2 (1+\ge) u_{tt} \N_i \N_p u \N_j \N_p u\\
&\  - (1+\ge) u_{tt} \brs{\N^2 u}^2 g_{ij} - (1+\ge) u_{tt}  \IP{\N u, \N \gD u}
g_{ij} + u_{tt}
\OO(\brs{\N^2 u} + \brs{\N u}^2 + 1)\\
&\ - \N_i \gD u_t \N_j u_t - \N_i u_t \N_j \gD u_t - 2 \N_i \N_p u_t \N_j \N_p
u_t.
\end{align*}
Thus
\begin{align*}
k & \gs_k(E)^{\frac{k-1}{k}} u_{tt}^{1-k} \FF^{ij} (\gD E_u)_{ij}\\
=&\ u_{tt}^{1-k} \left< T_{k-1}(E), (1+\ge) \gD u_{tt} (A_u)_{ij} + 2
u_{tt}^{-1} \N_p u_{tt} \N_p
(E_u)_{ij} - 2 u_{tt}^{-2}\brs{\N u_{tt}}^2 (E_u)_{ij} - 2 u_{tt}^{-2} \brs{\N
u_{tt}}^2 \N_i u_t \otimes \N_j u_t \right.\\
&\ + 4 u_{tt}^{-1} \N_p u_{tt} \N_i \N_p u_t \N_j u_t + (1+\ge) u_{tt} \N_i \N_j
\gD u + 2 (1+\ge) u_{tt} \N_i \gD u \N_j u\\
&\  + 2 (1+\ge) u_{tt} \N_i \N_p u \N_j \N_p u  - (1+\ge) u_{tt} \brs{\N^2 u}^2
g_{ij} - (1+\ge) u_{tt}  \IP{\N u, \N \gD u} g_{ij}\\
&\ \left. - 2 \N_i \gD u_t \N_j u_t - 2 \N_i \N_p u_t \N_j
\N_p u_t + u_{tt}
\OO(\brs{\N^2 u} + \brs{\N u}^2 + 1) \right>\\
=&\ \sum_{i=1}^{13} A_i.
\end{align*}
Comparing this against (\ref{Llap10}) yields
\begin{gather}
\begin{split}
\LL (\gD u) =&\ A_1 + A_6 + A_7 + A_{10} + A_{11}\\
&\ + u_{tt}^{1-k} \gD u_{tt} \IP{T_{k-1}(E), - (1+\ge) A_u + \N u_t \otimes \N
u_t} + (1+\ge)^{k-1} u_{tt}^{-1} f \gD u_{tt}\\
=&\ A_1 + A_6 + A_7 + A_{10} + A_{11}\\
&\ + u_{tt}^{-k} \gD u_{tt} \IP{T_{k-1}(E), -E} + (1+\ge)^{k-1} u_{tt}^{-1} f
\gD u_{tt}\\
=&\ A_1 + A_6 + A_7 + A_{10} + A_{11} + \gD u_{tt} \left[ - k u_{tt}^{-k}
\gs_k(E) + (1+\ge)^{k-1} u_{tt}^{-1} f \right]\\
=&\ A_1 + A_6 + A_7 + A_{10} + A_{11} + (1-k) (1+\ge)^{k-1} u_{tt}^{-1} f \gD
u_{tt}.
\end{split}
\end{gather}
Hence, collecting these calculations yields
\begin{align*}
\LL (\gD u) =&\ k \gs_k(E)^{\frac{k-1}{k}} u_{tt}^{1-k} \FF (\gD E_u)_{ij}\\
&\ - u_{tt}^{1-k} \left< T_{k-1}(E), 2 u_{tt}^{-1} \N_p u_{tt} \N_p
(E_u)_{ij} - 2 u_{tt}^{-2}\brs{\N u_{tt}}^2 (E_u)_{ij} - 2 u_{tt}^{-2} \brs{\N
u_{tt}}^2 \N_i u_t \otimes \N_j u_t \right.\\
&\ + 4 u_{tt}^{-1} \N_p u_{tt} \N_i \N_p u_t \N_j u_t  + 2 (1+\ge) u_{tt} \N_i
\N_p u \N_j \N_p u  - (1+\ge) u_{tt} \brs{\N^2 u}^2 g_{ij}\\
&\ \left. - 2 \N_i \N_p u_t \N_j
\N_p u_t + u_{tt}
\OO(\brs{\N^2 u} + \brs{\N u}^2 + 1) \right> + (1-k) (1+\ge)^{k-1} u_{tt}^{-1} f
\gD u_{tt}\\
=&\ - k \gs_k(E)^{\frac{k-1}{k}} u_{tt}^{1-k} \FF^{(ij),(kl)} \N_p (E_u)_{ij}
\N_p (E_u)_{kl}\\
&\ + c_{\ge} k \gs_k(E)^{\frac{k-1}{k}} u_{tt}^{1-k} \left[ \gD
(f^{\frac{1}{k}})
u_{tt}^{\frac{k-1}{k}} + 2 \left( \frac{k-1}{k} \right) \IP{ \N
(f^{\frac{1}{k}}), \N u_{tt}} u_{tt}^{-\frac{1}{k}} \right.\\
&\ \qquad \qquad \left. - \frac{1}{k} \left( \frac{k-1}{k} \right)
f^{\frac{1}{k}} u_{tt}^{-
\frac{1+k}{k}} \brs{\N u_{tt}}^2 + \left( \frac{k-1}{k} \right) f^{\frac{1}{k}}
u_{tt}^{-\frac{1}{k}} \gD u_{tt} \right]\\
&\ + u_{tt}^{1-k} \left< T_{k-1}(E),-  2 u_{tt}^{-1} \N_p u_{tt} \N_p
(E_u)_{ij} + 2 u_{tt}^{-2}\brs{\N u_{tt}}^2 (E_u)_{ij} + 2 u_{tt}^{-2} \brs{\N
u_{tt}}^2 \N_i u_t \otimes \N_j u_t \right.\\
&\ - 4 u_{tt}^{-1} \N_p u_{tt} \N_i \N_p u_t \N_j u_t  + 2 \N_i \N_p u_t \N_j
\N_p u_t - 2 (1+\ge) u_{tt} \N_i \N_p u \N_j \N_p u\\
&\ \left. + (1+\ge) u_{tt} \brs{\N^2 u}^2 g_{ij} + u_{tt}
\OO(\brs{\N^2 u} + \brs{\N u}^2 + 1) \right> + (1-k) (1+\ge)^{k-1} u_{tt}^{-1} f
\gD u_{tt}\\
=&\ \sum_{i=1}^{14} A_i.
\end{align*}
Now we simplify
\begin{align*}
A_2 =&\ \left( k \gs_k(E)^{\frac{k-1}{k}} u_{tt}^{1-k} \right) \left( c_{\ge}
u_{tt}^{\frac{k-1}{k}} \gD (f^{\frac{1}{k}})\right)\\
=&\ \left( k f^{\frac{k-1}{k}} u_{tt}^{\frac{1-k}{k}}
(1+\ge)^{\frac{(k-1)^2}{k}} \right) \left( (1+\ge)^{\frac{k-1}{k}}
u_{tt}^{\frac{k-1}{k}} \gD (f^{\frac{1}{k}}) \right)\\
=&\ k (1+\ge)^{k-1} f^{\frac{k-1}{k}} \gD (f^{\frac{1}{k}}).
\end{align*}
Next
\begin{align*}
A_3 =&\ \left( k \gs_k(E)^{\frac{k-1}{k}} u_{tt}^{1-k} \right) \left( 2 c_{\ge}
\left( \frac{k-1}{k} \right) \IP{\N (f^{\frac{1}{k}}), \N u_{tt}}
u_{tt}^{-\frac{1}{k}} \right)\\
=&\ \left( k f^{\frac{k-1}{k}} u_{tt}^{\frac{1-k}{k}}
(1+\ge)^{\frac{(k-1)^2}{k}} \right) \left( 2 (1+\ge)^{\frac{k-1}{k}} \left(
\frac{k-1}{k} \right) \IP{\N (f^{\frac{1}{k}}), \N u_{tt}} u_{tt}^{-\frac{1}{k}}
\right)\\
=&\ 2 (1+\ge)^{k-1} (k-1) f^{\frac{k-1}{k}} u_{tt}^{-1} \IP{\N
(f^{\frac{1}{k}}), \N u_{tt}}\\
=&\ (1+\ge)^{k-1} \left( 2 - \frac{2}{k} \right) u_{tt}^{-1} \IP{\N f, \N
u_{tt}}.
\end{align*}
Next
\begin{align*}
A_4 =&\ - \left( k \gs_k(E)^{\frac{k-1}{k}} u_{tt}^{1-k} \right) \left( c_{\ge}
\frac{1}{k} \left( \frac{k-1}{k} \right) f^{\frac{1}{k}} u_{tt}^{-\frac{1+k}{k}}
\brs{\N u_{tt}}^2 \right)\\
=&\ - \left( k f^{\frac{k-1}{k}} u_{tt}^{\frac{1-k}{k}}
(1+\ge)^{\frac{(k-1)^2}{k}} \right)  \left( (1+\ge)^{\frac{k-1}{k}} \frac{1}{k}
\left( \frac{k-1}{k} \right) f^{\frac{1}{k}} u_{tt}^{-\frac{1+k}{k}} \brs{\N
u_{tt}}^2 \right)\\
=&\ - (1+\ge)^{k-1} \left( \frac{k-1}{k} \right) f u_{tt}^{-2} \brs{\N
u_{tt}}^2.
\end{align*}
Next
\begin{align*}
A_5 =&\ \left( k \gs_k(E)^{\frac{k-1}{k}} u_{tt}^{1-k} \right) \left( c_{\ge}
\left( \frac{k-1}{k} \right) f^{\frac{1}{k}}
u_{tt}^{-\frac{1}{k}} \gD u_{tt} \right)\\
=&\ \left( k f^{\frac{k-1}{k}} u_{tt}^{\frac{1-k}{k}}
(1+\ge)^{\frac{(k-1)^2}{k}} \right) \left( (1+\ge)^{\frac{k-1}{k}} \left(
\frac{k-1}{k} \right) f^{\frac{1}{k}}
u_{tt}^{-\frac{1}{k}} \gD u_{tt} \right)\\
=&\ (k-1) (1+\ge)^{k-1} f u_{tt}^{-1} \gD u_{tt}\\
=&\ - A_{14}.
\end{align*}
Next
\begin{align*}
A_6 =&\ - 2 u_{tt}^{1-k} \N_p u_{tt} \IP{T_{k-1}(E), u_{tt}^{-1} \N_p
(E_u)_{ij}}\\
=&\ - 2 u_{tt}^{-k} \N_p u_{tt} \N_p \gs_k(E)\\
=&\ - 2 (1+\ge)^{k-1} u_{tt}^{-k} \N_p u_{tt} \N_p \left[ f u_{tt}^{k-1}
\right]\\
=&\ - 2(1+\ge)^{k-1} u_{tt}^{-1} \IP{\N f, \N u_{tt}} - 2(1+\ge)^{k-1} (k-1)  f
u_{tt}^{-2} \brs{\N u_{tt}}^2.
\end{align*}
Lastly
\begin{align*}
A_7 =&\ 2 u_{tt}^{1-k} u_{tt}^{-2} \brs{\N u_{tt}}^2 \IP{T_{k-1}(E), E}\\
=&\ 2 k u_{tt}^{1-k} u_{tt}^{-2} \brs{\N u_{tt}}^2 \gs_k(E)\\
=&\ 2 k (1+\ge)^{k-1} f u_{tt}^{-2} \brs{\N u_{tt}}^2.
\end{align*}
Collecting these simplifications yields the result.
\end{proof}
\end{lemma}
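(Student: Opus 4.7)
The strategy parallels exactly the proof of Lemma \ref{Lutt} for $\LL u_{tt}$, with spatial derivatives replacing time derivatives and with curvature terms arising from commuting spatial covariant derivatives. First I would apply the formula (\ref{linop}) directly to $\gD u$, obtaining an expression involving $\gD u_{tt}$, $\N^2 \gD u$, $\N \gD u$, and $\N \gD u_t$ paired against $T_{k-1}(E)$ and the $u_{tt}^{-1}\N u_t \otimes \N u_t$ correction. This produces the ``raw'' structural terms of $\LL(\gD u)$ before any cancellations.

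Next I would compute $\FF^{ij}(\gD E_u)_{ij}$, where $\FF(r) = \gs_k(r)^{1/k}$. Rewriting $(\star_{\ge,f})$ as $c_{\ge} f^{1/k} u_{tt}^{(k-1)/k} = \FF(E_u)$, differentiating once in a spatial direction $\N_p$, and then taking $\N^p$ of the result gives an algebraic identity expressing $\FF^{ij}(\gD E_u)_{ij}$ in terms of $\gD(f^{1/k})$, $\IP{\N(f^{1/k}), \N u_{tt}}$, $|\N u_{tt}|^2$, $\gD u_{tt}$, plus the non-positive concavity term $-\FF^{(ij),(kl)} \N_p(E_u)_{ij} \N_p(E_u)_{kl}$.

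Third I would expand $(\gD E_u)_{ij}$ explicitly. From $E_u = (1+\ge) u_{tt} A_u - \N u_t \otimes \N u_t$ and the conformal change formula for $A_u$ I would differentiate twice, using the Ricci identity to commute $\N_p$ past $\N_i \N_j$ — this is where background curvature terms enter and get absorbed into the $\OO(\brs{\N^2 u} + \brs{\N u}^2 + 1)$ remainder. As in Lemma \ref{Lutt}, it is convenient at one step to rewrite $(1+\ge)\N_p(A_u)_{ij}$ as $u_{tt}^{-1}\N_p(E_u)_{ij} - u_{tt}^{-2}(E_u)_{ij}\N_p u_{tt} + \cdots$ so that the two $\N_p u_{tt} \N_p(E_u)$ terms that will later arise combine with the spatial derivative of the equation into an $\IP{\N f, \N u_{tt}}$ piece plus an $|\N u_{tt}|^2$ piece.

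Finally I would substitute, pair the pieces, and identify the cancellation: the coefficient of $\gD u_{tt}$ coming from $(1+\ge)^{k-1} u_{tt}^{-1} f \gD u_{tt}$ (from the direct action of $\LL$) together with $u_{tt}^{1-k}\gD u_{tt}\IP{T_{k-1}(E), -(1+\ge)A_u + u_{tt}^{-1}\N u_t \otimes \N u_t}$ collapses via Lemma \ref{LAlemma} and $(\star_{\ge,f})$ into $(1-k)(1+\ge)^{k-1} u_{tt}^{-1} f\, \gD u_{tt}$, which cancels precisely against the $\gD u_{tt}$ contribution produced by the $(k-1)/k$-term in the differentiated equation after multiplying by $k\gs_k(E)^{(k-1)/k} u_{tt}^{1-k}$. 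The analogous simplifications applied to the terms proportional to $(E_u)_{ij}$, $\N f$, and $|\N u_{tt}|^2$ yield the stated coefficients $2k$ cancelling down to $(k+1)/k$, and so on. The main obstacle is purely bookkeeping: organizing roughly a dozen terms, tracking symmetry in $(i,j)$, and correctly tallying the Ricci-identity commutators — exactly as in Lemma \ref{Lutt}, only with the additional background-curvature terms grouped into the $\OO(\cdot)$ error.
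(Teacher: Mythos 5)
Your sketch follows the paper's proof step for step: the direct application of (\ref{linop}) to $\gD u$, twice differentiating the rewritten equation $c_\ge f^{1/k} u_{tt}^{(k-1)/k} = \FF(E_u)$ to isolate the concavity term $-\FF^{(ij),(kl)}\N_p(E_u)_{ij}\N_p(E_u)_{kl}$, the explicit expansion of $(\gD E_u)_{ij}$ with the $(1+\ge)\N_p(A_u)_{ij} = u_{tt}^{-1}\N_p(E_u)_{ij} - u_{tt}^{-2}(E_u)_{ij}\N_p u_{tt} + \cdots$ rewriting, and the exact cancellation of the $\gD u_{tt}$ contributions via Lemma \ref{LAlemma} and $(\star_{\ge,f})$. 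The coefficient bookkeeping you indicate (collapsing to $(k+1)/k$ for the $f u_{tt}^{-2}|\N u_{tt}|^2$ term, $-2/k$ for $\IP{\N f, \N u_{tt}}$) matches the paper's tallying of $A_2,\dots,A_7$, so this is essentially the same argument.
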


\begin{prop} \label{lapuest}
Given $u$ an admissible
solution to $(\star_{\ge,f})$, one has
 \begin{align*}
 \sup_{M \times [0,1]} \gD u \leq C \ge^{-1}.
 \end{align*}
 \begin{proof} We begin with a preliminary estimate for $\LL \Delta u$.
Returning to Lemma \ref{Llapu} and considering the terms in order, one first
observes by convexity of $\FF$ that
 \begin{align*}
 - k f^{\frac{k}{k-1}} u_{tt} (1+\ge)^k \FF^{ij,kl} \left[ \N_p (E_u)
\right]_{ij} \left[ \N_p (E_u) \right]_{kl} \geq 0.
 \end{align*}
 Also, by an application of the Cauchy Schwarz inequality one has the matrix
inequality
 \begin{align*}
 2  u_{tt}^{-2} \brs{\N u_{tt}}^2 \N_i u_t \N_j u_t - 4 u_{tt}^{-1} \N_p u_{tt}
\N_i \N_p u_t \N_j u_t + 2 \N_i \N_p u_{t} \N_j \N_p u_t \geq 0.
 \end{align*}
Also we observe
\begin{align*}
(1+\ge)^{k-1} k f^{\frac{k-1}{k}} \gD (f^{\frac{1}{k}}) \leq&\ C
f^{\frac{k-1}{k}} \left[ f^{\frac{1}{k} - 1} \gD f + f^{\frac{1}{k} - 2} \brs{\N
f}^2 \right] \leq C f.
\end{align*}
Next
\begin{align*}
-\frac{2}{k} (1+\ge)^{k-1} u_{tt}^{-1} \IP{\N f, \N u_{tt}} \leq&\ C f
u_{tt}^{-1} \brs{\N u_{tt}}\\
\leq&\ C \gd^{-1} f  + C \gd u_{tt}^{-2} \brs{\N u_{tt}}^2.
\end{align*}
Combining these estimates and choosing $\gd$ sufficiently small leads to the
preliminary estimate
\begin{gather} \label{lapuest10}
\begin{split}
\LL \gD u \geq&\ - 2 (1+\ge) u_{tt}^{2-k} \IP{T_{k-1}(E), \N_i \N_p u \N_j \N_p
u}\\
&\ + u_{tt}^{2-k} \IP{T_{k-1}(E), \brs{\N^2 u}^2 g + \mathcal
O(\brs{\N^2 u} + \brs{\N u}^2 + 1)} - C f.
\end{split}
\end{gather}
Similar considerations applied to Lemma \ref{Lgradu} yield
\begin{align} \label{lapuest20}
\LL \brs{\N u}^2 \geq&\ 2 \ge u_{tt}^{2-k} T_{k-1}(E)^{jk} \N_i \N_j u \N_i \N_k
u - C f - u_{tt}^{2-k} \IP{T_{k-1}(E), \OO(1)}.
\end{align}
Now fix a constant $\gl \in \mathbb R$ and consider
\begin{align*}
 \Phi = \gD u + \ge^{-1} \left[ (1+\ge) \brs{\N u}^2 + u_t^2 + \gl t(t-1)
\right]
\end{align*}
Combining Lemma \ref{Ltt} with lines (\ref{uttest20}), (\ref{lapuest10}), and
(\ref{lapuest20}) yields
\begin{align*}
 \LL \Phi \geq&\ u_{tt}^{2-k} \IP{T_{k-1}(E), \brs{\N^2 u}^2 g + \mathcal
O(\brs{\N^2 u} + \brs{\N u}^2 + 1) + \ge^{-1} \OO(1)}\\
&\ - C \ge^{-1} f + 2 \ge^{-1} f u_{tt} + \gl \ge^{-1} f u_{tt}^{-1}.
\end{align*}
First we observe that at a sufficiently large maximum of $\Phi$, the existing a
priori estimates imply that $\gD u$ is also large.  In particular, at a maximum
for $\Phi$ where $\brs{\N^2 u} \geq C \ge^{-\frac{1}{2}}$ we obtain
\begin{align*}
 \brs{\N^2 u}^2 g + \mathcal
O(\brs{\N^2 u} + \brs{\N u}^2 + 1) + \ge^{-1} \OO(1) \geq \frac{1}{2} \brs{\N^2
u}^2 g,
\end{align*}
and hence since $u$ is an admissible solution we have
 \begin{align*}
 u_{tt}^{2-k} \IP{T_{k-1}(E), \brs{\N^2 u}^2 g + \mathcal
O(\brs{\N^2 u} + \brs{\N u}^2 + 1) + \ge^{-1} \OO(1) } \geq&\ \frac{1}{2}
u_{tt}^{2-k}
\brs{\N^2 u}^2 \tr T_{k-1}(E) \geq 0.
\end{align*}
But then we can estimate
\begin{align*}
 C \ge^{-1} f \leq&\ \ge^{-1} f u_{tt} + C \ge^{-1} f u_{tt}^{-1}.
\end{align*}
hence choosing $\gl$ sufficiently large we obtain, at a sufficiently large
maximum for $\Phi$ which satisfies $\gD u \geq C \ge^{-\frac{1}{2}}$, one has
\begin{align*}
 \LL \Phi > 0,
\end{align*}
a contradiction.  The a priori estimate for $\gD u$ follows directly.
\end{proof}
\end{prop}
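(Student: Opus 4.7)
The plan is to adapt the maximum principle strategy of Proposition \ref{uttest} using Lemma \ref{Llapu} as the starting point. First, one drops the $\FF^{(ij),(kl)}$ second-derivative term (nonnegative by convexity of $\FF = \gs_k^{1/k}$), recognizes the Cauchy--Schwarz-positive matrix combination
\[
2u_{tt}^{-2}\brs{\N u_{tt}}^2 \N_i u_t\N_j u_t - 4u_{tt}^{-1}\N_p u_{tt}\N_i\N_p u_t\N_j u_t + 2\N_i\N_p u_t\N_j\N_p u_t \geq 0,
\]
and absorbs the lower-order contributions involving $\gD f^{1/k}$, $\N f \cdot \N u_{tt}$, and $f\brs{\N u_{tt}}^2$ via the a priori bounds on $f$ and Cauchy--Schwarz with a small parameter $\gd$. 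This yields the preliminary inequality
\begin{align*}
\LL(\gD u) \geq&\ - 2(1+\ge) u_{tt}^{2-k} \IP{T_{k-1}(E), \N_i\N_p u\, \N_j\N_p u}\\
&\ + u_{tt}^{2-k} \IP{T_{k-1}(E), \brs{\N^2 u}^2 g + \OO(\brs{\N^2 u} + \brs{\N u}^2 + 1)} - Cf,
\end{align*}
whose only obstruction to being signed is the first Hessian-quadratic term.

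The essential observation is that Lemma \ref{Lgradu} provides the compensating positive term $2(1+\ge) u_{tt}^{2-k}\IP{T_{k-1}(E), \N_i\N_j u\,\N_i\N_k u}$ (with opposite sign to the bad term), while Lemma \ref{Lut} provides a positive $2\ge u_{tt}^{2-k}T_{k-1}(E)^{jk}\N_j u_t\N_k u_t$ suitable for absorbing the $\N u_t$ cross-terms. Accordingly, one tests the maximum principle on
\[
\Phi = \gD u + \ge^{-1}\bigl[(1+\ge)\brs{\N u}^2 + u_t^2 + \gl t(t-1)\bigr],
\]
with $\gl \gg 1$ chosen below. Combining Lemmas \ref{Ltt}, \ref{Lut}, \ref{Lgradu} with the preliminary inequality, the weight $\ge^{-1}(1+\ge) > 1$ ensures the good Hessian-squared term dominates, producing schematically
\[
\LL \Phi \geq u_{tt}^{2-k}\IP{T_{k-1}(E), \brs{\N^2 u}^2 g + \OO(\brs{\N^2 u} + \brs{\N u}^2 + 1) + \ge^{-1}\OO(1)} - C\ge^{-1}f + 2\ge^{-1}f u_{tt} + \gl\ge^{-1}f u_{tt}^{-1}.
\]
At a sufficiently large interior maximum of $\Phi$, the a priori bounds on $u$, $u_t$, $\brs{\N u}$, $u_{tt}$ from Propositions \ref{C0est}, \ref{utestimate}, \ref{4dC1estimate}, \ref{uttest} force $\brs{\N^2 u} \geq C\ge^{-1/2}$, so the $\brs{\N^2 u}^2 g$ piece dominates the $\OO$-errors (including the $\ge^{-1}\OO(1)$ piece) inside the $T_{k-1}(E)$ pairing, which is then $\geq 0$ by admissibility. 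The $-C\ge^{-1}f$ is absorbed by $\ge^{-1}f u_{tt} + \gl\ge^{-1}f u_{tt}^{-1}$ (via AM--GM on $fu_{tt}^{\pm 1}$) upon choosing $\gl$ large, giving $\LL\Phi > 0$ at the putative maximum---a contradiction with the positive ellipticity of $\LL$ (Lemma \ref{geodesicellipticity}).

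The main obstacle is the simultaneous cancellation of two bad Hessian-quadratic terms: the $\N_i \N_p u \N_j \N_p u$ term coming directly from differentiating $A_u$ twice in $\LL(\gD u)$, and the $\N_i \N_p u_t$ cross-term concealed in the same expression, each of which individually would force blow-up. The calibration of weights $\ge^{-1}(1+\ge)$ on $\brs{\N u}^2$ and $\ge^{-1}$ on $u_t^2$ is precisely what yields strict dominance of the good quadratics over the bad ones in both the spatial Hessian and the $\N u_t$ directions; this same calibration forces the final bound to degenerate as $\ge \to 0$, in line with the inability to upgrade to $C^{1,1}$ estimates at the degenerate endpoint noted in the Introduction.
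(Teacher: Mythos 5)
Your proposal follows essentially the same route as the paper's proof: the same preliminary estimate for $\LL\gD u$ obtained from Lemma \ref{Llapu} via convexity of $\FF$, matrix Cauchy--Schwarz, and absorption of $f$-terms; the same auxiliary function $\Phi = \gD u + \ge^{-1}[(1+\ge)\brs{\N u}^2 + u_t^2 + \gl t(t-1)]$ with the calibrated weight $\ge^{-1}(1+\ge)$; and the same maximum principle contradiction using the a priori $C^1$ and $u_{tt}$ bounds.

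One small imprecision in your closing discussion: you attribute to the $\ge^{-1}u_t^2$ term the job of cancelling a bad $\N u_t$ cross-term, but those cross-terms are already absorbed in the matrix Cauchy--Schwarz step that produces (\ref{lapuest10}); the $\ge\, T_{k-1}(E)\N u_t\otimes \N u_t$ contribution from Lemma \ref{Lut} is simply nonnegative and gets dropped. The actual reason $u_t^2$ must be included is that, via (\ref{uttest20}), it supplies the $2\ge^{-1}f u_{tt}$ piece which, together with $\gl\ge^{-1}f u_{tt}^{-1}$ from the $t(t-1)$ term, absorbs the $-C\ge^{-1}f$ via AM--GM. This does not affect the correctness of the argument, only the heuristic attribution of roles.
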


\subsection{Boundary estimates}

By Proposition \ref{utestimate} we already have the boundary estimate
\begin{align*}
\sup_{M \times \{ 0,1 \} } \left[ |u| + |u_{t}| + |\N u| \right] \leq C.
 \end{align*}
In this section we prove boundary estimates for second order derivatives:

\begin{prop} \label{C2bdyProp}  Given $u$ an admissible
solution to $(\star_{\ge,f})$, one has
 \begin{align*}
 \sup_{M \times \{ 0,1 \} } \left[ |u_{tt}| + |\N u_t| + |\N^2 u| \right] \leq C.
 \end{align*}
\end{prop}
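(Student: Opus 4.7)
The plan is to handle separately the tangent-tangent, tangent-normal, and normal-normal components of $\N^2 u$ on $M\times \{0,1\}$, working on the slice $t = 0$ since the argument at $t = 1$ is symmetric.

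\textbf{Tangent-tangent and normal-normal.} Since $u(\cdot,0) = u_0$ is the prescribed boundary datum, all purely spatial components of $\N^2 u\big|_{t=0}$ coincide with $\N^2 u_0$ and are therefore controlled by boundary data alone. For the double-normal component, evaluating $(\star_{\ge,f})$ at $t = 0$ gives
\begin{equation*}
(1+\ge)\, u_{tt}(\cdot, 0)\, \gs_k(A_{u_0}) = f(\cdot, 0) + \IP{T_{k-1}(A_{u_0}), \N u_t \otimes \N u_t}\big|_{t=0}.
\end{equation*}
Since $u_0$ is admissible, $\gs_k(A_{u_0})$ admits a positive lower bound depending only on the boundary data, so $u_{tt}\big|_{t=0}$ is controlled once the mixed derivatives $\N u_t\big|_{t=0}$ are (the lower bound $u_{tt}\geq 0$ comes for free from convexity).

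\textbf{Mixed derivatives.} For each unit spatial vector field $X$, set $w := X u - X u_0$, so that $w \equiv 0$ on $M \times \{0\}$, $|w|$ is bounded globally by Proposition \ref{4dC1estimate}, and $|w(\cdot, 1)| \leq |X u_1 - X u_0|$ is controlled by boundary data. Differentiating $(\star_{\ge,f})$ in the direction $X$ (cf.\ identity (\ref{DPhi})) and applying Lemma \ref{linearizedgeod} to the background function $X u_0$, and invoking the previously established $C^0$, $|u_t|$, and $|\N u|$ bounds (Propositions \ref{C0est}, \ref{utestimate}, \ref{4dC1estimate}), one obtains a pointwise estimate
\begin{equation*}
|\LL w| \leq C\, u_{tt}^{2-k}\, \tr T_{k-1}(E_u) + C.
\end{equation*}
Consider the barrier $\Psi(t) := A_1 t + A_2 t(1-t)$. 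By Lemma \ref{Ltt}, $\LL \Psi = -2 A_2 (1+\ge)^k \gs_k(A_u)$. Using $(\star_{\ge,f})$ together with Lemma \ref{LAlemma} to rewrite $u_{tt}^{2-k}\tr T_{k-1}(E_u)$ in terms of $\gs_k(A_u)$ and the already-controlled quantity $\IP{T_{k-1}(A_u), \N u_t \otimes \N u_t}$, and exploiting the uniform positive lower bound on $\gs_k(A_u)$ coming from admissibility, the constant $A_2$ can be chosen large (independently of $\ge$) so that $\LL(\Psi \pm w) \leq 0$; then $A_1$ is taken large enough that $\Psi(1) \geq \sup_M |X u_1 - X u_0|$. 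Since $\LL$ is strictly elliptic for $\ge > 0$ (Lemma \ref{geodesicellipticity}), the maximum principle yields $\pm w \leq \Psi$ on $M \times [0,1]$. Differentiating in $t$ at $t = 0$ gives $|\partial_t (X u)|(\cdot, 0) \leq A_1$, and feeding this bound back into the normal-normal computation above completes the proof.

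\textbf{Main obstacle.} The delicate point is ensuring that the bound for $\LL w$ and the barrier calculation $\LL \Psi$ combine to give an $\ge$-independent constant. The leading coefficients of $\LL$, of the form $u_{tt}^{2-k} T_{k-1}(E_u)$, do not inherit uniform-in-$\ge$ bounds from the interior estimates of Propositions \ref{uttest} and \ref{lapuest} (which themselves degenerate like $\ge^{-1}$). The key reduction is therefore to use the equation $(\star_{\ge, f})$ itself, together with the algebraic identities of Section \ref{bckgrnd}, to rewrite every such coefficient in terms of $\gs_k(A_u)$ and the $C^1$ data of $u$, so that after comparison with $\LL \Psi \sim -\gs_k(A_u)$ the resulting ratio stays uniformly bounded. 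Once this algebraic reorganization is carried out, the barrier argument itself is routine.
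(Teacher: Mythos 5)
Your decomposition of the boundary $C^2$-estimate into spatial-spatial, mixed, and temporal-temporal blocks matches the paper, as does the observation that once $\N u_t$ is bounded, $u_{tt}$ follows from the equation and the lower bound $\gs_k(A_{u_0})\geq\gd_0>0$. The gap is in the barrier you use for the mixed-derivative estimate, and the hand-waving in your ``main obstacle'' paragraph conceals a step that actually fails.

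You correctly identify that the principal spatial coefficients of $\LL$ are of the form $(1+\ge)\,u_{tt}^{2-k}T_{k-1}(E_u^\ge)$, and that $|\LL w|$ is controlled only up to a term $C\,u_{tt}^{2-k}\tr T_{k-1}(E_u^\ge)$. You then assert that using $(\star_{\ge,f})$ and Lemma \ref{LAlemma} one can ``rewrite'' this quantity ``in terms of $\gs_k(A_u)$ and the $C^1$ data of $u$,'' so that $\LL\Psi\sim-\gs_k(A_u)$ will absorb it. This is not possible: the ratio $u_{tt}^{2-k}\tr T_{k-1}(E_u^\ge)/\gs_k(A_u)$ is \emph{unbounded in $u_{tt}$}. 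To see this, take a point where $\N u_t=0$; then $E_u^\ge=(1+\ge)u_{tt}A_u$, so $u_{tt}^{2-k}\tr T_{k-1}(E_u^\ge)=(n-k+1)(1+\ge)^{k-1}u_{tt}\gs_{k-1}(A_u)$, whereas the equation forces $\gs_k(A_u)=f/((1+\ge)u_{tt})$, giving a ratio $\sim u_{tt}^2$. More generally, the Newton--Maclaurin inequality gives $u_{tt}^{2-k}\tr T_{k-1}(E_u^\ge)\gtrsim f^{(k-1)/k}u_{tt}^{1/k}$ while $\gs_k(A_u)$ can decay like $u_{tt}^{-1}$, so a purely $t$-dependent barrier like $A_1t+A_2t(1-t)$, whose $\LL$-image is exactly $-2A_2(1+\ge)^k\gs_k(A_u)$, cannot produce the needed sign when $u_{tt}$ is large. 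Lemma \ref{LAlemma} relates $\gs_k(E_u)$ to $\gs_k(A_u)$, but it gives no identity for $\gs_{k-1}(E_u)=\frac{1}{n-k+1}\tr T_{k-1}(E_u)$ of the kind you need.

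This is precisely why the paper's barrier contains the extra term $e^{\gl(u_0-u+\gU)}-e^{\gl\gU}$, which is \emph{not} purely a function of $t$. Applying $\LL$ to this term produces, among other things, the contribution $\gl(1+\ge)\,u_{tt}^{2-k}\IP{T_{k-1}(E_u^\ge),A_{u_0}}$. Because $u_0$ is admissible and $M$ is compact, one has $A_{u_0}-\gd g\in\gG_k^+$ for some $\gd>0$, hence by Lemma \ref{newtonprops}(3), $\gd\,\tr T_{k-1}(E_u^\ge)\leq\IP{T_{k-1}(E_u^\ge),A_{u_0}}$. Choosing $\gl$ large relative to $\gd^{-1}$ lets this barrier term dominate the bad $u_{tt}^{2-k}\tr T_{k-1}(E_u^\ge)$ term \emph{coefficient-for-coefficient}, without ever trying to compare it against $\gs_k(A_u)$; the residual $\gs_k(A_u)$ and $f$-type terms are then handled using $\gs_k(A_u)\geq f/((1+\ge)u_{tt})$ and the Newton--Maclaurin bound. (Two further, more minor, differences from your sketch: the paper works on a shortened cylinder $M\times[0,\tau]$ so that the boundary datum for $\Psi$ at $t=\tau$ can be made non-positive without an $\ge$-dependent constant; and it linearizes $|\N(u-u_0)|$ via a locally parallel unit vector field $\eta$ rather than fixing a single direction $X$, which is needed to run the maximum-principle argument at a point where the direction of steepest gradient is not known in advance.) Without the exponential barrier the estimate does not close, so the ``routine'' final step you describe does not go through.
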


\begin{proof} A bound for $|\N^2 u|$ is immediate.   If we can prove a bound for the `mixed' term $|\N u_t|$, then restricting the equation for $u$ to $t = 0$ we have
\begin{align*}
(1+\ge) u_{tt}(\cdot,0) \gs_k(A_{u(\cdot,0)}) &= \IP{T_{k-1}(A_{u(\cdot,0)}), \N u_t(\cdot,0)  \otimes \N  u_t(\cdot,0)} + f \\
&\leq C_1 \left( 1 + |\N u_0|^2 + |\N^2 u_0| \right) |\N u_t(\cdot,0)|^2 + C_2.
\end{align*}
Since $u_0$ is admissible,
\begin{align*}
\gs_k(A_{u(\cdot,0)}) = \gs_k(A_{u_0}) \geq \delta_0 > 0,
\end{align*}
and it follows that
\begin{align*}
\sup_M u_{tt}(\cdot,0) \leq C_0 ( 1 + \sup_{M}  |\N u_t(\cdot,0)|^2 ),
\end{align*}
where $C_0$ depends on the second-order spacial derivatives of $u_0$.  The same argument gives a corresponding bound for $u_{tt}(\cdot,1)$ in terms of the mixed derivative $|\N u_t(\cdot,1)|$.

To prove a bound on $\nabla u_t$ we consider the following auxiliary function
$\Psi : M \times [0,\tau] \rightarrow \mathbb{R}$, where $0 < \tau < 1$ will be chosen later:
\begin{align*}
\Psi = |\nabla(u-u_0)| + \left[ e^{\gl \left( u_0 - u + \gU
\right)}  - e^{\gl \gU} \right] + \gL t(t-1),
\end{align*}
where $\gl,\gL$ and $\gU$ are constants yet to be determined.  By making an appropriate choice of these constants, we claim that $\Psi$ attains a non-positive maximum on the boundary of of $M \times [0,\tau]$.  Assuming for
the moment this is true, let us see how a bound for $\N u_t$ follows.

Choose a point $x_0 \in M$, and a unit tangent vector $X \in T_{x_0}M$.  Let $\{ x^i \}$ be a local coordinate system
with $X = \frac{\partial}{\partial x^1}$ at $x_0$.  Then
\begin{align*}
&\frac{\partial}{\partial x^1}\big( u(x,t) - u_0(x) \big) + \left[ e^{\gl \left( u_0 - u + \gU
\right)}  - e^{\gl \gU} \right] + \gL t(t-1) \\
& \quad \leq  |\nabla(u-u_0)(x,t)| + \left[ e^{\gl \left( u_0 - u + \gU
\right)}  - e^{\gl \gU} \right] + \gL t(t-1) \\
& \quad \leq 0.
\end{align*}
Therefore,
\begin{align*}
0 &\geq \lim_{t \to 0+} \frac{1}{t} \Big\{ \frac{\partial}{\partial x^1}u(x,t) - \frac{\partial}{\partial x^1}u_0(x) + \left[ e^{\gl \left( u_0 - u + \gU
\right)}  - e^{\gl \gU} \right] + \gL t(t-1)  \Big\} \\
&= \frac{\partial}{\partial x^1}u_t(x_0,0) + \frac{1}{t} \left[ e^{\gl \left( u_0 - u + \gU
\right)}  - e^{\gl \gU} \right] + \gL (t-1).
\end{align*}
Since $u_t$ is bounded, an upper bound on $\frac{\partial}{\partial x^1}u_t$ follows.  Since $X = \frac{\partial }{\partial x^1}$ was arbitrary, we obtain a bound on $|\N u_t(x,0)|$.

To see that such a choice of $\gl,\gL, \gU$ and $\tau$ are possible, we first note that
\begin{align*}
\Psi(x,0) = 0.
\end{align*}
Since $|\N u|$ is bounded,
\begin{align*}
\Psi(x,\tau) &= |\nabla  u(x,\tau) - \nabla  u_0(x)| +  \left[ e^{\gl \left( u_0(x) - u(x,\tau) + \gU
\right)}  - e^{\gl \gU} \right] + \gL \tau (\tau-1)  \\
&\leq C_1 +  \big\vert e^{\gl \left( u_0(x) - u(x,\tau) + \gU
\right)}  - e^{\gl \gU} \big\vert + \gL \tau (\tau-1).
\end{align*}
Since $|u_t|$ is also bounded,
\begin{align*}
\big\vert e^{\gl \left( u_0(x) - u(x,\tau) + \gU
\right)}  - e^{\gl \gU} \big\vert \leq C_2 \lambda e^{ C_2 \lambda \tau + \gU},
\end{align*}
hence if $0 < \tau < 1/2$,
\begin{align*}
\Psi(x,\tau) &\leq C_1 + C_2 \tau  \lambda e^{ C_2 \lambda \tau + \gU}  -  \Lambda \tau (1-\tau) \\
&\leq C_1 +  \big( C_2   \lambda e^{ \frac{1}{2} C_2 \lambda  + \gU} - \Lambda/2 \big) \tau.
\end{align*}
Therefore, if $\Lambda$ is chosen large enough (depending on $\tau, C_1, C_2, \lambda$, and $\gU$), then
\begin{align*}
\Psi(x,\tau) \leq 0.
\end{align*}
We conclude that $\Psi \leq 0$ on $\partial \big( M \times [0,\tau] \big)$.

Assume the maximum of $\Psi$ is attained at a point $(x_0,t_0)$ which is interior (i.e., $0 < t_0 < \tau$).  Let
\begin{align*}
\eta = \frac{\displaystyle \N (u-u_0)(x_0,t_0)}{ \displaystyle | \N (u-u_0)(x_0,t_0)|}.
\end{align*}
We can extend $\eta$ locally via parallel transport along radial geodesics based at $x_0$. By construction,
\begin{align} \label{pareta} \begin{split}
\N \eta (x_0) &= 0, \\
|\N^2 \eta (x_0)| &\leq C(g).
\end{split}
\end{align}
By using a cut-off function, we can assume $\eta$ is globally defined and satisfies
\begin{align*}
|\eta| \leq 1,
\end{align*}
with $|\eta| = 1$ in a neighborhood of $x_0$.

Define
\begin{align*}
H = \eta^{\alpha} \N_{\alpha} (u-u_0)  + \left[ e^{\gl \left( u_0 - u + \gU
\right)}  - e^{\gl \gU} \right] + \gL t(t-1).
\end{align*}
Since $|\eta| \leq 1$,
\begin{align*}
H(x,t) \leq \Psi(x,t),
\end{align*}
and the max of $H$ is attained at $(x_0,t_0)$.  Therefore,
\begin{align*}
\mathcal{L} H (x_0,t_0) \leq 0.
\end{align*}

To compute $\mathcal{L} H (x_0,t_0)$, let $\phi = \eta^{\alpha} \N_{\alpha} (u-u_0).$  Using (\ref{pareta}), at $(x_0,t_0)$ we have
\begin{align*} \begin{split}
\phi_t &= \eta^{\alpha} \N_{\alpha} u_t, \\
\phi_{tt} &= \eta^{\alpha} \N_{\alpha} u_{tt}, \\
\nabla_k \phi_t &= \eta^{\alpha} \N_k \N_{\alpha} u_t.
\end{split}
\end{align*}
Also at $(x_0,t_0)$,
\begin{align*} \begin{split}
\N_k \phi &= \eta^{\alpha} \N_k \N_{\alpha} (u - u_0) = \eta^{\alpha} \N_k \N_{\alpha} u + O(1), \\
\N_k \N_{\ell} \phi &= \N_k \N_{\ell} \eta^{\alpha} \N_{\alpha} (u-u_0) + \eta^{\alpha} \N_k \N_{\ell} \N_{\alpha} (u-u_0 ) \\
&= \eta^{\alpha} \N_k \N_{\ell} \N_{\alpha} u + O(1).
\end{split}
\end{align*}
Therefore, by the formula in (\ref{linop}), at $(x_0,t_0)$ we have
\begin{align*}
\begin{split}
\LL & \phi =\ (1+\ge)^{k-1} u_{tt}^{-1} f \eta^{\alpha} \N_{\alpha} u_{tt} + u_{tt}^{1-k} T_{k-1}(E_u^{\ge})_{k \ell}  \Big\{  (1 + \ge) u_{tt} \big[  \eta^{\alpha} \N_k \N_{\ell} \N_{\alpha} u \\
 & \qquad + \eta^{\alpha} \N_k \N_{\alpha} u  \N_{\ell} u + \eta^{\alpha} \N_k u \N_{\ell} \N_{\alpha} u  - ( \eta^{\alpha}  \N_m \N_{\alpha} v \N_m u ) g_{k \ell} + O(1) g_{k \ell} \big] \\
&\ \qquad \qquad  -  \eta^{\alpha} \N_k \N_{\alpha} u_t \N_{\ell} u_t -  \eta^{\alpha} \N_{k} u_t \N_{\ell} \N_{\alpha} u_t + \frac{\eta^{\alpha} \N_{\alpha} u_{tt}}{ u_{tt}} \N_k u_t  \N_{\ell} u_t  \Big\} \\
&\ \geq \eta^{\alpha} \LL \N_{\alpha} u  - C  u_{tt}^{2-k} \tr T_{k-1} (E_u^{\ge}).
\end{split}
\end{align*}
Using the identity (\ref{DPhi}), we conclude
\begin{align*} \begin{split}
\LL \phi &\geq  \IP{ \N f, \eta}  - C u_{tt}^{2-k} \tr T_{k-1} (E_u^{\ge}) \\
&\geq - C f - C u_{tt}^{2-k} \tr T_{k-1} (E_u^{\ge}),
\end{split}
\end{align*}
where the constants depend on  $\max_M |\N f|/f.$

Next, we use Lemma \ref{Lucalc} to calculate
\begin{align} \label{Luu0} \begin{split}
\LL (u - u_0) =&\ (k+1) (1 + \epsilon)^{k-1} f + (1 + \epsilon) u^{2-k}_{tt} \IP{T_{k-1}(E_u^{\ge}),- A + \N u \otimes \N u - \frac{1}{2}|\N u|^2 g } \\
&\ \ - (1 + \epsilon) u_{tt}^{2-k} \IP{ T_{k-1}(E_u^{\ge}), \N^2 u_0 + \N u_0 \otimes \N u + \N u \otimes \N u_0 - \langle \N u_0, \N u \rangle g } \\
=&\ (k+1) (1 + \epsilon)^{k-1} f - (1 + \epsilon) u^{2-k}_{tt} \IP{T_{k-1}(E_u^{\ge}),A + \N^2 u_0} \\
&\ \  + (1 + \epsilon) u^{2-k}_{tt} \left[
\IP{T_{k-1}(E_u^{\ge}), \N u \otimes \N u} - \frac{1}{2} \tr T_{k-1}(E_u^{\ge}) \brs{\N u}^2
\right]  \\
&\ \ \  - (1 + \epsilon) u_{tt}^{2-k} \left[ 2
\IP{T_{k-1}(E_u^{\ge}), \N u \otimes \N u_0} - \tr T_{k-1}(E_u^{\ge}) \IP{\N u, \N u_0}
\right]\\
=&\ (k+1) (1 + \epsilon)^{k-1} f + (1 + \epsilon) u_{tt}^{2-k}  \IP{T_{k-1}(E_u^{\ge}), - A_{u_0} + \N {u_0} \otimes \N
u_0 - \frac{1}{2} \brs{\N u_0}^2 g}   \\
&\ \ + (1 + \epsilon) u_{tt}^{2-k} \left[ \IP{T_{k-1}(E_u^{\ge}), \N u \otimes \N u - 2 \N u \otimes \N u_0} + \tr
T_{k-1}(E_u^{\ge}) \left( - \frac{1}{2} \brs{\N u}^2 + \IP{\N u, \N u_0} \right)
\right] \\
=&  (k+1) (1 + \epsilon)^{k-1} f + (1+\epsilon) u_{tt}^{2-k} \big[ - \IP{T_{k-1}(E_u^{\ge}), A_{u_0}} + \IP{T_{k-1}(E_u^{\ge}), \N
(u - u_0) \otimes \N (u - u_0)} \\
& \ \ \ - \frac{1}{2} \tr T_{k-1}(E_u^{\ge}) \brs{\N (u - u_0)}^2 \big].
\end{split}
\end{align}
Taking $v = e^{\gl \left( u_0 - u + \gU \right)}  - e^{\gl \gU}$ in Lemma \ref{linearizedgeod}, we also have
\begin{align*}  \begin{split}
\LL   \left(   e^{\gl \left( u_0 - u + \gU \right)}  - e^{\gl \gU} \right) &= e^{\gl \left( u_0 - u + \gU \right)} \Big\{  (1+ \epsilon)^{k-1} f u_{tt}^{-1} \big[ - \lambda u_{tt} +  \lambda^2 u_t^2 \big] \\
 & + u_{tt}^{1-k}  \big\langle T_{k-1}(E^{\epsilon}_{u}), (1+\epsilon) u_{tt} \big[ \lambda \N^2 (u_0 - u ) + \lambda^2 \N (u_0 - u ) \otimes \N (u_0 - u) \\
 & + \lambda \N (u_0 - u) \otimes \N u + \lambda \N u \otimes \N (u_0 - u) - \lambda \langle \N(u_0 - u), \N u \rangle g \big] \\
  & + \lambda \N u_t \otimes \N u_t + \lambda^2 u_t \N(u_0 - u) \otimes \N u_t + \lambda^2 u_t \N u_t \otimes \N(u_0 - u) + \lambda^2 \dfrac{u_t^2}{u_{tt}} \N u_t \otimes \N u_t \Big\} \\
  &= - \lambda e^{\gl \left( u_0 - u + \gU \right)}  \LL (u - u_0) \\
  & \ \  + \lambda^2 e^{\gl \left( u_0 - u + \gU \right)} \Big\{ (1+\epsilon)^{k-1}f \frac{u_t^2}{u_{tt}} +
   u_{tt}^{2-k} \Big\langle T_{k-1}(E^{\epsilon}_u ), (1 + \epsilon) \N (u - u_0) \otimes \N (u - u_0)  \\
  & \ \ \ + \frac{u_t}{u_{tt}} \N(u_0 - u) \otimes \N u_t + \frac{u_t}{u_{tt}} \N u_t \otimes \N (u_0 - u) + \frac{u_t^2}{u_{tt}^2} \N u_t \otimes \N u_t \Big\rangle \Big\}
\end{split}
\end{align*}
We can estimate the term in braces as follows:
\begin{align*} \begin{split}
& (1+\epsilon)^{k-1}f \frac{u_t^2}{u_{tt}} + u_{tt}^{2-k} \Big\langle T_{k-1}(E^{\epsilon}_u ), (1 + \epsilon) \N (u - u_0) \otimes \N (u - u_0)  \\
  & \ \ \ + \frac{u_t}{u_{tt}} \N(u_0 - u) \otimes \N u_t + \frac{u_t}{u_{tt}} \N u_t \otimes \N (u_0 - u) + \frac{u_t^2}{u_{tt}^2} \N u_t \otimes \N u_t \Big\rangle \\
  &\geq (1+\epsilon)^{k-1}f \frac{u_t^2}{u_{tt}} + u_{tt}^{2-k} \Big\langle T_{k-1}(E^{\epsilon}_u ), \frac{(1+\epsilon)}{2} \N (u - u_0) \otimes \N (u - u_0) - \frac{u_t^2}{u_{tt}^2} \N u_t \otimes \N u_t \Big\rangle
\end{split}
\end{align*}
Using Lemma \ref{LAlemma} and the regularized equation, the final (negative) term above can be rewritten:
\begin{align*} \begin{split}
u_{tt}^{2-k} \big\langle T_{k-1}(E^{\epsilon}_u ), - \frac{u_t^2}{u_{tt}^2} \N u_t \otimes \N u_t \big\rangle &=  - u_{tt}^{-k} u_t^2 \big\langle T_{k-1}\big( (1+\epsilon) u_{tt} A_u - \N u_t \otimes \N u_t\big) ,  \N u_t \otimes \N u_t \big \rangle \\
&=  - u_{tt}^{-k} u_t^2 \big\langle T_{k-1}\big( (1+\epsilon) u_{tt} A_u  \big) ,  \N u_t \otimes \N u_t \big \rangle \\
&=  - (1 + \epsilon)^{k-1} u_{tt}^{-1} u_t^2 \big\langle T_{k-1}(A_u) ,  \N u_t \otimes \N u_t \big \rangle \\
&=  - (1 + \epsilon)^{k-1} u_{tt}^{-1} u_t^2 \big\{ (1+\epsilon) u_{tt} \sigma_k(A_u) - f \big\} \\
&= - (1+\epsilon)^k u_t^2 \sigma_k(A_u) + ( 1 + \epsilon)^{k-1} f \frac{u_t^2}{u_{tt}}.
\end{split}
\end{align*}
Therefore,
\begin{align} \label{Le3} \begin{split}
\LL   \left(   e^{\gl \left( u_0 - u + \gU \right)}  - e^{\gl \gU} \right) &\geq - \lambda e^{\gl \left( u_0 - u + \gU \right)}  \LL (u - u_0) + \lambda^2 e^{\gl \left( u_0 - u + \gU \right)} \Big\{ 2 (1+\epsilon)^{k-1}f \frac{u_t^2}{u_{tt}} \\
  &  \hskip-.25in - (1+\epsilon)^k u_t^2 \sigma_k(A_u) + u_{tt}^{2-k} \Big\langle T_{k-1}(E^{\epsilon}_u ), \frac{(1+\epsilon)}{2} \N (u - u_0) \otimes \N (u - u_0) \Big\rangle \Big\}.
  \end{split}
  \end{align}
  Also, by (\ref{Luu0}),
\begin{align} \label{Luz}  \begin{split}
-\lambda  \LL (u - u_0) &= - \lambda (k+1) (1 + \epsilon)^{k-1} f + \lambda (1+\epsilon) u_{tt}^{2-k} \IP{T_{k-1}(E_u^{\ge}), A_{u_0}} \\
& \hskip-.5in + u_{tt}^{2-k} \big\langle T_{k-1}(E_u^{\ge}), - \lambda (1+\epsilon) \N (u - u_0) \otimes \N (u - u_0) \big\rangle +  \frac{1}{2} (1+\epsilon) \lambda u_{tt}^{2-k} \tr T_{k-1}(E_u^{\ge}) \brs{\N (u - u_0)}^2.
\end{split}
\end{align}
Combining (\ref{Le3}) and (\ref{Luz}), we get
\begin{align*} \begin{split}
\LL   \left(   e^{\gl \left( u_0 - u + \gU \right)}  - e^{\gl \gU} \right) &\geq e^{\gl \left( u_0 - u + \gU \right)} \Big\{ - \lambda (k+1) (1 + \epsilon)^{k-1} f + 2 \lambda^2 (1+\epsilon)^{k-1}f \frac{u_t^2}{u_{tt}} - \lambda^2 (1+\epsilon)^k u_t^2 \sigma_k(A_u) \\
& + u_{tt}^{2-k} \Big\langle T_{k-1}(E^{\epsilon}_u ), (1+\epsilon) \big( \frac{1}{2} \lambda^2 - \lambda \big) \N (u - u_0) \otimes \N (u - u_0) \Big\rangle \\
& +  \frac{1}{2} (1+\epsilon) \lambda u_{tt}^{2-k} \tr T_{k-1}(E_u^{\ge}) \brs{\N (u - u_0)}^2 + \lambda (1+\epsilon) u_{tt}^{2-k} \IP{T_{k-1}(E_u^{\ge}), A_{u_0}}  \Big\}.
  \end{split}
  \end{align*}

Next, using Lemma \ref{Ltt}, we have
\begin{align*}
\LL \big( \Lambda t(1-t) \big) = 2 \Lambda (1+\epsilon)^k \sigma_k(A_u).
\end{align*}
Combing the above, we conclude that at an interior maximum of $H$,
\begin{align*} \begin{split}
\LL H &\geq - C f - C u_{tt}^{2-k} \tr T_{k-1} (E_u^{\ge}) + 2 \Lambda (1+\epsilon)^k \sigma_k(A_u) \\
&\ \  + e^{\gl \left( u_0 - u + \gU \right)} \Big\{ - \lambda (k+1) (1 + \epsilon)^{k-1} f + 2 \lambda^2 (1+\epsilon)^{k-1}f \frac{u_t^2}{u_{tt}} - \lambda^2 (1+\epsilon)^k u_t^2 \sigma_k(A_u) \\
&\ \ \ + u_{tt}^{2-k} \Big\langle T_{k-1}(E^{\epsilon}_u ), (1+\epsilon) \big( \frac{1}{2} \lambda^2 - \lambda \big) \N (u - u_0) \otimes \N (u - u_0) \Big\rangle \\
&\ \ \ +  \frac{1}{2} (1+\epsilon) \lambda u_{tt}^{2-k} \tr T_{k-1}(E_u^{\ge}) \brs{\N (u - u_0)}^2 + \lambda (1+\epsilon) u_{tt}^{2-k} \IP{T_{k-1}(E_u^{\ge}), A_{u_0}} \Big\}.
\end{split}
\end{align*}

Now note that since the cone $\gG_k^+$ is open and $M$ is compact there exists
$\gd > 0$ depending only on $u_0$ so that $A_{u_0} - \gd g \in \gG_k^+$.  It
follows from Lemma \ref{newtonprops} that
\begin{align*}
\gd \tr T_{k-1}(E_u^{\ge}) = \Sigma(E_u^{\ge},\dots,E_u^{\ge}, \gd g) <
\Sigma(E_u^{\ge},\dots,E_u^{\ge},A_{u_0}) = \IP{T_{k-1}(E_u^{\ge}), A_{u_0}}.
\end{align*}
Therefore, if $\lambda >> 2$ we have
\begin{align} \label{LH2} \begin{split}
\LL H &\geq \big\{ -C - \lambda (k+1) (1 + \epsilon)^{k-1} e^{\gl \left( u_0 - u + \gU \right)}  \big\} f \\
&\ \ + \big\{  2 \Lambda (1+\epsilon)^k  - \lambda^2 (1+\epsilon)^k u_t^2 e^{\gl \left( u_0 - u + \gU \right)} \big\} \sigma_k(A_u) + \big\{ - C + \lambda(1+\epsilon) \delta   \big\} u_{tt}^{2-k} \tr T_{k-1}(E_u^{\ge}).
\end{split}
\end{align}
Observe that by choosing $\lambda = \lambda(\delta)$ large enough, we can assume the last term in (\ref{LH2}) is bounded below by
\begin{align} \label{Trf}
\frac{\lambda}{2} \delta u_{tt}^{2-k} \tr T_{k-1}(E_u^{\ge}).
\end{align}
By the Newton-Maclaurin inequality,
\begin{align*}
u_{tt}^{2-k} \tr T_{k-1}(E_u^{\ge}) =&\ (k-1) u_{tt}^{2-k} \gs_{k-1}(E_u^{\ge})\\
\geq&\ (k-1) u_{tt}^{2-k} \gs_k(E_u^{\ge})^{\frac{k-1}{k}}\\
=&\ (k-1) f^{\frac{k-1}{k}} u_{tt}^{\frac{1}{k}}\\
\geq&\ C f u_{tt}^{\frac{1}{k}}.
\end{align*}
Combining this with (\ref{Trf}) and substituting into (\ref{LH2}), we get
\begin{align*} \begin{split}
\LL H &\geq \big\{ -C - \lambda (k+1) (1 + \epsilon)^{k-1} e^{\gl \left( u_0 - u + \gU \right)} + C \lambda \delta  u_{tt}^{\frac{1}{k}} \big\} f \\
&\ \ + \big\{  2 \Lambda (1+\epsilon)^k  - \lambda^2 (1+\epsilon)^k u_t^2 e^{\gl \left( u_0 - u + \gU \right)} \big\} \sigma_k(A_u).
\end{split}
\end{align*}
Let us fix the constant $\gU$ so that
\begin{align*}
0 \leq u_0 - u + \gU \leq C,
\end{align*}
then
\begin{align*}
\LL H \geq \big\{ -C - C \lambda (k+1)  + C \lambda \delta  u_{tt}^{\frac{1}{k}} \big\} f + \big\{  2 \Lambda (1+\epsilon)^k  - C \lambda^2  u_t^2   \big\} \sigma_k(A_u).
\end{align*}
Next, we assume $\Lambda = \Lambda(\lambda, \max u_t^2)$ is chosen large enough so that the coefficient of the second term above is
\begin{align*}
2 \Lambda (1+\epsilon)^k  - C \lambda^2  u_t^2  \geq \frac{1}{2}\lambda^2.
\end{align*}
By the regularized equation,
\begin{align*}
\sigma_k(A_u) \geq \dfrac{f}{(1+\epsilon) u_{tt}}.
\end{align*}
Therefore,
\begin{align*}
\LL H \geq \big\{ -C - C \lambda (k+1)  + C \lambda \delta  u_{tt}^{\frac{1}{k}} + \frac{1}{2(1+\epsilon)} \lambda^2 u_{tt}^{-1} \big\} f.
\end{align*}

If $u_{tt} > C(\delta)$ is large then the left-hand side is positive, which would be a contradiction at an interior maximum. On the other hand, if $u_{tt}$ is small then as long as
$\lambda$ is chosen large enough, the last term in the braces will dominate and once again we conclude $\LL H > 0$.  It follows that $H$ attains its maximum on the boundary, as
claimed.

\end{proof}

\subsection{Existence of approximate and regularizable geodesics}
In this subsection we use the a priori estimates of the previous subsections to establish the existence of weak geodesics.

\begin{thm} \label{approxgeodthm} Given $u_0, u_1 \in \gG_k^+$, there exists $f
\in C^{\infty}(M \times [0,1])$ with $f > 0$ and a smooth
solution $u(x,t,s,\ge) : M \times [0,1] \times
[0,1] \times (0,\ge_0] \to \mathbb R$ of $\mathcal{G}_{s f}^{\ge} (u_{\epsilon}) = 0$ such that
\begin{enumerate}
\item For each $\ge \in (0,\ge_0]$ $u_{\epsilon} =
u(\cdot,\cdot,\cdot,\epsilon)$ satisfies
\begin{align*}
u_{\epsilon}(x,0,s) = u_0(x), \qquad u_{\epsilon}(x,1,s) = u_1(x).
\end{align*}
\item There is a constant $C > 0$, independent of $\epsilon$, such that
\begin{align*}
|u_{\epsilon}| + |\N u_{\epsilon}| + |(u_{\epsilon})_t| + \ge \left\{ |\N^2
u_{\epsilon}| + |\N (u_{\epsilon})_t| + |(u_{\epsilon})_{tt}| \right\} \leq C.
\end{align*}
\end{enumerate}
\begin{proof} As the argument follows standard lines we provide only a sketch.  Fix some $0 < \ge_0 < 1$, then choose an arbitrary $0 < \ge < \ge_0$.  First we observe that it follows from (\cite{JeffVEstimates} Proposition 3) that the path $u_t := t u_1 + (1-t) u_0$ lies in $\gG_k^+$.  Moreover, there exists some constant $\gL$ for which $w_t := u_t + \gL t(t-1)$ satisfies $E_u^{\ge} \in \gG_k^+$.  Let $f := \Phi_{\ge}(w)$, and set
\begin{align*}
\mathcal{I} = \big\{ s \in [0,1]  :\ \exists u \in C^{4,\alpha} \cap \Gamma_k^+,\ u \mbox{ solves $(\star_{\epsilon,sf})$} \big\}.
\end{align*}
By construction, $1 \in \mathcal{I}$.

To verify that $\mathcal{I}$ is open, it suffices to study the linearized equation; i.e., given $\psi \in C^{\infty}(M \times [0,1])$, we need to solve for some $s \in \mathcal I$ then equation
\begin{align*}
\mathcal{L}_{u_{\ge}(\cdot,\cdot,s)} \varphi = \psi
\end{align*}
with $\varphi$ satisfying Dirichlet boundary conditions.  The solvability of this linear problem follows from \cite{GT}, Theorem 6.13.

We claim that $\mathcal{I}$ is closed: let $\{ u_i = u_{s_i} \}$ be a sequence of admissible solutions with $s_i \geq s_0$.  The preceding {\em a priori }
estimates imply there is a constant $C$ (independent of $\epsilon$) such that
\begin{align*}
|u_i| + |\N u_i| + |(u_i)_t| + \ge \left\{ |(u_i)_{tt}| + |(\N u_i)_t| + |\N^2 u_i| \right\} \leq C.
\end{align*}
To obtain higher order regularity, we need to verify the concavity of the operator.  Observe that the equation can be rewritten as
\begin{align*}
\gs_k^{\frac{1}{k}} \left( u_{tt}^{\frac{1-k}{k}} E_u^{\ge} \right) = f^{\frac{1}{k}}.
\end{align*}
Since $\gs_k^{\frac{1}{k}}$ is a concave operator, the equation is convex, and so by Evans-Krylov \cite{Evans} \cite{Krylov} we conclude there is a constant $C = C(\epsilon, f)$ such that
\begin{align*}
\| u_i \|_{C^{2,\alpha}} \leq C.
\end{align*}
Applying the Schauder estimates we obtain bounds on derivatives of all orders, and it follows that the set $\mathcal{I}$ is closed.  Since $\mathcal{I}$ is open, closed, and non-empty, it follows that $\mathcal{I} = [0,1]$.  The theorem follows.
\end{proof}
\end{thm}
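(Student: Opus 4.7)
The plan is to solve the family $\mathcal{G}^\ge_{sf}(u) = 0$ by continuity in $s \in [0,1]$ for each fixed $\ge \in (0,\ge_0]$, and to read off the uniform-in-$\ge$ control from the a priori work of the preceding subsections. First I would construct an admissible reference datum: the linear interpolation $\tilde u(x,t) := (1-t)u_0 + t u_1$ satisfies $A_{\tilde u} \in \gG_k^+$ for each fixed $t$ by the known convexity property of admissible conformal factors along linear paths, and adding a sufficiently large quadratic in $t$,
\[
w(x,t) := \tilde u(x,t) + \gL\, t(t-1),
\]
forces $w_{tt} = 2\gL$ to dominate, so that the spacetime tensor $E^\ge_w = (1+\ge) w_{tt} A_w - \N w_t \otimes \N w_t$ lies in $\gG_k^+$. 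Setting $f := \Phi_\ge(w) > 0$, the function $w$ is an admissible smooth solution of $(\star_{\ge,f})$, so $1 \in \mathcal I := \{ s \in [0,1] : \exists\ \text{admissible } u \in C^{4,\alpha} \text{ solving } \mathcal{G}^\ge_{sf}(u) = 0,\ u|_{t=0}=u_0,\ u|_{t=1}=u_1 \}$.

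To show $\mathcal I = [0,1]$, I would run the usual openness/closedness dichotomy. For openness, I would linearize at a given solution using Lemma \ref{linearizedgeod}: the operator $\LL$ is strictly elliptic for $\ge > 0$ by Lemma \ref{geodesicellipticity}, so its Dirichlet problem on $M \times [0,1]$ is uniquely solvable by standard linear theory (Gilbarg--Trudinger Theorem 6.13), and the implicit function theorem delivers openness. For closedness, given a sequence of admissible solutions $\{u_i\}$ with parameter $s_i \to s_\infty$, the combined content of Propositions \ref{C0est}, \ref{utestimate}, and \ref{4dC1estimate} supplies bounds on $|u_i| + |\N u_i| + |(u_i)_t|$ that are \emph{uniform} in $\ge$, while Propositions \ref{uttest}, \ref{lapuest}, and the boundary estimate Proposition \ref{C2bdyProp} bound $|(u_i)_{tt}| + |\N(u_i)_t| + |\N^2 u_i|$ by a multiple of $\ge^{-1}$, giving a full $C^{1,1}$ bound for each fixed $\ge$. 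To promote this to $C^{2,\alpha}$ I would use Proposition \ref{geodMA} to rewrite the equation in the concave form
\[
\gs_k^{1/k}\bigl(u_{tt}^{(1-k)/k} E_u^\ge\bigr) \;=\; \bigl((1+\ge) f\bigr)^{1/k},
\]
and apply Evans--Krylov, followed by Schauder bootstrapping to obtain smooth interior estimates. Thus $s_\infty \in \mathcal I$, so $\mathcal I = [0,1]$ and assertion (2) is exactly what the $\ge$-behaviour of the estimates provides.

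The main obstacle, and the structural reason for introducing the $\ge$ regularization at all, is that the bare operator $\Phi_0$ is only \emph{weakly} elliptic in the time direction, which defeats both the openness step and the Evans--Krylov application if one attempts to work directly with the degenerate geodesic equation. The $\ge$ term breaks this degeneracy and produces classical solutions for each $\ge > 0$, but at the cost of second-derivative constants of size $\OO(\ge^{-1})$. The delicate point is that the maximum-principle arguments in the preceding sections yield second-order estimates of precisely this controlled form, while the $C^0$ and first-order estimates remain fully $\ge$-independent; ensuring that the construction of $f$ and all universal constants preserves this uniformity is the essential bookkeeping task, and is what will later allow the $\ge \to 0$ limit to produce a meaningful weak geodesic.
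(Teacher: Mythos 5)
Your proposal is correct and follows essentially the same continuity-method route as the paper: the same reference path $w = (1-t)u_0 + t u_1 + \gL t(t-1)$ and auxiliary $f = \Phi_\ge(w)$, the same openness argument via Lemma \ref{geodesicellipticity} and the linear Dirichlet theory, and the same closedness argument combining the $C^0$, $C^1$, and $C^2$ a priori estimates with the concave reformulation from Proposition \ref{geodMA} and Evans--Krylov. The small constant factor you display on the right-hand side of the concave form ($(1+\ge)$ to the wrong power) is harmless since it is a constant and does not affect concavity; the paper simply drops it.
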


\begin{defn} \label{eG} Given $u_0, u_1 \in \gG_k^+$, we say a one parameter
family of $C^{1,1}$ functions $u_{\ge}(x,t) : M \times [0,1] \to \mathbb R$ is
an \emph{$\ge$-geodesic from $u_0$ to $u_1$} if
\begin{align*}
u_{\epsilon}(x,0) = u_0(x), \qquad u_{\epsilon}(x,1,s) = u_1(x), \qquad
\mathcal{G}_{0}^{\ge}(u_{\epsilon}) = 0.
\end{align*}
We furthermore will say that it is a \emph{regularizable $\ge$-geodesic} if
there exists $f_0 \in
C^{\infty}(M \times [0,1])$ with $f_0 > 0$ and a smooth
function $u(x,t,s) : M \times [0,1] \times [0,1] \times
 \to \mathbb R$ with the following properties:

\noindent $(i)$ For each $s \in [0,1]$, $u(\cdot,\cdot,s)$ satisfies
\begin{align*}
u(x,0,s) = u_0(x), \qquad u(x,1,s) = u_1(x), \qquad
\mathcal{G}_{s f_0}(u) = 0.
\end{align*}

\noindent $(ii)$ There is a constant $C > 0$, independent of $\epsilon$, such
that
\begin{align*}
|u_{\epsilon}| + |\N u_{\epsilon}| + |(u_{\epsilon})_t| + \ge \left\{ |\N^2 u_{\epsilon}| +
|\N (u_{\epsilon})_t| + |(u_{\epsilon})_{tt}| \right\} \leq C.
\end{align*}

\noindent $(iii)$  One has that $u(x,t,s) \to u(x,t)$ in the weak $C^{1,1}$
topology as $s \to 0$.
\end{defn}

\begin{defn} \label{RG} Given $u_0, u_1 \in \gG_k^+$, we say a one parameter
family of $C^1$ functions $u(x,t)$ is
a \emph{regularizable geodesic from $u_0$ to $u_1$} if there exists $f_0 \in
C^{\infty}(M \times [0,1])$ with $f_0 > 0$ and a smooth
function $u(x,t,s,\ge) : M \times [0,1] \times [0,1] \times
[0,\ge_0] \to \mathbb R$ with the following properties:  \\

\noindent $(i)$ For each $\ge \in [0,\ge_0)$ $u_{\epsilon} =
u(\cdot,\cdot,\cdot,\epsilon)$ satisfies
\begin{align*}
u_{\epsilon}(x,0,s) = u_0(x), \qquad u_{\epsilon}(x,1,s) = u_1(x), \qquad
\mathcal{G}_{s f_0}(u_{\epsilon}) = 0.
\end{align*}

\noindent $(ii)$ There is a constant $C > 0$, independent of $\epsilon$, such
that
\begin{align*}
|u_{\epsilon}| + |\N u_{\epsilon}| + |(u_{\epsilon})_t| + \ge \left\{ |\N^2 u_{\epsilon}| +
|\N (u_{\epsilon})_t| + |(u_{\epsilon})_{tt}| \right\} \leq C.
\end{align*}

\noindent $(iii)$  For each $0 < \alpha < 1$, $u_{\epsilon} \rightarrow u$ in
$C^{0,\alpha}$ as $\ge,s \to 0$.
\end{defn}

We can now show existence and uniqueness of a regularizable geodesic connecting
any two points in $\gG^+$.  The key issue for uniqueness is a comparison lemma.

\begin{lemma} \label{compare} Suppose $u, \tilde{u} \in C^{\infty}$ are
admissible and satisfy
\begin{align*} \begin{split}
\mathcal{G}^{\ge}_{f_1}(u) &= 0, \\
\mathcal{G}^{\ge}_{f_2}(\tilde{u}) &= 0,
\end{split}
\end{align*}
where $f_1 \leq f_2$.  Assume further that on the boundary,
\begin{align*} \begin{split}
u(x,0) &= \tilde{u}(x,0), \\
u(x,1) &= \tilde{u}(x,1).
\end{split}
\end{align*}
Then on $M \times [0,1]$,
\begin{align*}
u(x,t) \geq \tilde{u}(x,t).
\end{align*}
\end{lemma}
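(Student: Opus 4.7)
Plan: The argument is a maximum principle for $w := \tilde u - u$, with a standard perturbation step to reduce the non-strict inequality $f_1 \leq f_2$ to the strict case $f_1 < f_2$. Suppose for contradiction that $w$ attains a positive maximum at some interior point $P_0 = (x_0,t_0) \in M \times (0,1)$; since $w = 0$ on $M \times \{0,1\}$, this is the only way for the lemma to fail.

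At $P_0$ the usual interior maximum conditions give $\N w = 0$, $w_t = 0$, $\N^2 w \leq 0$, and $w_{tt} \leq 0$, while the full $(n+1)$-dimensional space-time Hessian being negative semidefinite yields the matrix estimate
\[
\IP{\N w_t, X}^2 \leq w_{tt}\, \N^2 w(X,X) \qquad \text{for every } X \in T_{x_0}M.
\]
The vanishing $\N u = \N \tilde u$, combined with the conformal transformation law \eqref{Au}, gives $A_{\tilde u} - A_u = \N^2 w$ at $P_0$, so $A_u - A_{\tilde u}$ is positive semidefinite and, since the PSD cone lies inside $\gG_k^+$, is in $\gG_k^+$. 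Convexity of $\gG_k^+$ together with Lemma \ref{newtonmonotone} then yields the pointwise monotonicities $\gs_k(A_{\tilde u}) \leq \gs_k(A_u)$ and $T_{k-1}(A_{\tilde u}) \leq T_{k-1}(A_u)$ (as matrices) at $P_0$; moreover $\tilde u_{tt} \leq u_{tt}$.

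The crux is to deduce $\Phi_\ge(\tilde u)(P_0) \leq \Phi_\ge(u)(P_0)$. Writing
\[
\Phi_\ge(\tilde u) - \Phi_\ge(u) = (1+\ge)\big[\tilde u_{tt}\gs_k(A_{\tilde u}) - u_{tt}\gs_k(A_u)\big] - \big[\IP{T_{k-1}(A_{\tilde u}), \N \tilde u_t \otimes \N \tilde u_t} - \IP{T_{k-1}(A_u), \N u_t \otimes \N u_t}\big],
\]
the first bracket is immediately $\leq 0$ from the pointwise information just obtained. For the second bracket I split using $\N\tilde u_t = \N u_t + \N w_t$ into a Newton-transform-difference piece (of the favorable sign by matrix monotonicity of $T_{k-1}$) plus the cross piece $\IP{T_{k-1}(A_u),\, \N u_t \otimes \N w_t + \N w_t \otimes \N u_t + \N w_t \otimes \N w_t}$. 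Using positivity of $T_{k-1}(A_u)$ (admissibility plus Lemma \ref{newtonprops}(1)), the Cauchy--Schwarz inequality in the Newton bilinear form $(X,Y) \mapsto T_{k-1}(A_u)(X,Y)$, together with the space-time Hessian matrix estimate above, absorbs the indefinite $\N u_t \otimes \N w_t$ contribution into the dominant $w_{tt}\IP{T_{k-1}(A_u), \N^2 w}$ term, which has the correct sign. This yields $\Phi_\ge(\tilde u) \leq \Phi_\ge(u)$ at $P_0$, and hence $f_2(P_0) \leq f_1(P_0)$.

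In the strict case $f_1 < f_2$ everywhere this directly contradicts $f_1 \leq f_2$. The general case is reduced to the strict one by the perturbation $\tilde u_\eta := \tilde u - \eta\, t(1-t)$ for $\eta > 0$: since the added term depends only on $t$, both the Schouten tensor $A_{\tilde u_\eta}=A_{\tilde u}$ and the spatial gradient $\N(\tilde u_\eta)_t = \N \tilde u_t$ are unchanged, while $(\tilde u_\eta)_{tt} = \tilde u_{tt} + 2\eta$; a short computation gives $E^\ge_{\tilde u_\eta} = E^\ge_{\tilde u} + 2\eta(1+\ge)A_{\tilde u} \in \gG_k^+$, so $\tilde u_\eta$ remains admissible and $\Phi_\ge(\tilde u_\eta) = f_2 + 2\eta(1+\ge)\gs_k(A_{\tilde u}) > f_1$ strictly on $M \times [0,1]$, with boundary values agreeing with those of $\tilde u$ and hence of $u$. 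The strict case then yields $u \geq \tilde u_\eta$, and letting $\eta \to 0^+$ gives $u \geq \tilde u$. The main obstacle is the careful control of the indefinite cross term $\IP{T_{k-1}(A_u), \N u_t \otimes \N w_t}$, which has no counterpart in the classical $\gs_k$-Hessian comparison and is absorbable only by simultaneously invoking the full space-time Hessian matrix inequality and Cauchy--Schwarz in the Newton bilinear form; the regularization $\ge > 0$ and the strict ellipticity it provides (Lemma \ref{geodesicellipticity}) are what ensure the bookkeeping closes.
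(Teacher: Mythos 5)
The paper itself offers no proof of Lemma \ref{compare}---it is stated and immediately used---so there is no authorial argument for comparison; your attempt must stand on its own. Your framework is the right one: contradiction at an interior positive maximum $P_0$ of $w=\tilde u - u$, reduced to strict inequality via the perturbation $\tilde u_\eta = \tilde u - \eta\,t(1-t)$. The preparatory steps are correct: the first and second order conditions at $P_0$, the spacetime Hessian matrix inequality, the consequences $A_{\tilde u}\leq A_u$, $\tilde u_{tt}\leq u_{tt}$, and the perturbation computation $E^{\ge}_{\tilde u_\eta}=E^{\ge}_{\tilde u}+2\eta(1+\ge)A_{\tilde u}$, $\Phi_{\ge}(\tilde u_\eta)=f_2+2\eta(1+\ge)\gs_k(A_{\tilde u})$.

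The gap is in the crux, the assertion $\Phi_{\ge}(\tilde u)(P_0)\leq\Phi_{\ge}(u)(P_0)$, which you describe but do not execute; and the description contains a sign error. With your decomposition, the ``Newton-transform-difference piece'' is
\begin{align*}
\IP{T_{k-1}(A_{\tilde u})-T_{k-1}(A_u),\ \N\tilde u_t\otimes\N\tilde u_t},
\end{align*}
which is $\leq 0$ because $T_{k-1}(A_{\tilde u})\leq T_{k-1}(A_u)$ while $\N\tilde u_t\otimes\N\tilde u_t\geq 0$. After the overall minus sign in
\begin{align*}
\Phi_{\ge}(\tilde u)-\Phi_{\ge}(u)=(1+\ge)\bigl[\tilde u_{tt}\gs_k(A_{\tilde u})-u_{tt}\gs_k(A_u)\bigr]-\bigl[\IP{T_{k-1}(A_{\tilde u}),\N\tilde u_t\otimes\N\tilde u_t}-\IP{T_{k-1}(A_u),\N u_t\otimes\N u_t}\bigr],
\end{align*}
that piece contributes a quantity $\geq 0$, i.e.\ the \emph{unfavorable} sign, not the favorable one. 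Likewise, the claim that the cross term can be ``absorbed into the dominant $w_{tt}\IP{T_{k-1}(A_u),\N^2 w}$ term'' is not justified: no such term appears on its own in the expansion.

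What is actually missing is the admissibility of $u$ itself. The inequality $\Phi_{\ge}(\tilde u)(P_0)\leq\Phi_{\ge}(u)(P_0)$ is true but essentially sharp (already for $k=2$, $n=4$, $\ge=0$ one can produce data saturating it), and the required mechanism is the bound
\begin{align*}
T_{k-1}(A_u)(\N u_t,\N u_t)\ <\ (1+\ge)u_{tt}\,\gs_k(A_u),
\end{align*}
which follows from $\gs_k(E^{\ge}_u)>0$ via Lemma \ref{LAlemma}. Only after combining this with the spacetime constraint $\N w_t\otimes\N w_t\leq w_{tt}\N^2 w$ and a Cauchy--Schwarz/AM--GM in the Newton form with a carefully balanced parameter (roughly $u_{tt}/\brs{w_{tt}}$) does the cross term get absorbed. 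Your outline never invokes $\gs_k(E^{\ge}_u)>0$; without it the estimate cannot close. You should carry out this step in detail rather than asserting it.
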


We remark here also that the Lemma \ref{compare} can be used to exhibit
uniqueness for solutions of the equation $\mathcal{G}^{\ge}_0(u) = 0$.

\begin{cor} Given $u_0, u_1 \in \gG^+_k$, there exists a unique $\ge$-geodesic
from $u_0$ to $u_1$.
\begin{proof}Let $u(x,t,\ge)$ and $f$ be the data guaranteed by Theorem
\ref{approxgeodthm}.  Due to the a priori estimates, by Arzela-Ascoli there
exists a $C^{1,1}$ limit as $s \to 0$.  By definition this is an
$\ge$-geodesic.  Now suppose $\til{u}$ is another regularizable geodesic
connecting $u_0$ to $u_1$, with regularization $\til{u}(x,t,\ge)$ and auxiliary
function $\til{f}$.  Fixing some $\gd > 0$, for sufficiently small $\ge > 0$
Lemma \ref{compare} implies that $u(x,t,\ge) \geq \til{u}(x,t,\gd)$.  Since the
convergence is in $C^{0,\ga}$, sending $\ge \to 0$ yields $u(x,t) \geq
\til{u}(x,t,\gd)$.  We can now send $\gd \to 0$ to obtain $u(x,t) \geq
\til{u}(x,t)$.  Since the roles of $u$ and $\til{u}$ are interchangeable in that
argument, it follows that $u(x,t) = \til{u}(x,t)$.
\end{proof}
\end{cor}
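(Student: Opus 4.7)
The plan is to combine the smooth regularized family produced by Theorem \ref{approxgeodthm} with the comparison principle of Lemma \ref{compare}, along the lines already sketched in the excerpt.

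Step 1 (Existence). I fix $\ge \in (0,\ge_0]$ and let $u(x,t,s,\ge)$ and $f$ be as provided by Theorem \ref{approxgeodthm}, so that for every $s \in [0,1]$, $u(\cdot,\cdot,s,\ge)$ is a smooth admissible solution of $\mathcal{G}^{\ge}_{sf}(u) = 0$ with Dirichlet data $u_0, u_1$. Item $(2)$ of that theorem supplies a uniform $C^{1,1}$ bound independent of $s$ (with constants depending on $\ge$), so Arzel\`a--Ascoli along a subsequence $s_k \to 0$ produces a limit $u_\ge$ with weak-$*$ $C^{1,1}$ convergence, and strong $C^{1,\ga}$ convergence for every $\ga \in (0,1)$. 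Since $sf \to 0$ and the equation is stable under this convergence (all derivatives appearing have uniform bounds), passing to the limit gives $\mathcal{G}^{\ge}_0(u_\ge) = 0$; hence $u_\ge$ is a regularizable $\ge$-geodesic per Definition \ref{eG}.

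Step 2 (Uniqueness). Suppose $\til{u}$ is another regularizable $\ge$-geodesic, with regularizing family $\til{u}(x,t,s)$ and positive auxiliary function $\til{f}$. Fix $\gd > 0$; since $f$ and $\til{f}$ are smooth and strictly positive on the compact cylinder $M \times [0,1]$, for all sufficiently small $s > 0$ one has
\begin{equation*}
s f(x,t) \leq \gd \til{f}(x,t) \qquad \text{for every } (x,t) \in M \times [0,1].
\end{equation*}
Lemma \ref{compare} applied to $u(\cdot,\cdot,s,\ge)$ and $\til{u}(\cdot,\cdot,\gd)$, both smooth admissible with shared Dirichlet data, then yields $u(\cdot,\cdot,s,\ge) \geq \til{u}(\cdot,\cdot,\gd)$. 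Letting $s \to 0$ gives $u_\ge \geq \til{u}(\cdot,\cdot,\gd)$, and letting $\gd \to 0$ gives $u_\ge \geq \til{u}_\ge$. The symmetric argument, reversing the roles of $u$ and $\til{u}$, closes the proof.

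The main obstacle is nothing internal to the corollary itself but the technical machinery already in place: the carefully designed $C^{1,1}$ estimates of Theorem \ref{approxgeodthm} (whose derivation motivated the two-parameter regularization breaking degeneracy in the time direction) and the comparison principle of Lemma \ref{compare}. Given these, the corollary is a clean compactness-plus-comparison packaging. The only point requiring care is that the comparison step must be applied to the smooth approximants $u(\cdot,\cdot,s,\ge)$ and $\til{u}(\cdot,\cdot,\gd)$ rather than to the weak $\ge$-geodesic limits, since Lemma \ref{compare} is stated for smooth admissible solutions; the two limits $s,\gd \to 0$ are then taken afterwards using the uniform convergence guaranteed by Definitions \ref{eG} and \ref{RG}.
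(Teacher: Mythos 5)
Your proof is correct and follows essentially the same approach as the paper: construct the $\ge$-geodesic as the $s\to0$ limit of the family from Theorem \ref{approxgeodthm}, then prove uniqueness among regularizable $\ge$-geodesics by applying Lemma \ref{compare} to the smooth approximants with $sf \leq \gd\til f$ and passing to the two limits. Your write-up in fact cleans up a notational slip in the paper's own proof, which overloads the symbol $\ge$ for both the fixed ellipticity parameter and the running regularization parameter where you correctly use $s$ and $\gd$.
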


\begin{cor} Given $u_0, u_1 \in \gG^+_k$, there exists a unique regularizable
geodesic from $u_0$ to $u_1$.
\begin{proof} Let $u(x,t,\ge)$ and $f$ be the data guaranteed by Theorem
\ref{approxgeodthm}.  Due to the a priori estimates, by Arzela-Ascoli there
exists a $C^{0,\ga}$ limit as both $\ge \to 0$ and $s \to 0$.  By definition
this is a
regularizable geodesic.  Now suppose $\til{u}$ is another regularizable geodesic
connecting $u_0$ to $u_1$, with regularization $\til{u}(x,t,\ge)$ and auxiliary
function $\til{f}$.  Fixing some $\gd > 0$, for sufficiently small $\ge > 0$
Lemma \ref{compare} implies that $u(x,t,\ge) \geq \til{u}(x,t,\gd)$.  Since the
convergence is in $C^{0,\ga}$, sending $\ge \to 0$ yields $u(x,t) \geq
\til{u}(x,t,\gd)$.  We can now send $\gd \to 0$ to obtain $u(x,t) \geq
\til{u}(x,t)$.  Since the roles of $u$ and $\til{u}$ are interchangeable in that
argument, it follows that $u(x,t) = \til{u}(x,t)$.
\end{proof}
\end{cor}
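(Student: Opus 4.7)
The plan is to construct the regularizable geodesic as the limit of the doubly regularized family produced by Theorem \ref{approxgeodthm}, and to obtain uniqueness from the comparison principle in Lemma \ref{compare}. This proceeds in essentially the same way as the preceding corollary on $\ge$-geodesics, but with both the $s$ and $\ge$ regularizing parameters turned off simultaneously.

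For existence, let $u(x,t,s,\ge) : M \times [0,1] \times [0,1] \times (0,\ge_0] \to \mathbb R$ and $f > 0$ be the smooth admissible data provided by Theorem \ref{approxgeodthm}, so that $\mathcal{G}^{\ge}_{sf}(u) = 0$ with fixed boundary data $u_0, u_1$. The a priori bounds on $|u| + |\N u| + |u_t|$ obtained in Propositions \ref{C0est}, \ref{utestimate}, and \ref{4dC1estimate} are uniform in both $\ge$ and $s$, so Arzela-Ascoli yields a subsequence $(\ge_j, s_j) \to (0,0)$ along which $u(\cdot,\cdot,s_j,\ge_j)$ converges in $C^{0,\ga}$ for every $0 < \ga < 1$ to a limit $u(x,t)$. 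Setting $f_0 := f$, the family $u(x,t,s,\ge)$ together with this limit satisfies properties (i)--(iii) of Definition \ref{RG}.

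For uniqueness, suppose $\til u$ is another regularizable geodesic joining $u_0$ to $u_1$, with regularizing family $\til u(x,t,s,\ge)$ and auxiliary function $\til f > 0$. Fix $\gd > 0$. Since $f$ and $\til f$ are strictly positive and smooth on the compact cylinder $M \times [0,1]$, there exists $s > 0$ small enough that $sf \leq \gd \til f$ pointwise. For each $\ge > 0$ the functions $u(\cdot,\cdot,s,\ge)$ and $\til u(\cdot,\cdot,\gd,\ge)$ satisfy the hypotheses of Lemma \ref{compare}, which gives
\begin{align*}
u(x,t,s,\ge) \;\geq\; \til u(x,t,\gd,\ge).
\end{align*}
Sending $\ge \to 0$ using the $C^{0,\ga}$ convergence of both regularizing families, then $s \to 0$, yields $u(x,t) \geq \til u(x,t,\gd,0)$ for the $\ge \to 0$ limit of $\til u$ at fixed $\gd$. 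A final passage $\gd \to 0$ gives $u \geq \til u$, and the roles of the two geodesics can be reversed to obtain the opposite inequality.

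The argument requires no new estimates; the only technical care lies in organizing the double limit $(s,\ge) \to (0,0)$ so that the monotonicity from Lemma \ref{compare} is preserved at each stage, and in observing that the positivity of $f$ and $\til f$ on the compact cylinder $M \times [0,1]$ is exactly what allows the scaling $sf \leq \gd \til f$ to be arranged uniformly.
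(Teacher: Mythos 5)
Your proposal is correct and follows essentially the same approach as the paper: Arzela--Ascoli with the uniform $C^1$ estimates for existence of the limit, and Lemma \ref{compare} together with the scaling $sf \leq \gd\tilde f$ (which you justify explicitly via positivity on the compact cylinder) for uniqueness, finishing by symmetry. Your treatment of the double limit in $(s,\ge)$ and the explicit matching of the $\ge$-parameter when invoking the comparison lemma is a small but welcome tightening of the argument as written in the paper.
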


\section{Smoothing via Guan-Wang flow} \label{smoothingsec}

In this section we develop a sharper picture (Theorem \ref{gwsmoothing}) of the
short-time smoothing properties of a parabolic flow introduced by Guan-Wang in \cite{GW}.
This is used in the proof of Theorem \ref{unique} to smooth the approximate
geodesics so that we can take strong limits to obtain a curve of critical points
for $F$ connecting any two given critical points.

In first subsection we will derive a series of formulas for the evolution of various quantities.  Since we
will be quoting some of the formulas from the previous section, we will state these formulas for general
dimensions.  In the second subsection, where we derive some short-time estimates, we will specialize to the case
$n=4$ and $k=2$.

First, we recall the
definition of the flow introduced in \cite{GW}:
\begin{align} \label{gwflow}
\dt u =&\ \log \gs_k(g_u^{-1} A_u) - V_u^{-1} \int_M \log \gs_k(g_u^{-1} A_u) dV_g.
\end{align}
For technical simplicity we will instead study an unnormalized flow
\begin{align} \label{unnormgwflow} \begin{split}
\dt u =&\ \log \gs_k(g_u^{-1} A_u) \\
=&\ \log \gs_k( A_u) + 2ku.
\end{split}
\end{align}
As we will be able to control the size of $u$ along this flow, the renormalizing
term will only change $u$ by a controlled constant, and have no effect on the
estimates.

\subsection{Evolution equations}
We remark that when the dimension $n > 4$, Guan-Wang assumed the manifold was locally conformally flat.  For the evolutionary formulas
we are interested in this assumption will not be necessary.

\begin{defn} Given $u$ an admissible solution to (\ref{unnormgwflow}), define
\begin{align*}
 L f =&\ \gs_k(A_u)^{-1} \IP{T_{k-1}(A_u), \N^2 f + \N u \otimes \N f + \N f
\otimes \N u -
\IP{\N u, \N f} g},\\
H =&\ \dt - L.
\end{align*}
where the derivatives and inner products are with respect to $g$ (the fixed background metric).
\end{defn}

\begin{lemma} \label{gwHu} Let $u$ be a solution to (\ref{unnormgwflow}).  Then
\begin{align*}
H u =&\ \log \gs_k(A_u) - k + \gs_k(A_u)^{-1} \IP{T_{k-1}(A_u), A - \N u \otimes
\N u + \frac{1}{2} \brs{\N u}^2 g} + 2ku.
\end{align*}
\begin{proof} We directly compute
\begin{align*}
L u =&\ \gs_k(A_u)^{-1} \IP{T_{k-1}(A_u), \N^2 u + 2 \N u \otimes \N u -
\brs{\N u}^2 g} \\
=&\  \gs_k(A_u)^{-1} \IP{T_{k-1}(A_u), A_u - A + \N u \otimes \N u - \frac{1}{2}
\brs{\N u}^2 g} \\
=&\ k + \gs_k(A_u)^{-1} \IP{T_{k-1}(A_u), - A + \N u \otimes \N u - \frac{1}{2}
\brs{\N u}^2 g}.
\end{align*}
Combining this with (\ref{unnormgwflow}) yields the result.
\end{proof}
\end{lemma}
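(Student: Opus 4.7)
The plan is straightforward: unpack the right-hand side, observe that the $\log \gs_k(A_u) + 2ku$ portion is precisely $\partial_t u$ from the unnormalized flow (\ref{unnormgwflow}), so the substance of the lemma reduces to computing $Lu$. I would carry this out by a direct substitution using the conformal transformation law (\ref{Au}) for the Schouten tensor.

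First, applying $L$ to the function $u$ and combining the two symmetric first-order terms, one has
\begin{align*}
L u = \gs_k(A_u)^{-1} \IP{T_{k-1}(A_u), \N^2 u + 2 \N u \otimes \N u - \brs{\N u}^2 g}.
\end{align*}
The key observation is that the tensor inside the brackets is exactly $A_u - A + \N u \otimes \N u - \tfrac{1}{2}\brs{\N u}^2 g$, obtained by solving (\ref{Au}) for $\N^2 u$. Substituting and pairing with $T_{k-1}(A_u)$, the $A_u$-part contributes the constant $k$ via the Euler-type identity $\IP{T_{k-1}(A_u), A_u} = k \gs_k(A_u)$, which is a direct consequence of $\gs_k$ being homogeneous of degree $k$. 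What remains is precisely $\gs_k(A_u)^{-1} \IP{T_{k-1}(A_u), -A + \N u \otimes \N u - \tfrac{1}{2}\brs{\N u}^2 g}$.

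Subtracting $Lu$ from $\partial_t u = \log \gs_k(A_u) + 2ku$ then yields the stated formula for $Hu$. There is no real obstacle: the argument is purely algebraic, relying only on (\ref{Au}) and the standard homogeneity identity for $\gs_k$. The only point requiring care is bookkeeping the signs and the $\tfrac{1}{2}\brs{\N u}^2 g$ term contributed by (\ref{Au}); compared with the elaborate linearizations carried out in the geodesic-equation section, this computation is essentially trivial.
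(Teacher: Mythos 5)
Your proposal is correct and follows the same route as the paper: write $Lu$ as $\gs_k(A_u)^{-1}\IP{T_{k-1}(A_u),\N^2 u + 2\N u\otimes\N u - \brs{\N u}^2 g}$, recognize the bracketed tensor as $A_u - A + \N u\otimes\N u - \tfrac12\brs{\N u}^2 g$ via (\ref{Au}), extract $k$ from $\IP{T_{k-1}(A_u),A_u} = k\gs_k(A_u)$, and subtract from $\partial_t u = \log\gs_k(A_u) + 2ku$. No gaps.
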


\begin{lemma} \label{gwHexpu} Let $u$ be a solution to (\ref{unnormgwflow}) and $\lambda \in \mathbb{R}$.
Then
\begin{align*}
H e^{\gl u} =&\ \gl e^{\gl u} \left[ \log \gs_k(A_u) + 2 k u  -  k +
\gs_k(A_u)^{-1}
\IP{T_{k-1}(A_u), A - (1 + \gl) \N u \otimes \N u + \frac{1}{2} \brs{\N u}^2 g}
\right].
\end{align*}
\begin{proof} Note
\begin{align*}
\frac{\partial}{\partial t}( e^{\lambda u}) = \lambda e^{\lambda u} \big( \log \sigma_k(A_u) + 2k u \big).
\end{align*}
Also,
\begin{align*}
L e^{\lambda u} &= \sigma_k(A_u)^{-1}\big\langle T_{k-1}(A_u), \lambda e^{\lambda u} \nabla^2 u + \lambda^2 e^{\lambda u} \nabla u \otimes \nabla u + 2 \lambda e^{\lambda u} \nabla u \otimes \N u
- \lambda e^{\lambda u} |\N u|^2 g \big\rangle  \\
&= \sigma_k(A_u)^{-1}\big\langle T_{k-1}(A_u), \lambda e^{\lambda u} \big[ A_u - A - \nabla u \otimes \N u + \frac{1}{2}|\N u|^2 g \big] +   \lambda (\lambda + 2) e^{\lambda u} \nabla u \otimes \N u
- \lambda e^{\lambda u} |\N u|^2 g \big\rangle \\
&= \lambda e^{\lambda u} \sigma_k(A_u)^{-1} \big\langle T_{k-1}(A_u), A_u - A + (\lambda + 1) \nabla u \otimes \N u - \frac{1}{2}|\N u|^2 g \big] \big\rangle  \\
&= \lambda e^{\lambda u} \sigma_k(A_u)^{-1} \big\langle T_{k-1}(A_u), - A + (\lambda + 1) \nabla u \otimes \N u - \frac{1}{2}|\N u|^2 g \big] \big\rangle + \lambda  k e^{\lambda u}.
\end{align*}
Therefore,
 \begin{align*}
H e^{\gl u} =&\ \frac{\partial}{\partial t}( e^{\lambda u})- L e^{\lambda u} \\
=&\  \gl e^{\gl u} \left[ \log \gs_k(A_u) + 2  k u  -   k +
\gs_k(A_u)^{-1}
\IP{T_{k-1}(A_u), A - (1 + \gl) \N u \otimes \N u + \frac{1}{2} \brs{\N u}^2 g}
\right].
\end{align*}
\end{proof}
\end{lemma}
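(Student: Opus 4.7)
The plan is a direct computation, parallel to (but one shade more involved than) the proof of Lemma \ref{gwHu}. The only ingredients are the chain rule, the conformal transformation law \eqref{Au} for the Schouten tensor, and the Euler identity $\langle T_{k-1}(A_u), A_u \rangle = k \,\sigma_k(A_u)$.

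First I would compute the two pieces of $H e^{\lambda u} = \partial_t (e^{\lambda u}) - L e^{\lambda u}$ separately. The time derivative is immediate: since $u$ solves \eqref{unnormgwflow},
\begin{align*}
\partial_t(e^{\lambda u}) = \lambda e^{\lambda u}\bigl(\log \sigma_k(A_u) + 2k u\bigr).
\end{align*}
For the spatial operator, plug $f = e^{\lambda u}$ into the definition of $L$. Using $\nabla e^{\lambda u} = \lambda e^{\lambda u}\nabla u$ and $\nabla^2 e^{\lambda u} = \lambda e^{\lambda u}\nabla^2 u + \lambda^2 e^{\lambda u}\nabla u \otimes \nabla u$, pulling out the common factor $\lambda e^{\lambda u}$ collapses the bracket to
\begin{align*}
L e^{\lambda u} = \lambda e^{\lambda u}\,\sigma_k(A_u)^{-1}\bigl\langle T_{k-1}(A_u),\ \nabla^2 u + (\lambda+2)\,\nabla u\otimes\nabla u - |\nabla u|^2 g\bigr\rangle.
\end{align*}

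Next I would absorb the Hessian term via \eqref{Au}, rewriting
\begin{align*}
\nabla^2 u = A_u - A - \nabla u \otimes \nabla u + \tfrac{1}{2}|\nabla u|^2 g,
\end{align*}
so that the argument of the inner product becomes
\begin{align*}
A_u - A + (\lambda+1)\,\nabla u\otimes\nabla u - \tfrac{1}{2}|\nabla u|^2 g.
\end{align*}
The $A_u$-term contributes $\langle T_{k-1}(A_u), A_u\rangle = k\sigma_k(A_u)$, which cancels the $\sigma_k(A_u)^{-1}$ prefactor and produces the additive constant $k$. Hence
\begin{align*}
L e^{\lambda u} = \lambda e^{\lambda u}\Bigl[\,k - \sigma_k(A_u)^{-1}\bigl\langle T_{k-1}(A_u),\ A - (\lambda+1)\,\nabla u\otimes\nabla u + \tfrac{1}{2}|\nabla u|^2 g\bigr\rangle\Bigr].
\end{align*}

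Finally, subtracting this from $\partial_t(e^{\lambda u})$ gives the claimed formula. There is no real obstacle here: the computation is essentially bookkeeping, and the only subtlety is keeping track of the coefficient $(\lambda+1)$ that arises when combining the $\lambda^2$ term from $\nabla^2 e^{\lambda u}$ with the cross-terms $\nabla u \otimes \nabla f + \nabla f\otimes \nabla u$ in $L$, and then shifting once more after substituting the conformal identity for $\nabla^2 u$. Setting $\lambda = 1$ (and identifying $e^{u}\cdot(\cdot) \sim (\cdot)$) recovers Lemma \ref{gwHu} as a sanity check.
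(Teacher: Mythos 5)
Your proof is correct and follows the same route as the paper's: differentiate $e^{\lambda u}$, expand $L e^{\lambda u}$, substitute the conformal transformation law for $\nabla^2 u$ via \eqref{Au}, and use the Euler identity $\langle T_{k-1}(A_u), A_u\rangle = k\sigma_k(A_u)$ to pull out the constant $k$. One small slip in your closing remark: setting $\lambda = 1$ does \emph{not} recover Lemma \ref{gwHu}, since the gradient term carries coefficient $(1+\lambda) = 2$ rather than $1$; the correct sanity check is the formal limit $\lambda \to 0$, where $H e^{\lambda u} \sim \lambda\, H u$ and the coefficient $(1+\lambda) \to 1$ matches Lemma \ref{gwHu}.
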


\begin{lemma} \label{gwflowgradu} Given $u$ a solution to (\ref{unnormgwflow}),
one has
\begin{align*}
H \brs{\N u}^2 = 2 \gs_k(A_u)^{-1}  T_{k-1}(A_u)^{pq}\big\{ - \N_i \N_p u \N_i
\N_q u +  \OO(\brs{\N u}^2 + 1) \big\} + 4k |\N u|^2.
\end{align*}
\begin{proof} We compute
\begin{align*}
 \dt \N_i u =&\ \N_i \log \gs_k(A_u) + 2 k \N_i u \\
 =&\ \gs_k(A_u)^{-1} \IP{T_{k-1}(A_u), \N_i A_u} + 2 k \N_i u \\
 =&\ \gs_k(A_u)^{-1} T_{k-1}(A_u)^{pq} \big\{  \N_i A_{pq} + \N_i \N_p \N_q u + 2 \N_i \N_p u \N_q u - \N_i \N_j u \N_j u g_{pq} \big\} + 2 k \N_i u \\
=&\ \gs_k(A_u)^{-1} T_{k-1}(A_u)^{pq} \big\{ \N_p \N_q \N_i u + 2 \N_i \N_p u \N_q u - \N_i \N_j u \N_j u g_{pq} + (\N A + \Rm * \N u)_{ipq} \big\} \\
& \ \ \ + 2 k \N_i u,
\end{align*}
hence
\begin{align*}
\frac{\partial}{\partial t}|\N u|^2 &= 2 \gs_k(A_u)^{-1} T_{k-1}(A_u)^{pq} \big\{  \N_p \N_q \N_i u \N_i u + 2 \N_i \N_p u \N_q u \N_i u - \N_i \N_j u \N_j u \N_i u  g_{pq} \\
 & \ \ + \left[ (\N A + \Rm * \N u) * \N u \right]_{pq} \big\} + 4 k |\N u|^2.
\end{align*}
Also,
\begin{align*}
L |\N u|^2 &= 2 \gs_k(A_u)^{-1} T_{k-1}(A_u)^{pq} \big\{  \N_p \N_i u \N_q \N_i u + \N_p \N_q \N_i u \N_i u + 2 \N_i \N_p u \N_q u \N_i u - \N_i \N_j u \N_j u \N_i u  g_{pq} \big\}.
\end{align*}
It follows that
\begin{align*}
 \dt \brs{\N u}^2 =&\ L \brs{\N u}^2 + 2 \gs_k(A_u)^{-1}  T_{k-1}(A_u)^{pq}\big\{  -\N_p \N_i u \N_q \N_i u  + \left[ (\N A + \Rm * \N u) * \N u \right]_{pq} \big\} + 4 k |\N u|^2,
\end{align*}
which implies the result.
\end{proof}
\end{lemma}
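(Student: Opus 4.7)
The plan is to compute the two pieces of $H\brs{\N u}^2 = \dt \brs{\N u}^2 - L\brs{\N u}^2$ separately and match terms. For the time derivative, write $\dt\brs{\N u}^2 = 2 \N_i u\, \N_i (\dt u)$, which by the flow equation (\ref{unnormgwflow}) equals $2\gs_k(A_u)^{-1} T_{k-1}(A_u)^{pq} \N_i u\, \N_i (A_u)_{pq} + 4k \brs{\N u}^2$. Using the conformal change formula (\ref{Au}) to expand
\begin{equation*}
\N_i (A_u)_{pq} = \N_i A_{pq} + \N_i \N_p \N_q u + \N_i \N_p u\, \N_q u + \N_p u\, \N_i \N_q u - \N_j u\, \N_i \N_j u\, g_{pq},
\end{equation*}
and commuting $\N_i \N_p \N_q u = \N_p \N_q \N_i u + (\Rm * \N u)_{ipq}$ via the Ricci identity isolates one third-derivative term plus curvature remainders.

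For the spatial part, apply the definition of $L$ to $\brs{\N u}^2$. Using $\N_p \N_q \brs{\N u}^2 = 2 \N_p \N_i u\, \N_q \N_i u + 2 \N_p \N_q \N_i u\, \N_i u$ and $\N_p \brs{\N u}^2 = 2 \N_p \N_j u\, \N_j u$, the operator $L\brs{\N u}^2$ expands into (a) a third-derivative term $2\sigma_k^{-1} T_{k-1}^{pq} \N_p \N_q \N_i u\, \N_i u$, (b) the principal Hessian-squared piece $2\sigma_k^{-1} T_{k-1}^{pq} \N_p \N_i u\, \N_q \N_i u$, and (c) first-order cross terms of the form $\N u \otimes \N u \otimes \N^2 u$ and $\N u \cdot \N^2 u \cdot \N u \cdot g_{pq}$.

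Now take the difference. By symmetry of $T_{k-1}(A_u)^{pq}$, the third-derivative term from $\dt \brs{\N u}^2$ (after the Ricci commutation) exactly cancels (a). The two cross terms $\N_i u \N_i \N_p u \N_q u + \N_i u \N_p u \N_i \N_q u$ contracted with the symmetric $T_{k-1}$ combine to $2\N_i u\, \N_p u\, \N_i \N_q u$, matching the pair of gradient-$u$ terms in $L$; similarly the $\N_j u\, \N_i \N_j u\, g_{pq}$ contribution matches the $-\IP{\N u,\N\brs{\N u}^2}g$ term. What survives is $-2\sigma_k^{-1} T_{k-1}^{pq} \N_p \N_i u\, \N_q \N_i u$, the $4k\brs{\N u}^2$ from the explicit $2ku$ in the flow, and the remainder $2\sigma_k^{-1} T_{k-1}^{pq}[\N_i u\, \N_i A_{pq} + \N_i u\, (\Rm*\N u)_{ipq}]$; the latter is bounded pointwise (in background data) by a tensor of size $\OO(\brs{\N u}^2 + 1)$ contracted with $T_{k-1}(A_u)^{pq}$, yielding the claimed formula.

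The only real obstacle is the index bookkeeping to verify the three cross-term cancellations; the symmetry of $T_{k-1}(A_u)$ and the sign coming from the $-\IP{\N u,\N f}g$ term in the definition of $L$ are what make them go through, but there is no conceptual difficulty once the expansions are lined up.
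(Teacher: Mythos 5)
Your proposal is correct and follows essentially the same route as the paper: expand $\dt\brs{\N u}^2$ via the flow equation and the conformal change formula for $A_u$, commute third derivatives with the Ricci identity to match the third-derivative term appearing in $L\brs{\N u}^2$, and then cancel the first-order cross terms against the $\N u\otimes\N f + \N f\otimes\N u - \IP{\N u,\N f}g$ piece of $L$. The surviving Hessian-squared term and $\OO(\brs{\N u}^2+1)$ remainder you identify are exactly what the paper obtains.
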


\begin{cor} \label{gwflowgraduCor} Given $u$ a solution to (\ref{unnormgwflow}),
one has
\begin{align} \label{gflowCorsum}
H \big( e^{-4kt} \brs{\N u}^2\big)  = 2 e^{-4kt} \gs_k(A_u)^{-1}  T_{k-1}(A_u)^{pq}\big\{ - \N_i \N_p u \N_i
\N_q u +  \OO(\brs{\N u}^2 + 1) \big\}.
\end{align}
\end{cor}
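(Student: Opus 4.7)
The plan is to simply apply the operator $H = \partial_t - L$ to the product $e^{-4kt}|\nabla u|^2$ and exploit the fact that the prefactor $e^{-4kt}$ depends only on $t$. Since $L$ involves only spatial derivatives (and first-order $t$-derivatives only through $u_t$ implicit in coefficients, but in the form stated $L$ acts purely spatially on its argument $f$), one has $L(e^{-4kt}|\nabla u|^2) = e^{-4kt}\, L|\nabla u|^2$, while the Leibniz rule for $\partial_t$ produces an extra term $-4k e^{-4kt}|\nabla u|^2$.

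Combining these two observations gives
\begin{align*}
H\bigl( e^{-4kt}|\nabla u|^2\bigr) = -4k\, e^{-4kt}|\nabla u|^2 + e^{-4kt}\, H|\nabla u|^2.
\end{align*}
Now substitute the formula from Lemma \ref{gwflowgradu}:
\begin{align*}
H\bigl( e^{-4kt}|\nabla u|^2\bigr)
&= -4k\, e^{-4kt}|\nabla u|^2 \\
&\quad + e^{-4kt}\left[ 2\sigma_k(A_u)^{-1} T_{k-1}(A_u)^{pq}\bigl\{ -\nabla_i\nabla_p u\,\nabla_i\nabla_q u + \mathcal{O}(|\nabla u|^2+1)\bigr\} + 4k|\nabla u|^2\right].
\end{align*}
The explicit $\pm 4k\, e^{-4kt}|\nabla u|^2$ terms cancel, yielding exactly the stated identity.

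There is no substantive obstacle: this is really just a bookkeeping step whose only role is to absorb the non-derivative zeroth-order term $4k|\nabla u|^2$ produced by the conformal scaling (the $+2ku$ in the unnormalized flow \eqref{unnormgwflow}) into the integrating factor $e^{-4kt}$. This is useful in subsequent estimates because it replaces an equation with a positive forcing term by a pure ``heat-type'' inequality, making maximum-principle arguments for a gradient bound cleaner.
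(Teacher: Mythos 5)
Your proof is correct and is exactly the argument the paper intends: the corollary is stated without proof immediately after Lemma \ref{gwflowgradu}, and the integrating-factor computation you give — noting that $L$ is purely spatial so $e^{-4kt}$ commutes with it, while $\partial_t$ picks up a $-4k e^{-4kt}|\nabla u|^2$ term that cancels the $+4k|\nabla u|^2$ from the lemma — is the only step involved.
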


For the following lemma, for an $n \times n$ symmetric matrix $r = r_{ij}$ we denote
\begin{align*}
\FF(r) = \log \gs_k(r),
\end{align*}
and derivatives of $\FF$ with respect to the entries of $r$ by
\begin{align*}
\dfrac{\partial}{\partial r_{pq}}\FF(r) &= \FF(r)^{pq}, \\
 \dfrac{\partial^2}{\partial r_{pq} \partial r_{rs}}\FF(r) &= \FF(r)^{pq, rs}.
\end{align*}

\begin{lemma} \label{gwflowlapu} Given $u$ a solution to (\ref{unnormgwflow}),
one has
 \begin{align*}
  H \gD u =&\ \FF^{pq,rs} \N_i (A_u)_{pq} \N_i (A_u)_{rs}\\
  &\ + \gs_k(A_u)^{-1} \left<T_{k-1}(A_u)_{ij}, 2 \N_i \N_p u \N_j \N_p u -
\brs{\N^2 u}^2 g_{ij} + \OO(\brs{\N^2 u} + \brs{\N u}^2 + 1) \right>.
 \end{align*}
\begin{proof} We compute
\begin{align*}
\gD \log \gs_k(A_u) =&\ \N_i \left[ \FF^{pq}  \N_i (A_u)_{pq} \right] =  \FF^{pq,rs}
\N_i (A_u)_{pq} \N_i (A_u)_{rs} + \FF^{pq} \gD (A_u)_{pq}.
\end{align*}
Combining this with our prior calculation of $\gD A_u$ (\ref{lapA}) yields
\begin{align*}
 \dt \gD u =&\ \gD \log \gs_k(A_u) + 2k \Delta u  \\
 =&\ \FF^{pq,rs} \N_i (A_u)_{pq} \N_i (A_u)_{rs} + \gs_k(A_u)^{-1}
T_{k-1}(A_u)^{pq} (\gD A_u)_{pq} + 2k \Delta u \\
 =&\ \FF^{pq,rs} \N_i (A_u)_{pq} \N_i (A_u)_{rs} + \gs_k(A_u)^{-1}
 T_{k-1}(A_u)^{pq} \big\{  \N_p \N_q (\gD u) + \N_p \gD u \N_q u + \N_p u
\N_q \gD u \\
&\  + 2 \N_p \N_{\ell} u \N_q \N_{\ell} u - \brs{\N^2 u}^2 g_{pq} - \IP{\N u, \N \gD
u} g_{pq} + \OO(\brs{\N^2 u} + \brs{\N u}^2 + 1) \big\} + 2k \Delta u \\
=&\ L(\Delta u) + \FF^{pq,rs} \N_i (A_u)_{pq} \N_i (A_u)_{rs}  \\
& \ \ + \gs_k(A_u)^{-1} T_{k-1}(A_u)^{pq} \big\{ 2 \N_p \N_{\ell} u \N_q \N_{\ell} u - \brs{\N^2 u}^2 g_{pq} + \OO(\brs{\N^2 u} + \brs{\N u}^2 + 1) \big\},
\end{align*}
and the result follows.
\end{proof}
\end{lemma}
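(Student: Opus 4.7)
The plan is to compute the two pieces of $H \gD u = \partial_t (\gD u) - L(\gD u)$ separately and then combine. First, since the background metric $g$ is fixed, $\partial_t$ commutes with $\gD$, so the flow equation (\ref{unnormgwflow}) gives
\[
\partial_t (\gD u) \;=\; \gD(\partial_t u) \;=\; \gD \log \gs_k(A_u) + 2k\,\gD u.
\]
Next, writing $\FF(r) = \log \gs_k(r)$ so that $\FF^{pq}(A_u) = \gs_k(A_u)^{-1} T_{k-1}(A_u)^{pq}$, the chain rule yields $\N_i \log \gs_k(A_u) = \FF^{pq}\,\N_i (A_u)_{pq}$, and differentiating once more,
\[
\gD \log \gs_k(A_u) \;=\; \FF^{pq,rs}\,\N_i (A_u)_{pq}\,\N_i (A_u)_{rs} \;+\; \FF^{pq}\,\gD (A_u)_{pq}.
\]
The first term on the right is exactly the quadratic expression in $\N A_u$ appearing in the statement, so it passes through without modification.

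The main algebraic input is the earlier identity (\ref{lapA}), which expresses $\gD(A_u)_{pq}$ in terms of $\N_p \N_q \gD u$, the mixed terms $\N_p \gD u\,\N_q u + \N_p u\,\N_q \gD u$, the bilinear Hessian term $2\,\N_p \N_\ell u\,\N_q \N_\ell u$, the trace-type contributions $-\brs{\N^2 u}^2 g_{pq}$ and $-\IP{\N u, \N \gD u}\,g_{pq}$, and curvature/first-order remainders of size $\OO(\brs{\N^2 u} + \brs{\N u}^2 + 1)$. Substituting this into the previous display and contracting against $\FF^{pq}$, I would recognize the combination
\[
\gs_k(A_u)^{-1} T_{k-1}(A_u)^{pq}\Big\{ \N_p \N_q \gD u + \N_p \gD u\,\N_q u + \N_p u\,\N_q \gD u - \IP{\N u, \N \gD u}\,g_{pq}\Big\}
\]
as precisely $L(\gD u)$ by Definition \ref{sigmakmetric}. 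Moving this piece to the left-hand side of $H(\gD u)$ cancels the $-L(\gD u)$ term, leaving exactly the $T_{k-1}(A_u)$-contractions of $2\,\N_i\N_p u\,\N_j\N_p u - \brs{\N^2 u}^2 g_{ij}$ plus the $\OO(\brs{\N^2 u}+\brs{\N u}^2+1)$ remainder. The stray $2k\,\gD u$ from differentiating the flow is pointwise bounded by $C\brs{\N^2 u}$ and is absorbed into that remainder, producing the claimed identity.

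The only routine bookkeeping is the commutator $[\N_i,\N_p\N_q]u$ used to pass from $\N_i(A_u)_{pq}$ (whose definition involves $\N_i\N_p\N_q u$) to an expression involving $\N_p\N_q\N_i u$ plus curvature corrections, but this is already absorbed into (\ref{lapA}) and produces only $\OO$-terms of the permitted form; so no genuine obstacle arises here. The main thing to be careful about is not losing any derivative of $\gD u$ when matching the linearized operator $L$, which is what forces the precise cancellation of the \emph{all} cubic-in-derivative terms against $L(\gD u)$.
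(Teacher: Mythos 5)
Your argument is identical to the paper's: commute $\partial_t$ with $\gD$ (legitimate since $g$ is fixed), expand $\gD\log\gs_k(A_u) = \FF^{pq,rs}\N_i(A_u)_{pq}\N_i(A_u)_{rs} + \FF^{pq}\gD(A_u)_{pq}$, substitute (\ref{lapA}), and recognize that the terms $\N_p\N_q\gD u + \N_p\gD u\,\N_q u + \N_p u\,\N_q\gD u - \IP{\N u,\N\gD u}g_{pq}$ contracted against $\FF^{pq} = \gs_k(A_u)^{-1}T_{k-1}(A_u)^{pq}$ reconstitute exactly $L(\gD u)$. The one place you are more explicit than the paper is the stray $2k\gD u$ arising from the zeroth-order term in (\ref{unnormgwflow}): you flag it and claim it is absorbed into the $\OO$-remainder, whereas the paper silently drops it between its third and fourth displayed lines; note, though, that since that remainder sits inside the $T_{k-1}(A_u)$-weighted pairing, this absorption implicitly requires a lower bound on $\gs_k(A_u)^{-1}\tr T_{k-1}(A_u)$ --- a caveat present in the lemma's statement itself and not a defect introduced by your argument.
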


\subsection{Estimates}
In this section we specialize to the case $n=4$ and $k=2$, and use the evolutionary formulas from the
preceding subsection to derive some short-time smoothing estimates.

\begin{lemma} \label{gwFdecrease} Given $u$ a solution to (\ref{gwflow}), one
has
\begin{align*}
 \frac{d}{dt} F[u] \leq 0.
\end{align*}
\begin{proof} This is immediate from the flow equation (\ref{gwflow}) and the formula (\ref{Fdot}).
\end{proof}
\end{lemma}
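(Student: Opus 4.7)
The plan is to combine the variational identity (\ref{Fdot}) with the flow equation (\ref{gwflow}) and exploit pointwise monotonicity of the logarithm. Write the flow equation in the form $u_t = \log \gs_2(g_u^{-1} A_u) - C(t)$, where $C(t) = V_u^{-1} \int_M \log \gs_2(g_u^{-1} A_u) \, dV_g$ is a spatial constant depending only on $t$. This is the only feature of the normalization we will use.

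Substituting this into (\ref{Fdot}) yields
\begin{align*}
\frac{d}{dt} F[u] &= \int_M u_t \bigl[ -\gs_2(g_u^{-1} A_u) + \bar{\gs} \bigr] dV_u \\
&= \int_M \log \gs_2(g_u^{-1} A_u) \bigl[ -\gs_2(g_u^{-1} A_u) + \bar{\gs} \bigr] dV_u - C(t) \int_M \bigl[ -\gs_2(g_u^{-1} A_u) + \bar{\gs} \bigr] dV_u.
\end{align*}
The second integral vanishes by the definition of $\bar{\gs}$, so the constant $C(t)$ contributes nothing and we are left with
\begin{align*}
\frac{d}{dt} F[u] = - \int_M \log \gs_2(g_u^{-1} A_u) \bigl[ \gs_2(g_u^{-1} A_u) - \bar{\gs} \bigr] dV_u.
\end{align*}

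It remains to show that the integral on the right is nonnegative. Since $t \mapsto \log t$ is monotone increasing on $(0,\infty)$, for any $x,y > 0$ one has $(\log x - \log y)(x-y) \geq 0$. Applying this pointwise with $x = \gs_2(g_u^{-1} A_u) > 0$ (by admissibility) and $y = \bar{\gs} > 0$ and integrating against $dV_u$ gives
\begin{align*}
\int_M \log \gs_2(g_u^{-1} A_u) \bigl[ \gs_2(g_u^{-1} A_u) - \bar{\gs} \bigr] dV_u \geq \log \bar{\gs} \int_M \bigl[ \gs_2(g_u^{-1} A_u) - \bar{\gs} \bigr] dV_u = 0,
\end{align*}
so $\frac{d}{dt} F[u] \leq 0$. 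There is no real obstacle here: the argument is entirely algebraic once (\ref{Fdot}) is invoked, and the normalization constant in the flow is irrelevant precisely because the variational gradient $-\gs_2(g_u^{-1} A_u) + \bar{\gs}$ has mean zero in $L^2(dV_u)$.
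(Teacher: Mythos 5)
Your argument is correct and supplies exactly the computation the paper leaves as ``immediate'': substitute the flow equation into (\ref{Fdot}), drop the normalization constant because $-\gs_2(g_u^{-1}A_u)+\bar{\gs}$ has mean zero in $dV_u$, and conclude the sign from pointwise monotonicity of the logarithm. Equivalently (and slightly more compactly), one can use that same mean-zero property to subtract $\log\bar{\gs}$ inside the integrand and write
\begin{align*}
\frac{d}{dt}F[u] = -\int_M \big(\log\gs_2(g_u^{-1}A_u) - \log\bar{\gs}\big)\big(\gs_2(g_u^{-1}A_u)-\bar{\gs}\big)\,dV_u \leq 0,
\end{align*}
with pointwise nonnegative integrand, which is the same estimate you derive.
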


\begin{prop} \label{gwflowest10} Let u be a solution to (\ref{gwflow}) with initial value $u(\cdot,0) = u_0$, where $u_0$ is admissible. Then
there are constants $C_1 = C_1(g), \ge = \ge(\brs{u_0}_{C^0})$, such that $u$ exists for all $0 \leq t \leq \ge$, and
\begin{align*}
\brs{u}_{C^0} \leq C_1 ( 1 + \brs{u_0}_{C^0}).
\end{align*}
for all $0 \leq t \leq \ge$.

\begin{proof} At a maximum for $u$, one has $A_u \leq A$, and hence $\gs_2(A_u) \leq
\gs_2(A) < C$.  By (\ref{unnormgwflow}),
\begin{align*}
\frac{d}{dt} \max u \leq C + 4 \max u.
\end{align*}
Integrating this inequality we get an upper bound for $u$.  Applying a similar argument at a minimum of $u$, we
obtain a lower bound.
\end{proof}
\end{prop}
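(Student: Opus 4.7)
The plan is to prove the statement by applying the parabolic maximum principle to the unnormalized flow (\ref{unnormgwflow}). As the authors remark just before the proposition, the normalization term $V_u^{-1}\int_M \log\sigma_2(g_u^{-1}A_u)\,dV_g$ is a spatial constant and, on any time interval where $|u|_{C^0}$ is controlled, is itself controlled by a constant depending only on $g$ and $|u|_{C^0}$. Hence an upper/lower bound for the unnormalized flow immediately gives the analogous bound (up to an additive constant of that form) for the normalized flow, so it suffices to work with $\partial_t u = \log\sigma_2(A_u) + 4u$.

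Short-time existence for admissible initial data should come from standard parabolic theory: writing the equation as $\partial_t u = \log\sigma_2(A+\nabla^2 u + \nabla u\otimes\nabla u - \tfrac12|\nabla u|^2 g) + 4u$, the linearization at an admissible $u$ is a strictly elliptic second-order operator (the coefficient matrix being $\sigma_2(A_u)^{-1}T_1(A_u)$, positive definite because $A_u\in\Gamma_2^+$), and $\log\sigma_2$ is concave on $\Gamma_2^+$, so the Krylov--Safonov/Evans--Krylov machinery yields short-time $C^{2,\alpha}$ existence on some interval $[0,\varepsilon]$ whose length depends only on $|u_0|_{C^0}$ (and higher bounds that themselves depend only on $g$ and $|u_0|_{C^0}$ through the admissibility estimates).

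For the upper bound, let $M(t):=\max_{x\in M} u(x,t)$. At a spatial maximum $x_0$, one has $\nabla u(x_0,t)=0$ and $\nabla^2 u(x_0,t)\leq 0$, so by (\ref{Au}) we obtain the matrix inequality $A_u(x_0,t) \leq A(x_0)$. Lemma \ref{newtonmonotone} (applied with $k-1=1$ and its obvious analogue for $\sigma_2$, which follows from the same one-parameter-family argument) then gives $\sigma_2(A_u)(x_0,t) \leq \sigma_2(A)(x_0) \leq C(g)$. Consequently, by the standard weak-derivative form of the maximum principle,
\begin{align*}
\frac{d}{dt} M(t) \leq \log C(g) + 4 M(t),
\end{align*}
and Gr\"onwall's inequality gives $M(t) \leq e^{4t}M(0) + \tfrac14(e^{4t}-1)\log C(g)$, which is bounded by $C_1(1+|u_0|_{C^0})$ for $t\leq \varepsilon$.

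For the lower bound the argument is dual: at a spatial minimum $x_0$ of $u(\cdot,t)$, $\nabla u=0$ and $\nabla^2 u\geq 0$, so $A_u(x_0,t)\geq A(x_0)$. Since $A\in\Gamma_2^+$, there is a constant $\delta=\delta(g)>0$ with $\sigma_2(A)\geq\delta$, and the monotonicity of $\sigma_2$ on the cone gives $\sigma_2(A_u)(x_0,t)\geq\delta$. Letting $m(t):=\min_x u(x,t)$ we obtain $\tfrac{d}{dt}m(t) \geq \log\delta + 4m(t)$, and Gr\"onwall again yields the matching lower bound $m(t)\geq -C_1(1+|u_0|_{C^0})$. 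The only real obstacle is justifying that $u$ remains admissible on $[0,\varepsilon]$ so that the monotonicity inequalities (and the ellipticity needed for short-time existence) actually apply; this is handled by openness of $\Gamma_2^+$ together with continuity of $A_u$, which restricts $\varepsilon$ in terms of how close $A_{u_0}$ comes to $\partial\Gamma_2^+$, i.e.\ ultimately in terms of $|u_0|_{C^0}$ together with $g$.
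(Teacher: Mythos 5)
Your proposal is correct and follows essentially the same argument as the paper: evaluate the unnormalized flow at spatial extrema of $u$, use $\nabla^2 u\leq 0$ (resp.\ $\geq 0$) to compare $A_u$ with $A$ and hence bound $\log\sigma_2(A_u)$, and integrate the resulting ODE inequality for $\max u$ and $\min u$. The extra material you supply---the Gr\"onwall computation, the handling of the normalization constant, and the short-time existence/admissibility discussion---is detail the paper leaves implicit rather than a different route.
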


\begin{prop} \label{gwflowest40} Given $u$ as in the previous proposition,
there exists constants $C_1$ and $\ge$ depending on $\brs{u_0}_{C^0}$ such that for all $0 \leq t \leq \ge \leq 1$, one has
\begin{align*}
\brs{\N u}_{C^0} \leq C_1(\brs{u_0}, \brs{\N u_0}).
\end{align*}
\begin{proof} Let
\begin{align*}
\Phi = e^{-8t} \brs{\N u}^2 + \gL e^{- 2 u} - \mu t,
\end{align*}
where $\Lambda, \mu > 0$ will be specified later. Combining Corollary \ref{gwflowgraduCor} and Lemma \ref{gwHexpu}, and using the fact that at a maximum of $\Phi$ we have $H \Phi \geq 0$, it follows  
\begin{align*}\begin{split}
0 \leq H \Phi &= 2 \sigma_2(A_u)^{-1} T_{1}(A_u)^{pq}\big\{  - e^{-8t} \N_i \N_p u \N_i \N_q u + e^{-8t} \OO(1 + |\N u|^2) - \Lambda e^{-2u} \big[  \N_p u \N_q u + \frac{1}{2}|\N u|^2 g_{pq} \big] \big\}
 \\
 & \ \ - 2 \Lambda e^{-2u} \big[ \log \sigma_2(A_u) + 4u - 2 + \sigma_2(A_u)^{-1}\langle T_{1}(A_u),A\rangle \big]  - \mu \\
 &= I_1 + I_2 - \mu.
\end{split}
\end{align*}
We can estimate the terms in braces in $I_1$ by
\begin{align*}
- e^{-8t} \N_i \N_p u \N_i \N_q u + e^{-8t} \OO(1 + |\N u|^2) - \Lambda e^{-2u} \big[  \N_p u \N_q u + \frac{1}{2}|\N u|^2 g_{pq} \big] \leq \big\{ C + ( C  - \frac{\gL}{2} e^{-2u}) \brs{\N u}^2 \big\}g_{pq}.
\end{align*}
By Proposition \ref{gwflowest10}, for $0 \leq t \leq \ge \leq 1$ we have a uniform bound on $|u|$ depending only on the initial data, hence if $\Lambda >> 1$ is chosen large enough,
\begin{align*}
C + ( C  - \frac{\gL}{2} e^{-2u}) \brs{\N u}^2  \leq C -  \brs{\N u}^2.
\end{align*}
If $|\N u|$ remains uniformly bounded we have nothing to prove, so we may assume that at the maximum of $\Phi$ the gradient of $u$ is large, hence at a maximum of $\Phi$ we have
\begin{align*}
I_1 \leq 0.
\end{align*}
To estimate $I_2$, we first consider the case where $\sigma_2(A_u) \geq 1$.  Then $\log \sigma_2(A_u) \geq 0$ and the remaining terms
in brackets are either bounded or non-negative, hence
\begin{align} \label{mud} \begin{split}
I_2 - \mu &\leq C(\Lambda, \max |u|) - \mu \\
&\leq 0,
\end{split}
\end{align}
if $\mu$ is chosen large enough.  On the other hand, using Lemma \ref{convexityinequality} we see that
\begin{align*}
\sigma_2(A_u)^{-1} \langle T_1(A_u), A \rangle \geq \sigma_2(A_u)^{-1}\sigma_2(A_u)^{\frac{1}{2}}\sigma_2(A)^{\frac{1}{2}} = \frac{\sigma_2(A)^{\frac{1}{2}}}{\sigma_2(A_u)^{\frac{1}{2}}}.
\end{align*}
It follows there is a small constant $\delta = \delta(\sigma_2(A))$ such that if $0 < \sigma_2(A_u) \leq \delta$, then
\begin{align*}
\log \sigma_2(A_u) + \sigma_2(A_u)^{-1} \langle T_1 (A_u) , A \rangle \geq 0.
\end{align*}
Then arguing as we did in the case where $\sigma_2(A_u) \geq 1$, we can choose $\mu$ large enough to achieve (\ref{mud}) again.  Finally, in the intermediate range $\delta \leq \sigma_2(A_u) \leq 1$, all the terms in the brackets in $I_2$ are bounded are non-positive, and we again conclude that (\ref{mud}) holds once $\mu$ is chosen large enough.  It follows
that $H \Phi \leq 0$, and the result follows from the maximum principle.
\end{proof}
\end{prop}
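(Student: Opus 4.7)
The plan is to establish the gradient bound by a maximum principle argument applied to a carefully chosen auxiliary function. Since Proposition \ref{gwflowest10} already supplies a uniform $C^0$ bound on $u$ throughout $[0,\ge]$ depending only on $\brs{u_0}_{C^0}$, I may treat $|u|$ as bounded for the duration of the argument.

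I would try the auxiliary function
\[
\Phi = e^{-8t}\brs{\N u}^2 + \gL e^{-2u} - \mu t,
\]
where $\gL, \mu > 0$ are large constants to be chosen. Each summand plays a specific role. The weight $e^{-8t}=e^{-4kt}$ cancels the unwanted $4k\brs{\N u}^2$ drift identified in Lemma \ref{gwflowgradu}, so that by Corollary \ref{gwflowgraduCor}, $H(e^{-8t}\brs{\N u}^2)$ equals $2e^{-8t}\gs_2(A_u)^{-1}T_1(A_u)^{pq}\{-\N_i\N_p u\,\N_i\N_q u + \OO(\brs{\N u}^2+1)\}$, whose Hessian-squared part is non-positive. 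The term $\gL e^{-2u}$ is designed to generate a large negative coefficient of $\brs{\N u}^2$: applying Lemma \ref{gwHexpu} with $\gl=-2$, the bracket contains $\gs_2(A_u)^{-1}\IP{T_1(A_u),\N u\otimes\N u + \tfrac12\brs{\N u}^2 g}$, and since $T_1(A_u)>0$ this is bounded below by $\tfrac12\brs{\N u}^2\gs_2(A_u)^{-1}\tr T_1(A_u)$; the prefactor $-2\gL e^{-2u}$ then produces a large favorable contribution when $\gL$ is large. The final term $-\mu t$ is present to absorb bounded residuals.

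At an interior maximum of $\Phi$ we have $H\Phi\geq 0$. Combining the evolution formulas and choosing $\gL$ large depending on the $C^0$ bound on $u$, the net coefficient of $\brs{\N u}^2$ in $H\Phi$ becomes strictly negative, and then choosing $\mu$ large forces $H\Phi < 0$ whenever $\brs{\N u}$ at the maximum is large, giving a contradiction. Since $\Phi(x,0)$ is controlled by $\brs{\N u_0}$ and $\brs{u_0}_{C^0}$, the resulting bound on $\Phi$ yields the claimed estimate on $\brs{\N u}$.

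The main obstacle is the term $\log\gs_2(A_u)$ appearing in the bracket of $He^{-2u}$: with the prefactor $-2\gL e^{-2u}$, it becomes a large \emph{positive} contribution in $H\Phi$ precisely when $\gs_2(A_u)$ is small, and could potentially swamp the favorable gradient terms. I would handle this by splitting into three regimes. If $\gs_2(A_u)\geq 1$, then $\log\gs_2(A_u)\geq 0$ so its contribution is already favorable, and the remaining bracket terms are uniformly bounded and absorbed by $\mu$. In the intermediate range $\gd\leq\gs_2(A_u)\leq 1$ (with $\gd$ depending on background data), every term in the bracket is uniformly bounded and again absorbed by $\mu$. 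Finally, for $0<\gs_2(A_u)<\gd$, Lemma \ref{convexityinequality} gives $\gs_2(A_u)^{-1}\IP{T_1(A_u),A}\geq \gs_2(A)^{1/2}\gs_2(A_u)^{-1/2}$, which grows strictly faster than $|\log\gs_2(A_u)|$ as $\gs_2(A_u)\to 0$, so this favorable term dominates the troublesome logarithm. Combining these three cases yields $H\Phi<0$ at any interior maximum with $\brs{\N u}$ sufficiently large, completing the proof.
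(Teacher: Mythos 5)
Your proposal follows the paper's proof essentially verbatim: same auxiliary function $\Phi = e^{-8t}\brs{\N u}^2 + \gL e^{-2u} - \mu t$, same use of Corollary \ref{gwflowgraduCor} (to kill the $4k\brs{\N u}^2$ drift) together with Lemma \ref{gwHexpu} at $\gl = -2$, and the same three-regime split on $\gs_2(A_u)$ with Lemma \ref{convexityinequality} handling the $\gs_2(A_u)\to 0$ regime. The argument is correct and matches the paper's approach.
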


\begin{prop} \label{gwflowest30} Suppose $u$ is a solution to
(\ref{unnormgwflow}) with $n=4$ on
$[0,T], T \leq 1$, such that
\begin{align} \label{Nbig}
\sup_{M \times [0,T]} \brs{u} \leq N.
\end{align}
There exists a constant $C = C(\gL)$ such that for all $t \in [0,T]$, one has
\begin{align*}
t \brs{\log \gs_2(A_u)} \leq C.
\end{align*}
\begin{proof} First, note that
\begin{align*}
\frac{\partial}{\partial t}\log \sigma_2(A_u) &= \sigma_2(A_u)^{-1} \langle T_1(A_u), \frac{\partial}{\partial t} A_u \rangle \\
&= \sigma_2(A_u)^{-1} \Big \langle T_1(A_u), \nabla^2 \log \sigma_2(A_u) + \N u \otimes \N \log \sigma_2(A_u) + \N \log \sigma_2(A_u) \otimes \N u \\
& \ \ \ - \langle \N u, \N \log \sigma_2(A_u) \rangle g + 4 \N^2 u + 8 \N u \otimes \N u - 4|\N u|^2 g \Big\rangle \\
&= L (\log \sigma_2(A_u)) + 4 \sigma_2(A_u)^{-1} \big \langle T_1(A_u), \N^2 u + 2 \N u \otimes \N u - |\N u|^2 g \big\rangle  \\
&=  L (\log \sigma_2(A_u)) + 4 \sigma_2(A_u)^{-1} \big \langle T_1(A_u), A_u - A + \N u \otimes \N u - \frac{1}{2} |\N u|^2 g \big\rangle  \\
&=  L (\log \sigma_2(A_u)) + 8 + 4 \sigma_2(A_u)^{-1} \big \langle T_1(A_u), - A + \N u \otimes \N u - \frac{1}{2} |\N u|^2 g \big\rangle,  \\
\end{align*}
hence
\begin{align} \label{HLog}
H (\log \sigma_2(A_u)) =  8 + 4 \sigma_2(A_u)^{-1} \big \langle T_1(A_u), - A + \N u \otimes \N u - \frac{1}{2} |\N u|^2 g \big\rangle.
\end{align}
Set
\begin{align*}
 \Phi :=  t \log \gs_2(A_u) + \Lambda e^{- 2 u} - \mu t.
\end{align*}
We will show that by choosing $\Lambda, \mu > > 1$ sufficiently large (depending on $N$), $H \Phi \leq 0$.  This will give an upper bound on $\Phi$ depending only
 on the initial $C^0$-norm of $u$.

 To begin, we combine (\ref{HLog}) with \ref{gwHexpu} to get
\begin{align} \label{Log1} \begin{split}
H \Phi &= - \mu + 8 t + 4 \Lambda ( 1 - 2u) e^{-2u} + (1 - 2 \Lambda e^{-2u}) \log \sigma_2(A_u) \\
& \ \ + \sigma_2(A_u)^{-1} \big \langle T_1(A_u), - ( 4t + 2 \Lambda e^{-2u}) A
+ ( 4t - 2 \Lambda e^{-2u}) \N u \otimes \N u - (2t +  \Lambda e^{-2u}) |\N u|^2 g \big\rangle.
\end{split}
\end{align}
By choosing $\Lambda$ large enough (depending on the constant $N$ in (\ref{Nbig})) we may assume the coefficient of the log-term
 \begin{align} \label{Lmin1}
 1 - 2 \Lambda e^{-2u} \leq -1.
 \end{align}
For $t$ small (depending on $N$ and $\Lambda$) the coefficients of the gradient terms in (\ref{Log1}) are also non-positive, so we have
\begin{align} \label{Log1a} \begin{split}
H \Phi &\leq - \mu + 8 t + 4 \Lambda ( 1 - 2u) e^{-2u} + (1 - 2 \Lambda e^{-2u}) \log \sigma_2(A_u)  \\
& \ \ \ - ( 4t + 2 \Lambda  e^{-2u})\sigma_2(A_u)^{-1} \big \langle T_1(A_u),  A \big\rangle.
\end{split}
\end{align}
If $\mu >> 1$ is chosen large enough, the first three terms
on the RHS of (\ref{Log1}) can be bounded above by $-\mu/2$, and we conclude
\begin{align} \label{Log2}
H \Phi \leq - \mu/2 + (1 - 2 \Lambda e^{-2u}) \log \sigma_2(A_u) - ( 4t + 2 \Lambda  e^{-2u})\sigma_2(A_u)^{-1} \big \langle T_1(A_u),  A \big\rangle.
\end{align}
By Lemma \ref{convexityinequality} we have
\begin{align*}
 \gs_2(A_u)^{-1} \IP{T_1(A_u),A} \geq \gs_2(A_u)^{-1} \left[ 4
\gs_2(A_u)^{\frac{1}{2}} \gs_2(A)^{\frac{1}{2}} \right] \geq \gd
\gs_2(A_u)^{-\frac{1}{2}} > 0,
\end{align*}
hence
\begin{align} \label{CVLog}
- ( 4t + 2 \Lambda  e^{-2u})\sigma_2(A_u)^{-1} \big \langle T_1(A_u),  A \big\rangle \leq -C_1 \gs_2(A_u)^{-\frac{1}{2}}.
\end{align}

If $\sigma_2(A_u) \geq 1$, it follows from (\ref{Lmin1}), (\ref{Log2}), and (\ref{CVLog}) that $H \Phi \leq 0$. On the other hand, if $\sigma_2(A_u) < 1$, then
\begin{align*}
H \Phi \leq - \mu/2 - \log \sigma_2(A_u) - C_1 \gs_2(A_u)^{-\frac{1}{2}},
\end{align*}
and by choosing $\mu >> 1$ large enough (depending only on $C_1$) once again we have $H \Phi \leq 0$.

To obtain a lower bound for $\log \sigma_2(A_u)$, we consider
\begin{align*}
\tilde{\Phi} :=  - t \log \gs_2(A_u) + \Lambda e^{- 2 u} - \mu t,
\end{align*}
and apply a similar argument.  We will omit the details.
\end{proof}
\end{prop}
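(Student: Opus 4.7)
\textbf{Proof plan for Proposition \ref{gwflowest30}.} The strategy is a parabolic maximum principle argument applied to an auxiliary function of the form
\[
\Phi = t \log \sigma_2(A_u) + \Lambda e^{-2u} - \mu t,
\]
with constants $\Lambda, \mu \gg 1$ chosen large in terms of the $C^0$ bound $N$. I will first compute $H(\log \sigma_2(A_u))$ explicitly, then derive $H\Phi$, and finally show that $H \Phi \leq 0$ at any positive maximum. Since $\Phi(\cdot,0) = \Lambda e^{-2u_0}$ is bounded, this yields the upper bound $t \log \sigma_2(A_u) \leq C(N)$. The lower bound follows from the analogous argument applied to $\widetilde{\Phi} = -t \log \sigma_2(A_u) + \Lambda e^{-2u} - \mu t$.

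The first key step is to compute $H\log \sigma_2(A_u)$. Differentiating $A_u = A + \nabla^2 u + \nabla u \otimes \nabla u - \tfrac{1}{2}|\nabla u|^2 g$ in $t$ and using the definition of $L$, one checks directly that
\[
\partial_t \log \sigma_2(A_u) = \sigma_2(A_u)^{-1} \langle T_1(A_u), \partial_t A_u \rangle = L(\partial_t u) = L(\log \sigma_2(A_u) + 4u),
\]
so $H \log \sigma_2(A_u) = 4 L u$. A short computation using $\nabla^2 u = A_u - A - \nabla u \otimes \nabla u + \tfrac{1}{2}|\nabla u|^2 g$ and $\langle T_1(A_u), A_u\rangle = 2\sigma_2(A_u)$ gives
\[
H(\log \sigma_2(A_u)) = 8 + 4 \sigma_2(A_u)^{-1}\langle T_1(A_u), -A + \nabla u \otimes \nabla u - \tfrac{1}{2}|\nabla u|^2 g\rangle.
\]
Combining this with Lemma \ref{gwHexpu} (applied with $\lambda = -2$) produces
\[
H\Phi = -\mu + 8t + 4\Lambda(1-2u)e^{-2u} + (1 - 2\Lambda e^{-2u}) \log \sigma_2(A_u) + \sigma_2(A_u)^{-1}\langle T_1(A_u), Q_{t,\Lambda}\rangle,
\]
where $Q_{t,\Lambda} = -(4t + 2\Lambda e^{-2u})A + (4t - 2\Lambda e^{-2u}) \nabla u \otimes \nabla u - (2t + \Lambda e^{-2u}) |\nabla u|^2 g$.

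Using the a priori bound $|u| \leq N$, choose $\Lambda$ large so that $1 - 2\Lambda e^{-2u} \leq -1$, which makes the log-coefficient non-positive. For $t \leq \epsilon(\Lambda, N)$ small, the two gradient coefficients in $Q_{t,\Lambda}$ are also non-positive (since the $\Lambda e^{-2u}$ pieces dominate the $t$ pieces), so the gradient contributions in $Q_{t,\Lambda}$ give a non-positive contribution to $\langle T_1(A_u), Q_{t,\Lambda}\rangle$ because $T_1(A_u) > 0$ in $\Gamma_2^+$. This reduces the task to controlling
\[
H\Phi \leq -\mu/2 + (1 - 2\Lambda e^{-2u})\log\sigma_2(A_u) - (4t + 2\Lambda e^{-2u})\sigma_2(A_u)^{-1} \langle T_1(A_u), A\rangle,
\]
once $\mu$ dominates the bounded pieces $8t + 4\Lambda(1-2u)e^{-2u}$.

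The main obstacle, and the punch line, is handling $\log \sigma_2(A_u)$ in both the large and small regimes. The crucial input is Lemma \ref{convexityinequality}, applied to $(A_u, A)$, which yields $\sigma_2(A_u)^{-1}\langle T_1(A_u), A\rangle \geq 2 \sigma_2(A)^{1/2} \sigma_2(A_u)^{-1/2} \geq \delta \sigma_2(A_u)^{-1/2}$ for a fixed $\delta > 0$ depending on the background data. When $\sigma_2(A_u) \geq 1$ the log-term is non-positive (since its coefficient is $\leq -1$) and everything else is bounded, so $H\Phi \leq 0$ once $\mu$ is large. When $0 < \sigma_2(A_u) < 1$ one has
\[
H\Phi \leq -\mu/2 - \log \sigma_2(A_u) - C_1 \sigma_2(A_u)^{-1/2},
\]
and the function $x \mapsto -\log x - C_1 x^{-1/2}$ is bounded above on $(0,1)$, so choosing $\mu$ large relative to this sup gives $H\Phi \leq 0$. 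By the parabolic maximum principle, $\Phi$ attains its supremum on $\{t=0\}$, giving the claimed upper bound on $t\log\sigma_2(A_u)$. The lower bound is obtained by the same argument with $\widetilde\Phi = -t\log\sigma_2(A_u) + \Lambda e^{-2u} - \mu t$, where the only change is that the log term in $H\widetilde \Phi$ flips sign but is now easily absorbed by the $-C_1\sigma_2(A_u)^{-1/2}$ term when $\sigma_2(A_u)$ is small and by $-\mu/2$ when $\sigma_2(A_u)$ is bounded away from zero.
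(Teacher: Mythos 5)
Your proposal is correct and follows essentially the same route as the paper: the same auxiliary function $\Phi = t\log\sigma_2(A_u) + \Lambda e^{-2u} - \mu t$, the same reduction via Lemma \ref{gwHexpu}, the same sign analysis of the coefficients, and the same use of Lemma \ref{convexityinequality} to absorb the log in the small-$\sigma_2$ regime. (The only cosmetic difference is that you route the computation of $H\log\sigma_2(A_u)$ through the identity $\partial_t\log\sigma_2(A_u) = L(u_t)$, whereas the paper expands directly; both yield (\ref{HLog}).)
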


\begin{prop} \label{gwflowest20} Suppose $u$ is a solution to
(\ref{unnormgwflow}) with $n=4$ on
$[0,T], T \leq 1$, such that
\begin{align*}
\sup_{M \times [0,T]} \left\{ \brs{\N u}^2 + \brs{u} \right\} \leq A.
\end{align*}
There exists a constant $C = C(A)$ such that for all $t \in [0,T]$, one has
\begin{align*}
t \gD u \leq C.
\end{align*}
\begin{proof} Let
\begin{align*}
\Phi = t \gD u + \brs{\N u}^2,
\end{align*}
where $\Lambda >> 1$ will be chosen later.
A direct calculation using Lemmas \ref{gwflowgradu} and \ref{gwflowlapu} and
some elementary estimates yields
\begin{align} \label{Hdel} \begin{split}
H \Phi =&\ \gD u + t \FF^{pq,rs} \N_i (A_u)_{pq} \N_i (A_u)_{rs}\\
  &\ + \gs_2(A_u)^{-1} T_1(A_u)^{pq} \big\{ 2 \left(t - 1 \right) \N_i
\N_p u \N_i \N_q u - t \brs{\N^2 u}^2 g_{pq} + \OO( t \brs{\N^2 u} +\brs{\N u}^2
+ 1)\}.
\end{split}
\end{align}
If $\Phi$ attains a large space-time maximum, say $\Phi \geq B \geq 2A$, then
\begin{align*}
t \Delta u \geq B - A \geq \frac{1}{2} B,
\end{align*}
hence
\begin{align*}
t |\N^2 u|^2 \geq \frac{B^2}{16t}.
\end{align*}
Therefore, if $t \leq 1$, the terms in braces in (\ref{Hdel}) can be estimated as
\begin{align*}
2 \left(t - 1 \right) \N_i
\N_p u \N_i \N_q u - t \brs{\N^2 u}^2 g_{pq} + & \OO( t \brs{\N^2 u} +\brs{\N u}^2
+ 1) \leq  \big\{ - t \brs{\N^2 u}^2  + C t |\N^2 u| + C(A) \big\} g _{pq}  \\
&\leq  \big\{ - \frac{t}{2} \brs{\N^2 u}^2  +  C' \big\} g _{pq}   \\
&\leq \big\{  - \frac{B^2}{32t} + C' \big\} g_{pg} \\
&\leq 0,
\end{align*}
if $B$ is large enough.  Thus we conclude $H \Phi < 0$ at a sufficiently large maximum, proving the
result.
\end{proof}
\end{prop}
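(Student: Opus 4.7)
The plan is to apply the parabolic maximum principle to the auxiliary function
\[
 \Phi = t\,\Delta u + |\N u|^2,
\]
where the factor of $t$ multiplying $\Delta u$ reflects the expected short-time smoothing: we have no a priori control on $\Delta u$ at $t=0$, but the $t$-weight forces $\Phi$ to be bounded at the initial time, after which a suitable differential inequality will propagate the bound.

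First I would combine Lemma \ref{gwflowgradu} and Lemma \ref{gwflowlapu} to compute $H\Phi$. The central algebraic observation is a partial cancellation between quadratic $\N^2 u$ terms: $t H(\Delta u)$ contributes $+2t\sigma_2(A_u)^{-1}T_1(A_u)^{pq}\N_i\N_p u\,\N_i\N_q u$, while $H|\N u|^2$ contributes $-2\sigma_2(A_u)^{-1}T_1(A_u)^{pq}\N_i\N_p u\,\N_i\N_q u$. Summing gives a net coefficient $2(t-1)\leq 0$ on this (nonnegative) quantity, so the combined expression is non-positive for $t\leq 1$. In addition, concavity of $\log\sigma_2$ on $\Gamma_2^+$ makes $\FF^{pq,rs}\N_i(A_u)_{pq}\N_i(A_u)_{rs}\leq 0$, and we retain the strongly negative term
\[
 -t\,\sigma_2(A_u)^{-1}\,\tr T_1(A_u)\,|\N^2 u|^2.
\]
The remaining contributions to $H\Phi$ are of the form $\Delta u$, $\sigma_2(A_u)^{-1}T_1(A_u)^{pq}\,\OO(t|\N^2 u|+|\N u|^2+1)$, and a bounded term $8|\N u|^2$, all of which are controlled by the hypotheses $|u|+|\N u|\leq A$ and by the admissibility of $u$.

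Next I would run the maximum principle. Suppose $\Phi$ attains an interior space-time maximum with value $B$. At such a point $H\Phi\geq 0$ (since $\partial_t\Phi\geq 0$ and $L\Phi\leq 0$). From $|\N u|^2\leq A$ we get $t\Delta u\geq B-A$, and Cauchy--Schwarz yields $|\N^2 u|^2\geq \tfrac{1}{4}(\Delta u)^2$, so
\[
 t|\N^2 u|^2 \geq \tfrac{1}{4}\,t(\Delta u)^2 = \tfrac{1}{4t}(t\Delta u)^2 \geq \tfrac{(B-A)^2}{4t}.
\]
Using admissibility of $A_u$ together with the $C^0$ bound on $u$ to get a positive lower bound on $\sigma_2(A_u)^{-1}\tr T_1(A_u)$ (which in the worst case we combine with the bound $u_{tt}^{2-k}\tr T_{k-1}\geq C f^{(k-1)/k}u_{tt}^{1/k}$-style Newton--MacLaurin reasoning, though here only the $k=2$ version is needed), this negative quadratic term dominates the lower-order terms $O(t|\N^2 u|+|\N u|^2+1)\cdot\sigma_2^{-1}\tr T_1$ and the linear term $\Delta u$ once $B$ is chosen sufficiently large depending only on $A$. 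Thus $H\Phi<0$ at such a maximum, a contradiction.

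The main obstacle, and the reason the dimension assumption $n=4$ enters critically, is controlling the factor $\sigma_2(A_u)^{-1}$ in front of the ``bad'' lower-order $\OO$-terms: since $\sigma_2(A_u)$ might degenerate, one must verify that the negative term $-t|\N^2 u|^2\,\sigma_2^{-1}\tr T_1$ grows fast enough in $\Delta u$ to dominate. This works because the negative term is quadratic in $|\N^2 u|$ weighted by $t\cdot\sigma_2^{-1}\tr T_1$, while the bad terms are at most linear in $|\N^2 u|$ weighted by $\sigma_2^{-1}\tr T_1$, so an absorbing inequality $\tfrac12 at^2|\N^2 u|^2\geq C at|\N^2 u|+C'a$ (for $a=\sigma_2^{-1}\tr T_1$) works for $|\N^2 u|$ large. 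Since $\Phi(\cdot,0)=|\N u_0|^2\leq A$, the maximum principle then gives $\Phi\leq C(A)$ throughout $M\times[0,T]$, and hence $t\Delta u\leq C(A)$ as required.
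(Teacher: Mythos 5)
Your proof follows the paper's route step for step: the identical barrier $\Phi = t\gD u + \brs{\N u}^2$, the same combination of Lemmas \ref{gwflowgradu} and \ref{gwflowlapu} producing the nonpositive coefficient $2(t-1)$ on the squared-Hessian term, the same use of concavity of $\log\gs_2$, the same Cauchy--Schwarz lower bound $t\brs{\N^2 u}^2 \geq B^2/(16t)$, and the same maximum-principle contradiction at a large interior space-time maximum. So this is essentially the paper's argument.

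One caution, applicable equally to your sketch and to the paper's displayed chain of inequalities: once the matrix $2(t-1)\N_i\N_p u\N_j\N_q u$ is thrown away as merely ``nonpositive,'' the surviving negative contribution to $H\Phi$ is $-t\,\gs_2(A_u)^{-1}\tr T_1(A_u)\,\brs{\N^2 u}^2$, and the weight $\gs_2(A_u)^{-1}\tr T_1(A_u)$ can degenerate like $1/\gD u$ when $A_u$ is nearly a multiple of $g$; in that regime the negative term is only of size $O(t\gD u)$, which does not dominate the un-weighted $+\gD u$ coming from $\del_t(t\gD u)$ when $t$ is small. Your proposed absorbing inequality $\tfrac12 a t^2\brs{\N^2 u}^2 \geq C a t\brs{\N^2 u} + C' a$ (with $a = \gs_2^{-1}\tr T_1$) handles the $a$-weighted error terms but not this un-weighted $\gD u$. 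The resolution is to retain, rather than discard, the contraction $2(t-1)\gs_2^{-1}\langle T_1, \N_i\N_p u\N_j\N_p u\rangle$, which already at $t=0$ contributes a quantity of order $-\gD u$ (indeed, in the isotropic model $A_u \approx \mu g$ it exactly cancels the $+\gD u$), after which the $-t\gs_2^{-1}\tr T_1\brs{\N^2 u}^2$ piece supplies the strict negativity.
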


\begin{thm} \label{gwsmoothing} Let $(M^4, g)$ be a compact Riemannian manifold
such that $g \in \gG_2^+$.  Given $u_0 \in \gG_2^+$ there exists $\ge =
\ge(\brs{u_0}, \brs{\N u_0})$ and $C = C(\brs{u_0}, \brs{\N u_0})$ such that the
solution to (\ref{gwflow}) with initial condition $u_0$ exists on $[0,\ge]$ and
moreover satisfies
\begin{align} \label{gwsmoothing10}
-C \leq \gD u \leq \frac{C}{t}, \qquad -C \leq t \log \gs_2(A_u) \leq C.
\end{align}
Furthermore, choosing $l \in \mathbb N, 0 < \ga < 1$ there exists $C_2 =
C(\brs{u_0},\brs{\N u_0},l,\ga)$ such that
\begin{align*}
\brs{u_{\ge}}_{C^{l,\ga}} \leq C.
\end{align*}
\begin{proof} The equation (\ref{unnormgwflow}) is strictly parabolic for $u_0
\in \gG_2^+$, and so there exists a solution on some small time interval
$[0,\eta)$.  By Propositions \ref{gwflowest10} and \ref{gwflowest40}, as long as
the solution exists there is a uniform upper bound on $\brs{u}_{C^1}$ on
$[0,\ge]$ where $\ge$ depends only on $\brs{u_0}_{C^1}$.  The estimates of
(\ref{gwsmoothing10}) follow from Propositions \ref{gwflowest20} and
\ref{gwflowest30}.  Given these it follows that equation (\ref{unnormgwflow}) is
uniformly parabolic on $[0,\ge]$, and hence by the Evans-Krylov estimates
\cite{Evans,Krylov} there is a uniform $C^{2,\ga}$ estimate for $u$ on
$[0,\ge]$.  Schauder
estimates now imply that for any $l,\ga$ there are uniform $C^{l,\ga}$
bounds on $u$ on $[0,\ge]$, which in particular proves that the solution
actually exists for this whole time interval as well. Given these estimates, one
relates the solution to (\ref{unnormgwflow}) to the solution to (\ref{gwflow})
by adding a time dependent constant to $u$ which fixed the volume to be
$V_{u_0}$.  Since $u$ is a priori bounded and this has no effect on any of the
derivative estimates the result follows.
\end{proof}
\end{thm}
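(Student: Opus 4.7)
The plan is to deduce Theorem \ref{gwsmoothing} by assembling the a priori estimates established in Propositions \ref{gwflowest10}, \ref{gwflowest40}, \ref{gwflowest30}, and \ref{gwflowest20}, then bootstrapping to higher regularity via Evans--Krylov and Schauder. I would work with the unnormalized flow (\ref{unnormgwflow}) throughout, since the only difference between (\ref{unnormgwflow}) and (\ref{gwflow}) is a spatially constant additive function of $t$, which has no effect on $\N u$, $\N^2 u$, or $\gs_2(A_u)$.

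First I would establish short-time existence. Since $u_0 \in \gG_2^+$, the tensor $T_1(A_{u_0})$ is positive definite, so the linearization of (\ref{unnormgwflow}) is a strictly parabolic operator in a neighborhood of $u_0$. Standard parabolic theory yields a smooth solution on some maximal interval $[0, T^*)$. The task is then to show $T^* \geq \ge$ for some $\ge$ depending only on $\|u_0\|_{C^1}$, and to derive the stated estimates on this interval.

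Next I would chain the preliminary estimates. Proposition \ref{gwflowest10} gives a bound on $\|u\|_{C^0}$ in terms of $\|u_0\|_{C^0}$ on an interval of length depending only on $\|u_0\|_{C^0}$. Given this $C^0$ control, Proposition \ref{gwflowest40} yields $\|\N u\|_{C^0} \leq C(\|u_0\|_{C^1})$ on a possibly smaller interval $[0,\ge]$. With $|u|$ and $|\N u|$ now controlled, the hypotheses of Propositions \ref{gwflowest30} and \ref{gwflowest20} are satisfied, and these furnish the two-sided bounds
\begin{align*}
-C \leq t \log \gs_2(A_u) \leq C, \qquad t \gD u \leq C
\end{align*}
on $[0,\ge]$. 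The lower bound $\gD u \geq -C$ is automatic from admissibility: since $A_u \in \gG_2^+ \subset \gG_1^+$, we have $\tr A_u > 0$, so (\ref{Au}) combined with the definition of the Schouten tensor gives a one-sided bound $\gD u \geq -C(\|\N u\|, g)$.

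Finally I would bootstrap. The bounds on $\log \gs_2(A_u)$ mean that $\gs_2(A_u)$ is bounded strictly away from $0$ and $\infty$ at any fixed positive time $t = \ge/2$, say, so the solution is uniformly elliptic/parabolic there. Rewriting (\ref{unnormgwflow}) in the form $u_t = \log \gs_2(A_u) + 2ku$ and noting that $\gs_2^{1/2}$ is a concave function of the entries of $A_u$, the equation is concave in $\N^2 u$. The Evans--Krylov theorem \cite{Evans, Krylov} applied on $[\ge/2, \ge]$ then produces a uniform $C^{2,\ga}$ estimate depending on the bounds already obtained. Schauder theory upgrades this to $C^{l,\ga}$ bounds for all $l$, which in particular implies $T^* > \ge$ so that the solution indeed exists on the full interval. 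Converting back to the normalized flow (\ref{gwflow}) adds only a bounded constant to $u$ (controlled by the estimates just derived), completing the proof.

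The main obstacle I anticipate is purely organizational rather than conceptual: all substantive a priori estimates have already been proven, so the crucial points are (a) confirming that the regularity furnished by Propositions \ref{gwflowest30}--\ref{gwflowest20} is precisely what is needed to invoke Evans--Krylov (in particular, that the degenerate behavior at $t = 0$ does not obstruct the parabolic Evans--Krylov argument applied on any $[\gd,\ge]$ with $\gd > 0$), and (b) verifying that the passage from (\ref{unnormgwflow}) back to (\ref{gwflow}) genuinely preserves all estimates, which it does since the normalizing term $V_u^{-1} \int \log \gs_2 \, dV_g$ is a bounded function of $t$ alone.
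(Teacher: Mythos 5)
Your proposal is correct and follows essentially the same route as the paper: chain Propositions \ref{gwflowest10}, \ref{gwflowest40}, \ref{gwflowest30}, \ref{gwflowest20} to obtain the degenerate second-order and log-$\sigma_2$ bounds, then invoke Evans--Krylov on an interior time interval and Schauder to conclude. You actually make two small points explicit that the paper's proof elides: the lower bound $\Delta u \geq -C$ (which follows from $\tr A_u > 0$, i.e.\ admissibility, since $\tr A_u = \tr A + \Delta u - |\nabla u|^2$ in dimension four), and the fact that the Evans--Krylov argument is necessarily an interior-in-time estimate applied on $[\delta,\epsilon]$ because the bounds from Propositions \ref{gwflowest20} and \ref{gwflowest30} degenerate as $t \to 0$ (consistent with the theorem's conclusion being stated only for $u_\epsilon$ rather than uniformly down to $t=0$). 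Both observations are correct and useful clarifications.
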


\section{\texorpdfstring{Uniqueness of solutions to $\gs_2$-Yamabe
problem}{Uniqueness of solutions to sigma-2-Yamabe problem}} \label{uniquesec}

	In this section we combine the previous results to establish Theorem
\ref{unique}.  As described in the introduction, the proof consists of a few
main steps.  In particular, we use Theorem \ref{approxgeodthm} to connect any
two critical points for $F$ by an $\ge$-geodesic.  Applying the geodesic
convexity of $F$ we obtain that the curve must consist of near-minimizers for
$F$.  We then smooth this approximate geodesic via Theorem \ref{gwsmoothing}.
Taking the limit as $\ge \to 0$ of these smoothed paths yields a nontrivial
one-parameter family of minimizers of $F$.  Using our knowledge of the geodesic
convexity of $F$ we can show that this can only happen if the background
conformal class is $[g_{S^4}]$, and the endpoints of the path are round metrics.
 Note that, unlike the K\"ahler setting, we are unable to show that the
approximate geodesics converge directly to a nontrivial smooth geodesic due to
the lack of stronger regularity results for the geodesics.

 \begin{lemma} \label{uniquelemma1} Given $u_0,u_1$ two admissible critical points of $F$,
one has $F[u_0] = F[u_1]$, and $F[u] \geq F[u_0]$ for all admissible $u$.
Moreover, given $f$ and $u = u(x,t,s,\ge)$ the
approximate geodesics given by Theorem \ref{approxgeodthm}, one has for any $t
\in [0,1]$,
\begin{align*}
\lim_{s,\ge \to 0} F[u(\cdot,t,s,\ge)] = F[u_0].
\end{align*}
 \begin{proof} Fix $f$, and let $u = u(x,t,s,\ge)$ be the approximate geodesics guaranteed by Theorem \ref{approxgeodthm}, connecting $u_0$ and $u_1$.  To begin
we repeat the
calculation of
Proposition \ref{CYgeodconv} for these paths.  Fix some $s,\ge$ and
compute:
\begin{gather*}
\begin{split}
 \frac{d^2}{dt^2} F[u] =&\ \frac{d}{dt} \int_M u_t \left[ - \gs_2(g_u^{-1} A_u) +
\bar{\gs} \right] dV_u\\
 =&\ - \int_M \left[ u_{tt} \gs_2(g_u^{-1} A_u) + u_t \IP{T_1(g_u^{-1} A_u), \N^2 u_t} \right]
dV_u\\
 &\ + \gs \int_M \left[ u_{tt} V_u^{-1} + V_u^{-2} u_t \left( \int_M 4 u_t dV_u
\right) - 4 V_u^{-1} u_t^2 \right] dV_u\\
=&\ \int_M \left[ \ge u_{tt} \gs_2(g_u^{-1} A_u) - s f \right] dV_u + \gs V_u^{-1} \int_M
\left[ \frac{1}{\gs_2(g_u^{-1} A_u)} s f - \ge u_{tt} \right] dV_u\\
&\ + \gs V_u^{-1} \int_M \left[ \frac{1}{\gs_2(A)} \IP{T_1(g_u^{-1} A_u), \N u_t
\otimes \N u_t} - 4 \left( \int_M u_t^2 dV_u - V_u^{-1} \left( \int_M u_t dV_u
\right)^2 \right) \right] dV_u.
\end{split}
\end{gather*}
Applying Corollary \ref{n4andrews} to the above equation yields
\begin{align} \label{uniquelemma120}
 \frac{d^2}{dt^2} F \geq&\ - \int_M s f dV_u - \gs V_u^{-1} \ge \int_M u_{tt}.
\end{align}
Now let us estimate using the uniform $C^1$ estimate
\begin{align*}
\int_0^1 \int_M u_{tt} dV_u =&\ \int_0^1 \left[ \frac{\del}{\del t} \int_M u_t
dV_u - \int_M 4 u_t^2 \right]dt\\
=&\ \left.\int_M u_t dV_u \right|_{t=0}^{t=1} - \int_0^1 \int_M 4 u_t^2 dV_u dt\\
\leq&\ C.
\end{align*}
Hence, integrating the inequality (\ref{uniquelemma120}) and using that $u_0$ is a critical point yields
\begin{align*}
\frac{d}{dt} F[u] (t) =&\ \frac{d}{dt} F[u] (t) - \frac{d}{dt} F[u] (0) = \int_0^t \frac{d^2}{dt^2} F dt \geq - C (s + \ge).
\end{align*}
Integrating this in time and sending $s,\ge \to 0$ yields
\begin{align*}
 F[u_1] \geq F[u_0].
\end{align*}
But since the roles of $u_0$ and $u_1$ are interchangeable, we obtain $F[u_0] =
F[u_1]$.
\end{proof}
\end{lemma}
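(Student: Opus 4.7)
The plan is to connect $u_0$ to $u_1$ by the approximate geodesic $u = u(x,t,s,\epsilon)$ of Theorem \ref{approxgeodthm} and to perform, along this path, the same second-derivative computation that established Proposition \ref{CYgeodconv}, but keeping track of the error terms produced by the fact that $u$ solves $\Phi_\epsilon(u) = sf$ rather than the true geodesic equation. Differentiating (\ref{Fdot}) and integrating by parts against $T_1(g_u^{-1}A_u)$ (using Lemma \ref{divT}), the only deviations from the pure geodesic identity are an ``$sf$'' contribution and an ``$\epsilon u_{tt}$'' contribution. Applying Corollary \ref{n4andrews} to the resulting Poincar\'e-type term, I expect to arrive at the pointwise-in-$t$ bound
\begin{align*}
\frac{d^2}{dt^2} F[u(\cdot,t,s,\epsilon)] \ \geq \ -\int_M s f \,dV_u \ - \ \bar{\sigma}\, \epsilon \int_M u_{tt}\, dV_u.
\end{align*}

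Next I would promote this differential inequality to an integral one. The key observation is that, although $u_{tt}$ is only bounded by $C\epsilon^{-1}$ pointwise (Proposition \ref{uttest}), its space-time $L^1$-average is controlled uniformly: writing $\int_0^1\!\int_M u_{tt}\,dV_u\,dt = [\int_M u_t dV_u]_0^1 - 4\int_0^1\!\int_M u_t^2\,dV_u\,dt$ and using the $C^1$-estimates of Proposition \ref{utestimate} gives $\int_0^1\!\int_M u_{tt}\,dV_u\,dt \leq C$, independent of $s,\epsilon$. Since $u_0$ is a critical point of $F$, the boundary term $\frac{d}{dt}F[u]|_{t=0} = \int_M u_t(\cdot,0)[-\sigma_2(g_{u_0}^{-1}A_{u_0}) + \bar{\sigma}]\,dV_{u_0}$ vanishes identically. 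Integrating the second-derivative bound in $t$ and then in $t$ again and sending $s,\epsilon\to 0$ yields $F[u_1]\geq F[u_0]$; interchanging the roles of $u_0,u_1$ gives equality.

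For the global minimization statement $F[u]\geq F[u_0]$ for every admissible $u$, I would run the identical argument with the $u_1$ endpoint replaced by an arbitrary admissible $u$: the lower bound on $\frac{d^2}{dt^2}F$ and the vanishing of $\frac{d}{dt}F|_{t=0}$ only used that $u_0$ is a critical point, so the same integration gives $F[u]\geq F[u_0] - C(s+\epsilon)$, and the limit delivers the claim. (Equivalently, existence of a minimizer via \cite{CGY1} combined with the equality of critical values just proven identifies every critical value with the minimum.)

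For the last assertion on the path, set $g(t) := F[u(\cdot,t,s,\epsilon)]$. The lower bound on $g''$ says $g(t) + \tfrac{C(s+\epsilon)}{2} t^2$ is convex, so
\begin{align*}
g(t) \ \leq \ (1-t)\,g(0) + t\,g(1) + \tfrac{C(s+\epsilon)}{2}\,t(1-t),
\end{align*}
and since $g(0) = F[u_0]$ and $g(1) = F[u_1] \to F[u_0]$, this yields $\limsup_{s,\epsilon\to 0} g(t) \leq F[u_0]$. The matching lower bound $g(t) \geq F[u_0]$ is already given by the minimization property established in the previous step. The main technical obstacle I anticipate is the bookkeeping in the second-derivative computation: verifying that all discrepancies between the approximate-geodesic equation and the true geodesic equation are absorbed into the two harmless error terms above, and in particular that the integration-by-parts needed to write the cross term as $\int T_1(g_u^{-1}A_u)(\nabla u_t,\nabla u_t)\,dV_u$ goes through cleanly despite the $\epsilon$-perturbation on the $u_{tt}$ coefficient.
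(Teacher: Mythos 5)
Your proposal reproduces the paper's argument for the equality of critical values essentially verbatim: the same second-derivative computation along the approximate geodesic tracking the $sf$ and $\epsilon u_{tt}$ error terms, the same application of Corollary \ref{n4andrews}, the same space-time $L^1$ bound on $u_{tt}$ via the $C^1$ estimate, and the same use of $\tfrac{d}{dt}F|_{t=0}=0$ followed by two integrations and $s,\epsilon\to 0$ to conclude $F[u_1]\geq F[u_0]$, then symmetrizing. You go somewhat further than the paper's written proof, which stops after establishing $F[u_0]=F[u_1]$ and leaves the remaining two assertions implicit: your observation that the argument uses criticality only at $u_0$ and therefore delivers $F[u]\geq F[u_0]$ for arbitrary admissible endpoint, and your clean convexity-modulo-error estimate $g(t)\leq(1-t)g(0)+t\,g(1)+\tfrac{C(s+\epsilon)}{2}t(1-t)$ paired with the lower bound $g(t)\geq F[u_0]$ to squeeze out the limit, are exactly what is needed to finish the lemma and are in the same spirit.
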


\begin{lemma} \label{Fisolated} Fix $(M^4, g)$ with $A_g \in \gG_2^+$, and
suppose $u \in C^{\infty}(M)$ is an admissible critical point of $F$.  Then either $u$ is an isolated critical point for $F$ or $(M^4,
g_u)$ is isometric to $(S^4, g_{S^4})$.

\begin{proof} Suppose $u$ is not an isolated critical point, so that there
exists a sequence of admissible conformal factors $\{u_i\}$, $u_i \neq u$,
converging in $C^{\infty}$ to $u$, normalized so that $\int_M (u - u_i )dV_u =
0$.  We aim to use the convexity properties to show that the minimum eigenvalue
of the linear operator
\begin{align*}
L(\phi) =&\ - \IP{T_1(g_u^{-1} A_u), \N_{g_u}^2 \phi}_{g_u} - 4 \bar{\gs} \phi
\end{align*}
is zero.  Since $u$ satisfies $\gs_2(A_u) \equiv \bar{\gs}$ and has unit volume, this
lowest eigenvalue is characterized variationally
as
\begin{align*}
\gl_1 = \inf_{\{\phi | \int_M \phi dV_u = 0\}}\bar{\gs} \int_M \left[
\gs_2(A_u)^{-1} \IP{T_1(A_u), \N \phi \otimes \N \phi} - 4 \phi^2 \right]
dV_u.
\end{align*}
It follows from Corollary \ref{n4andrews} that $\gl_1 \geq 0$, with equality if
and only if $(M^4, g_u)$ is isometric to $(S^4, g_{S^4})$.  We suppose that
$\gl_1 > 0$ and derive a contradiction.

Fix a sufficiently large $i$ so
that the path
\begin{align*}
 w(x,t) = (1-t) u + t u_i
\end{align*}
consists of admissible functions.  Note that $w_{tt} = 0$, and by construction
$\frac{d F(w(\cdot,t))}{dt}(0) =
\frac{dF(w(\cdot,t))}{dt}(1) = 0$.  It follows that for any $i$ there exists
$t_{i} \in [0,1]$ such that $\frac{d^2 F(w(\cdot,t))}{dt^2}(t_{i}) = 0$.  We aim
to derive a contradiction from this setup.  First we make a second variation
calculation along this path using (\ref{CYvar}) and (\ref{densep}),
yielding
\begin{align*}
\frac{d^2}{dt^2} F[w(\cdot,t)] =&\ \frac{d}{dt} \int_M w_t \left( -
\sigma_2(g_w^{-1} A_w) +
\bar{\gs} \right) dV_{w}\\
=&\ \int_M w_{tt} \left( - \sigma_2(g_w^{-1} A_w) + \bar{\gs} \right) dV_w + \int_M
\left[- w_t \IP{T_1(g_w^{-1} A_w), \N^2 w_t} - n \bar{\gs} w_t^2 \right]
dV_w\\
=&\ \int_M \left[ \IP{T_1(g_w^{-1} A_w), \N w_t \otimes \N w_t} - n \bar{\gs}
w_t^2 \right] dV_w\\
=&\ \bar{\gs} \int_M \left[ \gs_k(g_w^{-1} A_w)^{-1} \IP{T_1(g_w^{-1} A_w), \N w_t \otimes
\N w_t} - n w_t^2 \right] dV_w.
\end{align*}
We next evaluate this at $t_i$.  Using that $w^i := w(\cdot,t_i)$ converges to $u$ as $i
\to \infty$ yields
\begin{align*}
 0 =&\ \int_M \left[ \IP{T_1(g_{w^{i}}^{-1} A_{w^i}), \N w_t \otimes \N w_t} -
n
\bar{\gs}
w_t^2 \right] dV_{w^i}\\
=&\ \int_M \left[ \IP{(1 - o(1)) T_1(g_{u_0}^{-1} A_{u_0}), \N w_t \otimes \N
w_t} - n \bar{\gs}
w_t^2 \right] (1 - o(1)) dV_{u_0}\\
=&\ \bar{\gs} \int_M \left[ \gs_k(g_{u_0}^{-1} A_{u_0})^{-1} \IP{T_1(g_{u_0}^{-1} A_{u_0}),
\N w_t \otimes \N w_t} - n w_t^2 \right] dV_{u_0} -o(1)\\
\geq&\ \bar{\gs} \gl_1 \int_M w_t^2 dV_{u_0} - o(1).
\end{align*}
If $\gl_1 > 0$ then for sufficiently large $i$ this implies that $w_{t} = u_i -
u = 0$, a contradiction.  It follows that $\gl_1 = 0$, and hence
by Corollary \ref{n4andrews} $(M^4, g_u)$ is isometric to $(S^4, g_{S^4})$.
\end{proof}
\end{lemma}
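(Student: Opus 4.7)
The plan is to leverage the sharp Andrews-type inequality of Corollary \ref{n4andrews}, which exhibits strict convexity of $F$ on any admissible conformal class other than $[g_{S^4}]$, to force a non-isolated critical point into the round sphere's rigidity case.

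Assuming $u$ is not an isolated critical point, I would take a sequence of admissible critical points $u_i \neq u$ with $u_i \to u$ in $C^\infty$, normalized by $\int_M(u_i - u)\,dV_u = 0$. For $i$ sufficiently large, admissibility being an open condition ensures that the linear interpolation $w^i(x,t) := (1-t)u(x) + tu_i(x)$ lies in $\Gamma_2^+$ for all $t \in [0,1]$. Since $w^i_{tt} \equiv 0$ and both endpoints are critical for $t \mapsto F[w^i(\cdot,t)]$ by (\ref{CYvar}), Rolle's theorem applied to $\frac{d}{dt}F[w^i(\cdot,t)]$ yields some $t_i \in (0,1)$ with $\frac{d^2}{dt^2}F[w^i(\cdot,t_i)] = 0$.

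Next I would carry out the second-variation computation. Differentiating (\ref{CYvar}) once more, using (\ref{densep}) and the vanishing of $w^i_{tt}$, integrating by parts via the divergence-free property of $T_1$ (Lemma \ref{divT}), and simplifying the lower-order terms should yield an identity of the shape
\begin{align*}
\frac{d^2}{dt^2}F[w^i(\cdot,t)] = \bar{\gs}\int_M\Big[\gs_2(g_w^{-1}A_w)^{-1}\IP{T_1(g_w^{-1}A_w),\N w^i_t\otimes \N w^i_t}_w - 4(w^i_t)^2\Big]dV_w.
\end{align*}
Evaluating at $t = t_i$ with $\phi_i := w^i_t = u_i - u$, and passing to the limit (noting that $w^i(\cdot,t_i) \to u$ in $C^\infty$), I would renormalize by setting $\psi_i := \phi_i/\|\phi_i\|_{L^2(dV_u)}$ to obtain a mean-zero sequence of unit $L^2$ functions for which the quadratic form appearing in Corollary \ref{n4andrews} vanishes in the limit.

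Finally I would invoke the rigidity clause. Positive definiteness of $T_1(A_u)$ on $\Gamma_2^+$ gives a uniform $H^1$ bound on $\psi_i$, so by Rellich compactness a subsequence converges strongly in $L^2$ to some $\psi$ with $\|\psi\|_{L^2} = 1$, $\int_M \psi\,dV_u = 0$, and realizing equality in Corollary \ref{n4andrews}; the sharp characterization there then forces $(M^4,g_u) \cong (S^4,g_{S^4})$. The main obstacle I anticipate is making this limiting argument rigorous: because $\phi_i \to 0$ pointwise, one must renormalize and carefully track convergence of the variable coefficients of the quadratic form, relying on uniform $L^\infty$ bounds on $\gs_2(A_{w^i(\cdot,t_i)})^{-1}$ and $T_1(A_{w^i(\cdot,t_i)})$ (secured by admissibility together with $C^\infty$ convergence) to extract a genuine minimizer realizing the sharp case.
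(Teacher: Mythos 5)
Your proposal is correct and follows the paper's argument essentially step for step: a sequence of nearby critical points, the linear interpolation $w^i(\cdot,t)$, Rolle's theorem producing a $t_i$ where $\frac{d^2}{dt^2}F$ vanishes, the second-variation identity, and Andrews' rigidity (Corollary~\ref{n4andrews}) to force the round sphere.

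The one place you diverge is the closing limiting step, and there your treatment is arguably the cleaner of the two. The paper argues by contradiction from $\gl_1 > 0$ and writes ``$0 \geq \bar{\gs}\gl_1 \int w_t^2 - o(1)$,'' concluding $w_t \equiv 0$; read literally this $o(1)$ is not enough, since $\int w_t^2 \to 0$ anyway, and the argument tacitly requires that the error is $o(\|w_t\|_{H^1}^2)$ together with a reverse Poincar\'e bound $\|\N w_t\|_{L^2}^2 \lesssim \|w_t\|_{L^2}^2$ extracted from the vanishing second variation itself. You bypass this by renormalizing $\psi_i = \phi_i/\|\phi_i\|_{L^2(dV_u)}$, using the uniform positivity of $T_1$ to get an $H^1$ bound, extracting a strong $L^2$ (weak $H^1$) limit $\psi \not\equiv 0$ via Rellich, and using weak lower semicontinuity of the gradient term to show $\psi$ realizes equality in Corollary~\ref{n4andrews} directly, without ever introducing $\gl_1$. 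Both reach the same conclusion; your version makes explicit the compactness that the paper leaves implicit.
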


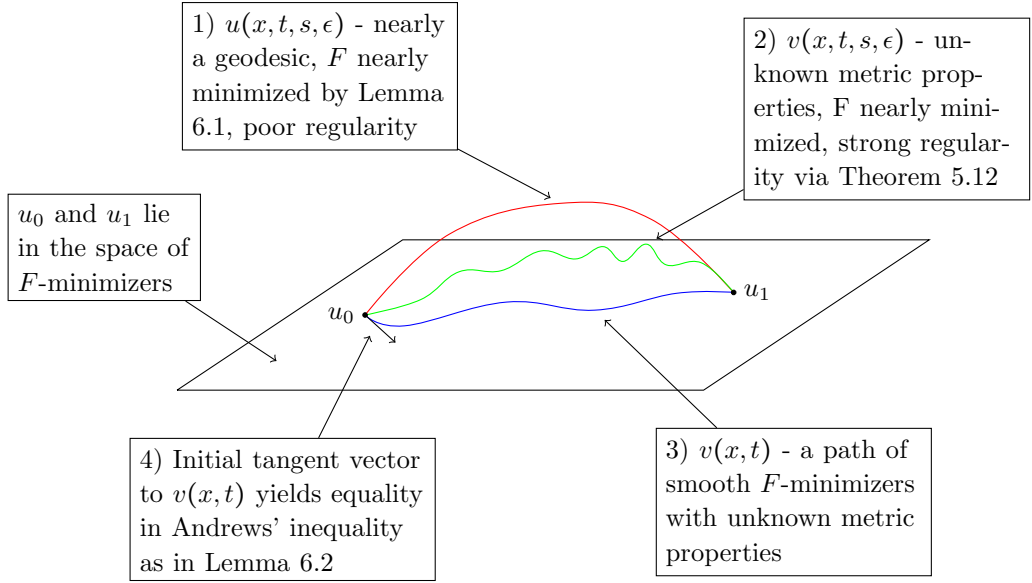
\begin{figure} \label{fig1}
\begin{tikzpicture}

\draw [black](-4,-1) -- (-1,1) -- (6,1) -- (3,-1) -- (-4,-1);

\node (v9) at (1.1005,1.4861) {};

\node (v10) at (1.9301,0.7128) {};
\node (v11) at (2.2394,0.9424) {};
\node (v14) at (-1.4366,-0.2777) {};
\node (v15) at (1.5504,0.0642) {};
\node (v17) at (-2.6787,-0.6116) {};

\draw  [red] plot[smooth, tension=.7] coordinates {(-1.5,0)(-0.3757,1.0783) (v9) (2.305,1.2724) (3.4,0.3)};
\draw  [green] plot[smooth, tension=.7] coordinates {(-1.5,0)(-0.76,0.2051)(-0.2492,0.5753)(0.3085,0.5894)(0.8381,0.8519)(1.2974,0.7456)(1.6629,0.9096) (v10) (v11) (2.5065,0.6659)(2.9705,0.694) (3.4,0.3)};
\draw  [blue] plot[smooth, tension=.7] coordinates {(-1.5,0)(-0.9756,-0.1441) (0.3835,0.172) (v15) (2.6424,0.2891)(3.4,0.3)};

\draw [fill=black] (-1.5,0) node (v2) [left]{\small{$u_0$}} circle (0.03);
\draw [fill=black] (3.4,0.3) node (v3) [right]{\small{$u_1$}} circle (0.03);

\node [draw,text width=3.4cm] (v5) at (-2.1004,3.1762) {\small{1) $u(x,t,s,\epsilon)$ - nearly a geodesic, $F$ nearly minimized by Lemma \ref{uniquelemma1}, poor regularity}};
\node[draw,text width=3.54cm] (v6) at (5.4368,2.7403) {\small{2) $v(x,t,s,\epsilon)$ - unknown metric properties, F nearly minimized, strong regularity via Theorem \ref{gwsmoothing}}};
\node[draw,text width=3.4cm] (v7) at (4.1996,-2.4886) {\small{3) $v(x,t)$ - a path of smooth $F$-minimizers with unknown metric properties}};
\node[draw,text width=3.8cm] (v13) at (-2.596,-2.587) {\small{4) Initial tangent vector to $v(x,t)$ yields equality in Andrews' inequality as in Lemma \ref{Fisolated}}};
\node[draw,text width=2.3cm] (v16) at (-4.9673,0.8998) {\small{$u_0$ and $u_1$ lie in the space of $F$-minimizers}};
\node (v8) at (-1.0965,-0.3663) {};
\draw [->] (v5) edge (v9);
\draw [->] (v6) edge (v11);
\draw [->] (v13) edge (v14.center);
\draw [->] (v7) edge (v15);
\draw [->](-1.5,0) -> (v8.center);
\draw[->] (v16)  -> (v17.center);
\end{tikzpicture}
\caption{Scheme of proof of Theorem \ref{unique}}
\end{figure}

\begin{proof}[Proof of Theorem \ref{unique}] See Figure \ref{fig1} for a schematic outline of the argument.  Suppose there exist two distinct solutions $u_0$ and $u_1$ to the $\gs_2$-Yamabe problem.  Let $u(x,t,s,\ge)$ be the family of
approximate geodesics connecting $u_0$ to $u_1$ guaranteed by Theorem \ref{approxgeodthm}.  Noting the a
priori estimates on $\brs{u}_{C^0}$ and $\brs{\N u}_{C^0}$ are independent of $s,\ge$
we have by Theorem \ref{gwsmoothing} that the solution to the flow equation (\ref{gwflow}) with
initial condition $u(\cdot,t,s,\ge)$ exists on some time interval $[0,\eta]$,
and moreover the solution at time $\eta$, call it $v(x,t,s,\ge)$ has uniform
$C^{k,\ga}$ estimates independent of $s,\ge$ and stays uniformly in the interior of $\gG_2^+$, in the
sense that $T_1(g_v^{-1} A_v)$ has uniform upper and lower bounds.
Due to these
estimates we can obtain one-parameter family of smooth functions $v(x,t) =
\lim_{s,\ge \to 0} v(x,t,s,\ge)$, which is continuous in $t$.  Moreover, by
Lemmas
\ref{gwFdecrease} and \ref{uniquelemma1} we see that $F[v(\cdot,t)] = F[u_0]$.
It follows that $v(\cdot,t)$ is a nontrivial path of critical points for $F$
through $u_0$, and hence by Lemma \ref{Fisolated} we conclude that $(M^4, g_u)$
is isometric to $(S^4, g_{S^4})$.
\end{proof}

\end{document}